\documentclass[a4paper, 12pt]{article}

\usepackage[top = 2.5cm, bottom = 2.5cm, left = 2.4cm, right = 2.4cm]{geometry}
\usepackage{amsfonts, amsmath, amsthm, amssymb, mathtools, cases, bbm, array}
\usepackage{lmodern, indentfirst, setspace, changepage, fancyhdr}
\usepackage{tikz, pgfplots, float, subcaption}
\usepackage[title]{appendix}
\usepackage[english]{babel}
\usepackage{cite}

\pagestyle{fancy}
\fancyhf{}
\lhead{\scriptsize{Utility maximizing load balancing policies}}
\rhead{\scriptsize{Goldsztajn, Borst and Van Leeuwaarden}}
\cfoot{\thepage}

\usepackage{hyperref}
\hypersetup{
	colorlinks = true,
	linkcolor = blue,
	citecolor = blue,
}

\newtheorem{theorem}{Theorem}
\newtheorem{proposition}{Proposition}
\newtheorem{corollary}{Corollary}
\newtheorem{lemma}{Lemma}
\theoremstyle{definition}
\newtheorem{definition}{Definition}

\newtheorem{remark}{Remark}

\newcommand{\ind}[1]{\mathbbm{1}_{\left\{#1\right\}}}
\newcommand{\norm}[1]{\left|\left|#1\right|\right|}
\newcommand{\floor}[1]{\left\lfloor#1\right\rfloor}
\newcommand{\ceil}[1]{\left\lceil#1\right\rceil}
\newcommand{\map}[3]{#1 : #2 \longrightarrow #3}
\newcommand{\set}[2]{\left\{#1 : #2\right\}}

\newcommand{\mayorigual}{\trianglerighteq}
\newcommand{\menorigual}{\trianglelefteq}
\newcommand{\avg}[1]{\langle#1\rangle}
\newcommand{\mayor}{\vartriangleright}
\newcommand{\menor}{\vartriangleleft}
\newcommand{\boldf}{\boldsymbol{f}}
\newcommand{\defeq}{\vcentcolon=}
\newcommand{\eqdef}{=\vcentcolon}
\newcommand{\ba}{\boldsymbol{a}}
\newcommand{\bd}{\boldsymbol{d}}
\newcommand{\bg}{\boldsymbol{g}}
\newcommand{\bh}{\boldsymbol{h}}
\newcommand{\bq}{\boldsymbol{q}}
\newcommand{\br}{\boldsymbol{r}}
\newcommand{\bs}{\boldsymbol{s}}
\newcommand{\bv}{\boldsymbol{v}}
\newcommand{\bw}{\boldsymbol{w}}
\newcommand{\bx}{\boldsymbol{x}}
\newcommand{\by}{\boldsymbol{y}}
\newcommand{\bz}{\boldsymbol{z}}
\newcommand{\bE}{E}
\newcommand{\bP}{P}
\newcommand{\bX}{\boldsymbol{X}}
\newcommand{\bY}{\boldsymbol{Y}}
\newcommand{\bound}{\mathrm{bd}}
\newcommand{\prob}{\mathbbm{P}}
\newcommand{\calA}{\mathcal{A}}

\newcommand{\calD}{\mathcal{D}}

\newcommand{\calF}{\mathcal{F}}
\newcommand{\calG}{\mathcal{G}}
\newcommand{\calI}{\mathcal{I}}

\newcommand{\calK}{\mathcal{K}}

\newcommand{\calN}{\mathcal{N}}

\newcommand{\calR}{\mathcal{R}}

\newcommand{\indc}{\mathbbm{1}}
\newcommand{\scdot}{{}\cdot{}}
\newcommand{\eq}{\textrm{eq}}

\newcommand{\N}{\mathbbm{N}}

\newcommand{\R}{\mathbbm{R}}

\newcommand{\e}{\mathrm{e}}

\captionsetup{labelfont = {bf, footnotesize}, textfont = footnotesize}
\usetikzlibrary{automata, arrows, positioning, calc, external, babel}
\usepgfplotslibrary{fillbetween}
\pgfplotsset{
	compat = 1.16,
	every axis/.append style = {
		grid style = {dashed, gray, opacity = 0.2},
		label style = {font = \footnotesize},
		tick label style = {font = \footnotesize},  
		width = 1 * \columnwidth,
		height = 0.618 * 1 * \columnwidth
	}
}

\definecolor{britishracinggreen}{rgb}{0.0, 0.26, 0.15}
\definecolor{bostonuniversityred}{rgb}{0.8, 0.0, 0.0}
\definecolor{ceruleanblue}{rgb}{0.16, 0.32, 0.75}
\definecolor{airforceblue}{rgb}{0.36, 0.54, 0.66}
\definecolor{cadmiumgreen}{rgb}{0.0, 0.42, 0.24}
\definecolor{ao(english)}{rgb}{0.0, 0.5, 0.0}
\definecolor{coolblack}{rgb}{0.0, 0.18, 0.39}
\definecolor{alizarin}{rgb}{0.82, 0.1, 0.26}
\definecolor{arsenic}{rgb}{0.23, 0.27, 0.29}
\definecolor{cobalt}{rgb}{0.0, 0.28, 0.67}
\definecolor{amber}{rgb}{1.0, 0.75, 0.0}


\title{Utility maximizing load balancing policies \vspace{\baselineskip}}

\author{
\normalsize{Diego Goldsztajn, Sem C. Borst}\\ \footnotesize{Eindhoven University of Technology, d.e.goldsztajn@tue.nl, s.c.borst@tue.nl} \\
\normalsize{Johan S.H. van Leeuwaarden}\\ \footnotesize{Tilburg University, j.s.h.vanleeuwaarden@uvt.nl} \\
}

\date{\vspace{\baselineskip} February 10, 2024}

\begin{document}

\maketitle

\noindent\rule{\textwidth}{1pt}

\vspace{2\baselineskip}

\onehalfspacing

\begin{adjustwidth}{0.8cm}{0.8cm}
	\begin{center}
		\textbf{Abstract}
	\end{center}
	
	\vspace{0.3\baselineskip}
	
	\noindent Consider a service system where incoming tasks are instantaneously dispatched to one out of many heterogeneous server pools. Associated with each server pool is a concave utility function which depends on the class of the server pool and its current occupancy. We derive an upper bound for the mean normalized aggregate utility in stationarity and introduce two load balancing policies that achieve this upper bound in a large-scale regime. Furthermore, the transient and stationary behavior of these asymptotically optimal load balancing policies is characterized on the scale of the number of server pools, in the same large-scale regime.
	
	\vspace{\baselineskip}
	
	\small{\noindent \textit{Key words:} load balancing, utility maximization, large-scale asymptotics.}
	
	\vspace{0.3\baselineskip}
	
	\small{\noindent \textit{Acknowledgment:} the work in this paper is supported by the Netherlands Organisation for Scientific Research (NWO) through Gravitation-grant NETWORKS-024.002.003 and Vici grant 202.068.} 
\end{adjustwidth}

\newpage

\section{Introduction}
\label{sec: introduction}

We consider a service system where incoming tasks are instantaneously assigned to one out of many heterogeneous server pools. All the tasks sharing a server pool are executed in parallel and the execution times do not depend on the class of the server pool or the number of tasks currently contending for service. Nevertheless, associated with each server pool is a not necessarily increasing concave utility function which does depend on the class of the server pool and the number of tasks currently sharing it. These features are characteristic of streaming and online gaming services, where the duration of tasks is mainly determined by the application, but still congestion can have a strong impact on the experienced performance (e.g., video resolution and fluency).

The goal is to assign tasks so as to maximize the overall utility of the system, defined as the aggregate utility of all the server pools normalized by the number of server pools. We derive an upper bound for its stationary mean through an optimization problem where the optimization variable is a sequence that describes the distribution of a fractional number of tasks across the server pools; the objective of the problem is the overall utility function, and the main constraint is that the total number of tasks must be equal to the offered load of the system. We construct an optimal (fractional) task assignment that solves this problem and has a particularly insightful structure, and we formulate the upper bound for the mean stationary overall utility in terms of this solution.

Armed with the above insight, we propose and analyze two assignment policies that maintain the occupancy state of the system aligned with an optimal task assignment. Specifically, we examine a policy that assigns every new task to a server pool with the largest marginal utility; this policy is dubbed Join the Largest Marginal Utility (JLMU). We also introduce a multi-threshold policy that follows the same greedy principle but only approximately, and uses significantly less state information. The optimal threshold values depend on the typically unknown offered load of the system and are adjusted over time through an inbuilt learning scheme; thus we name this policy Self-Learning Threshold Assignment (SLTA). Assuming exponential service times, we characterize the asymptotic transient and stationary behavior of both policies on the scale of the number of server pools, and we prove that both policies achieve the upper bound for the mean stationary overall utility as the number of server pools grows large.

A fundamental difference between JLMU and SLTA is that the former is naturally agnostic to the offered load, whereas for the latter, the optimal thresholds depend on the offered load. However, we show that the online learning scheme of SLTA is capable of finding the optimal threshold values without any prior knowledge of the offered load, which makes it possible to deploy SLTA if the offered load is not known in advance.

\subsection{Main contributions}
\label{sub: main contributions}

The main contribution of this paper is an upper bound for the mean stationary overall utility that is asymptotically tight for exponentially distributed service times, and thereby serves as a crucial performance benchmark. The asymptotic tightness of the upper bound is proved by studying the stationary behavior of JLMU and SLTA in the regime where the number of server pools grows large, and by establishing that both assignment policies achieve the upper bound in the latter regime.

The analysis of JLMU is based on a fluid limit given by an infinite system of differential equations with a discontinuous right-hand side. We prove that the associated initial value problem always has a unique solution, by making a connection with a system of integral equations, expressed in terms of Skorokhod one-dimensional reflection mappings, and using a uniqueness result for certain Kolmogorov backward equations. Moreover, we show that the fluid limit holds with respect to an $\ell_1$ norm, and that the system of differential equations is globally asymptotically stable with respect to this norm. These results are used to prove that the stationary distribution of the process that describes the occupancy state of the system converges in $\ell_1$ to an optimal task assignment for the offered load of the system. The asymptotic optimality of JLMU is then established by proving that the stationary overall utilities form a convergent and uniformly integrable sequence of random variables; the proof of the latter properties exploits a representation of the overall utility as a linear functional on $\ell_1$ and our convergence results with respect to the $\ell_1$ norm.

While SLTA is simple to implement, its analysis is inherently challenging due to the complex interdependence between two components of the policy. Namely, the dispatching rule, which depends on the multiple thresholds, and the online learning scheme, which adjusts the thresholds over time. Furthermore, an additional technical difficulty is that the learning scheme is triggered by excursions of the occupancy state of the system that asymptotically vanish on the scale of the number of server pools. 

In order to analyze the large-scale transient behavior of SLTA, we use a methodology of \cite{goldsztajn2021learning} which allows to overcome the aforementioned challenges by means of a non-traditional fluid limit analysis. In the present paper, we extend the latter methodology to also prove weak convergence of the stationary distribution of the occupancy process and thresholds. Here our contributions are proofs of ergodicity and tightness of stationary distributions through a careful drift analysis, as well as a suitably adapted interchange of limits argument designed to leverage the large-scale transient result obtained with the methodology of \cite{goldsztajn2021learning}. Equipped with the weak convergence results for the stationary distributions, we prove the asymptotic optimality of SLTA in a similar way as for JLMU, by showing that all of our limit theorems hold with respect to the $\ell_1$ norm and exploiting the linear representation of the overall utility function.

\subsection{Related work}
\label{sub: related work}

Load balancing and task assignment in parallel-server systems has received immense attention in the past decades; some relevant papers are \cite{mitzenmacher2001power,vvedenskaya1996queueing,lu2011join,stolyar2015pull,winston1977optimality,eschenfeldt2018join}. While traditionally the focus used to be on performance, more recently the implementation overhead has emerged as an equally important issue. In large-scale deployments, this overhead has two main sources: the communication burden of messaging between the dispatcher and the servers, and the operational cost of storing and managing state information at the dispatcher \cite{gamarnik2018delay,gamarnik2020lower}. We refer to \cite{van2018scalable} for an extensive survey on scalable load balancing.

While the load balancing literature has been predominantly concerned with systems of parallel single-server queues, the present paper considers an infinite-server setting where the service times of tasks do not depend on the number of competing tasks. This feature is characteristic of streaming applications, where the level of congestion does not significantly affect the duration of tasks. The level of congestion has, however, a strong impact on the amount of resources received by individual streaming sessions, and thus on the experienced quality-of-service, which can be modeled through utility functions. Infinite-server dynamics have been commonly adopted as a natural paradigm for modeling streaming sessions on flow-level \cite{benameur2002quality,key2004fair} and the problem of managing large data centers serving streaming sessions has been recently addressed in \cite{mukherjee2020asymptotic}. Systems with infinite-server dynamics have also been analyzed in \cite{karthik2017choosing,mukhopadhyay2015mean,mukhopadhyay2015power,xie2015power}, which concern loss models that are different in nature from the setting considered in the present paper.

When the server pools are homogeneous, the overall utility is a Schur-concave function of the vector describing the number of tasks at each server pool. In this case, maximizing the aggregate utility of the system boils down to equalizing the number of tasks across the various server pools. Join the Shortest Queue (JSQ) maximizes the mean stationary overall utility of the system for exponential service times, and in fact has stronger stochastic optimality properties \cite{menich1991optimality,sparaggis1993extremal}. In the homogeneous setting, JLMU reduces to JSQ and is thus optimal for exponential service times. Also, SLTA reduces to the policy considered in \cite{goldsztajn2021self,goldsztajn2021learning}, which asymptotically matches the performance of JSQ on the fluid and diffusion scales for exponentially distributed service times. While the policy considered in \cite{zhou2017designing,zhou2018heavy,horvath2019mean} is similar to SLTA in name, this policy does not equalize the queue lengths.

\begin{figure}
	\centering
	\begin{tikzpicture}[x = 2cm, y = 0.75cm]
	\node[align = center] at (0, -2) {\footnotesize{\textbf{Single-server}}\\ \footnotesize{\textbf{dynamics}}};
	\node[align = center] at (0, -4) {\footnotesize{\textbf{Infinite-server}}\\ \footnotesize{\textbf{dynamics}}};
	\node[align = center] at (2, 0) {\footnotesize{\textbf{Homogenenous}}\\ \footnotesize{\textbf{setting}}};
	\node[align = center] at (4, 0) {\footnotesize{\textbf{Heterogeneous}}\\ \footnotesize{\textbf{setting}}};
	
	\node at (2, -2) {\footnotesize{\cite{van2018scalable,mitzenmacher2001power,vvedenskaya1996queueing,lu2011join,stolyar2015pull,winston1977optimality,eschenfeldt2018join,gamarnik2018delay}}};
	\node at (2, - 4) {\footnotesize{\cite{goldsztajn2021self,goldsztajn2021learning,mukherjee2020asymptotic}}};
	\node at (4, -2) {\footnotesize{\cite{gardner2019smart,gardner2021scalable,bhambay2022asymptotic,van2021load,stolyar2015pull,mukhopadhyay2016randomized,abdul2022general}}};
	\node at (4, -4) {\footnotesize{Present paper}};
	
	\draw (1, -1) -- (5, -1);
	\draw (1, -3) -- (5, -3);
	\draw (1, -5) -- (5, -5);
	\draw (1, -1) -- (1, -5);
	\draw (3, -1) -- (3, -5);
	\draw (5, -1) -- (5, -5);
	\end{tikzpicture}
	\caption{Schematic view of some of the related work. Most of the load balancing literature concerns systems of parallel and homogeneous single-server queues; this vast literature is surveyed in \cite{van2018scalable}. Some recent papers study single-server dynamics in heterogeneous settings or infinite-server dynamics in homogeneous settings, whereas the present paper considers a heterogeneous system with infinite-server dynamics.}
	\label{fig: related work}
\end{figure}

The problem of maximizing the overall utility of the system is more challenging if the server pools are heterogeneous as in this paper. Heterogeneity is the norm in data centers, where servers from different generations coexist because old machines are only gradually replaced by more powerful versions; as shown in Figure \ref{fig: related work}, this feature has been recently addressed in the load balancing literature for single-server models \cite{gardner2019smart,gardner2021scalable,abdul2022general} but not in the infinite-server context. When the server pools are heterogeneous, it is no longer optimal to maintain an evenly balanced distribution of the load, in fact it is not even obvious at all how tasks should be distributed in order to maximize the overall utility function, and the optimal distribution of tasks across the server pools depends on this function. Another striking difference with the homogeneous setting is that JLMU is generally not optimal in the pre-limit for exponentially distributed service times; we establish that, in general, the optimality is only achieved asymptotically in the heterogeneous case.

From a theoretical perspective, one of the most interesting features of SLTA is its capacity to learn the offered load of the system. The problem of adaptation to unknown demands was previously addressed in \cite{goldsztajn2018controlling,goldsztajn2021automatic,mukherjee2017optimal} in the context of single-server models, by assuming that the number of servers can be right-sized on the fly to match the load of the system. However, in the latter papers the dispatching rule remains the same at all times since the right-sizing mechanism alone is sufficient to maintain small queues, by adjusting the number of servers. Different from these right-sizing mechanisms, the learning scheme of SLTA modifies the parameters of the dispatching rule over time to maximize the overall utility of the system. 

\subsection{Outline of the paper}
\label{sub: outline of the paper}

In Section \ref{sec: problem formulation} we introduce some of the notation used throughout the paper and we formulate the upper bound for the mean stationary overall utility. In Section \ref{sec: load balancing policies} we specify the JLMU and SLTA policies, and we state their asymptotic optimality with respect to the mean stationary overall utility. In Section \ref{sec: approximation theorems} we present several results that pertain to the asymptotic transient behavior of these two policies, and that are used to establish their asymptotic optimality. In Section \ref{sec: performance upper bound} we prove the upper bound for the mean stationary overall utility. In order to characterize the asymptotic behavior of JLMU and SLTA, we construct systems of different sizes on a common probability space in Section \ref{sec: strong approximations}, where we also prove relative compactness results. Limit theorems for JLMU and SLTA are proved in Sections \ref{sec: limiting behavior of jlmu} and \ref{sec: limiting behavior of threshold}, respectively, and the asymptotic optimality of these two policies is established in Section \ref{sec: asymptotic optimality}. Some proofs are deferred to Appendices \ref{app: auxiliary results}, \ref{app: relative compactness} and \ref{app: limiting behavior of threshold}.

\section{Problem formulation}
\label{sec: problem formulation}

In this section we define some of the notation used throughout the paper and we state the upper bound for the mean stationary overall utility. In Section \ref{sub: basic notation} we introduce two descriptors for specifying the state of the system and we define the overall utility function. In Section \ref{sub: optimization problem} we present the optimization problem used to derive the upper bound for the mean stationary overall utility. In Section \ref{sub: structure of an optimal solution} we construct a solution of this problem explicitly, and in Section \ref{sub: performance upper bound} we use the constructed solution to formulate the upper bound for the mean stationary overall utility.

\subsection{Basic notation}
\label{sub: basic notation}

Consider a system with $m$ classes of server pools. All the tasks sharing a server pool are executed in parallel and the execution times do not depend on the class of the server pool or the number of tasks currently contending for service. Nevertheless, associated with each server pool is a concave utility function which does depend on the class of the server pool and the number of tasks sharing it. For example, these functions can be used to model the overall quality-of-service provided to streaming tasks sharing an underlying resource with a fixed capacity. The objective is to assign the incoming tasks to the various server pools so as to maximize the aggregate utility of all the server pools in stationarity.

The number of server pools is denoted by $n$ and the number and fraction of server pools of class $i$ are denoted by $A_n(i)$ and $\alpha_n(i) = A_n(i) / n$, respectively. We assume that tasks arrive as a Poisson process of intensity $n \lambda$ with independent and identically distributed service times of mean $1 / \mu$, and we define $\bX_n(i, k)$ as the number of tasks in server pool $k$ of class $i$; boldface symbols are used in the paper to indicate time-dependence. Server pools of the same class that have the same number of tasks are exchangeable, thus we usually consider a different state descriptor. Specifically, we let
\begin{equation*}
\bq_n(i, j) \defeq \frac{1}{n} \sum_{k = 1}^{A_n(i)} \ind{\bX_n(i, k) \geq j}
\end{equation*}
denote the fraction of server pools which are of class $i$ and have at least $j$ tasks. The values of $\bX_n$ and $\bq_n$ at a given time are referred as the occupancy state or task assignment.

The concave utility function associated with server pools of class $i$ is denoted by $u_i$ and the overall utility of the system is defined as the aggregate utility of all the server pools normalized by the number of server pools. More precisely, we let
\begin{equation*}
u_n\left(\bX_n\right) \defeq \frac{1}{n} \sum_{i = 1}^m \sum_{k = 1}^{A_n(i)} u_i\left(\bX_n(i, k)\right).
\end{equation*}
Note that $\bq_n(i, j) - \bq_n(i, j + 1)$ is the fraction of server pools of class $i$ with $j$ tasks. Thus, the overall utility may equivalently be expressed as
\begin{equation*}
u\left(\bq_n\right) \defeq \sum_{i = 1}^m \sum_{j = 0}^\infty u_i(j) \left[\bq_n(i, j) - \bq_n(i, j + 1)\right].
\end{equation*}
While the overall utility function is generally not linear as a function of $\bX_n$, it is always linear as a function of $\bq_n$, as shown by the above expression.

The total number of tasks in the system, normalized by the number of server pools, can be expressed in terms of the occupancy state $\bq_n$ as follows:
\begin{equation}
\label{eq: definition of s_n}
\bs_n \defeq \sum_{i = 1}^m \sum_{j = 1}^\infty \bq_n(i, j) = \sum_{i = 1}^m \sum_{j = 1}^\infty j \left[\bq_n(i, j) - \bq_n(i, j + 1)\right].
\end{equation}
The quantity $j \left[\bq_n(i, j) - \bq_n(i, j + 1)\right]$ represents the number of tasks in server pools of class $i$ with exactly $j$ tasks, normalized by the total number of server pools. Hence, $\bs_n$ indeed corresponds to the normalized total number of tasks.

Throughout the paper, we write $\bP$ and $\bE$ to denote the probability and expectation with respect to a given probability measure. If $\rho \defeq \lambda / \mu$ denotes the normalized offered load, then the stationary distribution of the total number of tasks is Poisson with mean $n \rho$ due to the infinite-server dynamics of the system. Thus, $\bE\left[\bs_n\right] = \rho$ in stationarity, for any task assignment policy.

\subsection{Optimization problem}
\label{sub: optimization problem}

Based on the above, we now formulate an optimization problem which yields an upper bound for the mean stationary overall utility:
\begin{equation}
	\label{pr: optimal task assignments}
	\begin{split}
		\underset{q}{\text{maximize}} &\quad u(q) \\
		\text{subject to} &\quad \sum_{i = 1}^m \sum_{j = 1}^\infty q(i, j) = \rho, \\
		&\quad 0 \leq q(i, j + 1) \leq q(i, j) \leq q(i, 0) = \alpha_n(i) \quad \text{for all} \quad i, j.
	\end{split}
\end{equation}

In order to see that the optimum of \eqref{pr: optimal task assignments} yields an upper bound for the mean stationary overall utility, consider any policy such that $\bq_n$ has a stationary distribution. We assume that the policy is such that the evolution of the system over time can be described by a Markov process with a possibly uncountable state space that has a stationary distribution. Let $q_n$ be a random variable with the stationary distribution of $\bq_n$ and define $\bE\left[q_n\right]$ as the sequence whose $(i, j)$ element is $\bE\left[q_n(i, j)\right]$. Observe that
\begin{equation}
\label{eq: interchange of u and e}
\bE\left[u\left(q_n\right)\right] = u\left(\bE\left[q_n\right]\right).
\end{equation}
Indeed, the utility functions are concave, so for each $i$ there exists $j_i \in \N$ such that $u_i(j)$ and $u_i(k)$ have the same sign if $j, k > j_i$. Therefore, \eqref{eq: interchange of u and e} follows from Tonelli's theorem. In addition, $\bE\left[q_n\right]$ satisfies the constraints of \eqref{pr: optimal task assignments} because the total number of tasks in stationarity has mean $n \rho$. Thus, $\bE\left[u(q_n)\right]$ is upper bounded by the optimum of \eqref{pr: optimal task assignments}. 

\subsection{Structure of an optimal solution}
\label{sub: structure of an optimal solution}

For brevity, we refer to an optimizer of \eqref{pr: optimal task assignments} as an optimal task assignment; the term optimal fractional task assignment would be more appropriate since the offered load $n \rho$ may not be integral. In this section we define a ranking of the server pools that can be used to construct an optimal task assignment. For this purpose, consider the sets
\begin{equation*}
\calI \defeq \{1, \dots, m\} \times \N \quad \text{and} \quad \calI_+ \defeq \set{(i, j) \in \calI}{j \geq 1}.
\end{equation*}
A server pool has coordinates $(i, j) \in \calI_+$ if its class is $i$ and it has precisely $j - 1$ tasks; e.g., in Figure \ref{fig: marginal utilities}, server pool A of class $1$ has coordinates $(1, 4)$ and both server pools of class~$3$ have coordinates $(3, 1)$. Since server pools with the same coordinates are statistically identical, we may focus on ranking coordinates rather than server pools.

\begin{figure}
	\centering
	\begin{tikzpicture}[x = 1.6cm, y = 0.9cm]
	\draw[black, dashed] (-0.25, -1) -- (-0.25, 5);
	\draw[black, dashed] (2, -1) -- (2, 5);
	\draw[black, dashed] (5.25, -1) -- (5.25, 5);
	\draw[black, dashed] (7.5, -1) -- (7.5, 5);
	\draw[black, dashed] (-0.25, -1) -- (7.5, -1);
	
	
	\foreach \x in {0, 1}
	\foreach \y in {0, 1, 2, 3}
	\draw[black] (\x, \y) rectangle (\x + 0.75, \y + 0.75);
	
	\foreach \x in {0, 1}
	\node at (\x + 0.375, 0.375) {\tiny{$\Delta(1, 0)$}};
	
	\foreach \x in {0, 1}
	\node at (\x + 0.375, 1.375) {\tiny{$\Delta(1, 1)$}};
	
	\foreach \x in {0, 1}
	\node at (\x + 0.375, 2.375) {\tiny{$\Delta(1, 2)$}};
	
	\foreach \x in {0, 1}
	\node at (\x + 0.375, 3.375) {\tiny{$\Delta(1, 3)$}};
	
	\foreach \x in {0, 1}
	\node[rotate = 90] at (\x + 0.375, 4.375) {$\cdot\cdot\cdot$};
	
	\node at (0.375, -0.5) {\scriptsize{A}};
	\node at (1.375, -0.5) {\scriptsize{B}};
	
	\node at (0.875, -1.5) {\scriptsize{$1$}};
	
	\foreach \x in {2.25, 3.25, 4.25}
	\foreach \y in {0, 1, 2, 3}
	\draw[black] (\x, \y) rectangle (\x + 0.75, \y + 0.75);
	
	\foreach \x in {2.25, 3.25, 4.25}
	\node at (\x + 0.375, 0.375) {\tiny{$\Delta(2, 0)$}};
	
	\foreach \x in {2.25, 3.25, 4.25}
	\node at (\x + 0.375, 1.375) {\tiny{$\Delta(2, 1)$}};
	
	\foreach \x in {2.25, 3.25, 4.25}
	\node at (\x + 0.375, 2.375) {\tiny{$\Delta(2, 2)$}};
	
	\foreach \x in {2.25, 3.25, 4.25}
	\node at (\x + 0.375, 3.375) {\tiny{$\Delta(2, 3)$}};
	
	\foreach \x in {2.25, 3.25, 4.25}
	\node[rotate = 90] at (\x + 0.375, 4.375) {$\cdot\cdot\cdot$};
	
	\node at (2.625, -0.5) {\scriptsize{A}};
	\node at (3.625, -0.5) {\scriptsize{B}};
	\node at (4.625, -0.5) {\scriptsize{C}};
	
	\node at (3.625, -1.5) {\scriptsize{$2$}};
	
	\foreach \x in {5.5, 6.5}
	\foreach \y in {0, 1, 2, 3}
	\draw[black] (\x, \y) rectangle (\x + 0.75, \y + 0.75);
	
	\foreach \x in {5.5, 6.5}
	\node at (\x + 0.375, 0.375) {\tiny{$\Delta(3, 0)$}};
	
	\foreach \x in {5.5, 6.5}
	\node at (\x + 0.375, 1.375) {\tiny{$\Delta(3, 1)$}};
	
	\foreach \x in {5.5, 6.5}
	\node at (\x + 0.375, 2.375) {\tiny{$\Delta(3, 2)$}};
	
	\foreach \x in {5.5, 6.5}
	\node at (\x + 0.375, 3.375) {\tiny{$\Delta(3, 3)$}};
	
	\foreach \x in {5.5, 6.5}
	\node[rotate = 90] at (\x + 0.375, 4.375) {$\cdot\cdot\cdot$};
	
	\node at (5.875, -0.5) {\scriptsize{A}};
	\node at (6.875, -0.5) {\scriptsize{B}};
	
	\node at (6.375, -1.5) {\scriptsize{$3$}};
	
	\node[rotate = 90] at (-0.65, 2) {\scriptsize{Number of servers $j$}};
	\node at (3.625, -2.25) {\scriptsize{Server pool class $i$}};
	
	\foreach \y in {0, 1, 2}
	\draw[fill = black!20!white] (0, \y) rectangle (0.75, \y + 0.75);
	
	\foreach \y in {0, 1}
	\draw[fill = black!20!white] (1, \y) rectangle (1.75, \y + 0.75);
	
	\draw[fill = black!20!white] (3.25, 0) rectangle (4, 0.75);
	
	\foreach \y in {0, 1}
	\draw[fill = black!20!white] (4.25, \y) rectangle (5, \y + 0.75);
	\end{tikzpicture}
	\caption{Schematic representation of the marginal utilities. White rectangular slots and gray rectangles represent idle and busy servers, respectively. Each of the columns labeled with letters represents a server pool and the dashed lines enclose server pools of the same class. If the tasks sent to a given server pool are always placed in the first idle server from bottom to top, then the marginal utilities written on top of the idle servers indicate the increase in the aggregate utility of the system when the server receives a task.}
	\label{fig: marginal utilities}
\end{figure}
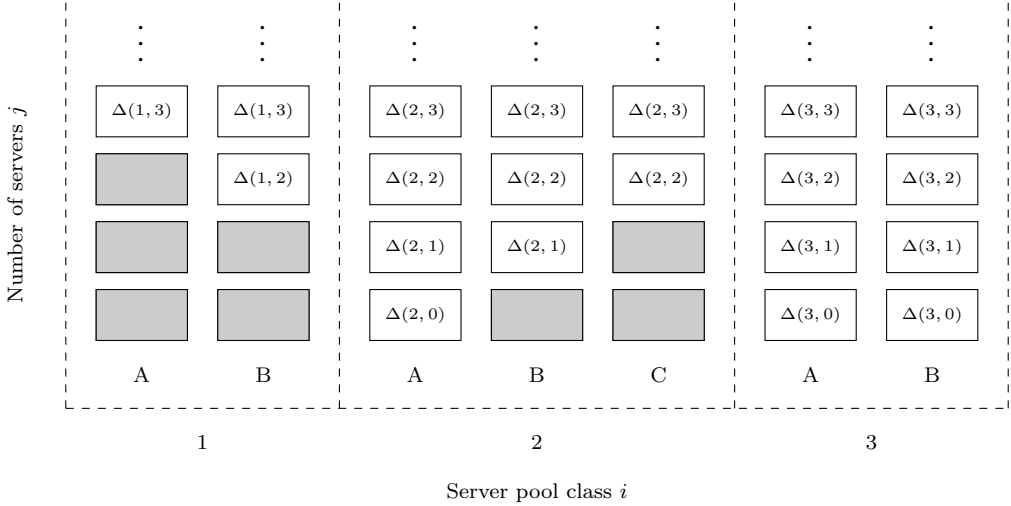

Formally, we define a total order on $\calI_+$ that gives precedence to coordinates associated with larger marginal utilities. The marginal utility of a server pool of class $i$ with $j$ tasks is denoted by $\Delta(i, j) \defeq u_i(j + 1) - u_i(j)$ and represents the change in the utility function of such a server pool if it receives an additional task. The marginal utility of the coordinates $(i, j)$ is just $\Delta(i, j - 1)$, the marginal utility of server pools of class $i$ with $j - 1$ tasks. 

Consider the dictionary order $\prec$ on $\calI$, defined by
\begin{equation*}
(i_1, j_1) \prec (i_2, j_2) \quad \text{if and only if} \quad i_1 < i_2 \quad \text{or} \quad i_1 = i_2 \quad \text{and} \quad j_1 < j_2.
\end{equation*}
We obtain a total order $\menor$ on $\calI_+$ by writing $(i_1, j_1) \menor (i_2, j_2)$ if and only if one of the following conditions holds:
\begin{itemize}
	\item $\Delta(i_1, j_1 - 1) < \Delta(i_2, j_2 - 1)$,
	
	\item $\Delta(i_1, j_1 - 1) = \Delta(i_2, j_2 - 1)$ and $(i_1, j_1 - 1) \succ (i_2, j_2 - 1)$.
\end{itemize}
In particular, the marginal utility of server pools with coordinates $(i_1, j_1)$ is smaller than or equal to that of server pools with coordinates $(i_2, j_2) \mayor (i_1, j_1)$. The dictionary order is used to break the tie when both coordinates are associated with the same marginal utility, but a different tie breaking rule could be used instead.

Consider the task assignment $q_n^*$ defined by
\begin{equation}
\label{eq: definition of q_n^N}
q_n^*(i, j) \defeq \begin{cases}
0												&	\text{if} \quad (i, j) \menor \sigma_n^*, \\
\alpha_n(i)										&	\text{if} \quad (i, j) \mayor \sigma_n^*, \\
\rho - \sum_{(i, j) \mayor \sigma_n^*} \alpha_n(i)	&	\text{if} \quad (i, j) = \sigma_n^*,
\end{cases}
\quad \text{for all} \quad (i, j) \in \calI_+.
\end{equation}
In Section \ref{sec: performance upper bound} we prove that $q_n^*$ constitutes an optimal task assignment if $\sigma_n^*$ is defined as the unique element of $\calI_+$ that satisfies
\begin{equation}
\label{eq: definition of sigma_n^N}
\sum_{(i, j) \mayor \sigma_n^*} \alpha_n(i) \leq \rho < \sum_{(i, j) \mayorigual \sigma_n^*} \alpha_n(i).
\end{equation}
For the uniqueness of $\sigma_n^*$, note that the number of terms in the summations on both sides of \eqref{eq: definition of sigma_n^N} increases as the ranking of $\sigma_n^*$ becomes worse, and that the summation on the right has exactly one more term than the summation on the left.

\subsubsection{Numerical examples}

Figure \ref{fig: optimal task asssignments} illustrates the optimal task assignments obtained through \eqref{eq: definition of q_n^N} for two sets of utility functions and different values of $\rho$. If $n$ is such that $n\alpha_n(i)$ and $n \rho$ are integers for all $i$, then the plots can be interpreted as sets of $n$ adjacent columns, where each column represents a server pool and the colored portion of a column indicates the number of tasks sharing the server pool, as in the diagram of Figure \ref{fig: marginal utilities}. The thick vertical lines separate the server pool classes and the quantities $q_n^*(i, j)$ can be read off by rotating the plots.

\begin{figure}
	\centering
	\begin{subfigure}{0.49\columnwidth}
		\centering
		\includegraphics{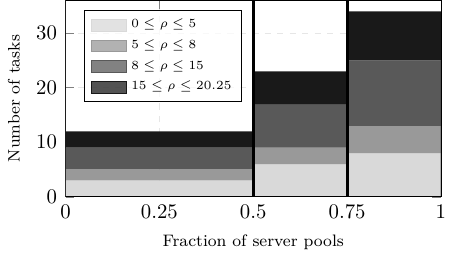}
	\end{subfigure}
	\hfill
	\begin{subfigure}{0.49\columnwidth}
		\centering
		\includegraphics{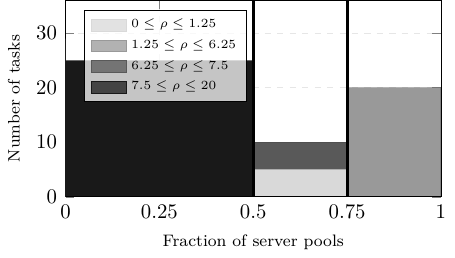}
	\end{subfigure}
	\caption{Distribution of the offered load across the various server pools under the optimal task assignment of \eqref{eq: definition of q_n^N} for $m = 3$ and $\alpha_n = (1/2, 1/4, 1/4)$. On the left, $u_i(x) = x \log(r(i) / x)$ with $r = (5, 10, 15)$. On the right, $u_1(x) = x$, $u_2(x) = 2x - x^2/20$, $u_3(x) = 3x/2$ if $x < 20$ and $u_3(x) = 30$ if $x \geq 20$.}
	\label{fig: optimal task asssignments}
\end{figure}

The left plot corresponds to utility functions of the form $u_i(x) = x g(r(i) / x)$, with $g$ a concave and increasing function. These utility functions can be used to model the aggregate quality-of-service provided to streaming tasks sharing a single server pool. The quantity $r(i)$ represents the total amount of resources in a server pool of class $i$ and $g(r(i) / x)$ models the quality-of-service provided to a single task when the server pool is shared by $x$ tasks and each task gets a fraction $r(i) / x$ of the total resources. For a given $\rho$, the left plot of Figure \ref{fig: optimal task asssignments} depicts server pools with roughly $r(i) / c(r, \rho)$ tasks, with $c(r, \rho)$ a normalizing constant that does not depend on $g$; for some values of $\rho$, all server pools have exactly $r(i) / c(r, \rho)$ tasks, but in other cases some of these numbers are rounded. This behavior is explained by noting that the derivative of $u_i(x)$ can be expressed as a function of $r(i) / x$, thus the occupancy levels $r(i) / c(r, \rho)$ equalize the marginal utilities.

In the left plot of Figure \ref{fig: optimal task asssignments}, the occupancy levels of the various server pools maintain approximately fixed ratios as $\rho$ increases. The right plot shows a completely different behavior: as $\rho$ increases from $0$ to $1.25$, only the occupancy of server pools of class $2$ grows, but from $1.25$ to $6.25$, only the occupancy of server pools of class $3$ grows, and eventually exceeds the occupancy of server pools of class $2$. Furthermore, as $\rho$ increases beyond $7.5$, only the occupancy of server pools of class $1$ increases.

\subsection{Performance upper bound}
\label{sub: performance upper bound}

We now state the upper bound for the mean stationary overall utility, which follows from the observations of Sections \ref{sub: optimization problem} and \ref{sub: structure of an optimal solution}; a rigorous proof is provided in Section \ref{sec: performance upper bound}.

\begin{theorem}
	\label{the: upper bound for mean stationary overall utility}
	Consider any task assignment policy such that the occupancy process $\bq_n$ has a stationary distribution, and let $q_n$ be a random variable distributed as this stationary distribution. Then
	\begin{equation*}
	\bE\left[u(q_n)\right] \leq u\left(q_n^*\right).
	\end{equation*}
\end{theorem}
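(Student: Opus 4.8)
The plan is to combine two facts. The first, already established in Section~\ref{sub: optimization problem}, is that $\bE\left[u(q_n)\right] = u\left(\bE\left[q_n\right]\right)$ and that $\bE\left[q_n\right]$ is feasible for \eqref{pr: optimal task assignments}; hence $\bE\left[u(q_n)\right]$ is at most the optimal value of \eqref{pr: optimal task assignments}. The second, which is the substantive part, is that $q_n^*$ attains this optimal value, i.e.\ that $q_n^*$ is an optimal task assignment. Granting both, the conclusion is immediate: $\bE\left[u(q_n)\right] = u\left(\bE\left[q_n\right]\right) \le u\left(q_n^*\right)$. So the task reduces to proving the optimality of $q_n^*$.

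First I would linearize the objective. By Abel summation, recalling that $q(i,0) = \alpha_n(i)$ and that $q(i,j)\to 0$ as $j\to\infty$, one obtains
\begin{equation*}
u(q) = \sum_{i=1}^m u_i(0)\,\alpha_n(i) + \sum_{(i,j)\in\calI_+} \Delta(i,j-1)\, q(i,j),
\end{equation*}
so that, up to an additive constant independent of $q$, problem \eqref{pr: optimal task assignments} maximizes a linear functional whose coefficient at coordinate $(i,j)$ is exactly its marginal utility $\Delta(i,j-1)$. Since the order $\menor$ ranks coordinates precisely by these coefficients, $q_n^*$ is the natural greedy candidate: it saturates every coordinate ranked above $\sigma_n^*$ at level $\alpha_n(i)$, leaves every coordinate ranked below $\sigma_n^*$ empty, and assigns the residual mass to $\sigma_n^*$ itself.

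The optimality of $q_n^*$ I would then establish by an exchange argument. Writing $\sigma_n^* = (i^*, j^*)$ and $\Delta^* \defeq \Delta(i^*, j^*-1)$ for the marginal utility of $\sigma_n^*$, let $q$ be any feasible point and set $\delta \defeq q - q_n^*$. Both $q$ and $q_n^*$ have total mass $\rho$, so $\sum_{(i,j)\in\calI_+}\delta(i,j) = 0$, whence
\begin{equation*}
u(q) - u\left(q_n^*\right) = \sum_{(i,j)\in\calI_+} \Delta(i,j-1)\,\delta(i,j) = \sum_{(i,j)\in\calI_+} \left[\Delta(i,j-1) - \Delta^*\right]\delta(i,j).
\end{equation*}
For $(i,j) \mayor \sigma_n^*$ the coordinate is saturated in $q_n^*$, so $\delta(i,j) \le 0$, while $\Delta(i,j-1) \ge \Delta^*$; for $(i,j) \menor \sigma_n^*$ one has $q_n^*(i,j) = 0$, so $\delta(i,j) \ge 0$, while $\Delta(i,j-1) \le \Delta^*$; and the summand at $\sigma_n^*$ vanishes. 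Every summand is therefore nonpositive, giving $u(q) \le u\left(q_n^*\right)$, as desired.

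It remains to confirm that $q_n^*$ is itself feasible and that the objects above are well defined, which I regard as the main source of technical care. Because each $u_i$ is concave, $\Delta(i,\cdot)$ is nonincreasing, so restricted to a fixed class the order $\menor$ coincides with the natural order $j = 1, 2, \dots$; consequently the coordinates of class $i$ ranked above $\sigma_n^*$ form an initial segment, $q_n^*(i,\cdot)$ is nonincreasing with values in $[0,\alpha_n(i)]$, and \eqref{eq: definition of sigma_n^N} guarantees that the residual at $\sigma_n^*$ lies in $[0,\alpha_n(i^*))$ and that the total mass equals $\rho$. The delicate points are: the well-definedness of $\menor$ and $\sigma_n^*$ together with the absolute convergence of the infinite sums under the standing assumptions on the $u_i$; the role of the dictionary tie-breaking rule, which is precisely what forces the above initial-segment structure and hence the monotonicity constraints for $q_n^*$ (note that the inequalities between $\Delta(i,j-1)$ and $\Delta^*$ used in the exchange step instead hold automatically, since $\menor$ orders by marginal utility first); and the interchange $\bE\left[u(q_n)\right] = u\left(\bE\left[q_n\right]\right)$, justified in Section~\ref{sub: optimization problem} via Tonelli's theorem and the eventual sign-consistency of the utilities.
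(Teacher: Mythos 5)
Your overall skeleton matches the paper's (reduce to showing $q_n^*$ optimizes \eqref{pr: optimal task assignments} via the Section \ref{sub: optimization problem} observations), and your exchange argument is a clean formalization of the greedy reasoning the paper uses; indeed it is essentially how the paper argues on the class of \emph{eventually zero} assignments in Lemma \ref{lem: optimal task assignment under eventually zero constraint}. The gap is in the step you treat as routine: the Abel-summation identity
\begin{equation*}
u(q) = \sum_{i=1}^m u_i(0)\,\alpha_n(i) + \sum_{(i,j)\in\calI_+}\Delta(i,j-1)\,q(i,j)
\end{equation*}
asserted for \emph{every} feasible $q$. Your justification --- $q(i,0)=\alpha_n(i)$ and $q(i,j)\to 0$ --- does not control the boundary term $u_i(k)\,q(i,k+1)$ produced by summation by parts. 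Theorem \ref{the: upper bound for mean stationary overall utility} assumes only concavity, so $\Delta(i,j)$ may decrease to $-\infty$ and $|u_i(k)|$ may grow super-linearly; this is not a pathological corner case, since the paper's running example $u_i(x)=x\log(r(i)/x)$ has $u_i(k)\sim -k\log k$. Feasibility gives $k\,q(i,k)\to 0$ (non-increasing summable sequences), but not $u_i(k)\,q(i,k+1)\to 0$: one can construct feasible $q$ with $q(i,k)\approx 1/(k\log k)$ along a sparse subsequence for which the boundary term stays bounded away from zero. In such cases $u(q)=-\infty$, and both your identity and the subsequent splitting $\sum\Delta\,\delta-\Delta^*\sum\delta$ manipulate divergent series. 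You flag ``absolute convergence of the infinite sums'' as a delicate point, but absolute convergence genuinely fails in general: the case $u(q)=-\infty$ must be split off (where the inequality is trivial), and when $u(q)>-\infty$ the vanishing of the boundary term needs an actual argument --- e.g., using that $|u_i|$ is eventually monotone, so $\sum_{j\geq k}|u_i(j)|\left[q(i,j)-q(i,j+1)\right] \geq |u_i(k)|\,q(i,k)$, and the left side is the tail of a convergent series.

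This difficulty is exactly what the paper's two-step structure is designed to avoid: Lemma \ref{lem: optimal task assignment under eventually zero constraint} proves the greedy bound only for eventually zero $q$, where all sums are finite and the linearization is harmless, and Proposition \ref{prop: optimal task assignments} then handles general feasible $q$ not by linearizing but by an explicit mass-shifting construction, which uses the concavity tangent bound $u_i(j)\leq u_i(k)+(j-k)\Delta(i,k)$ for $j>k$ to produce an eventually zero $z$ with $u(q)\leq u(z)$. Your route can be repaired along the lines sketched above, and once repaired it is arguably more direct than the paper's; but the repair is the substance of the proof in this generality, not a technicality that can be deferred to a closing remark.
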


In the following section we establish that the upper bound is asymptotically achievable when service times are exponentially distributed. In particular, we will see that JLMU achieves the upper bound of Theorem \ref{the: upper bound for mean stationary overall utility} as the number of server pools grows large; recall that JLMU is generally not optimal in the pre-limit, not even for exponential service times. Moreover, we will establish that SLTA also achieves the upper bound asymptotically, while relying on considerably less state information.

Before proceeding, it is illustrative to draw an analogy between the setting considered in this paper and the load balancing literature for systems of parallel single-server queues, where the primary objective is to minimize queueing delay. The natural policy for the setting considered in this paper is JLMU, while the natural policy for minimizing queueing delay in systems of parallel single-server queues is JSQ. The deployment of these policies involves a considerable communication overhead, or storing and managing a significant amount of state information. In the setting considered in this paper, SLTA provides a asymptotically optimal performance for exponential service times and uses substantially less state information than JLMU. From this perspective, SLTA is the counterpart of JIQ in the load balancing literature for systems of parallel single-server queues \cite{lu2011join,stolyar2015pull}.

\section{Load balancing policies}
\label{sec: load balancing policies}

In this section we describe the load balancing policies considered in the paper and we state their asymptotic optimality with respect to the mean stationary overall utility. In Sections \ref{sub: join the largest marginal utility (jlmu)} and \ref{sub: threshold} we specify JLMU and SLTA, respectively. In Section \ref{sub: stochastic models} we define stochastic models, based on continuous-time Markov chains, for the analysis of both of these policies. In Section \ref{sub: asymptotic optimality} we state the asymptotic optimality result.

\subsection{Join the Largest Marginal Utility (JLMU)}
\label{sub: join the largest marginal utility (jlmu)}

JLMU assigns every new task to a server pool that currently has the best ranked coordinates, thus also the largest marginal utility. Formally, define
\begin{equation}
\label{eq: definition of sigma}
\sigma(q) = \left(\sigma_i(q), \sigma_j(q)\right) \defeq \max \set{(i, j) \in \calI_+}{q(i, j - 1) > q(i, j)}
\end{equation}
for each occupancy state $q$. The maximum is taken with respect to $\menor$ and the condition $q(i, j - 1) > q(i, j)$ implies that some server pool of class $i$ has precisely $j - 1$ tasks. If $q_n$ is the occupancy state right before a task arrives, then JLMU assigns the task to a server pool of class $\sigma_i\left(q_n\right)$ with exactly $\sigma_j\left(q_n\right) - 1$ tasks.

The coordinates obtained through \eqref{eq: definition of sigma} correspond to server pools with the largest marginal utility by definition of $\menor$. In addition, observe that the dictionary order is used to break ties between coordinates associated with the same marginal utility. If two server pools have the same coordinates, then it does not matter which of them is assigned the new task since they are statistically identical. For definiteness, we postulate that the tie is broken uniformly at random.

If all the server pools have the same utility function, then JLMU reduces to JSQ and the overall utility is a Schur-concave function of $\bX_n$. If in addition the service times are exponential, then the stochastic optimality properties proven in \cite{sparaggis1993extremal,menich1991optimality} for JSQ imply that JLMU maximizes the mean stationary overall utility in this homogeneous setting. It might be natural to expect that the optimality with respect to the mean stationary overall utility extends to the heterogeneous setting. We refute this, however, in Section \ref{sub: suboptimality result}, where we construct a heterogeneous system for which JLMU is strictly suboptimal. The constructed example also hints at the underlying reasons for the suboptimality in the heterogeneous case. Essentially, instead of always assigning incoming tasks greedily, such that the increase in the overall utility is maximal, it is sometimes advantageous to dispatch the new tasks conservatively, to hedge against pronounced drops of the overall utility that may be caused by a quick succession of departures. The right balance between greedy and conservative actions depends intricately on the utility functions, but we prove that JLMU is always asymptotically optimal for exponential service times, regardless of the specific set of utility functions; this result is stated formally in Section \ref{sub: asymptotic optimality}.

\subsection{Self-Learning Threshold Assignment (SLTA)}
\label{sub: threshold}

JLMU relies on complete information about the number of tasks per server pool, which could be impractical in large-scale deployments. In contrast, SLTA only requires to store at most two bits per server pool, which is considerably less state information. In order to specify this policy, we need to describe its two components. Namely, the dispatching rule, for assigning the incoming tasks to the server pools, and the learning scheme, for dynamically adjusting a set of thresholds that the dispatching rule uses.

Define $\set{(i_k, j_k)}{k \geq 1}$ by
\begin{equation*}
(i_k, j_k) \defeq \max \set{(i, j) \in \calI_+}{(i, j) \neq (i_1, j_1), \dots, (i_{k - 1}, j_{k - 1})},
\end{equation*}
where the maximum is taken with respect to $\menor$, i.e., $(i_k, j_k)$ is the $k$th best ranked element of $\calI_+$. Given $r \geq 1$, we define a set of thresholds by
\begin{equation*}
\ell_i(r) \defeq \max \set{j \geq 0}{(i, j) \mayor (i_r, j_r)\ \text{or}\ j = 0} \quad \text{for all} \quad i.
\end{equation*}
Recall the optimal task assignment defined at the end of Section \ref{sub: structure of an optimal solution}. The learning scheme keeps an estimate $(i_{\br_n}, j_{\br_n})$ of the coordinates $\sigma_n^*$, which depend on the typically unknown offered load of the system. The index $\br_n$ determines this estimate and is used to compute thresholds from the above expression, which are in turn used to assign tokens to the server pools. Specifically, a server pool of class $i$ with exactly $j - 1$ tasks has:
\begin{itemize}
	\item a green token if $j - 1 < \ell_i(\br_n)$,
	
	\item a yellow token if $i = i_{\br_n}$ and $j - 1 \leq \ell_i(\br_n)$.
\end{itemize}
The first condition is equivalent to $\br_n > 1$ and $(i, j) \mayor (i_{\br_n}, j_{\br_n})$, and the second condition is equivalent to $i = i_{\br_n}$ and $(i, j) \mayorigual (i_{\br_n}, j_{\br_n})$. Also, a server pool can have both a green and a yellow token at the same time. As indicated in Figure \ref{fig: threshold}, a larger increase in the overall utility is obtained by dispatching tasks to server pools with green tokens first, then to server pools with yellow tokens and only afterwards to server pools without tokens.

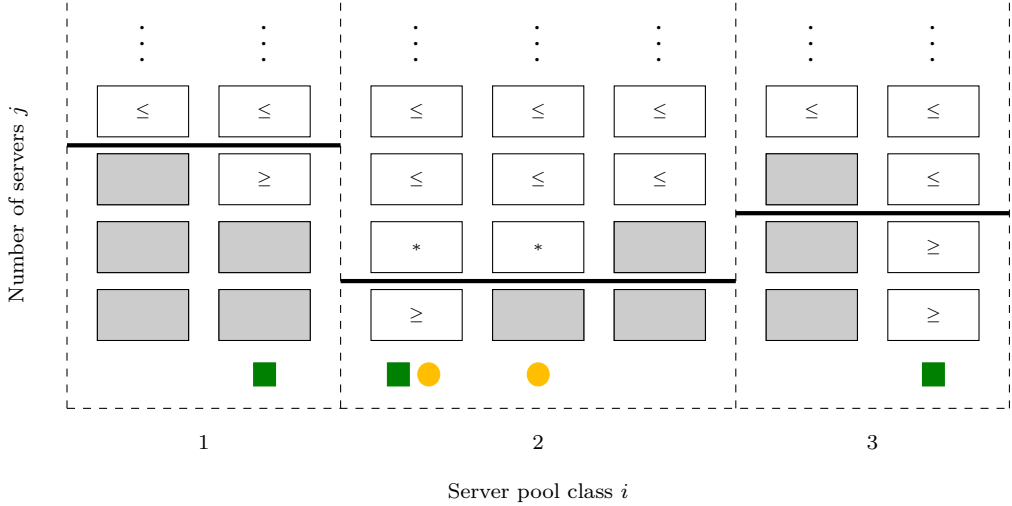
\begin{figure}
	\centering
	\begin{tikzpicture}[x = 1.6cm, y = 0.9cm]
	\draw[black, dashed] (-0.25, -1) -- (-0.25, 5);
	\draw[black, dashed] (2, -1) -- (2, 5);
	\draw[black, dashed] (5.25, -1) -- (5.25, 5);
	\draw[black, dashed] (7.5, -1) -- (7.5, 5);
	\draw[black, dashed] (-0.25, -1) -- (7.5, -1);
	
	\foreach \x in {0, 1}
	\foreach \y in {0, 1, 2, 3}
	\draw[black] (\x, \y) rectangle (\x + 0.75, \y + 0.75);
	
	\foreach \x in {0, 1}
	\node at (\x + 0.375, 0.375) {\tiny{$\geq$}};
	
	\foreach \x in {0, 1}
	\node at (\x + 0.375, 1.375) {\tiny{$\geq$}};
	
	\foreach \x in {0, 1}
	\node at (\x + 0.375, 2.375) {\tiny{$\geq$}};
	
	\foreach \x in {0, 1}
	\node at (\x + 0.375, 3.375) {\tiny{$\leq$}};
	
	\foreach \x in {0, 1}
	\node[rotate = 90] at (\x + 0.375, 4.375) {$\cdot\cdot\cdot$};
	
	\node at (0.875, -1.5) {\scriptsize{$1$}};
	
	\draw[black, ultra thick] (-0.25, 2.875) -- (2, 2.875);
	
	\foreach \x in {2.25, 3.25, 4.25}
	\foreach \y in {0, 1, 2, 3}
	\draw[black] (\x, \y) rectangle (\x + 0.75, \y + 0.75);
	
	\foreach \x in {2.25, 3.25, 4.25}
	\node at (\x + 0.375, 0.375) {\tiny{$\geq$}};
	
	\foreach \x in {2.25, 3.25, 4.25}
	\node at (\x + 0.375, 1.375) {\tiny{$*$}};
	
	\foreach \x in {2.25, 3.25, 4.25}
	\node at (\x + 0.375, 2.375) {\tiny{$\leq$}};
	
	\foreach \x in {2.25, 3.25, 4.25}
	\node at (\x + 0.375, 3.375) {\tiny{$\leq$}};
	
	\foreach \x in {2.25, 3.25, 4.25}
	\node[rotate = 90] at (\x + 0.375, 4.375) {$\cdot\cdot\cdot$};
	
	\node at (3.625, -1.5) {\scriptsize{$2$}};
	
	\draw[black, ultra thick] (2, 0.875) -- (5.25, 0.875);
	
	\foreach \x in {5.5, 6.5}
	\foreach \y in {0, 1, 2, 3}
	\draw[black] (\x, \y) rectangle (\x + 0.75, \y + 0.75);
	
	\foreach \x in {5.5, 6.5}
	\node at (\x + 0.375, 0.375) {\tiny{$\geq$}};
	
	\foreach \x in {5.5, 6.5}
	\node at (\x + 0.375, 1.375) {\tiny{$\geq$}};
	
	\foreach \x in {5.5, 6.5}
	\node at (\x + 0.375, 2.375) {\tiny{$\leq$}};
	
	\foreach \x in {5.5, 6.5}
	\node at (\x + 0.375, 3.375) {\tiny{$\leq$}};
	
	\foreach \x in {5.5, 6.5}
	\node[rotate = 90] at (\x + 0.375, 4.375) {$\cdot\cdot\cdot$};
	
	\node at (6.375, -1.5) {\scriptsize{$3$}};
	
	\draw[black, ultra thick] (5.25, 1.875) -- (7.5, 1.875);
	
	\node[rotate = 90] at (-0.65, 2) {\scriptsize{Number of servers $j$}};
	\node at (3.625, -2.25) {\scriptsize{Server pool class $i$}};
	
	\foreach \y in {0, 1, 2}
	\draw[fill = black!20!white] (0, \y) rectangle (0.75, \y + 0.75);
	
	\foreach \y in {0, 1}
	\draw[fill = black!20!white] (1, \y) rectangle (1.75, \y + 0.75);
	
	\draw[fill = black!20!white] (3.25, 0) rectangle (4, 0.75);
	
	\foreach \y in {0, 1}
	\draw[fill = black!20!white] (4.25, \y) rectangle (5, \y + 0.75);
	
	\foreach \y in {0, 1, 2}
	\draw[fill = black!20!white] (5.5, \y) rectangle (6.25, \y + 0.75);
	
	\draw[color = ao(english), fill = ao(english)] (1.375 - 0.09, -0.5 - 0.17) rectangle (1.375 + 0.09, -0.5 + 0.17);
	
	\draw[color = ao(english), fill = ao(english)] (2.625 - 0.15 - 0.09, -0.5 - 0.17) rectangle (2.625 - 0.15 + 0.09, -0.5 + 0.17);
	
	\draw[color = amber, fill = amber] (2.625 + 0.1, -0.5) ellipse (0.09 and 0.17);
	
	\draw[color = amber, fill = amber] (3.625, -0.5) ellipse (0.09 and 0.17);
	
	\draw[color = ao(english), fill = ao(english)] (6.875 - 0.09, -0.5 - 0.17) rectangle (6.875 + 0.09, -0.5 + 0.17);
	
	\end{tikzpicture}
	\caption{Schematic representation of the thresholds and tokens used by SLTA for $(i_{\br_n}, j_{\br_n}) = (2, 2)$. The thresholds are indicated by thick horizontal lines that cross the server pools, and the tokens are represented using squares and circles underneath the server pools; a square corresponds to a green token and a circle corresponds to a yellow token. Assuming that the rectangular slots within a server pool are always filled from bottom to top, the slots marked with an $*$ provide a marginal utility of $\Delta(i_{\br_n}, j_{\br_n} - 1)$. The symbols $\leq$ and $\geq$ indicate how the marginal utility of the other slots compares to the latter value.}
	\label{fig: threshold}
\end{figure}

The tokens are used by the dispatching rule. Specifically, when a task arrives, it is assigned to a server pool according to the following criteria.
\begin{itemize}
	\item In the presence of green tokens of class $i \neq i_{\br_n - 1}$, the dispatcher picks one of these green tokens uniformly at random, and if only green tokens of class $i = i_{\br_n - 1}$ remain, then one of these is picked. Then the task is sent to the corresponding server pool.
	
	\item In the presence of only yellow tokens, the dispatcher picks a yellow token uniformly at random and sends the task to the corresponding server pool.
	
	\item Otherwise, the task is sent to a server pool chosen uniformly at random.
\end{itemize}
If $(i_{\br_n}, j_{\br_n})$ are the coordinates $\sigma_n^*$ defined in \eqref{eq: definition of sigma_n^N}, then this dispatching rule drives the occupancy state of the system towards the optimal task assignment $q_n^*$ specified in \eqref{eq: definition of q_n^N}.

The learning scheme aims at finding the coordinates $\sigma_n^*$, which depend on the typically unknown offered load. The learning scheme is parameterized by $\beta_n > 0$ and adjusts the value of $\br_n$ at certain arrival epochs, in steps of one unit. Specifically, when a task arrives, the learning scheme acts only under the following circumstances.
\begin{itemize}
	\item If the system has at least $n \beta_n$ green tokens and at least one belongs to a server pool of class $i_{\br_n - 1}$, then $\br_n$ is decremented by one after the task is dispatched.
	
	\item If the number of yellow tokens is smaller than or equal to one and there are no other tokens, then $\br_n$ is incremented by one after the task is dispatched.
\end{itemize}
Observe that exactly one of the thresholds changes when the value of $\br_n$ is modified, and that this threshold changes by one unit. Also,
\begin{equation*}
n - \sum_{i = 1}^m n\bq_n\left(i, \ell_i(\br_n)\right) \quad \text{and} \quad n\alpha_n\left(i_{\br_n}\right) - n\bq_n\left(i_{\br_n}, j_{\br_n}\right)
\end{equation*}
are the number of green and yellow tokens, respectively.

\subsubsection{Comparison with the homogeneous case}

When all the server pools are of the same class, SLTA reduces to the load balancing policy analyzed in \cite{goldsztajn2021learning}. In this case there is a single threshold whose optimal value is simply $\floor{\rho}$. When the threshold has this value, the number of green tokens and the total number of tokens are typically small and positive, respectively. On the other hand, when the threshold is below optimal, the total number of tokens tends to be zero, and when the threshold is larger than optimal, the number of green tokens tends to be relatively large. These properties are used to adjust the threshold in an online manner when the offered load is unknown. In few words, the threshold is increased in the absence of tokens, and it is decreased if the number of green tokens is large enough.

In the general case, there are as many thresholds as the number of server pool classes, and the optimal threshold values depend intricately on the utility functions and the offered load. However, the ranking $\menor$ introduced in Section \ref{sub: structure of an optimal solution} makes it possible to express all the thresholds as a function of the coordinates $\sigma_n^*$. Hence, the optimal thresholds can still be found through a one-directional search, but now in the totally ordered space $\calI_+$. Moreover, since this space is countable, the search can be carried out by adjusting the integral parameter $\br_n$ until $(i_{\br_n}, j_{\br_n})$ reaches the optimal value $\sigma_n^*$.

The learning scheme of SLTA operates so that all the thresholds are at their optimal values if and only if $\br_n$ is at its optimal value, and when this happens, the number of green tokens and the total number of tokens are typically small and positive, respectively. When $\br_n$ is below optimal, all the thresholds are smaller than or equal to their optimal values, and at least one of the thresholds is strictly smaller than optimal; in this case, the total number of tokens tends to be zero. Similarly, when $\br_n$ is above optimal, all the thresholds are larger than or equal to their optimal values and at least one of the thresholds is above optimal; as a result, the number of green tokens tends to be relatively large. As in the homogeneous case, these observations are used to adjust $\br_n$ over time. Loosely speaking, in the absence of tokens, $\br_n$ is increased by one unit, which implies that one of the thresholds is increased by one unit, and when the number of green tokens is large enough, $\br_n$ is decreased by one unit, and thus one of the thresholds is decreased by one unit.

The dispatching rule and the online learning scheme of SLTA make distinctions between green tokens of class $i_{\br_n - 1}$ and green tokens of any other class. The rationale is that the marginal utility of server pools of class $i$ with $\ell_i(\br_n) - 1$ tasks is the lowest when $i = i_{\br_n - 1}$, thus it makes sense to give green tokens of this class the lowest priority for receiving new tasks. While this may slightly improve performance, it is not crucial. Nevertheless, the differential treatment of class $i_{\br_n - 1}$ simplifies the mathematical analysis of SLTA. In particular, the distinction made in the description of the learning scheme ensures that if the system had yellow tokens, then it will continue to have yellow tokens after $\br_n$ is decreased, which is used in the below stated Remark \ref{rem: dynamics of threshold}. In addition, the differential treatment by the dispatching rule simplifies the proof of Proposition \ref{prop: upper bound for br}.

\subsection{Stochastic models}
\label{sub: stochastic models}

If service times are exponentially distributed, then $\bq_n$ and $(\bq_n, \br_n)$ are continuous-time Markov chains when the load balancing policies are JLMU and SLTA, respectively. In either case, the process $\bs_n$ that describes the normalized total number of tasks is defined by \eqref{eq: definition of s_n}. Due to the infinite-server dynamics of the system, $n \bs_n$ has the law of an $M/M/\infty$ queue with arrival rate $n\lambda$ and service rate $\mu$.

Let $\ell_1$ be the space of absolutely summable sequences in $\R^\calI$, equipped with the norm
\begin{equation*}
\norm{x}_1 \defeq \sum_{(i, j) \in \calI} |x(i, j)| \quad \text{for all} \quad x \in \ell_1.
\end{equation*}
Throughout we assume that $\bs_n(0)$ is finite, so $\bq_n(0)$ takes values in $\ell_1$. As a result, if we let $F_n \defeq \set{k / n}{0 \leq k \leq n}$, then $\bq_n$ takes values in the set
\begin{equation*}
Q_n \defeq \set{q \in F_n^\calI \cap \ell_1}{q(i, j + 1) \leq q(i, j) \leq q(i, 0) = \alpha_n(i)\ \text{for all}\ (i, j) \in \calI}.
\end{equation*}
If the load balancing policy is JLMU, then the state space of $\bq_n$ is defined as the subset $S_n$ of $Q_n$ that is reachable from an empty occupancy state. If the load balancing policy is SLTA, then the state space of $(\bq_n, \br_n)$ is the subset $S_n$ of $Q_n \times \set{r \in \N}{r \geq 1}$ that is reachable from an empty occupancy state with $\br_n = 1$.

The notation used for the processes $\bs_n$ and $\bq_n$, for the state space $S_n$, and for some other objects that will be defined later, is exactly the same for JLMU and SLTA, but we always indicate which policy is being considered.

\subsection{Asymptotic optimality}
\label{sub: asymptotic optimality}

Throughout the rest of the paper, we assume that there exist constants $\alpha(i) \in (0, 1)$ and a random variable $q_0$ such that the following limits hold:
\begin{equation}
\label{ass: size of classes and initial condition}
\begin{split}
&\lim_{n \to \infty} \alpha_n(i) = \alpha(i) \quad \text{for all} \quad i \quad \text{and} \quad \lim_{n \to \infty} \norm{\bq_n(0) - q_0}_1 = 0.
\end{split}
\end{equation}
In analogy with \eqref{eq: definition of sigma_n^N} and \eqref{eq: definition of q_n^N}, we consider the unique $\sigma_* = \left(i_{r_*}, j_{r_*}\right) \in \calI_+$ such that 
\begin{equation}
\label{eq: definition of sigma*}
\sum_{(i, j) \mayor \sigma_*} \alpha(i) \leq \rho < \sum_{(i, j) \mayorigual \sigma_*} \alpha(i),
\end{equation}
and we define a occupancy state $q_*$ in terms of $\sigma_*$ by
\begin{equation}
\label{eq: definition of q*}
q_*(i, j) \defeq \begin{cases}
0												&	\text{if} \quad (i, j) \menor \sigma_*, \\
\alpha(i)										&	\text{if} \quad (i, j) \mayor \sigma_*, \\
\rho - \sum_{(i, j) \mayor \sigma_*} \alpha(i)	&	\text{if} \quad (i, j) = \sigma_*,
\end{cases}
\quad \text{for all} \quad (i, j) \in \calI_+.
\end{equation}

In Section \ref{sub: drift analysis} we establish that $\bq_n$ and $(\bq_n, \br_n)$ have a unique stationary distribution for all $n$, when the assignment policies are JLMU and SLTA, respectively. The following theorem is proved in Section \ref{sub: proof of the asymptotic optimality} and implies that both policies are asymptotically optimal if the marginal utilities are bounded and the service times are exponentially distributed; we also require that \eqref{ass: size of classes and initial condition} holds and we impose some mild technical assumptions, to be stated in Section \ref{subsub: technical assumptions}. Note that the condition on the marginal utilities always holds when the utility functions are non-decreasing, due to the concavity of these functions.

\begin{theorem}
	\label{the: asymptotic optimality}
	Suppose that service times are exponentially distributed. Then the following statements hold under \eqref{ass: size of classes and initial condition} and the assumptions of Section \ref{subsub: technical assumptions}.
	\begin{enumerate}
		\item[(a)] Suppose that JLMU is used and let $q_n$ have the stationary distribution of $\bq_n$. Then the random variables $q_n$ converge weakly in $\ell_1$ to $q_*$.
		
		\item[(b)] Suppose that SLTA is used and let $(q_n, r_n)$ have the stationary distribution of $(\bq_n, \br_n)$. The random variables $(q_n, r_n)$ converge weakly in $\ell_1 \times \N$ to $(q_*, r_*)$.
	\end{enumerate}
	Furthermore, if the load balancing policy is either JLMU or SLTA and the marginal utilities are bounded, then the random variables $u(q_n)$ are uniformly integrable and
	\begin{equation*}
	\lim_{n \to \infty} \bE\left[u(q_n)\right] = u(q_*) = \lim_{n \to \infty} u\left(q_n^*\right).
	\end{equation*}
\end{theorem}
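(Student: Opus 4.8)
The plan is to deduce the final display from the weak-convergence statements (a) and (b) together with the performance upper bound of Theorem \ref{the: upper bound for mean stationary overall utility}, by exploiting the fact that the overall utility is a \emph{continuous} functional on $\ell_1$. The argument has three ingredients: a bounded affine-linear representation of $u$, the uniform integrability of $\{u(q_n)\}$, and the convergence $u(q_n^*) \to u(q_*)$. The weak-convergence statements themselves I would establish first, through an interchange-of-limits argument sketched at the end, which I expect to be the main obstacle.

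First I would rewrite $u$ as an affine-linear functional. Summing by parts (the boundary term vanishes on the state space, since $q$ is summable and nonincreasing in $j$, so that $j\,q(i,j) \to 0$),
\begin{equation*}
u(q) = \sum_{i=1}^m u_i(0)\, q(i,0) + \sum_{(i,j) \in \calI_+} \Delta(i, j-1)\, q(i,j).
\end{equation*}
When the marginal utilities are bounded, say $|\Delta(i,j-1)| \le C$, the second sum is a bounded linear functional of $q \in \ell_1$ and the first is a finite, hence continuous, sum; thus $u$ is Lipschitz on $\ell_1$. Note this uses the $\ell_1$ norm topology in an essential way: $u$ need not be continuous under mere coordinatewise convergence, because $\Delta(i,j-1)$ need not vanish as $j\to\infty$. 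Since (a) and (b) give $q_n \Rightarrow q_*$ weakly in $\ell_1$ and $q_*$ is deterministic, the continuous mapping theorem yields $u(q_n) \Rightarrow u(q_*)$, that is, convergence of $u(q_n)$ to the constant $u(q_*)$ in distribution.

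Next I would upgrade this to convergence of means via uniform integrability. Using $q_n(i,0) = \alpha_n(i) \le 1$ and $\sum_{(i,j)\in\calI_+} q_n(i,j) = \bs_n$, the representation gives $|u(q_n)| \le \sum_i |u_i(0)| + C \bs_n$. In stationarity $n\bs_n$ is Poisson with mean $n\rho$, whence $\bE[\bs_n^2] = \rho/n + \rho^2 \le \rho + \rho^2$ and therefore $\sup_n \bE[u(q_n)^2] < \infty$; bounded second moments imply that $\{u(q_n)\}$ is uniformly integrable, and combined with $u(q_n) \Rightarrow u(q_*)$ this gives $\bE[u(q_n)] \to u(q_*)$. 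For the remaining equality I would use that $\alpha_n(i) \to \alpha(i)$ and that the sums in \eqref{eq: definition of sigma_n^N} and \eqref{eq: definition of sigma*} run over only finitely many coordinates above $\sigma_n^*$ and $\sigma_*$; one then checks that $\norm{q_n^* - q_*}_1 \to 0$ (with the boundary case $\sum_{(i,j) \mayor \sigma_*} \alpha(i) = \rho$ handled by observing that the mass placed at $\sigma_n^*$ vanishes in the limit), so continuity of $u$ yields $u(q_n^*) \to u(q_*)$. The inequality $\bE[u(q_n)] \le u(q_n^*)$ from Theorem \ref{the: upper bound for mean stationary overall utility} is then consistent with these limits and exhibits the benchmark as asymptotically attained.

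The hard part will be proving (a) and (b), the $\ell_1$ weak convergence of the stationary laws. The strategy is an interchange of limits: establish tightness of the stationary laws of $\bq_n$ (and of $(\bq_n, \br_n)$) \emph{in the $\ell_1$ norm topology} through a drift/Lyapunov analysis, controlling tails uniformly in $n$ with the help of the Poisson bound for $\bs_n$, and combine this with the transient fluid-limit approximation and the global asymptotic stability of the fluid dynamics, whose unique equilibrium is $q_*$. Every weak subsequential limit of the stationary laws must then be invariant for the limiting dynamics, and hence concentrated at $q_*$, which forces $q_n \Rightarrow q_*$. For SLTA this is considerably more delicate because of the coupling between the occupancy process and the learning variable $\br_n$, and because the learning scheme is triggered by excursions that vanish on the scale of $n$; handling these features requires the non-traditional fluid-limit methodology and a carefully adapted interchange argument, and securing convergence in the $\ell_1$ norm rather than a weaker product topology is exactly what makes the continuity of $u$ exploited above legitimate.
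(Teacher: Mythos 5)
Your proposal follows essentially the same route as the paper: the final display is deduced exactly as in the paper's proof (the affine representation of $u$ as a bounded linear functional on $\ell_1$, Lipschitz continuity and the continuous mapping theorem, uniform integrability from the Poisson second moment of $n\bs_n$), and parts (a)/(b) are obtained by the same strategy of drift-based tightness of the stationary laws in $\ell_1$ (resp.\ $\ell_1 \times \N$) followed by an interchange-of-limits argument resting on Theorems \ref{the: fluid limit of jlmu} and \ref{the: global asymptotic stability} for JLMU and on Theorem \ref{the: fluid limit of threshold} for SLTA. The only quibble is your parenthetical on the boundary case of $u(q_n^*) \to u(q_*)$: when $\sum_{(i, j) \mayor \sigma_*} \alpha(i) = \rho$ and $\sigma_n^* \mayor \sigma_*$, the mass placed at $\sigma_n^*$ tends to $\alpha(i)$ for the corresponding class rather than vanishing, but $q_*$ assigns that same limiting mass to this coordinate, so $\norm{q_n^* - q_*}_1 \to 0$ (and hence the conclusion) still holds.
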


The claims concerning the stationary overall utilities are proved using (a) and (b), as well as the fact that $u(q)$ is a bounded linear functional of $q \in \ell_1$. In order to establish (a) and (b), we first use drift analysis to prove that the random variables in (a) and (b) are tight in $\ell_1$ and  $\ell_1 \times \N$, respectively. Then (a) is established through an interchange of limits argument based on a fluid limit and a global asymptotic stability result for the fluid dynamics; these two results are stated in Theorems \ref{the: fluid limit of jlmu} and \ref{the: global asymptotic stability}, respectively. A different type of argument is used to prove (b). Namely, the fluid limit step is circumvented and Theorem \ref{the: fluid limit of threshold} serves as the counterpart of Theorems \ref{the: global asymptotic stability} and \ref{the: fluid limit of jlmu}, as illustrated in Figure \ref{fig: interchange of limits}.

\begin{figure}
	\centering
	\begin{subfigure}{0.49\columnwidth}
		\centering
		\begin{tikzpicture} [
		roundnode/.style = {circle, draw = white, minimum height = 20mm, minimum width = 20mm},
		node distance = 20mm
		]
		\node[roundnode]	(1)							{\scriptsize{$\bq_n(t)$}};
		\node[roundnode]	(2)		[right = of 1]		{\scriptsize{$q_n$}};
		\node[roundnode]	(3)		[below = of 1]		{\scriptsize{$\bq(t)$}};
		\node[roundnode]	(4)		[below = of 2]		{\scriptsize{$q_*$}};
		\draw[->] (1.east) -- node[above, midway]{\scriptsize{$t \to \infty$}} (2.west);
		\draw[->] (1.south) -- node[midway, rotate = -90, anchor = base, yshift = -3mm]{\scriptsize{$n \to \infty$}} node[midway, rotate = -90, anchor = base, yshift = 2mm]{\scriptsize{Theorem \ref{the: fluid limit of jlmu}}} (3.north);
		\draw[->] (2.south) -- node[midway, rotate = -90, anchor = base, yshift = 2mm]{\scriptsize{$n \to \infty$}} (4.north);
		\draw[->] (3.east) --  node[below, midway]{\scriptsize{$t \to \infty$}} node[above, midway]{\scriptsize{Theorem \ref{the: global asymptotic stability}}} (4.west);
		\end{tikzpicture}
		\label{fig: interchange of limits for jlmu}
	\end{subfigure}
	\hfill
	\begin{subfigure}{0.49\columnwidth}
		\centering
		\begin{tikzpicture} [
		roundnode/.style = {circle, draw = white, minimum height = 20mm, minimum width = 20mm},
		node distance = 2cm
		]
		\node[roundnode]	(1)							{\scriptsize{$\left(\bq_n(t), \br_n(t)\right)$}};
		\node[roundnode]	(2)		[right = of 1]		{\scriptsize{$(q_n, r_n)$}};
		\node[roundnode]	(4)		[below = of 2]		{\scriptsize{$(q_*, r_*)$}};
		\node[rotate = -45] at (1, -3) {\scriptsize{Theorem \ref{the: fluid limit of threshold}}};
		\draw[->] (1.east) -- node[above, midway]{\scriptsize{$t \to \infty$}} (2.west);
		\draw[->] (2.south) -- node[midway, rotate = -90, anchor = base, yshift = 2mm]{\scriptsize{$n \to \infty$}} (4.north);
		\draw[->, dashed] (1.south) .. controls +(down:20mm) and +(left:20mm) .. (4.west);
		\end{tikzpicture}
		\label{fig: interchange of limits for threshold}
	\end{subfigure}
	\caption{Schematic view of the proofs of (a) and (b) of Theorem \ref{the: asymptotic optimality}, on the left and right, respectively.}
	\label{fig: interchange of limits}
\end{figure}
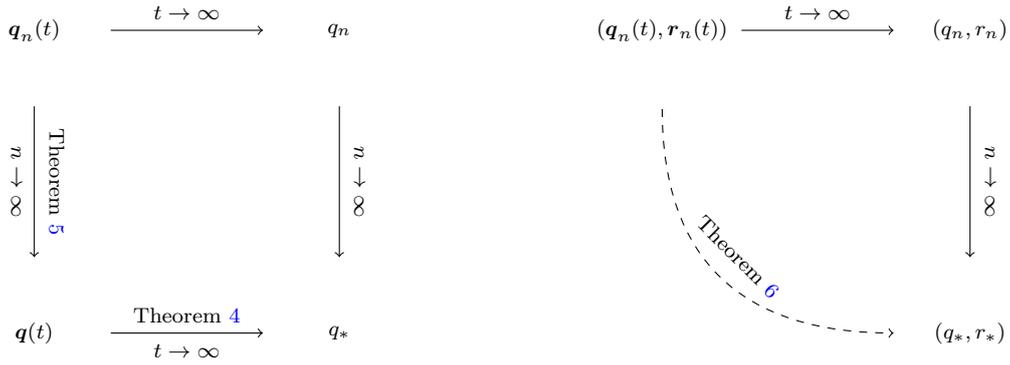

Deriving a fluid limit for a SLTA system would be inherently difficult due to the intricate interdependence between the dispatching rule and the learning scheme, and because the actions of the learning scheme are triggered by excursions of the occupancy process that have vanishing size. We deal with these challenges using a methodology of \cite{goldsztajn2021learning} to derive the fluid approximation of Theorem \ref{the: fluid limit of threshold}, which consists of asymptotic bounds, over arbitrarily long intervals of time, for the occupancy state and the thresholds. As noted earlier, this fluid approximation serves as a counterpart of both the fluid limit and the global asymptotic stability results for JLMU.

\subsubsection{Simulation experiments}
\label{subsub: simulation experiments}

The asymptotic optimality of JLMU and SLTA is illustrated by Table \ref{tab: empirical means}, which shows estimates of the mean stationary overall utility $\bE[u(q_n)]$ for simulation experiments with different values of $n$. All the estimates correspond to systems with two server pool classes of equal size and utility functions of the form $u_i(x) = x \log(r(i) / x)$ for $r = (20, 30)$. Also, two different values of $\rho$ are considered, so that the optimal task assignment defined in \eqref{eq: definition of q_n^N} takes two distinct forms. For $\rho = 9.75$, the optimal task assignment is such that all server pools of class $1$ have $8$ tasks, half of the server pools of class $2$ have $11$ tasks and the other half have $12$ tasks. For $\rho = 10$, the optimal task assignment is such that all server pools of class $1$ have $8$ tasks and all server pools of class $2$ have $12$ tasks.

\newcolumntype{C}[1]{>{\centering\arraybackslash}m{#1}}
\begin{table}
	\centering
	{\footnotesize
		\begin{tabular}{|C{6mm}|C{13mm}|C{24mm}|C{24mm}|C{13mm}|C{24mm}|C{24mm}|}
			\hline
			& \multicolumn{3}{c|}{$\rho = 9.75 \quad u^*(\rho) \cong 9.1731$} & \multicolumn{3}{c|}{$\rho = 10 \quad u^*(\rho) \cong 9.1629$} \\
			\hline
			$n$ & $u^*\left(\avg{\bs_n}\right)$ & $\avg{u(\bq_n)}$ JLMU & $\avg{u(\bq_n)}$ SLTA & $u^*\left(\avg{\bs_n}\right)$ & $\avg{u(\bq_n)}$ JLMU & $\avg{u(\bq_n)}$ SLTA \\
			\hline
			$50$ & $9.1752$ & $9.1652$ & $9.1649$ & $9.1594$ & $9.1433$ & $9.1439$ \\
			\hline
			$100$ & $9.1746$ & $9.1699$ & $9.1698$ & $9.1627$ & $9.1499$ & $9.1498$ \\
			\hline
			$150$ & $9.1738$ & $9.1706$ & $9.1705$ & $9.1633$ & $9.1534$ & $9.1534$ \\
			\hline
			$200$ & $9.1743$ & $9.1723$ & $9.1723$ & $9.1634$ & $9.1556$ & $9.1555$ \\
			\hline
			$250$ & $9.1735$ & $9.1720$ & $9.1720$ & $9.1639$ & $9.1572$ & $9.1572$ \\
			\hline
			$300$ & $9.1738$ & $9.1727$ & $9.1726$ & $9.1613$ & $9.1548$ & $9.1547$ \\
			\hline
			$350$ & $9.1742$ & $9.1734$ & $9.1734$ & $9.1619$ & $9.1563$ & $9.1562$ \\
			\hline
			$400$ & $9.1734$ & $9.1728$ & $9.1728$ & $9.1632$ & $9.1578$ & $9.1578$ \\
			\hline
			$450$ & $9.1737$ & $9.1732$ & $9.1731$ & $9.1630$ & $9.1576$ & $9.1576$ \\
			\hline
			$500$ & $9.1726$ & $9.1721$ & $9.1721$ & $9.1632$ & $9.1583$ & $9.1583$ \\
			\hline
		\end{tabular}
	}
	\caption{Results of simulation experiments with different values of $n$. The systems considered have two server pool classes of equal size and utility functions of the form $u_i(x) = x \log(r(i) / x)$ with $r = (20, 30)$. Service times are exponentially distributed with $\mu = 1$ and the time averages $\avg{\scdot}$ are computed over intervals of length $180$ with equilibrium initial conditions. The same sequences of inter-arrival and service times were used in the simulation experiments for JLMU and SLTA. Also, $\beta_n = 1 / n^{0.45}$ in the simulations for SLTA. The upper bound of Theorem \ref{the: upper bound for mean stationary overall utility} for a normalized offered load of $x$ is denoted by $u^*(x)$.}
	\label{tab: empirical means}
\end{table}

All the server pools of the same class have the same number of tasks when $\rho = 10$, and thus we say that the optimal task assignment $q_n^*$ is \emph{integral}; i.e., the optimal task assignment is integral if $q_n^*(\sigma_n^*) = 0$. In contrast, the optimal task assignment is \emph{fractional} when $\rho = 9.75$ since server pools of class $2$ may have either $11$ or $12$ tasks. Server pools of class $1$ behave similarly in the fractional and integral settings: almost all of the time all server pools of class $1$ have precisely $8$ tasks when $n$ is moderately large. However, the behavior of server pools of class $2$ depends on the setting. In the fractional case, server pools typically have $11$ or $12$ tasks and the fractions of server pools with $11$ and $12$ tasks oscillate around a half. In the integral case, server pools typically have $12$ tasks but a small number of server pools sometimes have $11$ or $13$ tasks instead. The aggregate utility of the system decreases by $\Delta(2, 11)$ whenever a server pool of class $2$ goes from $12$ to $11$ tasks, and increases by the same quantity when the server pool goes from $11$ to $12$ tasks. Therefore, the contributions to the average overall utility of the oscillations observed in the fractional case roughly balance each other. In the integral case, the aggregate utility increases by $\Delta(2, 12)$, instead of $\Delta(2, 11)$, if a server pool of class $2$ goes from $12$ to $13$ tasks. So in the integral case the contributions to the average overall utility of class $2$ server pools that drop to $11$ tasks or reach $13$ tasks are amplified by different marginal utilities. As a result, the mean stationary overall utility $\bE[u(q_n)]$ is closer to the upper bound $u(q_n^*)$ in the fractional case, as reflected by the estimates in Table \ref{tab: empirical means}.

Although there is a difference between the fractional and integral settings, in both settings the empirical mean of the overall utility $u(\bq_n)$ is extremely close to the upper bound $u(q_n^*)$ across all the values of $n$ listed in Table \ref{tab: empirical means}. Furthermore, the deviation of the empirical mean from the upper bound approaches zero as $n$ increases in both cases. We also observe that the empirical mean of $u(\bq_n)$ is almost the same for JLMU and SLTA in all the experiments, and particularly for the largest values of $n$.

A final remark on the simulation experiments is that the empirical mean of $u(\bq_n)$ is slightly larger than $u(q_n^*)$ in a few of the experiments within the fractional setting: both for JLMU and SLTA when $n = 350$ and just for JLMU when $n = 450$. It may be checked that the statement of Theorem \ref{the: upper bound for mean stationary overall utility} still holds if the stationary expectation sign is replaced by a time average and the upper bound is computed through \eqref{eq: definition of q_n^N} and \eqref{eq: definition of sigma_n^N} but with $\rho$ replaced by the time average of $\bs_n$; the proof does not change. The value of the upper bound when $\rho$ is replaced by the time average of $\bs_n$ is displayed in Table \ref{tab: empirical means}, and in all the experiments the empirical mean of $u(\bq_n)$ is indeed smaller than this empirical upper bound. Thus, the experiments where the empirical mean of $u(\bq_n)$ slightly exceeds the upper bound $u(q_n^*)$ are an indication of how close the performance of JLMU and SLTA is to optimal. 

\section{Approximation theorems}
\label{sec: approximation theorems}

In this section we assume exponential service times and we state several results used to prove Theorem \ref{the: asymptotic optimality}. In Section \ref{sub: fluid model of jlmu} we specify a fluid model of a JLMU system, based on differential equations, and we state some properties of this model. In Section \ref{sub: limit theorems} we state limit theorems that characterize the asymptotic transient behavior of JLMU and SLTA.

\subsection{Fluid model of JLMU}
\label{sub: fluid model of jlmu}

Consider a large-scale system where the load balancing policy is JLMU, and assume that $\alpha(i)$ is the fraction of server pools of class $i$. Then the occupancy state of the system remains within the set
\begin{equation*}
Q \defeq \set{q \in [0, 1]^\calI \cap \ell_1}{q(i, j + 1) \leq q(i, j) \leq q(i, 0) = \alpha(i)\ \text{for all}\ (i, j) \in \calI}.
\end{equation*}
The evolution of the occupancy state of this large-scale system can be modeled through the system of differential equations introduced in the following definition.

\begin{definition}
	\label{def: fluid trajectory}
	We say that $\map{\bq}{[0, \infty)}{Q}$ is a fluid trajectory if the coordinate functions $\bq(i, j)$ are absolutely continuous for all $(i, j) \in \calI$ and the following conditions hold almost everywhere with respect to the Lebesgue measure:
	\begin{subequations}
		\label{eq: fluid dynamics}
		\begin{align}
		&\dot{\bq}(i, j) = \Lambda(\bq, i, j) - \mu j \left[\bq(i, j) - \bq(i, j + 1)\right], \label{seq: fluid dynamics 1} \\
		&\Lambda(\bq, i, j) \geq 0 \quad \text{for all} \quad (i, j) \in \calI_+, \label{seq: fluid dynamics 2}
		\end{align}
	\end{subequations}
	where $\map{\Lambda}{Q \times \calI_+}{\R}$ is defined by
	\begin{equation*}
	\Lambda(q, i, j) \defeq \begin{cases}
	0																									&	\text{if} \quad (i, j) \menor \sigma(q), \\
	\mu j \left[\alpha(i) - q(i, j + 1)\right]															&	\text{if} \quad (i, j) \mayor \sigma(q), \\
	\displaystyle \lambda - \sum_{(k, l) \mayor \sigma(q)} \mu l \left[\alpha(k) - q(k, l + 1)\right]	&	\text{if} \quad (i, j) = \sigma(q).
	\end{cases}
	\end{equation*}
\end{definition}

In the latter definition, $\lambda$ is the arrival rate of tasks normalized by the number of server pools, $\mu$ is the service rate of tasks and $\bq(i, j)$ represents the fraction of server pools which are of class $i$ and have at least $j$ tasks. Thus, the system of differential equations \eqref{eq: fluid dynamics} has a simple interpretation. The right-most term of \eqref{seq: fluid dynamics 1} corresponds to the departure rate of tasks from server pools of class $i$ with exactly $j$ tasks and $\Lambda(\bq, i, j)$ represents the arrival rate of tasks to server pools which belong to class $i$ and have precisely $j - 1$ tasks. The definition of $\Lambda$ is motivated by the following remarks.
\begin{itemize}
	\item Server pools of class $i$ with exactly $j - 1$ tasks are not assigned additional tasks if $(i, j) \menor \sigma(\bq)$. Hence, we should have $\Lambda(\bq, i, j) = 0$ in this case.
	
	\item All server pools of class $i$ have at least $j$ tasks if $(i, j) \mayor \sigma(\bq)$. Therefore, $\Lambda(\bq, i, j)$ should be equal to the last term of \eqref{seq: fluid dynamics 1} in this case, since $\bq(i, j)$ is at its maximum value and thus its derivative should be zero.
	
	\item The total arrival rate of tasks normalized by the number of server pools is equal to $\lambda$ and this determines the value of $\Lambda(\bq, i, j)$ when $(i, j) = \sigma(\bq)$.
\end{itemize}

\subsubsection{Properties of fluid trajectories}
\label{subsub: properties of fluid trajectories - main results section}

The two results stated below are proved in Section \ref{sub: properties of fluid trajectories}. The first one is a uniqueness theorem for the solutions of \eqref{eq: fluid dynamics}. Existence is ensured by Theorem \ref{the: fluid limit of jlmu} of Section \ref{sub: limit theorems}.

\begin{theorem}
	\label{the: uniqueness of fluid trajectories}
	For each initial condition $q \in Q$ there exists at most a unique fluid trajectory $\bq$ with initial condition $\bq(0) = q$.
\end{theorem}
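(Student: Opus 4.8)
The plan is to rule out the existence of two distinct fluid trajectories $\bq_1$ and $\bq_2$ sharing the same initial condition $q \in Q$. Standard Picard--Lindelöf arguments do not apply here: the map $\Lambda$ is discontinuous in $q$ because the frontier $\sigma(q)$ jumps as coordinates saturate or empty, and the system \eqref{eq: fluid dynamics} is infinite-dimensional with unbounded death coefficients $\mu j$. I would begin with a reduction that is immune to the discontinuity. Summing \eqref{seq: fluid dynamics 1} over all $(i, j) \in \calI_+$ and using $\sum_{(i, j) \in \calI_+} \Lambda(\bq, i, j) = \lambda$ together with $\sum_{(i, j) \in \calI_+} \mu j [\bq(i, j) - \bq(i, j + 1)] = \mu \bs$, one finds that the total mass obeys the scalar linear equation $\dot{\bs} = \lambda - \mu \bs$; hence $\bs$ is determined and coincides for both trajectories. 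This fixes the aggregate arrival flux that the frontier must distribute.

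Next I would linearize the discontinuous feedback through a reflection representation. Using the enumeration $\{(i_k, j_k)\}_{k \ge 1}$ of $\calI_+$ with $(i_k, j_k) \mayor (i_{k + 1}, j_{k + 1})$, define for each $k$ the deficit of the top-ranked coordinates, $D_k(t) \defeq \sum_{l = 1}^{k} [\alpha(i_l) - \bq(i_l, j_l)(t)] \ge 0$. The definition of $\Lambda$ makes the evolution of $D_k$ piecewise explicit: $D_k = 0$ holds exactly when the frontier has rank greater than $k$, so that all top-$k$ coordinates are saturated and remain so; while $D_k > 0$ forces the frontier into the top-$k$ coordinates, which then absorb the entire aggregate rate $\lambda$, giving the drift $\dot{D}_k = \delta_k(\bq) - \lambda$ with $\delta_k(\bq) \defeq \mu \sum_{l \le k} j_l [\bq(i_l, j_l) - \bq(i_l, j_l + 1)]$ the departure rate of those coordinates. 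This is precisely a one-dimensional Skorokhod problem on $[0, \infty)$: $D_k$ equals the reflection at $0$ of the free path $D_k(0) + \int_0^t (\delta_k(\bq(s)) - \lambda)\,ds$, with the reflecting term recording the overflow pushed to ranks greater than $k$ and increasing only on $\{D_k = 0\}$. Writing \eqref{eq: fluid dynamics} in integrated form yields a closed system of integral equations in which the occupancy of every coordinate is recovered by applying the one-dimensional reflection map to data determined by lower-ranked coordinates.

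I would then exploit that the one-dimensional reflection map is Lipschitz with respect to the supremum norm on every compact interval $[0, T]$. Processing the coordinates in the order $\menor$, the reflection representation expresses the discrepancy between $\bq_1$ and $\bq_2$ at rank $k$ through the reflection map applied to data involving only lower-ranked discrepancies and the Lipschitz departure rates; a Grönwall estimate, made uniform over ranks, then forces $\norm{\bq_1(t) - \bq_2(t)}_1 = 0$ on $[0, T]$, and hence for all $t$ since $T$ is arbitrary. The delicate point is controlling the infinite tail, where the death rates $\mu j$ are unbounded: here I would invoke the probabilistic interpretation of the departure part as the forward equations of a pure death process with rates $\mu j$, which is honest (non-explosive) because $\bs(t)$ stays finite, and apply a uniqueness result for the associated Kolmogorov backward equations to pin down the tail of $\bq(i, \cdot)$ in $\ell_1$ given the frontier flux.

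The main obstacle I anticipate is making the reflection representation rigorous rather than merely heuristic: one must verify that each $D_k$ genuinely solves a one-dimensional Skorokhod problem --- in particular that the reflecting overflow term is nondecreasing and increases only on $\{D_k = 0\}$ --- and that the resulting integral equations are simultaneously well posed across all ranks in the $\ell_1$ topology. Reconciling the Lipschitz estimate for the finitely many leading ranks with the Kolmogorov-equation control of the unbounded-rate tail is where the argument is most technical, but it is precisely this combination that converts the discontinuous right-hand side of \eqref{eq: fluid dynamics} into a uniqueness statement.
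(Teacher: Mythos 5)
Your proposal is correct and takes essentially the paper's route: the paper likewise converts \eqref{eq: fluid dynamics} into an infinite system of one-dimensional Skorokhod reflection equations (Lemma \ref{lem: system of reflection equations}; it reflects each coordinate $\bq(i_k, j_k)$ at the upper barrier $\alpha(i_k)$ with the overflow $\bw_k$ cascading to rank $k+1$, which is equivalent, after partial summation over ranks, to your reflection of the cumulative deficits $D_k$ at zero), then combines the Lipschitz property of these maps with Gr\"onwall's inequality on the finitely many leading ranks, and settles the unbounded-rate tail by a uniqueness theorem for Kolmogorov equations. Two details to correct when writing it up: the tail system $\dot{\bz}(j) = -\mu j\left[\bz(j) - \bz(j+1)\right]$ is the \emph{backward} Kolmogorov equation of a pure \emph{birth} process with rates $\mu j$ (non-explosive because $\sum_j 1/(\mu j) = \infty$, not because $\bs$ stays finite), and before invoking its uniqueness theory you need the paper's finite-propagation-speed step \eqref{eq: bv_k and bw_k are zero for large k}, which shows that on any $[0, T]$ the arrival overflow never reaches sufficiently low-ranked coordinates, so that the tail dynamics are genuinely linear with no arrival term.
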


In order to prove this theorem, we first show that all fluid trajectories satisfy an infinite system of integral equations, stated using Skorokhod one-dimensional reflection mappings. The theorem is then proved using a Lipschitz property of these mappings and a uniqueness result for certain Kolmogorov backward equations.

Besides uniqueness of solutions of \eqref{eq: fluid dynamics}, we also establish that there exists a unique equilibrium point and that this equilibrium point is globally asymptotically stable, i.e., all fluid trajectories converge to the unique equilibrium over time.

\begin{theorem}
	\label{the: global asymptotic stability}
	Let $q_*$ be as in \eqref{eq: definition of q*}. Then $q_*$ is the unique equilibrium of \eqref{eq: fluid dynamics}. Furthermore, all fluid trajectories converge to $q_*$ in $\ell_1$ over time.
\end{theorem}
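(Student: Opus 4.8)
The plan is to first verify that $q_*$ solves the fixed-point equations $\dot{\bq} = 0$ and is the only such point, and then to prove convergence by tracking the total mass together with the greedy threshold structure encoded in $\sigma(\bq)$.

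\textbf{Equilibrium and uniqueness.} I would check directly that $q_*$ from \eqref{eq: definition of q*} is an equilibrium. Using the threshold structure of $q_*$ and the fact (immediate from the definition of $\sigma$) that $\sigma(q_*) = \sigma_*$, every coordinate $(i,j) \neq \sigma_*$ gives $\dot{\bq}(i,j) = 0$ by substitution into \eqref{seq: fluid dynamics 1}: coordinates $(i,j) \menor \sigma_*$ are empty and receive no fluid, while coordinates $(i,j) \mayor \sigma_*$ are saturated and their inflow $\Lambda$ exactly cancels their outflow. For the single remaining coordinate $\sigma_*$, rather than compute $\Lambda(q_*,\sigma_*)$ I would use that summing \eqref{seq: fluid dynamics 1} over all coordinates and telescoping gives $\dot{\bs} = \lambda - \mu\bs$; since $\bs(q_*) = \rho = \lambda/\mu$, the aggregate derivative vanishes, forcing $\dot{\bq}(\sigma_*) = 0$ as well. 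For uniqueness I would run this in reverse: for an arbitrary equilibrium $\bar{q}$ with front $\bar\sigma := \sigma(\bar q)$, the equations $\dot{\bq}=0$ force $\bar q(i,j) = \alpha(i)$ for $(i,j) \mayor \bar\sigma$ and $\bar q(i,j) = 0$ for $(i,j) \menor \bar\sigma$, while $\bs(\bar q) = \rho$ pins down $\bar q(\bar\sigma) = \rho - \sum_{(i,j)\mayor\bar\sigma}\alpha(i)$. Writing $\bar\sigma = (a,b)$, the constraints $\bar q(\bar\sigma) \geq 0$ and $\bar q(a,b-1) = \alpha(a) > \bar q(\bar\sigma)$ (the latter needed for $\bar\sigma$ to carry a step) are precisely the two inequalities in \eqref{eq: definition of sigma*}; by the uniqueness of $\sigma_*$ noted there, $\bar\sigma = \sigma_*$ and hence $\bar q = q_*$.

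\textbf{Global stability.} The backbone of the convergence proof is the elementary fact $\bs(t) = \rho + (\bs(0)-\rho)\e^{-\mu t} \to \rho$. The crucial structural observation is a state property: for any $q \in Q$, maximality of $\sigma(q)$ forces $q(i,j-1) = q(i,j)$ for every $(i,j) \mayor \sigma(q)$, and chaining back to $q(i,0) = \alpha(i)$ shows that all coordinates above the front are saturated. Consequently, whenever the front lies strictly below $\sigma_*$, coordinate $\sigma_*$ and everything above it is saturated, so $\bs \geq \sum_{(i,j)\mayorigual\sigma_*}\alpha(i) > \rho$ by \eqref{eq: definition of sigma*}. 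Since $\bs(t) \to \rho$, there is a time after which $\sigma(\bq(t)) \mayorigual \sigma_*$; from then on no fluid is routed to coordinates $(i,j) \menor \sigma_*$, and a class-wise Abel summation shows that the mass $B_i(t) = \sum_{j \geq m_i}\bq(i,j)(t)$ below $\sigma_*$ (with $(i,m_i)$ the first class-$i$ coordinate ranked below $\sigma_*$) obeys $\dot B_i \leq -\mu B_i$ and thus drains exponentially. Because the cumulative capacities cross $\rho$ at finite rank, the set $\{(i,j) \mayorigual \sigma_*\}$ is finite, so $\bs \to \rho$ together with the vanishing of the below-mass gives $\sum_{(i,j)\mayorigual\sigma_*}\bq(i,j)(t) \to \rho$; combined with the trivial bound $\bq(i,j) \leq \alpha(i)$, this already yields $\limsup_t \sum_{(i,j)\mayor\sigma_*}\bq(i,j)(t) \leq \sum_{(i,j)\mayor\sigma_*}\alpha(i)$ and the matching lower bound $\liminf_t\bq(\sigma_*)(t) \geq q_*(\sigma_*)$.

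\textbf{Main obstacle and conclusion.} What remains --- and this I expect to be the hard part --- is to upgrade these one-sided bounds into saturation of the finitely many coordinates $(i,j) \mayor \sigma_*$, equivalently to show that the front actually reaches $\sigma_*$ rather than stalling strictly above it. The difficulty is genuine: the right-hand side of \eqref{eq: fluid dynamics} is discontinuous and constrained by \eqref{seq: fluid dynamics 2}, so naive monotonicity of the front can fail and the top coordinates remain coupled through the greedy routing. My plan is to exploit that, once the below-mass is negligible, the dynamics restrict to a \emph{finite}-dimensional system on $\{(i,j) \mayorigual \sigma_*\}$ driven by the full arrival rate $\lambda$, whose only equilibrium is $q_*$; convergence of this finite system --- via a Lyapunov/LaSalle argument, or more robustly via the Skorokhod one-dimensional reflection representation already set up for Theorem \ref{the: uniqueness of fluid trajectories}, using Lipschitz continuity of the reflection map to transfer $\bs \to \rho$ into convergence of the individual coordinates --- then forces $\bq(i,j)(t) \to \alpha(i)$ for $(i,j) \mayor \sigma_*$ and $\bq(\sigma_*)(t) \to q_*(\sigma_*)$. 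A short mass-balance check shows that any stalled front would contradict $\bs \to \rho < \sum_{(i,j)\mayorigual\sigma_*}\alpha(i)$, which also handles the boundary (integral) case where $q_*(\sigma_*) = 0$. Having obtained coordinate-wise convergence $\bq(t) \to q_*$ with nonnegative entries and $\sum_{(i,j)}\bq(i,j)(t) = \bs(t) \to \rho = \sum_{(i,j)}q_*(i,j)$, I would finish by invoking Scheffé's lemma to promote coordinate-wise convergence to convergence in $\ell_1$.
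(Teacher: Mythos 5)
Your treatment of the equilibrium property, its uniqueness, and the first stages of convergence --- $\bs(t) \to \rho$, the front satisfying $\sigma(\bq(t)) \mayorigual \sigma_*$ after a finite time $t_0$, and exponential drainage of the mass strictly below $\sigma_*$ --- is sound and follows essentially the same route as the paper (your direct static characterization of equilibria is a valid alternative; the paper instead obtains uniqueness as a byproduct of global convergence). However, the step you yourself flag as ``the hard part'' is a genuine gap, and it is exactly where the paper's key idea lives. Your one-sided bounds go in the trivial directions: $\limsup_t \sum_{(i,j)\mayor\sigma_*}\bq(t,i,j) \leq \sum_{(i,j)\mayor\sigma_*}\alpha(i) \eqdef \theta$ follows from $\bq(i,j) \leq \alpha(i)$, and $\liminf_t \bq(t,\sigma_*) \geq \rho - \theta$ follows from mass conservation; what must be proved are the reverse inequalities, which rule out mass persistently sloshing between $\sigma_*$ and the coordinates above it. Your ``stalled front'' mass-balance check does not deliver this: ruling out a front that remains $\mayor \sigma_*$ forever says nothing about a front that returns to $\sigma_*$ infinitely often while coordinates above $\sigma_*$ keep dipping below $\alpha(i)$, and in any case that contradiction only arises when $\theta < \rho$ --- in the integral case $\theta = \rho$ a permanently stalled front contradicts nothing, so your claim that the check ``also handles the boundary case'' is backwards. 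The LaSalle and Skorokhod alternatives are left as plans, and the Skorokhod one is doubtful as described: the Lipschitz bounds of Lemma \ref{lem: skorokhod mappings with upper reflecting barrier} hold over compact time intervals and do not transfer the $t \to \infty$ behavior of the aggregate $\bs$ to individual coordinates.

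The paper closes this gap with a single differential inequality that your decomposition misses: it tracks $\bg \defeq \sum_{(i,j)\menorigual\sigma_*}\bq(i,j)$, the mass at \emph{or} below $\sigma_*$ (not just strictly below, as your $B_i$ do), and shows that for $t > t_0$,
\begin{equation*}
\dot{\bg}(t) \leq \left[\lambda - \mu\theta\right]^+ - \mu\,\bg(t).
\end{equation*}
The point is that when $\sigma(\bq(t)) = \sigma_*$, all arrivals feed $\bg$ while saturation above $\sigma_*$ forces the total departure rate to equal $\mu\left[\theta + \bg(t)\right]$, giving $\dot{\bg} = \lambda - \mu\theta - \mu\bg$; and when $\sigma(\bq(t)) \mayor \sigma_*$ no arrivals feed $\bg$, giving $\dot{\bg} \leq -\mu\bg$. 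The inequality yields $\limsup_t \bg(t) \leq \rho - \theta$, hence $\liminf_t \sum_{(i,j)\mayor\sigma_*}\bq(t,i,j) = \liminf_t\left[\boldf(t)-\bg(t)\right] \geq \theta$; since these are finitely many coordinates each bounded by $\alpha(i)$, every one of them converges to $\alpha(i)$, and $\bq(t,\sigma_*) \to \rho - \theta$ follows by subtraction. With this lemma your remaining steps go through (the upgrade to $\ell_1$ via Scheff\'e is fine, though unnecessary: only finitely many coordinates are $\mayorigual \sigma_*$, and the tail is controlled by the drained mass).
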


Recall that \eqref{eq: definition of q*} is the counterpart of \eqref{eq: definition of q_n^N}, which is used to formulate the upper bound for the mean stationary overall utility provided in Theorem \ref{the: upper bound for mean stationary overall utility}. It is not difficult to check that $u\left(q_n^*\right) \to u\left(q_*\right)$ as $n$ grows large, which hints at the asymptotic optimality of JLMU.

\subsection{Limit theorems}
\label{sub: limit theorems}

In Section \ref{sub: coupled construction of sample paths} we construct the processes defined in Section \ref{sub: stochastic models} on a common probability space $(\Omega, \calF, \prob)$ for all $n$, in such a way that the sample paths of the occupancy processes lie in the space $D_{\ell_1}[0, \infty)$ of c\`adl\`ag functions with values in $\ell_1$, which we endow with the topology of uniform convergence over compact sets. This construction is used to prove limit theorems that characterize the asymptotic transient behavior of JLMU and SLTA. Before stating these theorems, we introduce some mild technical assumptions.

\subsubsection{Technical assumptions}
\label{subsub: technical assumptions}

As indicated earlier, we assume that \eqref{ass: size of classes and initial condition} holds with $q_0$ a random variable that takes values in $Q$ and represents the limiting initial occupancy state. The initial number of tasks in the limit, normalized by the number of server pools, is defined as
\begin{equation}
\label{eq: initial normalized number of tasks}
s_0 \defeq \sum_{(i, j) \in \calI_+} q_0(i, j).
\end{equation}

If the load balancing policy is SLTA, then we assume that the first inequality in \eqref{eq: definition of sigma*} is strict and that there exists a constant $\gamma_0 \in (0, 1/ 2)$ such that
\begin{equation}
\label{ass: conditions on beta}
\lim_{n \to \infty} \beta_n = 0 \quad \text{and} \quad \liminf_{n \to \infty} n^{\gamma_0}\beta_n > 0.
\end{equation}
These assumptions are used to prove that the learning scheme reaches an equilibrium in all large enough systems with probability one. Finally, we adopt the following assumptions about the initial state of the system: there exists a random variable $R \geq 1$ such that
\begin{subequations}
	\label{ass: boundedness and goodness of r}
	\begin{align}
	&\br_n(0) \leq R, \\
	&\bq_n(0, i, j) < \alpha_n(i) \quad \text{for all} \quad (i, j) \menorigual \left(i_{\br_n(0)}, j_{\br_n(0)}\right), \label{ass: goodness of r}
	\end{align}
\end{subequations}
for all $n$ with probability one. We impose \eqref{ass: goodness of r} just to simplify the analysis; this property always holds after a certain time, which depends on the initial state of the system.

\begin{remark}
	\label{rem: dynamics of threshold}
	Property \eqref{ass: goodness of r} is preserved by arrivals and departures, thus it holds at all times provided that it holds at time zero. Furthermore, every new task is sent to a server pool with coordinates $(i, j) \mayorigual (i_{\br_n}, j_{\br_n})$ if the number of tokens is positive right before the arrival. Hence, \eqref{ass: goodness of r} implies that tasks are sent to server pools with coordinates $(i, j) \mayorigual (i_{\br_n}, j_{\br_n})$  at all times and for all $n$ with probability one.
\end{remark}

\subsubsection{Statements of the theorems}
\label{subsub: statements of the theorems}

First we state a fluid limit for JLMU, proven in Section \ref{sub: proof of the fluid limit}. In view of Theorem \ref{the: global asymptotic stability}, this fluid limit implies that, as $n$ grows large, the occupancy processes of JLMU approach functions which converge over time to the unique equilibrium of \eqref{eq: fluid dynamics}.

\begin{theorem}
	\label{the: fluid limit of jlmu}
	Suppose that the load balancing policy is JLMU. Then there exists a set of probability one $\Gamma$ with the following property. If $\omega \in \Gamma$, then $\bq_n(\omega)$ converges in $D_{\ell_1}[0, \infty)$ to the unique fluid trajectory with initial condition $q_0(\omega)$.
\end{theorem}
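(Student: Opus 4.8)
The plan is to combine the pathwise strong-approximation construction of Section \ref{sec: strong approximations} with the integral-equation characterization already developed for the uniqueness Theorem \ref{the: uniqueness of fluid trajectories}, and to close the argument with that uniqueness result. On the common probability space each occupancy process $\bq_n$ admits a representation driven by independent unit-rate Poisson processes, one clocking arrivals and one clocking the departures at each coordinate, composed with the appropriate state-dependent time changes. In the resulting semimartingale decomposition the departure part of the compensator of $\bq_n(i,j)$ is $\int_0^t \mu j (\bq_n(u, i, j) - \bq_n(u, i, j+1))\, du$, whereas the arrival input of mass $1/n$ per task is always directed to the single best-ranked available coordinate $\sigma(\bq_n)$; the martingale part is a sum of compensated Poisson integrals. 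I would take $\Gamma$ to be the event on which the functional strong law of large numbers holds simultaneously for all the driving Poisson processes, so that on $\Gamma$ the scaled Poisson processes converge to the identity uniformly over compact time intervals, the martingale terms vanish, and the convergence we obtain is genuinely almost sure.

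First I would invoke the relative compactness established in Appendix \ref{app: relative compactness}: on $\Gamma$ every subsequence of $\{\bq_n(\omega)\}$ has a further subsequence converging in $D_{\ell_1}[0,\infty)$ to some limit $\bq$, with $\bq(0) = q_0(\omega)$ by \eqref{ass: size of classes and initial condition}. The crux is to show that each such limit is a fluid trajectory. Passing to the limit directly in \eqref{seq: fluid dynamics 1} is hopeless, both because $\Lambda$ and the selection map $\sigma$ are discontinuous and because the prelimit arrival input sits on the single coordinate $\sigma(\bq_n)$ whereas the fluid drift $\Lambda$ spreads positive saturation-maintaining rates across all coordinates ranked above $\sigma(\bq)$. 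Following the route taken for Theorem \ref{the: uniqueness of fluid trajectories}, I would instead show that the prelimit processes satisfy a system of integral equations written through the Skorokhod one-dimensional reflection mappings enforcing $0 \le \bq_n(i,j+1) \le \bq_n(i,j) \le \alpha_n(i)$. The reflection formulation absorbs the instantaneous refilling of saturated coordinates into the reflection term and works with cumulative, rather than instantaneous, arrival mass; by the Lipschitz continuity of the reflection map, the limit $\bq$ then inherits the limiting integral equations and is therefore a fluid trajectory.

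I expect two main obstacles. The first and most delicate is exactly the discontinuity at saturation: near $\bq_n(i,j) = \alpha_n(i)$ the target $\sigma(\bq_n)$ can oscillate rapidly, so the instantaneous allocation of the arrival rate across coordinates need not converge. The reflection-map reformulation circumvents this by integrating, but making it rigorous requires matching each prelimit reflection term with its fluid counterpart with an error that is $o(1)$ uniformly on compacts on $\Gamma$, and controlling the cumulative input to $\sigma(\bq_n)$ using that the total normalized arrival rate is $\lambda$. The second obstacle is that convergence must take place in $\ell_1$, not merely coordinate-wise: I would derive uniform control of the tails $\sum_{j \ge J} \bq_n(i,j)$ from the identity $\sum_{(i,j) \in \calI_+} \bq_n(i,j) = \bs_n$, noting that $n\bs_n$ is an $M/M/\infty$ occupancy whose law is tight with uniformly integrable tails, so that no mass escapes to infinity in the index $j$. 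Once every subsequential limit on $\Gamma$ is shown to be a fluid trajectory with initial condition $q_0(\omega)$, Theorem \ref{the: uniqueness of fluid trajectories} forces these limits to coincide, and the usual subsequence argument upgrades subsequential convergence to convergence of the full sequence $\bq_n(\omega)$ in $D_{\ell_1}[0,\infty)$, as claimed.
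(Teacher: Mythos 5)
Your overall architecture (almost-sure relative compactness, characterization of subsequential limits, then Theorem \ref{the: uniqueness of fluid trajectories} plus the standard subsequence principle) matches the paper's, but your route for the characterization step is genuinely different: the paper never runs the prelimit through Skorokhod reflection mappings. It works directly with the a.e.\ derivatives of the subsequential limits $(\ba, \bd, \bq)$ of the arrival, departure and occupancy processes (Lemma \ref{lem: fluid derivatives}): the limiting arrival rates sum to $\lambda$, saturation $\bq(t_0,i,j)=\alpha(i)$ forces $\dot{\ba}(t_0,i,j)=\dot{\bd}(t_0,i,j)$, concavity forces every coordinate $(i,j) \mayor \sigma(\bq(t_0))$ to be saturated, and uniform convergence on a small interval around $t_0$ freezes $\calA_k(i,j)$ for $(i,j) \menor \sigma(\bq(t_0))$; reflection mappings enter the paper only in the uniqueness proof. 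Your reflection route (in the spirit of the fluid limit of \cite{bhamidi2022near}) is viable in principle, but as written it has a genuine gap at its final step: you assert that because the limit $\bq$ inherits the integral equations \eqref{eq: system of reflection equations}, it ``is therefore a fluid trajectory.'' The paper's Lemma \ref{lem: system of reflection equations} gives only the opposite implication (fluid trajectory $\Rightarrow$ reflection system); the converse, that a solution of \eqref{eq: system of reflection equations} satisfies \eqref{eq: fluid dynamics} with $\Lambda$ defined through the discontinuous selection map $\sigma$, is established nowhere. You also cannot get it by combining Lemma \ref{lem: system of reflection equations} with uniqueness of reflection solutions, because that argument presupposes that a fluid trajectory with initial condition $q_0(\omega)$ exists --- and in the paper existence is itself a corollary of Theorem \ref{the: fluid limit of jlmu}, so the reasoning would be circular. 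To close the gap you must prove the converse directly: show from the reflection structure that $\dot{\bv}_{k-1} \geq \dot{\bv}_k \geq 0$ a.e., that concavity makes every coordinate $(i,j) \mayor \sigma(\bq(t))$ saturated (so its net input $\dot{\bv}_{k-1}-\dot{\bv}_k$ is the saturation-maintaining rate), and that the flat-off property of $\bv_r$ at the unsaturated coordinate $(i_r,j_r)=\sigma(\bq(t))$ forces $\dot{\bv}_k=0$ for all $k \geq r$, hence zero input to every coordinate $\menor \sigma(\bq(t))$. This is exactly the content that Lemma \ref{lem: fluid derivatives} supplies on the paper's route; your proposal silently assumes it.

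Two further points. First, the claim that the prelimit processes satisfy a reflection system also needs proof: for JLMU one can verify, by induction over arrival and departure epochs, that the cumulative mass routed past coordinate $k$ equals $\Psi_{\alpha_n(i_k)}$ applied to a version of $\Theta_k$ in which the compensator is replaced by the actual departure count $\calD_n(i_k,j_k)$, and then replace $\calD_n$ by its compensator via \eqref{seq: fslln for departures} and propagate convergence through the Lipschitz bounds of Lemma \ref{lem: skorokhod mappings with upper reflecting barrier} by induction on $k$; you gesture at this, but it is substantive work, not a remark. Second, your $\ell_1$ tail argument is flawed: tightness or uniform integrability of the scalar $\bs_n$ does not control the tails $\sum_{j \geq J} \bq_n(i,j)$ uniformly in $n$, since the same total mass can sit at arbitrarily high coordinates. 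What actually prevents escape of mass is the policy structure together with the initial condition: on $\Gamma$, no arrivals are routed to coordinates $j > j_T(\omega)$ once $n \geq n_T(\omega)$ (Lemma \ref{lem: set of nice sample paths}), so high coordinates are non-increasing, and the initial tails are controlled by $\norm{\bq_n(0)-q_0}_1 \to 0$ with $q_0 \in \ell_1$. This is precisely Proposition \ref{prop: relative compactness - l1}, which you should cite instead of rederiving; note it is proved in Section \ref{sub: relative compactness results}, while Appendix \ref{app: relative compactness} only yields relative compactness for the product topology.
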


Since $q_0$ is arbitrary, the above theorem implies that solutions to \eqref{eq: fluid dynamics} exist for all initial conditions. Therefore, Theorems \ref{the: uniqueness of fluid trajectories} and \ref{the: fluid limit of jlmu} imply that for each initial condition $q \in Q$ there exists a unique fluid trajectory with initial condition $q$.

The proof of Theorem \ref{the: fluid limit of jlmu} uses a methodology of \cite{bramson1998state} to prove that, with probability one, every subsequence of $\set{\bq_n}{n \geq 1}$ has a further subsequence which converges uniformly over compact sets with respect to a metric for the product topology of $\R^\calI$. Then we show that this convergence in fact holds with respect to $\norm{\scdot}_1$ and that the limits of convergent subsequences are fluid trajectories, also with probability one.

The counterpart of Theorems \ref{the: global asymptotic stability} and \ref{the: fluid limit of jlmu} for SLTA is the following result. The proof is provided in Section \ref{sub: evolution of the learning scheme} and is based on a methodology developed in \cite{goldsztajn2021learning}.

\begin{theorem}
	\label{the: fluid limit of threshold}
	Suppose that the load balancing policy is SLTA and let $\sigma_*$ and $r_*$ be as in \eqref{eq: definition of sigma*}. There exist $\map{\tau_\eq}{[0, \infty)}{\R}$ and a set of probability one $\Gamma$ with the following property. If $\omega \in \Gamma$ and $T \geq \tau > \tau_\eq(s_0(\omega))$, then the next limits hold:
	\begin{subequations}
		\label{eq: limit of threshold}
		\begin{align}
		&\lim_{n \to \infty} \sup_{t \in [\tau, T]} |\br_n(\omega) - r_*| = 0, \label{seq: limit of threshold 1} \\
		&\lim_{n \to \infty} \sup_{t \in [\tau, T]} n^\gamma\left|\alpha(i) - \bq_n(\omega, t, i, j)\right| = 0 \quad \text{if} \quad (i, j) \mayor \sigma_* \quad \text{and} \quad \gamma \in [0, 1/2), \label{seq: limit of threshold 2} \\
		&\limsup_{n \to \infty} \sup_{t \in [\tau, T]} \sum_{(i, j) \menor \sigma^*} \bq_n(\omega, t, i, j)\e^{\mu(t - \tau)} \leq c(\omega, \tau), \label{seq: limit of threshold 3}
		\end{align} 
	\end{subequations}
	where $c(\omega, \tau)$ can be expressed in terms of $\rho$, $\tau$ and $s_0(\omega)$.
\end{theorem}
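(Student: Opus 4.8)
The plan is to follow the non-traditional fluid-limit methodology of \cite{goldsztajn2021learning}, working pathwise on the common probability space of Section~\ref{sub: coupled construction of sample paths}. First I would fix the probability-one set $\Gamma$ on which the driving Poisson processes and their associated martingales obey the strong-approximation bounds used throughout, and on which the normalized total number of tasks $\bs_n$ converges uniformly over compact sets to the deterministic $M/M/\infty$ fluid $s(t) = \rho + (s_0 - \rho)\e^{-\mu t}$; in particular $\bs_n(t) \to \rho$ as $t$ grows. The equilibration time $\tau_\eq(s_0)$ will be defined as a bound, depending on $s_0$ only through $s$, after which the learning scheme has provably settled.

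The core of the argument is \eqref{seq: limit of threshold 1}, i.e.\ that $\br_n$ reaches $r_*$ and remains there for all large $n$, uniformly on $[\tau, T]$. The idea is to read off the sign of the net drift of the token counts from the capacity comparisons implied by \eqref{eq: definition of sigma*}. Holding the index fixed at a value $r$, if $r > r_*$ then $(i_r, j_r) \menor \sigma_*$ and the capacity strictly above the threshold, $\sum_{(i, j) \mayor (i_r, j_r)} \alpha(i)$, is at least $\sum_{(i, j) \mayorigual \sigma_*} \alpha(i) > \rho$; since only $\approx n\rho$ tasks are available, a macroscopic (order-$n$) deficit of green tokens persists, the decrement rule fires, and $\br_n$ is driven down. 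Conversely, if $r < r_*$ then $(i_r, j_r) \mayor \sigma_*$ and the capacity up to and including the threshold, $\sum_{(i, j) \mayorigual (i_r, j_r)} \alpha(i)$, is at most $\sum_{(i, j) \mayor \sigma_*} \alpha(i) \leq \rho$; the occupancy therefore saturates the threshold, the tokens vanish, and the increment rule drives $\br_n$ up. At $r = r_*$ the strict first inequality in \eqref{eq: definition of sigma*} (assumed for SLTA) makes the yellow-token fraction at $\sigma_*$ converge to $\rho - \sum_{(i, j) \mayor \sigma_*} \alpha(i) > 0$, so the yellow tokens are macroscopic and the increment rule never fires, while the green-token count stays below the decrement window $n\beta_n$; hence $\br_n$ is pinned at $r_*$. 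Since each step is by one unit and the number of steps from $\br_n(0) \leq R$ to $r_*$ is finite, this produces the finite $\tau_\eq(s_0)$.

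With $\br_n = r_*$ on $[\tau, T]$ in hand, I would establish the two occupancy estimates. For \eqref{seq: limit of threshold 2}, the coordinates $(i, j) \mayor \sigma_*$ carry green tokens and thus receive dispatch priority, so the deficits $\alpha(i) - \bq_n(t, i, j)$ are governed by a strongly refilling birth–death dynamics whose stationary fluctuations live on the $n^{-1/2}$ scale; a martingale concentration bound then yields $\alpha(i) - \bq_n(t, i, j) = o(n^{-\gamma})$ uniformly for every $\gamma < 1/2$. The calibration $\liminf_n n^{\gamma_0}\beta_n > 0$ with $\gamma_0 < 1/2$ in \eqref{ass: conditions on beta} is precisely what forces the window $n\beta_n$ to dominate these $\sqrt{n}$-scale fluctuations, so that they never trigger a spurious decrement and the bound is uniform in $n$. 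For \eqref{seq: limit of threshold 3}, Remark~\ref{rem: dynamics of threshold} guarantees that no task is ever dispatched to a coordinate $(i, j) \menor \sigma_*$, so the mass $m_n(t) \defeq \sum_{(i, j) \menor \sigma_*} \bq_n(t, i, j)$ obeys a near-pure-death evolution in which each contributing coordinate drains at rate at least $\mu$; this gives $\dot{m}_n \leq -\mu m_n$ up to a vanishing error, hence $m_n(t) \leq m_n(\tau)\e^{-\mu(t - \tau)} + o(1)$, and since $m_n(\tau) \leq \bs_n(\tau)$ is controlled by $s(\tau)$, the constant $c(\omega, \tau)$ can be expressed through $\rho$, $\tau$ and $s_0(\omega)$.

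The main obstacle is the second paragraph: proving that $\br_n$ equilibrates at $r_*$ even though the updates of the learning scheme are triggered by excursions of the occupancy process whose size vanishes on the scale of the number of server pools, so that a classical single fluid limit would be blind to them. The remedy — and the reason for invoking \cite{goldsztajn2021learning} rather than a standard fluid-limit argument — is to replace the fluid limit by asymptotic bounds valid over arbitrarily long time intervals, freezing $\br_n$ between consecutive updates and controlling the sign of the net drift of the token counts through the capacity comparisons above. A second delicate point, intertwined with the first, is the calibration of the window $n\beta_n$: it must exceed the $\sqrt{n}$-scale fluctuations of the green-token count (ensured by $\gamma_0 < 1/2$) yet vanish relative to $n$ (ensured by $\beta_n \to 0$), and this is simultaneously what stabilizes $\br_n$ at $r_*$ and delivers the $n^\gamma$-scale tightness in \eqref{seq: limit of threshold 2}.
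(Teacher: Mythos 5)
Your high-level plan coincides with the paper's: pathwise analysis on the coupled space, the total-mass fluid limit (Proposition \ref{prop: fluid limit of total number of tasks}), capacity comparisons driven by \eqref{eq: definition of sigma*}, and, for \eqref{seq: limit of threshold 3}, the pure-death drainage argument, which is exactly Proposition \ref{prop: asymptotic dynamical property of tail mass processes} combined with Corollary \ref{cor: decay of the tail}. The genuine gap is in your treatment of the case $r > r_*$, which is the heart of the equilibration claim. You assert that an order-$n$ deficit of green tokens makes ``the decrement rule fire.'' It does not: $\br_n$ is decremented only when there are at least $n\beta_n$ green tokens \emph{and at least one of them belongs to a server pool of class $i_{\br_n - 1}$}. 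The second condition fails whenever $\bq_n(i_{r-1}, j_{r-1}) = \alpha_n(i_{r-1})$, i.e.\ whenever every class-$i_{r-1}$ pool holds at least $j_{r-1}$ tasks, no matter how many green tokens of other classes exist; in that situation $\br_n$ cannot move until departures de-saturate class $i_{r-1}$. Controlling these waiting periods is what most of the paper's proof of Proposition \ref{prop: upper bound for br} does: it first shows that $\br_n$ falls within a short time to a level $r_\bound$ above which the coordinate $(i_{\br_n - 1}, j_{\br_n - 1})$ cannot be saturated for counting reasons, and then, using the fact that the dispatching rule sends no tasks to class-$i_{\br_n - 1}$ pools while other green tokens are present, bounds by roughly $(J - j_{r-1})/(\mu\theta)$ the departure-driven delay before each of the at most $r_\bound - r_*$ remaining decrements, via deterministic jump-count properties of the driving Poisson processes. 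These waiting times are precisely what make $\tau_\eq$ a finite deterministic function of $s_0(\omega)$ alone; your sketch provides no mechanism for them, so the existence of your claimed $\tau_\eq$ is unsupported.

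Two smaller points. First, your token bookkeeping at $r = r_*$ swaps the roles of the two inequalities in \eqref{eq: definition of sigma*}: the quantity $\rho - \sum_{(i,j) \mayor \sigma_*} \alpha(i)$ is the limiting mass $q_*(\sigma_*)$, not the yellow-token fraction; the yellow-token fraction is $\alpha(i_{r_*}) - q_*(\sigma_*)$, which is positive by the (automatically strict) second inequality. What the assumed-strict first inequality actually buys is a strictly positive drift for the mass above $\sigma_*$ (Lemma \ref{lem: lemma 1 for limit of threshold}), which forces that mass to reach capacity by a deterministic time; that saturation time is the second summand in the paper's formula for $\tau_\eq$. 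Second, the paper never runs a separate drift argument for $r < r_*$: once $\br_n \leq r_*$ is known from Proposition \ref{prop: upper bound for br}, a contradiction argument combining Propositions \ref{prop: fluid limit of total number of tasks} and \ref{prop: asymptotic dynamical property of tail mass processes} shows that the coordinates above $\sigma_*$ saturate by time $\tau$, and the stopping-time argument of Lemma \ref{lem: lemma 2 for the limit of threhsold} then pins $\br_n = r_*$ and simultaneously yields the $n^{-\gamma}$ bounds of \eqref{seq: limit of threshold 2}, so the increment rule is never tracked explicitly. If you wanted to keep your case-by-case outline you would still need the waiting-time analysis above, so it is more economical to adopt the paper's organization.
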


\section{Performance upper bound}
\label{sec: performance upper bound}

In this section we prove Theorem \ref{the: upper bound for mean stationary overall utility}, which provides an upper bound for the mean stationary overall utility of a system where the service time distribution has a finite mean but is otherwise general. Recall from Section \ref{sub: optimization problem} that the optimum of \eqref{pr: optimal task assignments} is an upper bound for the mean stationary overall utility. Therefore, we only need to prove that the task assignment $q_n^*$ defined in \eqref{eq: definition of q_n^N} is an optimizer of \eqref{pr: optimal task assignments}.

We say that a sequence $q \in \R^\calI$ is eventually zero if there exists $k > 0$ such that $q(i, j) = 0$ for all $i$ and $j > k$. The following lemma implies that $q_n^*$ is an optimizer of \eqref{pr: optimal task assignments} if we impose the additional constraint that the solution must be eventually zero.

\begin{lemma}
	\label{lem: optimal task assignment under eventually zero constraint}
	If $q$ satisfies the constraints of \eqref{pr: optimal task assignments} and is eventually zero, then $u(q) \leq u(q_n^*)$.
\end{lemma}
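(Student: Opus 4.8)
The plan is to linearize the objective by summation by parts and then establish the optimality of $q_n^*$ with a single exchange argument. First I would rewrite $u(q)$ in terms of the marginal utilities. Fixing a class $i$ and using that $q$ is eventually zero, choose $k$ with $q(i,j) = 0$ for all $j > k$; Abel summation of the finite sum $\sum_{j=0}^{k} u_i(j)[q(i,j) - q(i,j+1)]$ makes the boundary term $u_i(k)q(i,k+1)$ vanish and, after substituting $q(i,0) = \alpha_n(i)$, produces
\[
\sum_{j=0}^\infty u_i(j)[q(i,j) - q(i,j+1)] = u_i(0)\alpha_n(i) + \sum_{j=1}^\infty \Delta(i,j-1)q(i,j).
\]
Summing over the finitely many classes gives $u(q) = C + L(q)$, where $C \defeq \sum_{i=1}^m u_i(0)\alpha_n(i)$ depends only on the fixed boundary values $q(i,0) = \alpha_n(i)$ and $L(q) \defeq \sum_{(i,j) \in \calI_+} \Delta(i,j-1)q(i,j)$. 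The eventually-zero hypothesis is exactly what is needed here: it annihilates the boundary term and renders every series finite. The same identity applies to $q_n^*$ once one notes that $q_n^*$ has finite support, which follows from $\sum_{(i,j) \mayor \sigma_n^*} \alpha_n(i) \leq \rho$, as this precludes infinitely many positive-mass coordinates ranked above $\sigma_n^*$.

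Since $C$ is common to $q$ and $q_n^*$, it suffices to prove $L(q) \leq L(q_n^*)$. I would set $\delta \defeq q_n^* - q$ and let $w^*$ denote the marginal utility of $\sigma_n^*$. Because $\menor$ ranks coordinates by marginal utility, one has $\Delta(i,j-1) \geq w^*$ for every $(i,j) \mayor \sigma_n^*$ and $\Delta(i,j-1) \leq w^*$ for every $(i,j) \menor \sigma_n^*$. The box constraints on $q$ supply the matching signs for $\delta$: above $\sigma_n^*$ one has $\delta(i,j) = \alpha_n(i) - q(i,j) \geq 0$, and below $\sigma_n^*$ one has $\delta(i,j) = -q(i,j) \leq 0$. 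Hence $\Delta(i,j-1)\delta(i,j) \geq w^* \delta(i,j)$ termwise in both regions, with equality at $\sigma_n^*$; summing the finite series and invoking the common total mass gives
\[
L(q_n^*) - L(q) = \sum_{(i,j) \in \calI_+} \Delta(i,j-1)\delta(i,j) \geq w^* \sum_{(i,j) \in \calI_+} \delta(i,j) = w^*(\rho - \rho) = 0.
\]

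I expect the conceptual crux to be the linearization itself: once $u = C + L$ is available, greedy optimality is only a two-line rearrangement closed by the mass constraint. The delicate bookkeeping is concentrated in the vanishing of the Abel boundary term, which is precisely where the eventually-zero assumption enters, and in the absolute summability needed to rearrange terms freely and cancel the masses at the end, which rests on $q_n^*$ having finite support. It is worth observing that the monotonicity part of the constraints of \eqref{pr: optimal task assignments} never enters the inequality — only the derived bounds $0 \leq q(i,j) \leq \alpha_n(i)$ and the total-mass constraint are used — and that concavity of the $u_i$ plays no role in the bound itself. Concavity is instead what makes $\menor$ consistent within each class, and hence what guarantees that $q_n^*$ is a feasible (monotone) task assignment; that property is needed for the surrounding optimization claim rather than for Lemma \ref{lem: optimal task assignment under eventually zero constraint} as stated.
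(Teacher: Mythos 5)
Your proposal is correct and follows essentially the same route as the paper: your linearization $u(q) = \sum_{i=1}^m u_i(0)\alpha_n(i) + \sum_{(i,j)\in\calI_+}\Delta(i,j-1)q(i,j)$ is exactly the paper's display \eqref{eq: overall utility of eventually zero sequence}, and the conclusion rests on the same greedy-maximality observation. The only difference is one of rigor, not of method: where the paper asserts informally that $q_n^*$ maximizes the linear functional because its mass is placed on the best-ranked coordinates, you close the argument explicitly via the $w^*$-threshold exchange inequality $\Delta(i,j-1)\delta(i,j) \geq w^*\delta(i,j)$ together with the common total mass $\rho$, which is a precise rendering of the step the paper leaves to the reader.
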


\begin{proof}
	Since $q$ is eventually zero, it is possible to write
	\begin{equation}
	\label{eq: overall utility of eventually zero sequence}
	\begin{split}
	u(q) &= \sum_{i = 1}^\infty \sum_{j = 0}^\infty u_i(j)\left[q(i, j) - q(i, j + 1)\right] \\
	&= \sum_{i = 1}^m u_i(0)q(i, 0) + \sum_{i = 1}^m \sum_{j = 1}^\infty \Delta(i, j - 1) q(i, j) \\
	&= \sum_{i = 1}^m u_i(0)\alpha_n(i) + \sum_{(i, j) \in \calI_+} \Delta(i, j - 1)q(i, j).
	\end{split}
	\end{equation}
	In the last expression, the terms of the summation are ordered with respect to $\menor$, and in particular in non-increasing order of the marginal utilities $\Delta(i, j - 1)$. The task assignment $q_n^*$ is obtained by choosing the coefficients $q(i, j)$ so that the first coefficients are maximal, while all the coefficients add up to $\rho$. Thus, $q_n^*$ maximizes the right-hand side of \eqref{eq: overall utility of eventually zero sequence}.
\end{proof}

We now provide a solution of \eqref{pr: optimal task assignments}, without imposing any additional constraints.

\begin{proposition}
	\label{prop: optimal task assignments}
	The task assignment $q_n^*$ defined in \eqref{eq: definition of q_n^N} is an optimizer of \eqref{pr: optimal task assignments}.
\end{proposition}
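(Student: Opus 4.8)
The plan is to show that the optimality established for eventually-zero feasible points in Lemma~\ref{lem: optimal task assignment under eventually zero constraint} actually holds against \emph{every} feasible point, by extending the summation-by-parts computation of \eqref{eq: overall utility of eventually zero sequence} to infinite support and then running the same exchange argument. First I would record that $q_n^*$ is itself eventually zero: by \eqref{eq: definition of sigma_n^N} the set $\set{(i, j) \in \calI_+}{(i, j) \mayor \sigma_n^*}$ has total $\alpha_n$-mass at most $\rho < \infty$, hence is finite (finitely many classes, each with $\alpha_n(i) > 0$), and $q_n^*$ vanishes off this set together with $\sigma_n^*$. Thus $q_n^*$ is a legitimate feasible point, and it remains to prove $u(q) \le u(q_n^*)$ for an arbitrary $q$ satisfying the constraints of \eqref{pr: optimal task assignments}, now without the eventually-zero requirement.

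Next I would make the representation \eqref{eq: overall utility of eventually zero sequence} available for general feasible $q$. Summation by parts gives, for each class $i$,
$$\sum_{j = 0}^N u_i(j)\left[q(i, j) - q(i, j + 1)\right] = u_i(0)\alpha_n(i) + \sum_{j = 1}^N \Delta(i, j - 1) q(i, j) - u_i(N) q(i, N + 1),$$
so the only issue is the vanishing of the boundary term $u_i(N) q(i, N + 1)$. The budget constraint $\sum_{(i,j) \in \calI_+} q(i, j) = \rho$ forces $q(i, \cdot)$ to be non-negative, non-increasing and summable, whence $q(i, j) \downarrow 0$ and in fact $j\, q(i, j) \to 0$; writing $u_i(N) = u_i(0) + \sum_{l < N} \Delta(i, l)$ and splitting this sum at a fixed truncation level, the boundary term tends to zero whenever $\sum_{(i,j) \in \calI_+} \Delta(i, j - 1) q(i, j)$ converges absolutely. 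Crucially, the positive part of this series is bounded by $\max\{\Delta(i_1, j_1 - 1), 0\}\,\rho$, where $(i_1, j_1)$ is the $\menor$-largest coordinate, so $u(q)$ is either $-\infty$ — in which case $u(q) \le u(q_n^*)$ trivially, since $u(q_n^*)$ is finite — or finite, in which case the series converges absolutely and $u(q) = \sum_{i = 1}^m u_i(0)\alpha_n(i) + \sum_{(i,j) \in \calI_+} \Delta(i, j - 1) q(i, j)$.

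With the representation secured I would run the exchange argument. Let $\set{(i_k, j_k)}{k \ge 1}$ enumerate $\calI_+$ with $(i_k, j_k) \mayor (i_{k+1}, j_{k+1})$, so the marginals $\Delta_k \defeq \Delta(i_k, j_k - 1)$ are non-increasing and $\sigma_n^* = (i_r, j_r)$ for a unique index $r$. Since the constant $\sum_{i=1}^m u_i(0)\alpha_n(i)$ is shared and both objective series converge absolutely,
$$u(q) - u(q_n^*) = \sum_{k \ge 1} \Delta_k\left[q(i_k, j_k) - q_n^*(i_k, j_k)\right] = \sum_{k \ge 1} (\Delta_k - \Delta_r)\left[q(i_k, j_k) - q_n^*(i_k, j_k)\right],$$
where the second equality uses $\sum_{k}\left[q(i_k, j_k) - q_n^*(i_k, j_k)\right] = \rho - \rho = 0$. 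Each summand is non-positive: for $k < r$ one has $\Delta_k \ge \Delta_r$ and $q(i_k, j_k) \le \alpha_n(i_k) = q_n^*(i_k, j_k)$; for $k > r$ one has $\Delta_k \le \Delta_r$ and $q(i_k, j_k) \ge 0 = q_n^*(i_k, j_k)$; and the $k = r$ term vanishes. Hence $u(q) \le u(q_n^*)$. Observe that only the box constraints $0 \le q(i, j) \le \alpha_n(i)$ and the budget enter the comparison — the monotonicity constraints serve only to ensure $q_n^*$ is feasible.

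The main obstacle I anticipate is analytic rather than combinatorial: justifying the two limiting steps for feasible points with infinite support, namely the vanishing of $u_i(N) q(i, N+1)$ and the regrouping of the objective series into one with single-signed terms. The dichotomy ``$u(q) = -\infty$ or the series converges absolutely,'' which follows from the bounded positive part, is exactly what makes both steps rigorous and what upgrades the finite-support Lemma~\ref{lem: optimal task assignment under eventually zero constraint} to the full feasible region.
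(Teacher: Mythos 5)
Your strategy is sound and genuinely different from the paper's. The paper never extends the representation \eqref{eq: overall utility of eventually zero sequence} beyond eventually zero sequences: given a feasible $q$ with infinite support, it constructs an eventually zero feasible $z$ by transferring all mass at levels $j > k$ (with $k$ chosen so that $\alpha_n(i) k > \rho$) down to the levels $j \le k$, proves $u(q) \le u(z)$ using the concavity bound $u_i(j) \le u_i(k) + (j - k)\Delta(i, k)$, and then invokes Lemma \ref{lem: optimal task assignment under eventually zero constraint}. You skip the construction of $z$ altogether: you extend the linear representation to every feasible point (allowing the value $-\infty$) and conclude with a rearrangement inequality, subtracting $\Delta_r \sum_k \left[q(i_k, j_k) - q_n^*(i_k, j_k)\right] = 0$ and checking that each term of the regrouped series is nonpositive. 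Your route yields a more transparent optimality argument (in particular, it makes precise the final, rather informal, sentence of the paper's proof of Lemma \ref{lem: optimal task assignment under eventually zero constraint}), at the cost of the analytic work needed to handle infinite support --- and that is where there is a gap. A small slip along the way: monotonicity of $q(i, \cdot)$ in $j$ is a constraint of \eqref{pr: optimal task assignments}, not a consequence of the budget, and your analysis does use it --- both for $j\, q(i, j) \to 0$ and in the truncation bound --- so it is not accurate that the monotonicity constraints only serve to make $q_n^*$ feasible.

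The gap is the dichotomy itself. You claim that ``$u(q) = -\infty$ or $\sum_{(i, j)} \Delta(i, j - 1) q(i, j)$ converges absolutely'' follows from the boundedness of the positive part of this series. Bounded positive part gives a dichotomy for the \emph{series}: it converges absolutely or diverges to $-\infty$. It does not connect the second alternative to $u(q) = -\infty$, and your boundary-term argument cannot make that connection either, since it assumes absolute convergence, which is exactly what is in question. What is missing is the implication ``$u(q)$ finite $\Rightarrow$ the series converges absolutely'' (equivalently: divergence of the class-$i$ series forces $u(q) = -\infty$). The implication is true, so the gap is reparable. The only case at issue is a class $i$ with $\Delta(i, \cdot)$ eventually negative; concavity then forces $u_i(j) \downarrow -\infty$, so there is $M$ with $u_i(j) < 0$ and $\Delta(i, j) < 0$ for all $j \ge M$. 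Writing $q(i, j) = \sum_{l \ge j} \left[q(i, l) - q(i, l + 1)\right]$ and applying Tonelli's theorem,
\begin{align*}
\sum_{j > M} \left|\Delta(i, j - 1)\right| q(i, j) &= \sum_{l > M} \left[u_i(M) - u_i(l)\right] \left[q(i, l) - q(i, l + 1)\right] \\
&\le \left|u_i(M)\right| q(i, M + 1) + \sum_{l > M} \left|u_i(l)\right| \left[q(i, l) - q(i, l + 1)\right],
\end{align*}
and the last sum is finite whenever the class-$i$ component of $u(q)$ is finite, because its terms beyond $M$ all have the same sign. With this lemma inserted, your proof is complete.
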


\begin{proof}
	By Lemma \ref{lem: optimal task assignment under eventually zero constraint}, it suffices to prove that $u(q) \leq u(q_n^*)$ for each $q$ that is not eventually zero and satisfies the constraints of \eqref{pr: optimal task assignments}. Next we fix one such sequence $q$ and we construct an eventually zero sequence $z$ such that $u(q) \leq u(z)$ and $z$ satisfies the constraints of \eqref{pr: optimal task assignments}. Then $u(q) \leq u(z) \leq u(q_n^*)$ by Lemma \ref{lem: optimal task assignment under eventually zero constraint}, as desired.
	
	Choose $k \in \N$ such that $\alpha_n(i) k > \rho$ for all $i$. For each $i$, we define $z(i, j)$ iteratively, by
	\begin{equation*}
	z(i, j) \defeq \begin{cases}
	\min\left\{q(i, j) + \sum_{l = k + 1}^\infty q(i, j) - \sum_{l = 0}^{j - 1} \left[z(i, l) - q(i, l)\right], \alpha_n(i)\right\} & \quad \text{if} \quad j \leq k, \\
	0 & \quad \text{if} \quad j > k.
	\end{cases}
	\end{equation*}
	Informally, each coefficient $q(i, j)$ can be regarded as a container with capacity $\alpha_n(i)$, as shown in Figure \ref{fig: construction of z}. For each $i$, the sequence $q(i, \scdot)$ is transformed into $z(i, \scdot)$ in two steps: first we remove all the mass from the coefficients $q(i, j)$ with $j > k$, and then we place this mass on the coefficients $q(i, j)$ with $j \leq k$. In the latter step, we start with the first coefficient, placing as much mass as possible without exceeding the capacity $\alpha_n(i)$. The remainder of mass is placed in the following coefficients in the same fashion, and in increasing order of $j$. Observe that all the mass will have been placed right after the coefficient $q(i, k)$ is done since $q$ satisfies the constraints of \eqref{pr: optimal task assignments} and $\alpha_n(i)k > \rho$. Furthermore, the following property holds:
	\begin{equation}
	\label{aux: relationship between q and z}
	\sum_{j = 0}^k \left[z(i, j) - q(i, j)\right] = \sum_{j = k + 1}^\infty q(i, j) \quad \text{for all} \quad i.
	\end{equation}
	
	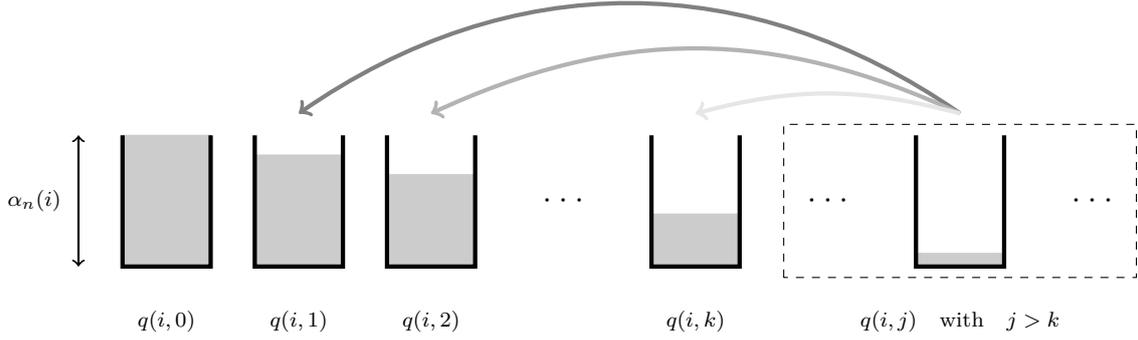
\begin{figure}
		\centering	
		\begin{tikzpicture}[x = 0.58cm, y = 0.58cm]
		
		\foreach \x in {0, 3, 6, 12}
		\draw[color = black!20!white, fill = black!20!white] (\x, 0) rectangle (\x + 2, {3 * (20 - \x) / 20});
		
		\draw[color = black!20!white, fill = black!20!white] (18, 0) rectangle (20, 3 * 2 / 20);
		
		\foreach \x in {0, 3, 6, 12}
		\draw[black, ultra thick] (\x, 3) -- (\x, 0) -- (\x + 2, 0) -- (\x + 2, 3);
		
		\node at (10, 1.5) {$\cdot\cdot\cdot$};
		\node at (16, 1.5) {$\cdot\cdot\cdot$};
		\draw[black, ultra thick] (18, 3) -- (18, 0) -- (20, 0) -- (20, 3);
		\node at (22, 1.5) {$\cdot\cdot\cdot$};
		\draw[black, dashed] (15, -0.25) rectangle (23, 3.25);
		
		\node at (1, - 1.25) {\scriptsize{$q(i, 0)$}};
		\node at (4, - 1.25) {\scriptsize{$q(i, 1)$}};
		\node at (7, - 1.25) {\scriptsize{$q(i, 2)$}};
		\node at (13, - 1.25) {\scriptsize{$q(i, k)$}};
		\node at (19, - 1.25) {\scriptsize{$q(i, j) \quad \text{with} \quad j > k$}};
		\node at (-2, 1.5) {\scriptsize{$\alpha_n(i)$}};
		
		\draw[black!50!white, ultra thick, ->] (19, 3.5) to[out = 145, in = 35] (4, 3.5);
		\draw[black!30!white, ultra thick, ->] (19, 3.5) to[out = 155, in = 25] (7, 3.5);
		\draw[black!10!white, ultra thick, ->] (19, 3.5) to[out = 165, in = 15] (13, 3.5);
		\draw[black, <->, thick] (-1, 0) -- (-1, 3);
		
		\end{tikzpicture}
		\caption{Schematic view of the construction of $z(i, \scdot)$ from $q(i, \scdot)$ for some fixed $i$.}
		\label{fig: construction of z}
	\end{figure}
	
	The overall utility of the task assignment $q$ satisfies:
	\begin{align*}
	u(q) &= \sum_{i = 1}^m \sum_{j = 0}^\infty u_i(j)\left[q(i, j) - q(i, j + 1)\right] \\
	&= \sum_{i = 1}^m\sum_{j = 0}^k u_i(j)\left[q(i, j) - q(i, j + 1)\right] + \sum_{i = 1}^m\sum_{j = k + 1}^\infty u_i(j)\left[q(i, j) - q(i, j + 1)\right] \\
	&\leq \sum_{i = 1}^m\sum_{j = 0}^k u_i(j)\left[q(i, j) - q(i, j + 1)\right] \\
	&+ \sum_{i = 1}^m\sum_{j = k + 1}^\infty \left[u_i(k) + (j - k)\Delta(i, k)\right]\left[q(i, j) - q(i, j + 1)\right] \\
	&= \sum_{i = 1}^m\sum_{j = 0}^k u_i(j)\left[q(i, j) - q(i, j + 1)\right] + \sum_{i = 1}^m u_i(k)q(i, k + 1) + \sum_{i = 1}^m \Delta(i, k) \sum_{j = k + 1}^\infty q(i, j) \\
	&= \sum_{i = 1}^m u_i(0)\alpha_n(i) + \sum_{i = 1}^m\sum_{j = 0}^k \Delta(i, j - 1)q(i, j) + \sum_{i = 1}^m \Delta(i, k) \sum_{j = k + 1}^\infty q(i, j) \\
	&= u(z) + \sum_{i = 1}^m\sum_{j = 0}^k \Delta(i, j - 1)\left[q(i, j) - z(i, j)\right] + \sum_{i = 1}^m \Delta(i, k) \sum_{j = k + 1}^\infty q(i, j).
	\end{align*}
	For the last step, recall that $z$ is eventually zero, so $u(z)$ can be computed as in \eqref{eq: overall utility of eventually zero sequence}. Note that $q(i, j) \leq z(i, j)$ and $\Delta(i, j - 1) \geq \Delta(i, k)$ if $j \leq k$. Therefore,
	\begin{align*}
	u(q) &\leq u(z) + \sum_{i = 1}^m \Delta(i, k) \sum_{j = 1}^k \left[q(i, j) - z(i, j)\right] + \sum_{i = 1}^m \Delta(i, k) \sum_{j = k + 1}^\infty q(i, j) = u(z) \leq u(q_n^*).
	\end{align*}
	The middle equality and last inequality follow from \eqref{aux: relationship between q and z} and Lemma \ref{lem: optimal task assignment under eventually zero constraint}, respectively.
\end{proof}

The proof of Theorem \ref{the: upper bound for mean stationary overall utility} follows easily from Proposition \ref{prop: optimal task assignments}.

\begin{proof}[Proof of Theorem \ref{the: upper bound for mean stationary overall utility}]
	As indicated at the end of Section \ref{sub: optimization problem}, the optimum of \eqref{pr: optimal task assignments} upper bounds the mean stationary overall utility $\bE\left[u(q_n)\right]$. Thus, it follows from Proposition \ref{prop: optimal task assignments} that $u\left(q_n^*\right)$ upper bounds the mean stationary overall utility.
\end{proof}

\section{Strong approximations}
\label{sec: strong approximations}

In this section we construct the processes defined in Section \ref{sub: stochastic models} on a common probability space for all $n$. In addition, we prove that $\set{\bq_n}{n \geq 1}$ is almost surely relatively compact in $D_{\ell_1}[0, \infty)$ both for JLMU and SLTA. The construction of the processes is carried out in Section \ref{sub: coupled construction of sample paths} and the relative compactness results are provided in Section \ref{sub: relative compactness results}.

\subsection{Coupled construction of sample paths}
\label{sub: coupled construction of sample paths}

Consider the following stochastic processes and random variables.
\begin{itemize}
	\item \textit{Driving Poisson processes:} a collection $\{\calN\} \cup \set{\calN_\nu}{\nu \in \calI_+}$ of independent Poisson processes with unit rate, for counting arrivals and departures. These processes are defined on a common probability space $(\Omega_D, \calF_D, \prob_D)$.
	
	\item \textit{Selection variables:} a family $\set{U_k}{k \geq 1}$ of independent random variables, uniformly distributed on $[0, 1)$ and defined on a common probability space $(\Omega_S, \calF_S, \prob_S)$.
	
	\item \textit{Initial conditions:} sequences $\set{\bq_n(0)}{n \geq 1}$, and also $\set{\br_n(0)}{n \geq 1}$ for SLTA, of random variables for describing the initial states of the systems, defined on a common probability space $(\Omega_I, \calF_I, \prob_I)$ and satisfying the assumptions of Section \ref{subsub: technical assumptions}.
\end{itemize}
Denote the completion of the product probability space of $(\Omega_D, \calF_D, \prob_D)$, $(\Omega_S, \calF_S, \prob_S)$ and $(\Omega_I, \calF_I, \prob_I)$ by $(\Omega, \calF, \prob)$. The processes introduced in Section \ref{sub: stochastic models} are constructed on the latter space as deterministic functions of the stochastic primitives.

\subsubsection{Construction for JLMU}
\label{subsub: construction for jlmu}

Let $\calN_n^\lambda(t) \defeq \calN(n \lambda t)$ for each $t \geq 0$ and each $n$. This quantity will be used to count the number of tasks arriving to the system with $n$ server pools during the interval $[0, t]$. Also, denote the jump times of $\calN_n^\lambda$ by $\set{\tau_{n, k}}{k \geq 1}$ and define $\tau_{n, 0} \defeq 0$. For each function $\map{\bq}{[0, \infty)}{Q_n}$ and each $n$, we define two counting processes, for arrivals and departures, denoted $\calA_n(\bq)$ and $\calD_n(\bq)$, respectively. The coordinates $(i, j) \in \calI$ of these processes are identically zero if $j = 0$, whereas the other coordinates are defined as follows:
\begin{subequations}
	\begin{align}
	&\calA_n(\bq, t, i, j) \defeq \frac{1}{n} \sum_{k = 1}^{\calN_n^\lambda(t)} \ind{(i, j) = \sigma\left(\bq\left(\tau_{n, k}^-\right)\right)}, \label{seq: arrival process for jlmu} \\
	&\calD_n(\bq, t, i, j) \defeq \frac{1}{n} \calN_{(i, j)}\left(n\int_0^t \mu j \left[\bq(s, i, j) - \bq(s, i, j + 1)\right]ds\right). \label{seq: departure process for jlmu}
	\end{align}
\end{subequations}

For each $n$, the functional equation
\begin{equation}
\label{eq: functional equation for jlmu}
\bq = \bq_n(0) + \calA_n(\bq) - \calD_n(\bq)
\end{equation}
has a unique solution with probability one. More precisely, there exists a set of probability one $\Gamma_0$ with the following property: for each $\omega \in \Gamma_0$ and each $n$, there exists a unique c\`adl\`ag function $\map{\bq_n(\omega)}{[0, \infty)}{Q_n}$ that solves \eqref{eq: functional equation for jlmu}. This solution can be constructed by forward induction on the jump times of the driving Poisson processes. The assumption $\bq_n(\omega, 0) \in Q_n$ implies that $\bq_n(\omega, 0)$ has finitely many non-zero coordinates and ensures that the constructed solution is defined on $[0, \infty)$ with probability one; i.e., the constructed solution does not explode in finite time.

The occupancy processes are defined by extending the above solutions to $\Omega$, setting $\bq_n(\omega, t) = 0$ for all $t \geq 0$ and all $\omega \notin \Gamma_0$. In addition, we let
\begin{equation*}
\calA_n \defeq \calA_n(\bq_n) \quad \text{and} \quad \calD_n \defeq \calD_n(\bq_n),
\end{equation*}
and we note that the sample paths of $\calA_n$, $\calD_n$ and $\bq_n$ lie in $D_{\ell_1}[0, \infty)$. The functional equation \eqref{eq: functional equation for jlmu} can now be rewritten as follows:
\begin{equation}
\label{eq: definition of q for jlmu}
\bq_n(\omega) = \bq_n(\omega, 0) + \calA_n(\omega) - \calD_n(\omega) \quad \text{for all} \quad \omega \in \Gamma_0.
\end{equation}

This construction endows the processes $\bq_n$ with the intended statistical behavior. The processes $\calA_n(i, j)$ count the arrivals to server pools of class $i$ with precisely $j - 1$ tasks and the processes $\calD_n(i, j)$ count the departures from server pools of class $i$ with exactly $j$ tasks. Indeed, $\calA_n(i, j)$ has a jump at the arrival epoch $\tau_{n, k}$ if and only if the incoming task should be assigned to a server pool of class $i$ with $j - 1$ tasks under the JLMU policy. In addition, the intensity of $\calD_n(i, j)$ equals the total number of tasks in server pools of class $i$ with exactly $j$ tasks times the rate at which tasks are executed, and this totals the departure rate from server pools of class $i$ with precisely $j$ tasks.

\subsubsection{Construction for SLTA}
\label{subsub: construction for threshold}

The processes $(\bq_n, \br_n)$ are constructed to a large extent as in Section \ref{subsub: construction for jlmu} when the load balancing policy is SLTA. The only differences arise in \eqref{seq: arrival process for jlmu} and \eqref{eq: functional equation for jlmu}. Specifically, \eqref{seq: arrival process for jlmu} has to be modified to capture the dispatching rule of SLTA and \eqref{eq: functional equation for jlmu} must be accompanied by another equation, for describing the evolution of $\br_n$.

The counterpart of \eqref{seq: arrival process for jlmu}, with an extra argument $\map{\br}{[0, \infty)}{\set{r \in \N}{r \geq 1}}$, is
\begin{equation*}
\calA_n(\bq, \br, t, i, j) \defeq \frac{1}{n} \sum_{k = 1}^{\calN_n^\lambda(t)} \eta_k\left(\bq\left(\tau_{n, k}^-\right), \br\left(\tau_{n, k}^-\right), i, j\right) \quad \text{for all} \quad (i, j) \in \calI_+.
\end{equation*}
The functions $\eta_k$ are defined in Appendix \ref{app: auxiliary results} using the selection variables $U_k$, so that they have the following property. If $(q, r)$ is the value of $(\bq_n, \br_n)$ when the $k^{\text{th}}$ task arrives, then SLTA sends this task to a server pool of class $i$ with precisely $j - 1$ tasks if and only if $\eta_k(q, r, i, j) = 1$. Moreover, $\eta_k(q, r, h, l) = 0$ for all $(h, l) \neq (i, j)$.

The analog of the functional equation \eqref{eq: functional equation for jlmu} is
\begin{subequations}
	\label{eq: functional equation for threshold}
	\begin{align}
	&\bq(t) = \bq_n(0) + \calA_n(\bq, \br, t) - \calD_n(\bq, t), \label{seq: functional equation for q for threshold} \\
	&\br(t) = \br_n(0) + \sum_{k = 1}^{\calN_n^\lambda(t)} \left[\indc_{I_{n, k}} - \indc_{D_{n, k}}\right], \label{seq: functional equation for r for threshold}
	\end{align}
\end{subequations}
where the sets $I_{n, k}$ and $D_{n, k}$ are defined formally in Appendix \ref{app: auxiliary results}. The former set indicates that $(\bq, \br)$ corresponds to a system with no green tokens and at most one yellow token right before the $k^{\text{th}}$ arrival. The latter set indicates that the number of green tokens is larger than or equal to $n \beta_n$ and that at least one of these tokens belongs to a server pool of class $i_{\br - 1}$ right before the $k^{\text{th}}$ arrival. Also, $\calD_n(\bq)$ is defined as in \eqref{seq: departure process for jlmu}.

As in Section \ref{subsub: construction for jlmu}, there exists a set of probability one $\Gamma_0$ with the next property. For each $\omega \in \Gamma_0$ and each $n$, there exists a unique  pair of c\`adl\`ag functions $\map{\bq_n(\omega)}{[0, \infty)}{Q_n}$ and $\map{\br_n(\omega)}{[0, \infty)}{\set{{r \in \N}}{r \geq 1}}$ that solve \eqref{eq: functional equation for threshold}; these functions can be constructed by forward induction on the jumps of the driving Poisson processes. The processes $(\bq_n, \br_n)$ are defined by extending the above solutions to $\Omega$, setting $\bq_n(\omega, t) = 0$ and $\br_n(\omega, t) = 0$ for all $t \geq 0$ and all $\omega \notin \Gamma_0$. In addition, we define
\begin{equation*}
\calA_n \defeq \calA_n(\bq_n, \br_n) \quad \text{and} \quad \calD_n \defeq \calD_n(\bq_n),
\end{equation*}
and we note that the sample paths of $\calA_n$, $\calD_n$ and $\bq_n$ lie in $D_{\ell_1}[0, \infty)$. The functional equation \eqref{eq: functional equation for threshold} can now be rewritten as follows:
\begin{subequations}
	\begin{align}
	&\bq_n(\omega, t) = \bq_n(\omega, 0) + \calA_n(\omega, t) - \calD_n(\omega, t), \label{seq: definition of q for threshold} \\
	&\br_n(\omega, t) = \br_n(\omega, 0) + \sum_{k = 1}^{\calN_n^\lambda(\omega, t)} \left[\indc_{I_{n, k}}(\omega) - \indc_{D_{n, k}}(\omega)\right], \label{eq: definition of r for threhsold}
	\end{align}
\end{subequations}
for all $\omega \in \Gamma_0$ and all $t \geq 0$.

\subsection{Relative compactness results}
\label{sub: relative compactness results}

Let $D_{\R^\calI}[0, \infty)$ denote the space of c\`adl\`ag functions on $[0, \infty)$ with values in $\R^\calI$. We endow the space $\R^\calI$ with the metric defined in Appendix \ref{app: relative compactness}, which is compatible with the product topology, and we equip $D_{\R^\calI}[0, \infty)$ with the topology of uniform convergence over compact sets. The following proposition is proved in Appendix \ref{app: relative compactness}.

\begin{proposition}
	\label{prop: relative compactness - product topology}
	Suppose that the load balancing policy is JLMU or SLTA. There exists a set of probability one $\Gamma_\infty$ where:
	\begin{subequations}
		\label{eq: properties within gamma infty}
		\begin{align}
		&\lim_{n \to \infty} \norm{\bq_n(0) - q_0}_1 = 0, \label{seq: convergence of initial conditions} \\
		&\lim_{n \to \infty} \sup_{t \in [0, T]} n^\gamma\left|\frac{1}{n}\calN_n^\lambda(t) - \lambda t\right| = 0, \label{seq: fslln for arrivals} \\
		&\lim_{n \to \infty} \sup_{t \in [0, \mu jT]} n^\gamma\left|\frac{1}{n}\calN_{(i, j)}(n t) - t\right| = 0 \quad \text{for all} \quad (i, j) \in \calI_+, \label{seq: fslln for departures}
		\end{align}
	\end{subequations}
	for all $T \geq 0$ and $\gamma \in [0, 1 / 2)$. Also, $\set{\calA_n(\omega)}{n \geq 1}$, $\set{\calD_n(\omega)}{n \geq 1}$ and $\set{\bq_n(\omega)}{n \geq 1}$ are relatively compact subsets of $D_{\R^\calI}[0, \infty)$ for all $\omega \in \Gamma_\infty$ and satisfy that the limit of every convergent subsequence is a function with locally Lipschitz coordinates. If the load balancing policy is SLTA, then there exists a random variable $R \geq 1$ such that, apart from the above properties, we also have on $\Gamma_\infty$ that:
	\begin{equation}
	\label{eq: additional property of threshold within gamma infty}
	\br_n(0) \leq R \quad \text{and} \quad \bq_n(0, i, j) < \alpha_n(i) \quad \text{for all} \quad (i, j) \menorigual \left(i_{\br_n(0)}, j_{\br_n(0)}\right) \quad \text{and} \quad n.
	\end{equation}
\end{proposition}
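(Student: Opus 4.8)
The plan is to build $\Gamma_\infty$ as a countable intersection of probability-one events and then argue deterministically on each sample path $\omega \in \Gamma_\infty$. First I would secure the three limits in \eqref{eq: properties within gamma infty}. Limit \eqref{seq: convergence of initial conditions} holds almost surely by the standing assumption \eqref{ass: size of classes and initial condition}, and for SLTA the additional property \eqref{eq: additional property of threshold within gamma infty} holds almost surely by \eqref{ass: boundedness and goodness of r}. For \eqref{seq: fslln for arrivals} and \eqref{seq: fslln for departures}, the right tool is a quantitative functional strong law of large numbers for the unit-rate driving Poisson processes $\{\calN\} \cup \set{\calN_\nu}{\nu \in \calI_+}$: by a strong approximation (law of the iterated logarithm) estimate, $\sup_{u \le M}\left|\frac1n \calN(nu) - u\right|$ is almost surely $O(n^{-1/2}\sqrt{\log\log n})$, so multiplying by $n^\gamma$ with $\gamma < 1/2$ still yields a vanishing bound; the range $\gamma < 1/2$ is exactly where this persists. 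Since both left-hand sides are nondecreasing in $T$ and in $\gamma$, it suffices to establish the limits on a countable dense set of pairs $(T,\gamma)$ and on the countably many driving processes, and then intersect the corresponding probability-one events.

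Fixing $\omega \in \Gamma_\infty$, I would establish relative compactness coordinate by coordinate, exploiting the product topology on $\R^\calI$ (coordinates with $j = 0$ are constant and need no argument). For a departure coordinate, the integrand in \eqref{seq: departure process for jlmu} is bounded by $\mu j\alpha_n(i) \le \mu j$, so its time-changed argument remains in $[0, \mu j T]$ for $t \in [0,T]$; then \eqref{seq: fslln for departures} shows that $\calD_n(\omega, \cdot, i, j)$ lies uniformly on $[0,T]$ within $o(1)$ of the $\mu j$-Lipschitz integral $\int_0^{\cdot} \mu j[\bq_n(\omega, s, i, j) - \bq_n(\omega, s, i, j + 1)]\,ds$. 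For an arrival coordinate the increments satisfy $0 \le \calA_n(\omega, t, i, j) - \calA_n(\omega, s, i, j) \le \frac1n[\calN_n^\lambda(\omega, t) - \calN_n^\lambda(\omega, s)]$, because each incoming task is dispatched to exactly one coordinate; by \eqref{seq: fslln for arrivals} the right-hand side is $\lambda(t - s) + o(1)$ uniformly. In both cases the jumps have size $1/n \to 0$ and the modulus of continuity is controlled, uniformly in $n$, up to a vanishing error, so the families are uniformly bounded and asymptotically equicontinuous on compacts; a standard Arzel\`a--Ascoli argument then yields, for every subsequence, a further subsequence converging uniformly on compacts to a continuous limit, and the increment bounds force these limits to be $\mu j$- and $\lambda$-Lipschitz, respectively. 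Notably, this argument is insensitive to the dispatching rule, so JLMU and SLTA are treated uniformly.

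Relative compactness of $\set{\bq_n(\omega)}{n \geq 1}$ then follows at once from the functional equations \eqref{eq: definition of q for jlmu} and \eqref{seq: definition of q for threshold}, which express $\bq_n = \bq_n(0) + \calA_n - \calD_n$, together with \eqref{seq: convergence of initial conditions}; its coordinate limits are differences of Lipschitz functions and hence locally Lipschitz. To pass from these coordinatewise statements to relative compactness in $D_{\R^\calI}[0, \infty)$ under the product metric, I would extract a diagonal subsequence over the countably many coordinates and invoke the summable weighting of the metric (deferred to Appendix \ref{app: relative compactness}) to conclude uniform-on-compacts convergence in $\R^\calI$ to a limit with locally Lipschitz coordinates. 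Finally, the additional SLTA property \eqref{eq: additional property of threshold within gamma infty} is inherited directly from \eqref{ass: boundedness and goodness of r}.

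The main obstacle I anticipate is the first step: establishing the $n^\gamma$-rate functional law of large numbers uniformly on compacts, simultaneously for every $\gamma < 1/2$ and every $T$, on a single probability-one set. This needs a careful strong-approximation or iterated-logarithm bound for Poisson processes combined with Borel--Cantelli, rather than the plain functional strong law of large numbers. By contrast, once these limits are available the relative compactness arguments are robust; the only policy-specific input is that every arrival is routed to a single coordinate, which keeps the arrival coordinates monotone with total increments controlled by \eqref{seq: fslln for arrivals}.
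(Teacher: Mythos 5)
Your proposal is correct and takes essentially the same approach as the paper's proof: the paper likewise builds $\Gamma_\infty$ as a countable intersection driven by a refined strong law of large numbers for the unit-rate Poisson processes (which it imports from \cite{goldsztajn2021learning} rather than deriving from a law-of-the-iterated-logarithm estimate as you sketch), establishes the identical approximate-Lipschitz increment bounds for $\calA_n(i,j)$ and $\calD_n(i,j)$, and formalizes your ``asymptotic Arzel\`a--Ascoli'' step via Bramson's approximation lemma \cite{bramson1998state} and compactness of Lipschitz balls, before recovering $\set{\bq_n}{n \geq 1}$ from the functional equation and diagonalizing over the countably many coordinates and integer time horizons. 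The only substantive difference is that you prove the rate-$n^\gamma$ functional law yourself, which is a valid (if more laborious) substitute for the paper's citation.
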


The product topology of $\R^\calI$ is coarser than the topology of $\ell_1$, thus convergence in $D_{\R^\calI}[0, \infty)$ does not imply convergence in $D_{\ell_1}[0, \infty)$. The following technical lemma is used to demonstrate that $\set{\calA_n}{n \geq 1}$, $\set{\calD_n}{n \geq 1}$ and $\set{\bq_n}{n \geq 1}$ are relatively compact in $D_{\ell_1}[0, \infty)$ with probability one; the proof is given in Appendix \ref{app: auxiliary results}.

\begin{lemma}
	\label{lem: set of nice sample paths}
	Suppose that the load balancing policy is JLMU or SLTA. There exists a set of probability one $\Gamma \subset \Gamma_\infty$ with the following property. For each $\omega \in \Gamma$ and $T \geq 0$, there exist $j_T(\omega)$ and $n_T(\omega)$ such that
	\begin{equation*}
	\calA_n(\omega, t, i, j) = 0 \quad \text{for all} \quad t \in [0, T], \quad 1 \leq i \leq m, \quad j > j_T(\omega) \quad \text{and} \quad n \geq n_T(\omega).
	\end{equation*}
	Also, if the load balancing policy is SLTA, then there exists $R_T(\omega)$ such that
	\begin{equation*}
	\br_n(\omega, t) \leq R_T(\omega) \quad \text{for all} \quad t \in [0, T] \quad \text{and} \quad n \geq n_T(\omega).
	\end{equation*}
\end{lemma}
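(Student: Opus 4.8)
The plan is to read this as a tightness-in-$j$ statement: the product-topology relative compactness from Proposition \ref{prop: relative compactness - product topology} fails to upgrade to $\ell_1$ precisely because high occupancy levels are invisible in the product topology, so I must show that, up to any horizon $T$ and uniformly in large $n$, arrivals never reach levels beyond a finite $j_T(\omega)$. I would work throughout on $\Gamma_\infty$ and first record two preliminary facts. Since departures only decrease the number of tasks, $\bs_n(t) \leq \bs_n(0) + \tfrac{1}{n}\calN_n^\lambda(t)$ for the process $\bs_n$ in \eqref{eq: definition of s_n}; combining $\bs_n(0) \to s_0$ (a consequence of \eqref{seq: convergence of initial conditions}) with \eqref{seq: fslln for arrivals} gives, for each $\omega \in \Gamma_\infty$ and each $T \geq 0$, an index $n_T(\omega)$ and a constant $B_T(\omega) \defeq s_0(\omega) + \lambda T + 1$ such that $\sup_{t \in [0, T]} \bs_n(\omega, t) \leq B_T(\omega)$ for all $n \geq n_T(\omega)$. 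Second, because there are finitely many classes with $\alpha(i) > 0$, after enlarging $n_T(\omega)$ I may assume $\alpha_n(i) \geq \tfrac12\alpha_{\min}$ for all $i$ and $n \geq n_T(\omega)$, where $\alpha_{\min} \defeq \min_k \alpha(k) > 0$.

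For JLMU the key structural observation is that an arrival is routed to the coordinate $\sigma(\bq_n)$ of \eqref{eq: definition of sigma}. If this coordinate equals $(i, j)$, then by maximality every $(i, l) \mayor (i, j)$ satisfies $\bq_n(i, l - 1) = \bq_n(i, l)$; chaining these equalities from $\bq_n(i, 0) = \alpha_n(i)$ down to level $j - 1$ yields $\bq_n(i, j - 1) = \alpha_n(i)$, i.e. every server pool of class $i$ has at least $j - 1$ tasks. Hence $\bs_n \geq (j - 1)\alpha_n(i) \geq (j - 1)\tfrac12\alpha_{\min}$ at that arrival epoch, so an arrival to level $j - 1$ forces $j \leq 1 + 2 B_T(\omega)/\alpha_{\min}$. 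Taking $j_T(\omega) \defeq 1 + 2 B_T(\omega)/\alpha_{\min}$ then gives $\calA_n(\omega, t, i, j) = 0$ for $t \in [0, T]$, $j > j_T(\omega)$ and $n \geq n_T(\omega)$.

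For SLTA I would bound $\br_n$ first and then deduce the level bound. By \eqref{eq: additional property of threshold within gamma infty} and Remark \ref{rem: dynamics of threshold}, property \eqref{ass: goodness of r} persists, so every task is routed to a coordinate $(i, j) \mayorigual (i_{\br_n}, j_{\br_n})$; combined with the token rules this forces any arrival level $j$ to satisfy $j \leq \br_n$, since a green (resp.\ yellow) token at level $j - 1$ requires $j \leq \ell_i(\br_n)$ (resp.\ $j \leq j_{\br_n}$), and both bounds are at most $\br_n$ because the class-$i$ coordinates ranked above $(i_{\br_n}, j_{\br_n})$ are exactly $(i, 1), \dots, (i, \ell_i(\br_n))$, whence $\sum_i \ell_i(\br_n) = \br_n - 1$. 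To bound $\br_n$ itself, recall that $\br_n$ increases only at the arrival epochs in $I_{n, k}$, where by construction there are no green tokens; this means $\bq_n(i, \ell_i(\br_n)) = \alpha_n(i)$ for every $i$, so $\bs_n \geq \sum_i \ell_i(\br_n)\alpha_n(i) \geq \tfrac12\alpha_{\min}(\br_n - 1)$ just before the increment. As $\br_n$ starts at $\br_n(0) \leq R$ and changes by one unit at a time, its running maximum on $[0, T]$ is attained either at time zero or immediately after an increment, giving $\sup_{t \in [0, T]} \br_n(\omega, t) \leq R_T(\omega) \defeq \max\{R, 2 + 2 B_T(\omega)/\alpha_{\min}\}$ for $n \geq n_T(\omega)$; then $j_T(\omega) \defeq R_T(\omega)$ handles the level bound as well.

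The total-task estimate and the JLMU chaining are routine. The main obstacle is the SLTA threshold bound: one must verify that membership in $I_{n, k}$ genuinely rules out green tokens and hence delivers $\bs_n \geq \tfrac12\alpha_{\min}(\br_n - 1)$, and argue that $\br_n$ cannot accumulate a large value between increments, which relies on its $\pm 1$ step structure together with the uniform control $B_T(\omega)$ on $\bs_n$. The supporting identity $\sum_i \ell_i(\br_n) = \br_n - 1$ and the routing restriction of Remark \ref{rem: dynamics of threshold} are what make the level bound $j \leq \br_n$ exact.
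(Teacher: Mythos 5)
Your proof is correct, but it takes a genuinely different route from the paper's. The paper first controls the number of arrivals on $[0,T]$ (via a Chernoff bound and Borel--Cantelli, giving $\calN_n^\lambda(T) \leq nM_T$ eventually), and then runs an averaging argument over a block of levels: since the tail of $q_0$ beyond some $j_0$ is small and at most $nM_T$ arrivals can be spread over the $\ceil{2M_T/\varepsilon}$ levels above $j_0$, monotonicity of $\bq_n(t,i,\scdot)$ forces $\bq_n(t,i,j_T) \leq \varepsilon < \alpha_n(i)$ uniformly on $[0,T]$; the lemma then follows because this occupancy estimate caps $\sigma_j(\bq_n)$ for JLMU and, through the token conditions, blocks any threshold increment past an index $r_T$ determined by $j_T$ for SLTA. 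You instead bound the normalized total mass, $\bs_n \leq \bs_n(0) + \tfrac{1}{n}\calN_n^\lambda \leq B_T$, directly from \eqref{seq: convergence of initial conditions} and \eqref{seq: fslln for arrivals} (so you can simply take $\Gamma = \Gamma_\infty$, dispensing with the paper's extra Chernoff/Borel--Cantelli set), and exploit a pigeonhole consequence of the policies' structure: under JLMU an arrival at coordinates $(i,j)$ forces every class-$i$ pool to hold at least $j-1$ tasks, and under SLTA an increment of $\br_n$ forces every pool to be full up to its threshold, so either event costs total mass at least $(j-1)\alpha_{\min}/2$, respectively $(\br_n - 1)\alpha_{\min}/2$, which $B_T$ caps. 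Both arguments are sound. Yours is more elementary and yields explicit constants $j_T = 1 + 2B_T/\alpha_{\min}$ and $R_T = \max\{R,\, 2 + 2B_T/\alpha_{\min}\}$; the paper's intermediate estimate $\bq_n(t,i,j_T) \leq \varepsilon$ is a statement about the occupancy state itself rather than only about where arrivals can go, which is the form its SLTA threshold argument reuses. One presentational remark: your JLMU chaining of the equalities $\bq_n(i,l-1) = \bq_n(i,l)$ down to level $j-1$ silently uses that $(i,l) \mayor (i,j)$ for every $l < j$, i.e.\ the within-class monotonicity of $\menor$ coming from concavity of $u_i$ plus the dictionary tie-break; you justify exactly this fact in the SLTA paragraph (via $\sum_i \ell_i(r) = r-1$), but it deserves to be cited in the JLMU step as well, since it is where concavity enters that argument.
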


\begin{proposition}
	\label{prop: relative compactness - l1}
	The sequences $\set{\calA_n(\omega)}{n \geq 1}$, $\set{\calD_n(\omega)}{n \geq 1}$ and $\set{\bq_n(\omega)}{n \geq 1}$ are relatively compact in $D_{\ell_1}[0, \infty)$ for all $\omega \in \Gamma$.
\end{proposition}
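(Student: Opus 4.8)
The plan is to upgrade the relative compactness in the product topology furnished by Proposition \ref{prop: relative compactness - product topology} to relative compactness in $D_{\ell_1}[0, \infty)$, by establishing uniform control over the $\ell_1$-tails of the three families of processes. Fix $\omega \in \Gamma$ and an arbitrary subsequence; by Proposition \ref{prop: relative compactness - product topology} there is a further subsequence along which $\bq_n(\omega)$ and the associated $\calA_n(\omega)$, $\calD_n(\omega)$ all converge in $D_{\R^\calI}[0, \infty)$ to limits with locally Lipschitz coordinates. I would show that this convergence in fact takes place in $D_{\ell_1}[0, \infty)$; since $\omega$ and the subsequence are arbitrary, this proves the proposition.

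The crux, and the \emph{main obstacle}, is the tail estimate. Fix $T \geq 0$ and let $j_T(\omega)$, $n_T(\omega)$ be as in Lemma \ref{lem: set of nice sample paths}, so that $\calA_n(\omega, t, i, j) = 0$ for all $t \in [0, T]$, all $j > j_T(\omega)$ and all $n \geq n_T(\omega)$. For such $(i, j)$ the functional equation \eqref{eq: definition of q for jlmu} (respectively \eqref{seq: definition of q for threshold} for SLTA) reduces to $\bq_n(\omega, t, i, j) = \bq_n(\omega, 0, i, j) - \calD_n(\omega, t, i, j)$. Since $\calD_n$ is non-negative and non-decreasing and $\bq_n$ takes values in $Q_n$, this yields simultaneously $\bq_n(\omega, t, i, j) \leq \bq_n(\omega, 0, i, j)$ and $\calD_n(\omega, t, i, j) = \bq_n(\omega, 0, i, j) - \bq_n(\omega, t, i, j) \leq \bq_n(\omega, 0, i, j)$. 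Hence, for $J \geq j_T(\omega)$, the $\ell_1$-tails of both $\bq_n(\omega, t)$ and $\calD_n(\omega, t)$ are dominated, uniformly over $t \in [0, T]$ and $n \geq n_T(\omega)$, by $\sum_{i = 1}^m \sum_{j > J} \bq_n(\omega, 0, i, j)$, while the tail of $\calA_n(\omega, t)$ vanishes identically. Finally, because $\bq_n(\omega, 0) \to q_0$ in $\ell_1$ by \eqref{seq: convergence of initial conditions} with $q_0 \in \ell_1$, the estimate $\sum_{i = 1}^m \sum_{j > J} \bq_n(\omega, 0, i, j) \leq \norm{\bq_n(\omega, 0) - q_0}_1 + \sum_{i = 1}^m \sum_{j > J} q_0(i, j)$ shows that this common tail bound drops below any prescribed $\epsilon$ once $J$ and $n$ are large. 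This uniform tail control is exactly what the product topology cannot see.

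With the tail control in hand the upgrade is routine. Given $\epsilon > 0$, choose $J$ and $n_0$ so that the tail bound above is below $\epsilon$ for $n \geq n_0$; by Fatou's lemma the same bound passes to the pointwise limits of $\bq_n(\omega)$, $\calD_n(\omega)$, and $\calA_n(\omega)$, uniformly in $t \in [0, T]$. Splitting the $\ell_1$ distance between a process and its limit into the finitely many head coordinates $j \leq J$ and the tail $j > J$, the head contribution tends to zero uniformly on $[0, T]$ because convergence in $D_{\R^\calI}[0, \infty)$ is uniform over compact time sets coordinatewise, whereas the tail contribution is at most $2\epsilon$ by the uniform bound and its Fatou counterpart. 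Letting $\epsilon \downarrow 0$ gives $\sup_{t \in [0, T]} \norm{\scdot}_1 \to 0$ for each of the three processes, and since $T$ is arbitrary the selected subsequences converge in $D_{\ell_1}[0, \infty)$; the limits are $\ell_1$-valued and c\`adl\`ag, being uniform-over-compacts $\ell_1$-limits of such functions. The argument is verbatim the same for JLMU and SLTA, since Lemma \ref{lem: set of nice sample paths} and the relevant functional equations have the same form in both cases.
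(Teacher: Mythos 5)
Your proposal is correct and follows essentially the same route as the paper: both upgrade the product-topology relative compactness of Proposition \ref{prop: relative compactness - product topology} to $\ell_1$ by combining uniform tail control (from Lemma \ref{lem: set of nice sample paths}, which kills arrivals beyond level $j_T$, together with the $\ell_1$ convergence of the initial states in \eqref{seq: convergence of initial conditions}) with coordinatewise uniform convergence of the finitely many head coordinates. The only cosmetic difference is that you bound the tail of $\calD_n$ directly via $\calD_n(t,i,j) \leq \bq_n(0,i,j)$, whereas the paper deduces the claim for $\calD_n$ algebraically from the functional equation \eqref{eq: definition of q for jlmu} (resp. \eqref{seq: definition of q for threshold}) once the claims for $\bq_n$ and $\calA_n$ are in hand; both are valid.
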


\begin{proof}
	We fix some $\omega \in \Gamma$ which we omit from the notation. For $\set{\calA_n}{n \geq 1}$, the claim is a straightforward consequence of Proposition \ref{prop: relative compactness - product topology} and Lemma~\ref{lem: set of nice sample paths}. Below we prove the claim for $\set{\bq_n}{n \geq 1}$. If the load balancing policy is JLMU, then \eqref{eq: definition of q for jlmu} and \eqref{seq: convergence of initial conditions} imply that the claim also holds for $\set{\calD_n}{n \geq 1}$. If the load balancing policy is SLTA, then we must invoke \eqref{seq: definition of q for threshold} instead of \eqref{eq: definition of q for jlmu}.
	
	Consider any increasing sequence of natural numbers. By Proposition \ref{prop: relative compactness - product topology}, there exists a subsequence $\calK$ such that $\set{\bq_k}{k \in \calK}$ converges in $D_{\R^\calI}[0, \infty)$ to a function $\bq \in D_{\R^\calI}[0, \infty)$ that satisfies $\bq(0) = q_0$ and has locally Lipschitz coordinates. Therefore, it suffices to prove that the latter limit in fact holds in $D_{\ell_1}[0, \infty)$. More specifically, we have to demonstrate that $\bq \in D_{\ell_1}[0, \infty)$ and that
	\begin{equation*}
	\lim_{k \to \infty} \sup_{t \in [0, T]} \norm{\bq_k(t) - \bq(t)}_1 = 0 \quad \text{for all} \quad T \geq 0.
	\end{equation*}
	
	For this purpose, fix arbitrary $T \geq 0$ and $\varepsilon > 0$. In addition, let $j_T$ and $n_T$ be as in the statement of Lemma \ref{lem: set of nice sample paths}, which implies that $\bq_k(i, j)$ and $\bq(i, j)$ are non-increasing on $[0, T]$ provided that $j > j_T$ and $k \geq n_T$. The coordinates of $\bq$ are continuous, thus we may conclude from the monotone convergence theorem that $\norm{\bq(s) - \bq(t)}_1 \to 0$ as $s \to t \in [0, T]$ monotonically, from above or below. Since $T$ is arbitrary, $\bq$ is continuous with respect to $\norm{\scdot}_1$ and, in particular, $\bq \in D_{\ell_1}[0, \infty)$.
	
	For all $t \in [0, T]$, $l \geq j_T$ and $k \geq n_T$, we have
	\begin{align*}
	\sum_{i = 1}^m \sum_{j > l} \left|\bq_k(t, i, j) - \bq(t, i, j)\right| &\leq \sum_{i = 1}^m \sum_{j > l} \bq_k(t, i, j) + \sum_{i = 1}^m \sum_{j > l} \bq(t, i, j) \\
	&\leq \sum_{i = 1}^m \sum_{j > l} \bq_k(0, i, j) + \sum_{i = 1}^m \sum_{j > l} q_0(i, j) \\
	&\leq \norm{\bq_k(0) - q_0}_1 + 2\sum_{i = 1}^m \sum_{j > l} q_0(i, j).
	\end{align*}
	Since $q_0 \in \ell_1$ and $\norm{\bq_k(0) - q_0}_1 \to 0$ with $k$, there exist $j_\varepsilon \geq j_T$ and $k_0 \geq n_T$ such that the following inequality holds for all $t \in [0, T]$ and $k \geq k_0$:
	\begin{equation*}
	\sum_{i = 1}^m \sum_{j > j_\varepsilon} \left|\bq_k(t, i, j) - \bq(t, i, j)\right| \leq \norm{\bq_k(0) - q_0}_1 + 2\sum_{i = 1}^m \sum_{j > j_\varepsilon} q_0(i, j) \leq \frac{\varepsilon}{2}.
	\end{equation*}
	
	Note that convergence in $D_{\R^\calI}[0, \infty)$ implies uniform convergence over compact sets of the coordinate functions. In particular, there exists $k_\varepsilon \geq k_0$ such that
	\begin{equation*}
	\sup_{t \in [0, T]} \sum_{i = 1}^m \sum_{j = 0}^{j_\varepsilon} \left|\bq_k(t, i, j) - \bq(t, i, j)\right| \leq \frac{\varepsilon}{2} \quad \text{for all} \quad k \geq k_\varepsilon. 
	\end{equation*}
	Therefore, we conclude that
	\begin{equation*}
	\sup_{t \in [0, T]} \norm{\bq_k(t) - \bq(t)}_1 \leq \varepsilon \quad \text{for all} \quad k \geq k_\varepsilon,
	\end{equation*}
	which completes the proof since $T$ and $\varepsilon$ are arbitrary.
\end{proof}

\section{Limiting behavior of JLMU}
\label{sec: limiting behavior of jlmu}

In this section we assume that JLMU is used and we prove Theorems \ref{the: uniqueness of fluid trajectories}, \ref{the: global asymptotic stability} and \ref{the: fluid limit of jlmu}. The first two theorems are proved in Section \ref{sub: properties of fluid trajectories}, which is devoted to the study of fluid trajectories. The proof of Theorem \ref{the: fluid limit of jlmu} is provided in Section \ref{sub: proof of the fluid limit}.

\subsection{Properties of fluid trajectories}
\label{sub: properties of fluid trajectories}

In order to prove the uniqueness of fluid trajectories, we show that every fluid trajectory satisfies a system of equations involving one-dimensional Skorokhod reflection mappings. Then we use a Lipschitz property of these mappings to prove that the system of equations cannot have multiple solutions for a given initial condition. The proof strategy is inspired by a fluid limit derived in \cite{bhamidi2022near} using Skorokhod reflection mappings; this fluid limit corresponds to a system of parallel single-server queues with a JSQ policy.

Consider the space $D[0, \infty)$ of all real c\`adl\`ag functions defined on $[0, \infty)$ and let
\begin{equation*}
\norm{\bx}_T \defeq \sup_{t \in [0, T]} |\bx(t)| \quad \text{for all} \quad \bx \in D[0, \infty) \quad \text{and} \quad T \geq 0.
\end{equation*}
The next lemma introduces the one-dimensional Skorokhod mappings with upper reflecting barrier; a proof is provided in Appendix \ref{app: auxiliary results}.

\begin{lemma}
	\label{lem: skorokhod mappings with upper reflecting barrier}
	Fix $\alpha \in \R$ and suppose that $\bx \in D[0, \infty)$ is such that $\bx(0) \leq \alpha$. Then there exist unique $\by, \bz \in D[0, \infty)$ such that the following statements hold.
	\begin{enumerate}
		\item[(a)] $\bz(t) = \bx(t) - \by(t) \leq \alpha$ for all $t \geq 0$.
		
		\item[(b)] $\by$ is non-decreasing, thus almost everywhere differentiable, and $\by(0) = 0$.
		
		\item[(c)] $\by$ is flat off $\set{t \geq 0}{\bz(t) = \alpha}$, i.e., $\dot{\by}(t) \ind{\bz(t) < \alpha} = 0$ almost everywhere.
	\end{enumerate}
	The map $(\Psi_\alpha, \Phi_\alpha)$ such that $\Psi_\alpha(\bx) = \by$ and $\Phi_\alpha(\bx) = \bz$ is called the one-dimensional Skorokhod mapping with upper reflecting barrier at $\alpha$ and satisfies
	\begin{equation}
	\label{eq: explicit expressions for reflection mappings}
	\Psi_\alpha(\bx)(t) = \sup_{s \in [0, t]} \left[\bx(s) - \alpha\right]^+ \quad \text{and} \quad \Phi_\alpha(\bx)(t) = \bx(t) - \Psi_\alpha(\bx)(t).
	\end{equation}
	In addition, if $\bx, \by \in D[0, \infty)$ are any two functions such that $\bx(0), \by(0) \leq \alpha$, then for each $T \geq 0$ we have the following Lipschitz properties:
	\begin{equation*}
	\norm{\Psi_\alpha(\bx) - \Psi_\alpha(\by)}_T \leq \norm{\bx - \by}_T \quad \text{and} \quad \norm{\Phi_\alpha(\bx) - \Phi_\alpha(\by)}_T \leq 2 \norm{\bx - \by}_T.
	\end{equation*} 
\end{lemma}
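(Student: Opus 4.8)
The plan is to prove existence by exhibiting the explicit formula as a solution, then prove that this solution is forced (uniqueness), and finally read off the Lipschitz bounds directly from the explicit expression. For existence I would \emph{define} $\by$ and $\bz$ through \eqref{eq: explicit expressions for reflection mappings}, namely $\by(t) := \sup_{s \in [0, t]}[\bx(s) - \alpha]^+$ and $\bz := \bx - \by$, and verify (a)--(c). That $\by$ is non-decreasing with $\by(0) = [\bx(0) - \alpha]^+ = 0$ is immediate from $\bx(0) \leq \alpha$, and one checks that the running supremum of a càdlàg function is again càdlàg by using right-continuity of $\bx$ to control the supremum over shrinking intervals $(t, u]$. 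Property (a) follows because $\by(t) \geq [\bx(t) - \alpha]^+ \geq \bx(t) - \alpha$, so $\bz(t) = \bx(t) - \by(t) \leq \alpha$.

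The delicate point of existence is the flatness condition (c). I would argue that the measure $d\by$ charges only the set $\{t : \bz(t) = \alpha\}$. If $\bz(t_0) < \alpha$, then $\by(t_0) > \bx(t_0) - \alpha$, and right-continuity of $\bx$ yields an interval $[t_0, t_0 + \delta)$ on which $\bx(s) - \alpha$ stays strictly below $\by(t_0)$; since $\by(t_0) \geq 0$, the running supremum does not grow there, so $\by$ is constant on that interval and $\dot\by(t_0)\ind{\bz(t_0) < \alpha} = 0$ wherever the derivative exists. A jump of $\by$ occurs only when $\bx$ sets a new record, and at such an instant $\by(t) = \bx(t) - \alpha$, i.e. $\bz(t) = \alpha$, so the jump part of $d\by$ is carried by the barrier as well.

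For uniqueness, let $(\by, \bz)$ be any pair satisfying (a)--(c). Conditions (a) and (b) give $\by(t) \geq [\bx(t) - \alpha]^+$ for every $t$, and monotonicity of $\by$ upgrades this to $\by(t) \geq \sup_{s \leq t}[\bx(s) - \alpha]^+ = \Psi_\alpha(\bx)(t)$. For the reverse inequality I would use (c): since $d\by$ is supported on $A := \{s \leq t : \bz(s) = \alpha\}$ and on that set $\by(s) = \bx(s) - \alpha \leq \sup_{r \leq t}[\bx(r) - \alpha]^+$, approximating $\sup A$ from within the support and invoking that $\by$ is constant on $(\sup A, t]$ forces $\by(t) \leq \Psi_\alpha(\bx)(t)$. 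Hence $\by = \Psi_\alpha(\bx)$ and $\bz = \Phi_\alpha(\bx)$. I expect the handling of càdlàg jumps in this last step to be the main obstacle: one must be careful that $\sup A$ may fail to lie in $A$, and that $\by$ really is constant on $(\sup A, t]$, both of which follow from the flatness condition together with the fact that $\by$ can only jump while on the barrier.

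Finally, the Lipschitz estimates are routine from the explicit formula. For any $\bx, \by \in D[0, \infty)$ with $\bx(0), \by(0) \leq \alpha$ and any $s$, the inequality $[\bx(s) - \alpha]^+ \leq [\by(s) - \alpha]^+ + |\bx(s) - \by(s)|$ holds because $[\scdot]^+$ is $1$-Lipschitz; taking the supremum over $[0, T]$ and then the symmetric bound gives $\norm{\Psi_\alpha(\bx) - \Psi_\alpha(\by)}_T \leq \norm{\bx - \by}_T$. The bound for $\Phi_\alpha$ then follows from $\Phi_\alpha(\bx) - \Phi_\alpha(\by) = (\bx - \by) - (\Psi_\alpha(\bx) - \Psi_\alpha(\by))$ and the triangle inequality, yielding the factor $2$.
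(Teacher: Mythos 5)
You prove the lemma from first principles, whereas the paper does not: it quotes the standard one-dimensional Skorokhod mapping with \emph{lower} reflecting barrier at zero from the literature (Chen--Yao; Karatzas--Shreve) and transports everything through the identities $\Psi_\alpha(\bx) = \Psi(\alpha - \bx)$ and $\Phi_\alpha(\bx) = \alpha - \Phi(\alpha - \bx)$, so existence, uniqueness and both Lipschitz bounds are inherited rather than re-derived. Your route --- verify (a)--(c) for the explicit running supremum, sandwich an arbitrary solution to obtain uniqueness, and read the Lipschitz constants off the formula --- is self-contained and makes explicit the jump structure (records of $\bx$) that the paper's reduction hides; the paper's route is shorter and delegates exactly the delicate points you single out. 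Your existence verification and your Lipschitz estimates are correct as sketched.

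The one step that needs repair is the opening move of your uniqueness argument: ``since $d\by$ is supported on $A$''. Condition (c) as written is only the almost-everywhere derivative statement $\dot{\by}(t)\ind{\bz(t) < \alpha} = 0$ a.e., and this does \emph{not} imply that the Lebesgue--Stieltjes measure $d\by$ gives no mass to $\set{t \geq 0}{\bz(t) < \alpha}$: a non-decreasing c\`adl\`ag function can have vanishing derivative almost everywhere and still increase (a Cantor-type singular function). Indeed, under the literal reading of (c) the uniqueness assertion itself fails: for $\bx \equiv \alpha$, both $(\by, \bz) = (0, \alpha)$ and $(\by, \bz) = (C, \alpha - C)$, with $C$ a Cantor function, satisfy (a)--(c). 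So uniqueness requires interpreting ``flat off'' in the measure sense, $\int_0^\infty \ind{\bz(t) < \alpha}\, d\by(t) = 0$; with that reading, your sandwich argument (including the care you flag about $\sup A$ possibly lying outside $A$, handled via left limits of $\bx$) goes through, and the canonical solution does satisfy this stronger condition, precisely by the local-constancy and jumps-only-on-the-barrier observations you make in the existence part. To be fair, this imprecision originates in the paper: its conditions (c) and (c') are stated in the same loose form, and its appeal to the cited lower-barrier results silently uses the measure form; moreover, in the paper's only application (Lemma \ref{lem: system of reflection equations}) the functions playing the role of $\by$ are absolutely continuous, where the two readings coincide. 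But in a self-contained proof such as yours the strengthening must be stated explicitly, since it is exactly what rules out singular increase of $\by$ off the barrier.
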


Consider c\`adl\`ag functions $\map{\bx}{[0, \infty)}{\R^\calI}$ such that $\bx(0, i, j) \leq \alpha(i)$ for all $(i, j) \in \calI_+$ and families of c\`adl\`ag functions $\map{\bv_k}{[0, \infty)}{\R}$ such that $\bv_k(0) = 0$ for all $k \in \N$. Define for each $k \geq 1$ a mapping $\Theta_k$ as follows:
\begin{equation*}
\Theta_k(\bx, \bv)(t) \defeq \bx(0, i_k, j_k) + \bv_{k - 1}(t) - \int_0^t \mu j_k \left[\bx(s, i_k, j_k) - \bx(s, i_k, j_k + 1)\right]ds;
\end{equation*}
here recall the enumeration of $\calI_+$ introduced in Section \ref{sub: threshold}. The next lemma establishes that $(\bq, \bw)$ satisfies the following set of conditions if $\bq$ is a fluid trajectory and $\bw$ is defined suitably in terms of $\bq$.
\begin{subequations}
	\label{eq: system of reflection equations}
	\begin{align}
	&\bv_k = \Psi_{\alpha(i_k)}\left[\Theta_k(\bx, \bv)\right] \quad \text{for all} \quad k \geq 1, \label{seq: system of reflection equations 1}\\
	&\bx(i_k, j_k) = \Phi_{\alpha(i_k)} \left[\Theta_k(\bx, \bv)\right] \quad \text{for all} \quad k \geq 1, \label{seq: system of reflection equations 2}\\
	&\bv_0(t) = \lambda t \quad \text{for all} \quad t \geq 0, \label{seq: system of reflection equations 3}\\
	&\bx(t, i, 0) = \alpha(i) \quad \text{for all} \quad t \geq 0 \quad \text{and} \quad i. \label{seq: system of reflection equations 4}
	\end{align}
\end{subequations}

\begin{lemma}
	\label{lem: system of reflection equations}
	Let $\bq$ be a fluid trajectory and define $\br$ such that
	\begin{equation*}
	\sigma\left(\bq(t)\right) = \left(i_{\br(t)}, j_{\br(t)}\right) \quad \text{for all} \quad t \geq 0.
	\end{equation*}
	Also, consider the absolutely continuous functions $\bw_k$ such that $\bw_k(0) = 0$ and
	\begin{equation*}
	\dot{\bw}_k(t) = \left[\lambda - \sum_{s = 1}^k \mu j_s \left[\alpha(i_s) - \bq(t, i_s, j_s + 1)\right]\right]\ind{k < \br(t)} \quad \text{for all} \quad k \geq 0.
	\end{equation*}
	Then $(\bq, \bw)$ satisfies \eqref{eq: system of reflection equations}.
\end{lemma}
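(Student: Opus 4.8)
The plan is to dispatch the two elementary identities \eqref{seq: system of reflection equations 3} and \eqref{seq: system of reflection equations 4} directly, and then reduce the reflection identities \eqref{seq: system of reflection equations 1} and \eqref{seq: system of reflection equations 2} to the uniqueness statement of Lemma \ref{lem: skorokhod mappings with upper reflecting barrier}. Identity \eqref{seq: system of reflection equations 4} is immediate from $\bq(t) \in Q$. For \eqref{seq: system of reflection equations 3}, the sum defining $\dot{\bw}_0$ is empty and $\br(t) \geq 1$ for all $t$, since the set $\set{(i, j) \in \calI_+}{\bq(t, i, j - 1) > \bq(t, i, j)}$ defining $\sigma(\bq(t))$ is nonempty (as $\bq(t) \in \ell_1$ forces $\bq(t, i, j) \to 0$ in $j$ while $\bq(t, i, 0) = \alpha(i) > 0$); hence $\ind{0 < \br(t)} = 1$, giving $\dot{\bw}_0 \equiv \lambda$ and $\bw_0(t) = \lambda t$.

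For the reflection identities I would fix $k \geq 1$ and set $\bx = \Theta_k(\bq, \bw)$ and $\alpha = \alpha(i_k)$. Since $\bx(0) = \bq(0, i_k, j_k) \leq \alpha(i_k)$, Lemma \ref{lem: skorokhod mappings with upper reflecting barrier} applies, and by the uniqueness it guarantees it suffices to check that the pair $(\by, \bz) = (\bw_k, \bq(\scdot, i_k, j_k))$ satisfies conditions (a)--(c) of that lemma. Condition (a) has two parts. The decomposition $\bz = \bx - \by$, once the definitions are substituted, matches at $t = 0$ and involves only absolutely continuous functions, so after cancelling the departure integral against \eqref{seq: fluid dynamics 1} it is equivalent to the pointwise identity $\dot{\bw}_{k - 1}(t) - \dot{\bw}_k(t) = \Lambda(\bq(t), i_k, j_k)$ for a.e. $t$. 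I would verify this by a three-case comparison according to whether $k < \br(t)$, $k = \br(t)$ or $k > \br(t)$ — equivalently $(i_k, j_k) \mayor \sigma(\bq(t))$, $(i_k, j_k) = \sigma(\bq(t))$ or $(i_k, j_k) \menor \sigma(\bq(t))$ — in each of which the telescoping difference of the two partial sums in $\dot{\bw}_{k-1} - \dot{\bw}_k$ reproduces exactly the matching branch of $\Lambda$; each case is a one-line check once the indicators $\ind{k < \br(t)}$ and $\ind{k - 1 < \br(t)} $ are evaluated. The remaining part of (a), namely $\bz = \bq(\scdot, i_k, j_k) \leq \alpha(i_k)$, is again part of $\bq(t) \in Q$.

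For condition (b), $\bw_k(0) = 0$ holds by definition, and monotonicity follows because $\dot{\bw}_k$ vanishes when $k \geq \br(t)$, while for $k < \br(t)$ it equals $\lambda - \sum_{s = 1}^k \mu j_s[\alpha(i_s) - \bq(t, i_s, j_s + 1)]$, which dominates $\Lambda(\bq(t), \sigma(\bq(t))) \geq 0$ from \eqref{seq: fluid dynamics 2} since every summand is nonnegative (as $\bq(t, i_s, j_s + 1) \leq \alpha(i_s)$) and this sum runs over strictly fewer terms than the one defining $\Lambda$ at $\sigma(\bq(t))$. The flatness condition (c) is the crux of the argument: I must show $\dot{\bw}_k(t)\ind{\bq(t, i_k, j_k) < \alpha(i_k)} = 0$ a.e. Because $\dot{\bw}_k$ can be nonzero only when $k < \br(t)$, i.e.\ $(i_k, j_k) \mayor \sigma(\bq(t))$, it suffices to prove that $(i_k, j_k) \mayor \sigma(\bq(t))$ forces $\bq(t, i_k, j_k) = \alpha(i_k)$. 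Here I would use concavity of $u_{i_k}$, which makes the marginal utilities $\Delta(i_k, \scdot)$ nonincreasing, so every coordinate $(i_k, l)$ with $1 \leq l \leq j_k$ satisfies $(i_k, l) \mayorigual (i_k, j_k) \mayor \sigma(\bq(t))$; since $\sigma(\bq(t))$ is the $\menor$-maximum of the set above, none of these coordinates lies in it, whence $\bq(t, i_k, l - 1) \leq \bq(t, i_k, l)$, and combined with the reverse inequality from $Q$ this gives equality at every level. Telescoping from $l = 1$ to $l = j_k$ then yields $\bq(t, i_k, j_k) = \bq(t, i_k, 0) = \alpha(i_k)$, establishing (c).

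Having verified (a)--(c), the uniqueness in Lemma \ref{lem: skorokhod mappings with upper reflecting barrier} delivers $\bw_k = \Psi_{\alpha(i_k)}[\Theta_k(\bq, \bw)]$ and $\bq(\scdot, i_k, j_k) = \Phi_{\alpha(i_k)}[\Theta_k(\bq, \bw)]$ for every $k \geq 1$, which is precisely \eqref{seq: system of reflection equations 1} and \eqref{seq: system of reflection equations 2}. I anticipate the flatness step (c) to be the main obstacle, since it is the only place where the analytic reflection structure must be reconciled with the combinatorial definition of $\sigma$; the three-case verification of (a) and the monotonicity in (b) are mechanical, and along the way I would note that $\br$ is measurable (its level sets are intersections of closed sets determined by the continuous coordinates of $\bq$), so that each $\bw_k$ is indeed well defined and Lipschitz.
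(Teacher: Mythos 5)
Your proposal is correct and takes essentially the same route as the paper's proof: both dispatch \eqref{seq: system of reflection equations 3}--\eqref{seq: system of reflection equations 4} directly and reduce \eqref{seq: system of reflection equations 1}--\eqref{seq: system of reflection equations 2} to conditions (a)--(c) of Lemma \ref{lem: skorokhod mappings with upper reflecting barrier}, with (a) resting on the identity $\dot{\bw}_{k-1} - \dot{\bw}_k = \Lambda(\bq, i_k, j_k)$, (b) on \eqref{seq: fluid dynamics 2}, and (c) on the fact that $(i_k, j_k) \mayor \sigma\left(\bq(t)\right)$ forces $\bq(t, i_k, j_k) = \alpha(i_k)$. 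The only difference is that you spell out details the paper leaves implicit (the three-case verification of the drift identity and the concavity/telescoping argument behind the flatness condition), which is harmless.
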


\begin{proof}
	It is clear that $(\bq, \bw)$ satisfies \eqref{seq: system of reflection equations 3} and \eqref{seq: system of reflection equations 4}, so we only need to verify that \eqref{seq: system of reflection equations 1} and \eqref{seq: system of reflection equations 2} hold as well. By Lemma \ref{lem: skorokhod mappings with upper reflecting barrier}, it is enough to check the following properties.
	\begin{enumerate}
		\item[(a)] $\bq(t, i_k, j_k) = \Theta_k(\bq, \bw)(t) - \bw_k(t) \leq \alpha(i_k)$ for all $t \geq 0$ and $k \geq 1$.
		
		\item[(b)] $\bw_k(0) = 0$ and $\dot{\bw}_k(t) \geq 0$ almost everywhere for all $k \geq 1$.
		
		\item[(c)] $\dot{\bw}_k(t) \ind{\bq(t, i_k, j_k) < \alpha(i_k)} = 0$ almost everywhere for all $k \geq 1$.
	\end{enumerate}
	
	In order to establish (a), it suffices to show that
	\begin{equation}
	\label{eq: relation between derivatives of q and v}
	\dot{\bq}(i_k, j_k) = \dot{\Theta}_k(\bq, \bw) - \dot{\bw}_k = \dot{\bw}_{k - 1} - \dot{\bw}_k - \mu j_k \left[\bq(i_k, j_k) - \bq(i_k, j_k + 1)\right]
	\end{equation}
	holds almost everywhere. Indeed, (a) holds at $t = 0$ and $\bq(i_k, j_k) \leq \alpha(i_k)$ at all times. Since $\dot{\bw}_{k - 1} - \dot{\bw}_k = \Lambda(\bq, i_k, j_k)$ and $\bq$ is a fluid trajectory, we obtain \eqref{eq: relation between derivatives of q and v} from \eqref{seq: fluid dynamics 1}.
	
	Property (b) is a consequence of the definition of $\bw_k$ and \eqref{seq: fluid dynamics 2}, and (c) follows from the following observation. If $\bq(t, i_k, j_k) < \alpha(i_k)$, then $(i_k, j_k) \menorigual \sigma\left(\bq(t)\right)$ and thus $k \geq \br(t)$, which implies that $\dot{\bw}_k(t) = 0$.
\end{proof}

Next we prove Theorem \ref{the: uniqueness of fluid trajectories}. As noted earlier, the proof relies on the Lipschitz property of the Skorokhod reflection mapppings $\Psi_\alpha$ and $\Phi_\alpha$. In addition, a uniqueness of solutions result for certain Kolmogorov backward equations is used.

\begin{proof}[Proof of Theorem \ref{the: uniqueness of fluid trajectories}]
	Suppose that there exist two fluid trajectories $\bx$ and $\by$ such that $\bx(0) = \by(0) = q$. Define $\bv$ in terms of $\bx$ and $\bw$ in terms of $\by$ as in Lemma \ref{lem: system of reflection equations}. It follows from the same lemma that $(\bx, \bv)$ and $(\by, \bw)$ satisfy \eqref{eq: system of reflection equations}. Next we fix $T > 0$ and we prove that $\bx(t) = \by(t)$ and $\bv(t) = \bw(t)$ for all $t \in [0, T]$.
	
	As a first step, we demonstrate that there exists $M > 0$ such that
	\begin{equation}
	\label{eq: bv_k and bw_k are zero for large k}
	\bv_k(t) = \bw_k(t) = 0 \quad \text{for all} \quad t \in [0, T] \quad \text{and} \quad k \geq M.
	\end{equation}
	Since $\dot{\bv}_{k - 1} - \dot{\bv}_k = \Lambda(\bx, i_k, j_k) \geq 0$, we conclude that $\dot{\bv}_k \leq \dot{\bv}_{k - 1} \leq \dot{\bv}_0 = \lambda$ almost everywhere and for all $k \geq 1$. It follows from \eqref{seq: fluid dynamics 1}, or equivalently from \eqref{seq: system of reflection equations 1} and \eqref{seq: system of reflection equations 2}, that the following inequalities hold almost everywhere:
	\begin{equation}
	\label{eq: upper bound for derivative of bx}
	\dot{\bx}(i_k, j_k) \leq \dot{\bv}_{k - 1} - \dot{\bv}_k \leq \dot{\bv}_{k - 1} \leq \lambda \quad \text{for all} \quad k \geq 1.
	\end{equation}
	Define $\alpha_{\min} \defeq \min \set{\alpha(i)}{1 \leq i \leq m}$ and note that there exist $\varepsilon > 0$ and $k_0$ such that $q(i_k, j_k) \leq \varepsilon < \alpha_{\min}$ for all $k \geq k_0$ since $q \in Q \subset \ell_1$. Property (c) of Lemma \ref{lem: skorokhod mappings with upper reflecting barrier} implies that $\bv_k$ is zero until $\bx(i_k, j_k)$ reaches $\alpha(i_k) \geq \alpha_{\min}$ for the first time. Using this remark and \eqref{eq: upper bound for derivative of bx}, it is possible to prove by induction on $k \geq k_0$ that
	\begin{equation*}
	\bv_k(t) = 0 \quad \text{for all} \quad t \in \left[0, (k + 1 - k_0)\frac{\alpha_{\min} - \varepsilon}{\lambda}\right).
	\end{equation*}
	The same property holds if $(\bx, \bv)$ is replaced by $(\by, \bw)$, thus \eqref{eq: bv_k and bw_k are zero for large k} holds.
	
	Next we show that $\bx(t, i, j) = \by(t, i, j)$ for all $t \in [0, T]$ and $(i, j) \menor (i_M, j_M)$. For this purpose, fix an arbitrary $i$ and let $J_i \defeq \min \set{j \geq 1}{(i, j) \menor (i_M, j_M)}$. By \eqref{eq: bv_k and bw_k are zero for large k}, both $\bx(i)$ and $\by(i)$ satisfy the following initial value problem:
	\begin{equation}
	\label{eq: backward equations of a pure birth process}
	\dot{\bz}(j) = -\mu j \left[\bz(j) - \bz(j + 1)\right] \quad \text{and} \quad \bz(0, j) = q(i, j) \quad \text{for all} \quad j \geq J_i.
	\end{equation}
	The above system of differential equations are the backward Kolmogorov equations of the pure birth process with state space $E_i \defeq \set{j \in \N}{j \geq J_i}$ that has birth rate $\lambda_j \defeq \mu j$ at state $j$. This process is non-explosive since $\sum_{j \geq J_i} 1 / \lambda_j = \infty$, hence it follows from \cite{fontes1989note} that the initial value problem \eqref{eq: backward equations of a pure birth process} has a unique solution $\bz$ such that $\bz$ is bounded on $[0, t] \times E_i$ for all $t \geq 0$. Both $\bx(i)$ and $\by(i)$ satisfy the latter condition since fluid trajectories take values in $Q$, thus $\bx(t, i, j) = \by(t, i, j)$ for all $t \geq 0$ and $j \geq J_i$.
	
	We conclude by proving that $\bx(i_k, j_k) = \by(i_k, j_k)$ and $\bv_k = \bw_k$ along the interval $[0, T]$ for all $k \leq M$. Let $\boldf(i, j) \defeq \bx(i, j) - \by(i, j)$ and $\bg_k \defeq \bv_k - \bw_k$. The subsequent arguments are analogous to those in \cite[Section 4.1]{bhamidi2022near}.
	
	For all $t \in [0, T]$ and $k \leq M$, we have
	\begin{align*}
	&\norm{\boldf(i_k, j_k)}_t \leq 2 \norm{\bg_{k - 1}}_t + 2 \mu j_k \int_0^t \norm{\boldf(i_k, j_k)}_s ds + 2 \mu j_k \int_0^t \norm{\boldf(i_k, j_k + 1)}_s ds, \\
	&\norm{\bg_k}_t \leq \norm{\bg_{k - 1}}_t + \mu j_k \int_0^t \norm{\boldf(i_k, j_k)}_s ds + \mu j_k \int_0^t \norm{\boldf(i_k, j_k + 1)}_s ds;
	\end{align*}
	these inequalities follow from the Lipschitz properties of $\Psi_{\alpha(i_k)}$ and $\Phi_{\alpha(i_k)}$. Let us define $J \defeq \max\set{j_k}{k \leq M}$ and $\bh(t) \defeq \max\set{\norm{\boldf(i_k, j_k)}_t}{k \leq M}$, then
	\begin{equation*}
	\norm{\bg_k}_t \leq \norm{\bg_{k - 1}}_t + 2 \mu J \int_0^t \bh(s)ds \leq 2k \mu J \int_0^t \bh(s)ds
	\end{equation*}
	for all $t \in [0, T]$ and $k \leq M$. For the first inequality, note that $\boldf(i_k, j_k + 1)$ is identically zero along the interval $[0, T]$ if $(i_k, j_k + 1) = (i_l, j_l)$ for some $l > M$, and for the last inequality observe that $\bg_0$ is identically zero by \eqref{seq: system of reflection equations 3}. In addition, we have
	\begin{equation*}
	\norm{\boldf(i_k, j_k)}_t \leq 4k \mu J \int_0^t \bh(s)ds + 4 \mu J \int_0^t \bh(s)ds \leq 4(M + 1)\mu J \int_0^t \bh(s)ds
	\end{equation*}
	for all $t \in [0, T]$ and $k \leq M$. We conclude that
	\begin{equation*}
	\bh(t) \leq 4(M + 1)\mu J \int_0^t \bh(s)ds \quad \text{for all} \quad t \in [0, T].
	\end{equation*}
	Therefore, Gr\"onwall's inequality yields $\bh(t) = 0$ for all $t \in [0, T]$ and this in turn implies that $(\bx, \bv) = (\by, \bw)$ along the interval $[0, T]$.
\end{proof}

We conclude this section by establishing that \eqref{eq: fluid dynamics} has a unique equilibrium point and that all fluid trajectories converge to this equilibrium point over time.

\begin{proof}[Proof of Theorem \ref{the: global asymptotic stability}]
	First we verify that $q_*$ is an equilibrium of \eqref{eq: fluid dynamics}. To this end, note that $\sigma\left(q_*\right) = \sigma_*$ and that the right-hand side of \eqref{seq: fluid dynamics 1} equals zero if $(i, j) \neq \sigma_*$ and $\bq = q_*$. It only remains to be shown that this also holds for $(i_*, j_*) \defeq \sigma_*$.
	
	If $(i, j) = \sigma(q)$, then
	\begin{equation*}
		\Lambda(q, i, j) - \mu j \left[q(i, j) - q(i, j + 1)\right] = \lambda - \sum_{(r, s) \mayorigual (i, j)} \mu s \left[q(r , s) - q(r, s + 1)\right].
	\end{equation*}
	Define $J^i \defeq \max \set{j \geq 1}{(i, j) \mayorigual \sigma_*}$ for each $i$. If $(i, j) = (i_*, j_*)$ and $\bq$ is replaced by $q_*$, then the right-hand side of \eqref{seq: fluid dynamics 1} equals
	\begin{align*}
		\lambda - \sum_{(i, j) \mayorigual \sigma_*} \mu j \left[q_*(i, j) - q_*(i, j + 1)\right] &= \lambda - \sum_{i = 1}^m \sum_{j = 1}^{J^i} \mu j \left[q_*(i, j) - q_*(i, j + 1)\right] \\
		&= \lambda - \sum_{i \neq i_*} \mu J^i \alpha(i) - \mu(j_* - 1) \alpha(i_*) - \mu q_*(i_*, j_*) \\
		&= \sum_{(i, j) \mayor \sigma_*} \mu \alpha(i) - \sum_{i \neq i_*} \mu J^i \alpha(i) - \mu (j_* - 1) \alpha(i_*).
	\end{align*}
	The expression in the last line equals zero by definition of $J^i$ and therefore we conclude that $q_*$ is indeed an equilibrium point of \eqref{eq: fluid dynamics}.
	
	Next we prove that all fluid trajectories converge coordinatewise to $q_*$ over time; this implies, in particular, that $q_*$ is the unique equilibrium of \eqref{eq: fluid dynamics}. Afterwards we prove that all fluid trajectories in fact converge to $q_*$ in $\ell_1$.
	
	If $\bq$ is a fluid trajectory and $T \geq 0$, then there exists $j_T$ such that $\Lambda(\bq, i, j) = 0$ in $[0, T]$ for all $i$ and $j \geq j_T$. This can be established directly from \eqref{eq: fluid dynamics}, but also using Lemma \ref{lem: set of nice sample paths}, Theorem \ref{the: fluid limit of jlmu} and the uniqueness of solutions. For each $i$ and $k \geq j_T$,
	\begin{equation*}
		\sum_{j = j_T}^k \dot{\bq}(i, j) = - \sum_{j = j_T}^k \mu j \left[\bq(i, j) - \bq(i, j + 1)\right] = -\mu(j_T - 1)\bq(i, j_T) - \sum_{j = j_T}^k \bq(i, j) + \mu k\bq(i, k + 1). 
	\end{equation*}
	The last term vanishes as $k \to \infty$ because $\bq$ takes values in $\ell_1$. Also, $\bq(i, j)$ does not increase in $[0, T]$ for all $j \geq j_T$ since $\Lambda(\bq, i, j) = 0$ along $[0, T]$. Therefore, 
	\begin{align*}
		\left|\sum_{j = j_T}^k \dot{\bq}(t, i, j) - \sum_{j = j_T}^\infty \dot{\bq}(t, i, j)\right| &= \sum_{j = k + 1}^\infty \bq(t, i, j) + \mu k\bq(t, i, k + 1) \\
		&\leq \sum_{j = k + 1}^\infty \bq(0, i, j) + \mu k\bq(0, i, k + 1) \quad \text{for all} \quad t \in [0, T].
	\end{align*}
	The right-hand side vanishes as $k \to \infty$ because $\bq(0) \in \ell_1$, thus the left-hand side converges uniformly to zero over $[0, T]$ and, by \cite[Theorem 7.17]{rudin1976principles}, the derivative of
	\begin{equation*}
		\sum_{j = j_T}^\infty \bq(t, i, j) \quad \text{is} \quad \sum_{j = j_T}^\infty \dot{\bq}(t, i, j) \quad \text{for all} \quad i \quad \text{and} \quad t \in (0, T).
	\end{equation*}
	This allows for the interchanges of summation and differentation that appear below.
	
	Consider the function
	\begin{equation*}
		\boldf \defeq \sum_{(i, j) \in \calI_+} \bq(i, j).
	\end{equation*}
	It follows from \eqref{seq: fluid dynamics 1} that $\dot{\boldf} = \lambda - \mu\boldf$. Thus, $\boldf(t) = \rho + \left[\boldf(0) - \rho\right]\e^{-\mu t}$ for all $t \geq 0$. We conclude from this identity and \eqref{eq: definition of sigma*} that
	\begin{equation*}
		t_0 \defeq \min \set{t \geq 0}{\boldf(t) \leq \sum_{(i, j) \mayorigual \sigma_*} \alpha(i)}
	\end{equation*}
	exists and is finite. If $t > t_0$, then the inequality inside the minimum sign is strict, and this implies that $\sigma\left(\bq(t)\right) \mayorigual \sigma_*$ since $\bq(t, i, j) = \alpha(i)$ for all $(i, j) \mayor \sigma\left(\bq(t)\right)$.
	
	Consider the function
	\begin{equation*}
		\bg \defeq \sum_{(i, j) \menorigual \sigma_*} \bq(i, j).
	\end{equation*}
	As noted above, if $t > t_0$, then $\sigma\left(\bq(t)\right) \mayorigual \sigma_*$, and thus we have
	\begin{align*}
		\dot{\bg}(t) &= \left[\lambda - \sum_{(i, j) \mayor \sigma_*} \mu j \left[\bq(t, i, j) - \bq(t, i, j + 1)\right]\right] \ind{\sigma(\bq(t)) = \sigma_*} \\
		&- \sum_{(i, j) \menorigual \sigma_*} \mu j \left[\bq(t, i, j) - \bq(t, i, j + 1)\right],
	\end{align*} 
	since $\Lambda(\bq(t), i, j) = 0$ for all $(i, j) \menorigual \sigma_*$ except perhaps for $(i, j) = \sigma_*$. Hence,
	\begin{align*}
		\dot{\bg}(t) &= \left[\lambda - \sum_{(i, j) \in \calI_+} \mu j \left[\bq(t, i, j) - \bq(t, i, j + 1)\right]\right] \ind{\sigma(\bq(t)) = \sigma_*} \\
		&- \left[\sum_{(i, j) \menorigual \sigma_*} \mu j \left[\bq(t, i, j) - \bq(t, i, j + 1)\right]\right]\ind{\sigma(\bq(t)) \neq \sigma_*} \\
		&= \left[\lambda - \sum_{(i, j) \in \calI_+} \mu \bq(t, i, j)\right] \ind{\sigma(\bq(t)) = \sigma_*} \\
		&- \left[\sum_{i = 1}^m \sum_{j = J_i}^\infty \mu j \left[\bq(t, i, j) - \bq(t, i, j + 1)\right]\right]\ind{\sigma(\bq(t)) \neq \sigma_*},
	\end{align*}
	where $J_i \defeq \min \set{j \geq 1}{(i, j) \menorigual \sigma_*}$. Note that $\bq(t, i, j) = \alpha(i)$ if $(i, j) \mayor \sigma\left(\bq(t)\right)$. Thus,
	\begin{align*}
		\dot{\bg}(t) &= \left[\lambda - \mu \sum_{(i, j) \mayor \sigma_*} \alpha(i) - \mu\bg(t)\right]\ind{\sigma\left(\bq(t)\right) = \sigma_*} \\
		&- \left[\sum_{i = 1}^m \left(\mu(J_i - 1)\bq(t, i, J_i) + \sum_{j = J_i}^\infty \mu \bq(t, i, j)\right)\right]\ind{\sigma\left(\bq(t)\right) \neq \sigma_*} \\
		&= \left[\lambda - \mu \sum_{(i, j) \mayor \sigma_*} \alpha(i) - \mu\bg(t)\right]\ind{\sigma\left(\bq(t)\right) = \sigma_*} \\
		&- \mu\bg(t) \ind{\sigma\left(\bq(t)\right) \neq \sigma_*} - \mu\sum_{i = 1}^m (J_i - 1) \bq(t, i, J_i)\ind{\sigma\left(\bq(t)\right) \neq \sigma_*} \\
		&\leq \left[\lambda - \mu \sum_{(i, j) \mayor \sigma_*} \alpha(i)\right]^+ - \mu \bg(t).
	\end{align*}
	By definition of $\sigma_*$, we have $\theta \defeq \sum_{(i, j) \mayor \sigma_*} \alpha(i) \leq \rho$, and from the above bound for $\dot{\bg}$ we get
	\begin{equation*}
		\bg(t) \leq \rho - \theta + \left[\bg(t_0) - (\rho - \theta)\right]\e^{-\mu(t - t_0)} \quad \text{for all} \quad t \geq t_0.
	\end{equation*}
	We conclude that
	\begin{equation*}
		\liminf_{t \to \infty} \sum_{(i, j) \mayor \sigma_*} \bq(t, i, j) = \liminf_{t \to \infty} \left[\boldf(t) - \bg(t)\right] \geq \rho - (\rho - \theta) = \sum_{(i, j) \mayor \sigma_*} \alpha(i).
	\end{equation*}
	This proves that $\bq(t, i, j) \to \alpha(i) = q_*(i, j)$ over time for all $(i, j) \mayor \sigma_*$.
	
	Consider now the function
	\begin{equation*}
		\bh \defeq \sum_{(i, j) \menor \sigma_*} \bq(i, j).
	\end{equation*}
	Recall that $\sigma\left(\bq(t)\right) \mayorigual \sigma_*$ for all $t > t_0$. Therefore,
	\begin{equation*}
		\dot{\bh}(t) = - \sum_{(i, j) \menor \sigma_*} \mu j \left[\bq(t, i, j) - \bq(t, i , j + 1)\right] \leq - \mu \bh(t) \quad \text{for all} \quad t > t_0.
	\end{equation*}
	It follows that $\bh(t) \leq \bh(t_0) \e^{-\mu(t - t_0)}$ for all $t \geq t_0$ and thus $\bq(t, i, j) \to 0 = q_*(i, j)$ over time for all $(i, j) \menor \sigma_*$. Furthermore, we have
	\begin{equation*}
		\lim_{t \to \infty} \bq(t, \sigma_*) = \lim_{t \to \infty} \left[\boldf(t) - \bh(t) - \sum_{(i, j) \mayor \sigma_*} \bq(t, i, j)\right] = \rho - \sum_{(i, j) \mayor \sigma_*} \alpha(i) = q_*(\sigma_*),
	\end{equation*}
	and thus $\bq(t, i, j) \to q_*(i, j)$ for all $(i, j) \in \calI$.
	
	Finally, observe that
	\begin{equation*}
		\lim_{t \to \infty} \sum_{(i, j) \menor \sigma_*} \left|\bq(t, i, j) - q_*(i, j)\right| = \lim_{t \to \infty} \bh(t) = 0.
	\end{equation*}
	Consequently, $\bq(t) \to q_*$ over time not only coordinatewise but also in $\ell_1$.
\end{proof}

\subsection{Proof of the fluid limit}
\label{sub: proof of the fluid limit}

In order to prove Theorem \ref{the: fluid limit of jlmu}, it suffices to demonstrate, for each $\omega \in \Gamma$, that every subsequence of $\set{\bq_n(\omega)}{n \geq 1}$ has a further subsequence with a limit in $D_{\ell_1}[0, \infty)$, and that this limit is the unique fluid trajectory starting at $q_0(\omega)$. The first part is covered by Proposition \ref{prop: relative compactness - l1}, every subsequence of $\set{\bq_n(\omega)}{n \geq 1}$ has a further subsequence with a limit in $D_{\ell_1}[0, \infty)$. Next we characterize the limits of the convergent subsequences.

Let us fix an arbitrary $\omega \in \Gamma$, which we omit from the notation for brevity, and an increasing sequence $\calK \subset \N$ such that $\set{\calA_k}{k \in \calK}$, $\set{\calD_k}{k \in \calK}$ and $\set{q_k}{k \in \calK}$ converge in $D_{\ell_1}[0, \infty)$ to certain functions $\ba$, $\bd$ and $\bq$, respectively, which have locally Lipschitz coordinates by Proposition \ref{prop: relative compactness - product topology}. In order to characterize these three limits, it suffices to just characterize $\ba$ and $\bd$ because \eqref{eq: definition of q for jlmu} and \eqref{seq: convergence of initial conditions} imply that
\begin{equation}
\label{eq: fluid limit as arrivals minus departures}
\bq = q_0 + \ba - \bd.
\end{equation}
Since $\ba$ and $\bd$ have locally Lipschitz coordinates, there exists $\calR \subset (0, \infty)$ such that $\calR^c$ has zero Lebesgue measure and the derivatives of $\ba(i, j)$ and $\bd(i, j)$ exist for all $(i, j) \in \calI$ at all points in $\calR$. These derivatives are zero if $j = 0$ by the definitions of $\calA_k$ and $\calD_k$. The following lemma computes the derivatives for $(i, j) \in \calI_+$.

\begin{lemma}
	\label{lem: fluid derivatives}
	Fix an arbitrary $t_0 \in \calR$, we have
	\begin{equation}
	\label{eq: fluid departure rates}
	\dot{\bd}(t_0, i, j) = \mu j \left[\bq(t_0, i, j) - \bq(t_0, i, j + 1)\right] \quad \text{for all} \quad (i, j) \in \calI_+.
	\end{equation}
	Furthermore, $\bq(t_0)$ and the derivatives $\dot{\ba}(t_0, i, j)$ satisfy
	\begin{equation}
	\label{eq: fluid arrival rates}
	\Lambda(\bq(t_0), i, j) = \dot{\ba}(t_0, i, j) \geq 0 \quad \text{for all} \quad (i, j) \in \calI_+.
	\end{equation}
\end{lemma}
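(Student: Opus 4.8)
The plan is to identify the two limits $\ba$ and $\bd$ separately from the explicit formulas \eqref{seq: arrival process for jlmu} and \eqref{seq: departure process for jlmu}, using the functional strong law of large numbers \eqref{seq: fslln for arrivals}--\eqref{seq: fslln for departures} together with the uniform-on-compacts convergence $q_k \to \bq$. Writing $\sigma_0 \defeq \sigma(\bq(t_0))$, I would repeatedly use that $\bq = q_0 + \ba - \bd$ by \eqref{eq: fluid limit as arrivals minus departures}, so that each coordinate $\bq(i, j)$ is differentiable at $t_0 \in \calR$ with $\dot{\bq}(t_0, i, j) = \dot{\ba}(t_0, i, j) - \dot{\bd}(t_0, i, j)$; since $\ba$ and $\bd$ are non-decreasing (limits of the counting processes $\calA_k$ and $\calD_k$), the inequality $\dot{\ba}(t_0, i, j) \geq 0$ in \eqref{eq: fluid arrival rates} is immediate.

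For the departure rates I would substitute the occupancy process into \eqref{seq: departure process for jlmu} and set $I_k(t) \defeq \int_0^t \mu j[q_k(s, i, j) - q_k(s, i, j + 1)]ds$, with $I$ defined analogously in terms of $\bq$. The uniform convergence gives $\sup_{s \in [0, T]}|I_k(s) - I(s)| \to 0$, and since $I_k(t) \leq \mu j T$ on $[0, T]$, the bound \eqref{seq: fslln for departures} allows replacing $\frac{1}{k}\calN_{(i, j)}(k I_k(t))$ by $I_k(t)$ in the limit. This yields $\bd(t, i, j) = I(t) = \int_0^t \mu j[\bq(s, i, j) - \bq(s, i, j + 1)]ds$, and because the integrand is continuous (the coordinates of $\bq$ are locally Lipschitz), the fundamental theorem of calculus gives \eqref{eq: fluid departure rates} at every point, in particular at $t_0$.

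The arrival rates require a case distinction with respect to $\sigma_0$, and the case $(i, j) \menor \sigma_0$ is the crux. Here I would show that no task is routed to $(i, j)$ for arrival epochs near $t_0$ and $k$ large. The key observation is that $\sigma_0 \mayor (i, j)$ is strictly open at $t_0$: writing $(i_0, j_0) \defeq \sigma_0$, the definition of $\sigma$ gives $\delta \defeq \bq(t_0, i_0, j_0 - 1) - \bq(t_0, i_0, j_0) > 0$. By continuity of $\bq$ and the uniform convergence $q_k \to \bq$, this gap persists, so there is $\varepsilon > 0$ with $q_k(\tau^-, i_0, j_0 - 1) > q_k(\tau^-, i_0, j_0)$ for all $\tau \in [t_0 - \varepsilon, t_0 + \varepsilon]$ and all large $k$; by definition of $\sigma$ this forces $\sigma(q_k(\tau^-)) \mayorigual \sigma_0 \mayor (i, j)$, so by \eqref{seq: arrival process for jlmu} the path $\calA_k(\scdot, i, j)$ is constant on $[t_0 - \varepsilon, t_0 + \varepsilon]$. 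Passing to the limit, $\ba(\scdot, i, j)$ is constant there and $\dot{\ba}(t_0, i, j) = 0 = \Lambda(\bq(t_0), i, j)$. The delicate point is precisely that the routing map $\sigma$ is discontinuous, but only the one-sided conclusion that $\sigma(q_k(\tau^-))$ is ranked above $(i, j)$ is needed, and this is robust because it rests on a strict, persistent gap rather than on pinning down $\sigma(q_k(\tau^-))$ exactly; I expect this to be the main obstacle of the proof.

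For $(i, j) \mayor \sigma_0$ I would invoke the structural fact (noted below Definition \ref{def: fluid trajectory}, and a direct consequence of concavity and the tie-breaking convention) that $\bq(t_0, i, j) = \alpha(i)$; since this coordinate then sits at its maximal value at the interior point $t_0$, its derivative vanishes, whence $\dot{\ba}(t_0, i, j) = \dot{\bd}(t_0, i, j) = \mu j[\alpha(i) - \bq(t_0, i, j + 1)] = \Lambda(\bq(t_0), i, j)$ by \eqref{eq: fluid departure rates}. Finally, for $(i, j) = \sigma_0$ I would close the argument by conservation of mass: summing \eqref{seq: arrival process for jlmu} over $\calI_+$ gives $\sum_{(i, j) \in \calI_+} \calA_k(t, i, j) = \frac{1}{k}\calN_k^\lambda(t) \to \lambda t$, and by Lemma \ref{lem: set of nice sample paths} this is a finite sum on compact time intervals for $k$ large, so term-by-term differentiation yields $\sum_{(i, j) \in \calI_+} \dot{\ba}(t_0, i, j) = \lambda$. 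Subtracting the contributions already computed for $(i, j) \menor \sigma_0$ and $(i, j) \mayor \sigma_0$ leaves $\dot{\ba}(t_0, \sigma_0) = \lambda - \sum_{(i, j) \mayor \sigma_0} \mu j[\alpha(i) - \bq(t_0, i, j + 1)] = \Lambda(\bq(t_0), \sigma_0)$, which completes \eqref{eq: fluid arrival rates}.
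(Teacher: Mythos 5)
Your proposal is correct and takes essentially the same approach as the paper's proof: the FSLLN identification of $\bd(i,j)$ as the integral $\int_0^t \mu j \left[\bq(s,i,j) - \bq(s,i,j+1)\right]ds$, non-negativity of $\dot{\ba}$ from monotonicity, the vanishing-derivative-at-the-maximum argument for $(i,j) \mayor \sigma(\bq(t_0))$, local constancy of $\calA_k(i,j)$ for $(i,j) \menor \sigma(\bq(t_0))$, and conservation of mass (term-by-term differentiation of $\sum_{(i,j) \in \calI_+} \ba(t,i,j) = \lambda t$, justified by the finiteness provided by Lemma \ref{lem: set of nice sample paths}) to settle the remaining case $(i,j) = \sigma(\bq(t_0))$. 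The only difference is cosmetic: to force $\sigma\left(\bq_k(\tau^-)\right) \mayorigual \sigma(\bq(t_0))$ near $t_0$ you use persistence of the strict gap $\bq(t_0, \sigma_i(\bq(t_0)), \sigma_j(\bq(t_0)) - 1) > \bq(t_0, \sigma(\bq(t_0)))$, whereas the paper uses persistence of $\bq_k(t, \sigma(\bq(t_0))) < \alpha_k(\sigma_i(\bq(t_0)))$; both are valid and yield the same conclusion.
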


\begin{proof}
	The sequences $\set{\calD_k(i, j)}{k \in \calK}$ and $\set{\bq_k(i, j)}{k \in \calK}$ converge uniformly over compact sets to $\bd(i, j)$ and $\bq(i, j)$, respectively, for all $(i, j) \in \calI$. This remark, the definition of $\calD_k$ and \eqref{seq: fslln for departures} imply that
	\begin{equation*}
	\bd(t, i, j) = \int_0^t \mu j \left[\bq(s, i, j) - \bq(s, i, j + 1)\right]ds \quad \text{for all} \quad t \geq 0.
	\end{equation*}
	It is clear that this identity establishes \eqref{eq: fluid departure rates}.
	
	We now prove that
	\begin{equation}
	\label{eq: properties of fluid arrival rates}
	\sum_{(i, j) \in \calI_+} \dot{\ba}(t_0, i, j) = \lambda \quad \text{and} \quad \dot{\ba}(t_0, i, j) \geq 0 \quad \text{for all} \quad (i, j) \in \calI_+.
	\end{equation}
	The derivatives $\dot{\ba}(t_0, i, j)$ are non-negative since the processes $\calA_k(i, j)$ are non-decreasing, so we only need to show that the derivatives add up to $\lambda$. For this purpose, note that
	\begin{equation*}
	\sum_{(i, j) \in \calI_+} \calA_k(t, i, j) = \calN_k^\lambda(t) \quad \text{for all} \quad t \geq 0 \quad \text{and} \quad k \in \calK.
	\end{equation*}
	Fix $T > t_0$, and let $j_T$ and $n_T$ be as in Lemma \ref{lem: set of nice sample paths}. The left-hand side has at most $m j_T$ non-zero terms for all $k \geq n_T$ and $t \in [0, T]$. It follows from \eqref{seq: fslln for arrivals} that
	\begin{equation*}
	\sum_{(i, j) \in \calI_+} \ba(t, i, j) = \lambda t \quad \text{for all} \quad t \in [0, T].
	\end{equation*}
	This yields \eqref{eq: properties of fluid arrival rates} since the left-hand side has at most $mj_T$ non-zero terms.
	
	It follows from \eqref{eq: fluid limit as arrivals minus departures} that the derivative of $\bq(i, j)$ exists at $t_0$ and
	\begin{equation*}
	\dot{\bq}(t_0, i, j) = \dot{\ba}(t_0, i, j) - \dot{\bd}(t_0, i, j) \quad \text{for all} \quad (i, j) \in \calI.
	\end{equation*}
	Note that $\bq(i, j)$ is upper bounded by $\alpha(i)$, so $\bq(t_0, i, j) = \alpha(i)$ implies $\dot{\bq}(t_0, i, j) = 0$. Thus,
	\begin{equation}
	\label{eq: fluid and departure arrival rate are equal}
	\dot{\ba}(t_0, i, j) = \dot{\bd}(t_0, i, j) = \mu j [\alpha(i) - \bq(t_0, i, j + 1)] \quad \text{if} \quad \bq(t_0, i, j) = \alpha(i).
	\end{equation}
	In order to prove the equality in \eqref{eq: fluid arrival rates}, define $\sigma_0 = (i_0, j_0) \defeq \sigma\left(\bq(t_0)\right)$. If $(i, j) \mayor \sigma_0$, then $\bq(t_0, i, j) = \bq(t_0, i, j - 1)$ by \eqref{eq: definition of sigma}. Moreover, for a fixed $i$, the marginal utility $\Delta(i, j)$ does not increase with $j$ since $u_i$ is a concave function. Therefore, $(i, j) \mayor \sigma_0$ and $j > 1$ imply that $(i, j - 1) \mayor \sigma_0$. We conclude that
	\begin{equation*}
	\bq(t_0, i, j) = \bq(t_0, i, 0) = \alpha(i) \quad \text{for all} \quad (i, j) \mayor \sigma_0.
	\end{equation*}
	The last property and \eqref{eq: fluid and departure arrival rate are equal} imply that \eqref{eq: fluid arrival rates} holds if $(i, j) \mayor \sigma_0$.
	
	Note that $\bq(t_0, \sigma_0) < \alpha(i_0)$ by \eqref{eq: definition of sigma}. Since $\bq(\sigma_0)$ is continuous and $\bq_k(\sigma_0)$ converges uniformly over compact sets to $\bq(\sigma_0)$, there exist $\varepsilon > 0$ and $k_\varepsilon \in \calK$ such that
	\begin{equation*}
	\bq_k(t, \sigma_0) < \alpha_k(i_0) \quad \text{for all} \quad t \in (t_0 - \varepsilon, t_0 + \varepsilon) \quad \text{and} \quad k \geq k_\varepsilon.
	\end{equation*}
	It follows from \eqref{eq: definition of sigma} and the last statement that
	\begin{equation*}
	\sigma_0 \menorigual \sigma\left(\bq_k(t)\right) \quad \text{for all} \quad t \in (t_0 - \varepsilon, t_0 + \varepsilon) \quad \text{and} \quad k \geq k_\varepsilon.
	\end{equation*}
	Thus, $\calA_k(i, j)$ is constant over $(t_0 - \varepsilon, t_0 + \varepsilon)$ if $k \geq k_\varepsilon$ and $(i, j) \menor \sigma_0$. Indeed, server pools of class $i$ with exactly $j - 1$ tasks are not assigned incoming tasks in the system with $k$ server pools if $(i, j) \menor \sigma\left(\bq_k\right)$. This proves \eqref{eq: fluid arrival rates} for $(i, j) \menor \sigma_0$, and we conclude from \eqref{eq: properties of fluid arrival rates} that \eqref{eq: fluid arrival rates} must also hold in the case $(i, j) = \sigma_0$.
\end{proof}

Below we complete the proof of Theorem \ref{the: fluid limit of jlmu}.

\begin{proof}[Proof of Theorem \ref{the: fluid limit of jlmu}.]
	As above, we fix some $\omega \in \Gamma$ which we omit from the notation. Every subsequence of $\set{\bq_n}{n \geq 1}$ has a further subsequence which converges in $D_{\ell_1}[0, \infty)$ by Proposition \ref{prop: relative compactness - l1}. It follows from \eqref{seq: convergence of initial conditions} and Lemma \ref{lem: fluid derivatives} that the limit $\bq$ of this convergent subsequence is a fluid trajectory with $\bq(0) = q_0$, which determines $\bq$ by Theorem \ref{the: uniqueness of fluid trajectories}. 
\end{proof}

\section{Limiting behavior of SLTA}
\label{sec: limiting behavior of threshold}

In this section we assume that the load balancing policy is SLTA and we leverage a methodology developed in \cite{goldsztajn2021learning} to prove Theorem \ref{the: fluid limit of threshold}. The first steps of the proof are carried out in Section \ref{sub: asymptotic dynamical properties}, where we establish that certain dynamical properties of the system hold asymptotically with probability one. The proof is completed in Section \ref{sub: evolution of the learning scheme}, where we analyze the evolution of the learning scheme over time. While the arguments used here are more involved due to the heterogeneity of the system, most of the proofs are conceptually similar to those in \cite{goldsztajn2021learning}, and hence are deferred to Appendix \ref{app: limiting behavior of threshold}.

\subsection{Asymptotic dynamical properties}
\label{sub: asymptotic dynamical properties}

In this section we establish asymptotic dynamical properties pertaining to the total and tail mass processes, which are defined as
\begin{equation*}
\bs_n \defeq \sum_{(i, j) \in \calI_+} \bq_n(i, j) \quad \text{and} \quad \bv_n(r) \defeq \sum_{(i, j) \menorigual (i_r, j_r)} \bq_n(i, j) \quad \text{for all} \quad r \geq 1,
\end{equation*}
respectively. Recall that the total mass process was introduced in \eqref{eq: definition of s_n} and represents the total number of tasks in the system, normalized by the number of server pools. The following proposition is proved in Appendix \ref{app: limiting behavior of threshold}.

\begin{proposition}
	\label{prop: fluid limit of total number of tasks}
	For each $\omega \in \Gamma$, the sequence $\set{\bs_n(\omega)}{n \geq 1}$ converges uniformly over compact sets to the unique function $\bs(\omega)$ such that
	\begin{equation}
	\label{eq: fluid limit of total number of tasks}
	\dot{\bs}(\omega) = \lambda - \mu \bs(\omega) \quad \text{and} \quad \bs(\omega, 0) = s_0(\omega),
	\end{equation}
	where $s_0$ is as defined in \eqref{eq: initial normalized number of tasks}. Explicitly, $\bs(\omega, t) = \rho + \left[s_0(\omega) - \rho\right]\e^{-\mu t}$.  
\end{proposition}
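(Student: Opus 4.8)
The plan is to derive a closed (approximate) integral equation for the total mass process by summing, over $\calI_+$, the functional equation that defines the occupancy process, and then to pass to the limit. Note first that the total mass dynamics are policy-independent, so the argument applies verbatim to JLMU and SLTA, using \eqref{eq: definition of q for jlmu} and \eqref{seq: definition of q for threshold}, respectively. Fix $\omega \in \Gamma$ (omitted from the notation). For each fixed $n$ the occupancy $\bq_n(t)$ has finitely many nonzero coordinates, so summing the functional equation over $(i, j) \in \calI_+$ is legitimate and yields
\begin{equation*}
\bs_n(t) = \bs_n(0) + \frac{1}{n}\calN_n^\lambda(t) - \bd_n(t), \qquad \bd_n(t) \defeq \sum_{(i, j) \in \calI_+} \calD_n(t, i, j),
\end{equation*}
where I used that every arrival increments exactly one coordinate of $\calA_n$, so that $\sum_{(i, j) \in \calI_+} \calA_n(t, i, j) = \frac{1}{n}\calN_n^\lambda(t)$.

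Next I would identify the cumulative intensity driving the departure clocks. By the telescoping identity in \eqref{eq: definition of s_n},
\begin{equation*}
\sum_{(i, j) \in \calI_+} \mu j \left[\bq_n(s, i, j) - \bq_n(s, i, j + 1)\right] = \mu \bs_n(s),
\end{equation*}
so the total time change appearing across all the processes $\calN_{(i, j)}$ in \eqref{seq: departure process for jlmu} equals $n \int_0^t \mu \bs_n(s) ds$. The key step is then to show that $\bd_n$ is uniformly close to this mean on compact time intervals, i.e.
\begin{equation*}
\sup_{t \in [0, T]} \left|\bd_n(t) - \int_0^t \mu \bs_n(s) ds\right| \xrightarrow[n \to \infty]{} 0.
\end{equation*}
I would prove this by splitting the sum defining $\bd_n$ into levels $j \leq L$ and levels $j > L$. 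The finitely many low levels are handled by the functional strong law \eqref{seq: fslln for departures} applied to each $\calN_{(i, j)}$. For the high levels I would invoke Lemma \ref{lem: set of nice sample paths}: for $L \geq j_T(\omega)$ and $n \geq n_T(\omega)$ no arrivals reach levels above $L$, so both the departure count and the integrated intensity from those levels are controlled by the initial tail mass $\sum_{i = 1}^m \sum_{j > L} \bq_n(0, i, j)$; this is uniformly small for large $L$ because $\bq_n(0) \to q_0$ in $\ell_1$ with $q_0 \in \ell_1$, where for the boundary contribution one uses that the occupancy sequences are non-increasing in $j$, so that $L \bq_n(0, i, L + 1) \to 0$ uniformly.

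Combining these steps with \eqref{seq: fslln for arrivals} and with $\bs_n(0) \to s_0$ (a consequence of \eqref{seq: convergence of initial conditions}) gives
\begin{equation*}
\bs_n(t) = s_0 + \lambda t - \int_0^t \mu \bs_n(s) ds + \varepsilon_n(t), \qquad \sup_{t \in [0, T]} |\varepsilon_n(t)| \to 0.
\end{equation*}
This is a linear Volterra equation whose solution operator is Lipschitz, so a Gr\"onwall estimate yields both uniqueness of the limit and uniform convergence of $\bs_n$ over compacts to the solution $\bs$ of $\dot{\bs} = \lambda - \mu \bs$ with $\bs(0) = s_0$, which integrates to $\bs(t) = \rho + (s_0 - \rho) \e^{-\mu t}$. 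The main obstacle is the third step, the uniform control (in both $n$ and $t$) of the tail of the departure sum: the low-level fluctuations are a routine FSLLN, but the high-level contribution requires combining the $\ell_1$ convergence of the initial conditions, the absence of arrivals above $j_T$ from Lemma \ref{lem: set of nice sample paths}, and the monotonicity of the occupancy sequences.
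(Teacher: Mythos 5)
Your proof is correct, but it follows a genuinely different route from the paper's. Both arguments start from the same identity $\bs_n = \bs_n(0) + \tfrac{1}{n}\calN_n^\lambda - \sum_{(i, j) \in \calI_+} \calD_n(i, j)$, but from there the paper argues by compactness: it invokes Proposition \ref{prop: relative compactness - l1} to extract a subsequence along which $\calD_k$ and $\bq_k$ converge in $D_{\ell_1}[0, \infty)$, uses the $\ell_1$ convergence to pass the infinite sum of departure processes to the limit $\int_0^t \mu \tilde{\bs}(s)\,ds$, identifies every subsequential limit of $\bs_k$ as the unique solution of the integral equation, and concludes by the subsequence principle. You avoid compactness altogether: you show directly that $\sum_{(i, j) \in \calI_+} \calD_n(t, i, j)$ is uniformly close to $\int_0^t \mu \bs_n(s)\,ds$ --- with $\bs_n$ itself rather than a limit point --- by treating the finitely many low levels with the FSLLN \eqref{seq: fslln for departures} and the high levels with Lemma \ref{lem: set of nice sample paths} (no arrivals above $j_T$), monotonicity in $j$, and the uniform tail control supplied by \eqref{seq: convergence of initial conditions}; a Gr\"onwall estimate on the resulting perturbed Volterra equation then delivers uniqueness and uniform convergence in one stroke. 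The tail bookkeeping you do by hand --- including the boundary term $L\,\bq_n(0, i, L + 1)$, which as you note requires the monotonicity of $\bq_n(0, i, \scdot)$ --- is precisely what the paper outsources to Proposition \ref{prop: relative compactness - l1}, whose own proof rests on the same Lemma \ref{lem: set of nice sample paths}. What your route buys is a more self-contained and quantitative argument: the error $\varepsilon_n$ is explicitly bounded by FSLLN discrepancies and initial tail masses, so it would in principle yield a convergence rate, and no subsequence extraction is needed. What the paper's route buys is brevity, since the $\ell_1$ relative compactness machinery is needed anyway for Theorem \ref{the: fluid limit of jlmu}, so reusing it makes this proposition nearly immediate.
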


While the above law of large numbers is known to hold weakly, it is not straightforward that it holds with probability one under the coupled construction of sample paths adopted in Section \ref{sub: coupled construction of sample paths}; this fact is established in Proposition \ref{prop: fluid limit of total number of tasks}.

The next result is also proved in Appendix \ref{app: limiting behavior of threshold}, it provides an asymptotic upper bound for certain tail mass processes, under specific conditions concerning $\bq_n$ and $\br_n$. The upper bound implies at least an exponentially fast decay over time.

\begin{proposition}
	\label{prop: asymptotic dynamical property of tail mass processes}
	Suppose that the next conditions hold for a given $\omega \in \Gamma$ and a given increasing sequence $\calK$ of natural numbers.
	\begin{enumerate}
		\item[(a)] The sequence $\set{\bq_k(\omega)}{k \in \calK}$ converges in $D_{\ell_1}[0, \infty)$ to some function $\bq$.
		
		\item[(b)] There exist $r > 1$ and $0 \leq t_0 < t_1$ such that
		\begin{equation*}
		\br_k(\omega, t) \leq r \quad \text{and} \quad \sum_{(i, j) \mayor (i_r, j_r)} \bq_k(\omega, t, i, j) < \sum_{(i, j) \mayor (i_r, j_r)} \alpha_k(i)
		\end{equation*}
		for all $t \in [t_0, t_1]$ and $k \in \calK$.
	\end{enumerate}
	Then $\bq(i, j)$ is differentiable on $(t_0, t_1)$ for all $(i, j) \menorigual (i_r, j_r)$ and satisfies
	\begin{equation*}
	\dot{\bq}(t, i, j) = -\mu j \left[\bq(t, i, j) - \bq(t, i, j + 1)\right] \quad \text{for all} \quad t \in (t_0, t_1).
	\end{equation*}
	Furthermore, the sequence of tail mass processes $\set{\bv_k(\omega, r)}{k \in \calK}$ converges uniformly over compact sets to a function $\bv(\omega, r)$ that satisfies
	\begin{equation*}
	\bv(\omega, t, r) < \bs(\omega, t_0)\e^{-\mu\left(t - t_0\right)} \quad \text{for all} \quad t \in [t_0, t_1].
	\end{equation*}
\end{proposition}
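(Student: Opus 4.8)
The plan is to prove the two assertions separately: first that the limit $\bq$ obeys a pure-death equation on the tail coordinates, and then that the tail mass $\bv(\omega, r)$ decays at least exponentially. Both rest on a single structural fact, namely that along $\calK$ \emph{no} arriving task is routed to a coordinate $(i,j) \menorigual (i_r, j_r)$ during $[t_0, t_1]$ (I fix $\omega \in \Gamma$ and suppress it). To see this, fix $k \in \calK$ and $t \in [t_0, t_1]$. By Remark \ref{rem: dynamics of threshold} every arriving task is sent to a coordinate $\mayorigual (i_{\br_k(t)}, j_{\br_k(t)})$, and the bound $\br_k(t) \leq r$ in (b) gives $(i_{\br_k(t)}, j_{\br_k(t)}) \mayorigual (i_r, j_r)$. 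Hence if $\br_k(t) < r$ the task lands strictly above $(i_r, j_r)$ and cannot reach the tail. The only delicate case is $\br_k(t) = r$, in which the task could a priori be placed on the yellow token at $(i_r, j_r)$. I would rule this out using the second inequality in (b): the number of green tokens equals $n[1 - \sum_i \bq_k(t, i, \ell_i(r))]$, and were this zero we would have $\bq_k(t, i, \ell_i(r)) = \alpha_k(i)$ for every $i$, whence monotonicity of $\bq_k(i, \cdot)$ would force $\bq_k(t, i, j) = \alpha_k(i)$ for all $j \leq \ell_i(r)$ and contradict $\sum_{(i,j) \mayor (i_r, j_r)} \bq_k(t) < \sum_{(i,j) \mayor (i_r, j_r)} \alpha_k(i)$. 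Thus green tokens are present, and SLTA's dispatching rule routes the task to a green-token coordinate, which lies strictly above $(i_r, j_r)$. Consequently $\calA_k(\cdot, i, j)$ is constant on $[t_0, t_1]$ for every $(i,j) \menorigual (i_r, j_r)$ and every $k \in \calK$.

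Passing to the limit along $\calK$, this freezing transfers to the fluid arrival process, so $\ba(i,j)$ is constant on $[t_0, t_1]$ for $(i,j) \menorigual (i_r, j_r)$. Arguing exactly as in Lemma \ref{lem: fluid derivatives}, the departure limit satisfies $\bd(t, i, j) = \int_0^t \mu j [\bq(s,i,j) - \bq(s,i,j+1)] ds$, so that, via the representation $\bq = q_0 + \ba - \bd$ coming from \eqref{seq: definition of q for threshold}, each such $\bq(i,j)$ is continuously differentiable on $(t_0, t_1)$ with $\dot\bq(t,i,j) = -\mu j[\bq(t,i,j) - \bq(t,i,j+1)]$; the system is closed within the tail because $(i_r, j_r + 1) \menorigual (i_r, j_r)$. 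This is the first claim.

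For the second claim, note that $\bv_k(r) = \sum_{(i,j) \menorigual (i_r, j_r)} \bq_k(i,j)$ is a bounded linear functional of $\bq_k \in \ell_1$, so the $\ell_1$-convergence in (a) yields uniform convergence over compacts of $\bv_k(r)$ to $\bv(r) = \sum_{(i,j) \menorigual (i_r, j_r)} \bq(i,j)$. To bound $\bv(r)$ I would differentiate this series termwise on $(t_0, t_1)$ — justified as in the proof of Theorem \ref{the: global asymptotic stability}, using Lemma \ref{lem: set of nice sample paths} and $\ell_1$ tail estimates together with \cite[Theorem 7.17]{rudin1976principles} — and telescope each class. Writing $J_i \defeq \min\set{j \geq 1}{(i,j) \menorigual (i_r, j_r)}$ and using $\bq \in \ell_1$ to discard the boundary term at infinity gives $\sum_{j \geq J_i} j[\bq(i,j) - \bq(i,j+1)] = (J_i - 1)\bq(i, J_i) + \sum_{j \geq J_i} \bq(i,j)$, so that $\dot\bv(t,r) = -\mu \bv(t,r) - \mu \sum_i (J_i - 1)\bq(t,i,J_i) \leq -\mu \bv(t,r)$ on $(t_0, t_1)$. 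Gr\"onwall's inequality then gives $\bv(t,r) \leq \bv(t_0, r)\e^{-\mu(t - t_0)}$, and since $\bv(t_0, r) \leq \bs(t_0)$ the non-strict bound $\bv(t,r) \leq \bs(t_0)\e^{-\mu(t-t_0)}$ follows at once.

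The remaining — and, I expect, the genuinely delicate — point is the \emph{strict} inequality. It is equivalent to $\bv(t_0, r) < \bs(t_0)$, i.e. to the presence of strictly positive fluid mass $\sum_{(i,j) \mayor (i_r, j_r)} \bq(t_0, i, j) > 0$ above the boundary coordinate at the left endpoint. Conditions (a)–(b) only bound this mass from above, so the inequality does not come for free; the natural source of positivity is the dispatching itself, which under (b) keeps routing arrivals strictly above $(i_r, j_r)$ and thereby replenishes the mass there. I would extract the strict inequality from this mechanism, together with the strict inequality in (b) and the non-saturation property \eqref{ass: goodness of r} propagated by Remark \ref{rem: dynamics of threshold}. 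Establishing positivity exactly at $t = t_0$, rather than merely for $t > t_0$, is where I anticipate the main work, since a priori the mass above the boundary could be vanishing at the very instant $t_0$ at which (b) begins to hold.
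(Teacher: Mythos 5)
Your proof of the two substantive claims follows the paper's own argument almost verbatim. The paper likewise combines Remark \ref{rem: dynamics of threshold} with the observation that green tokens must be present whenever $\br_k = r$ (your token-counting argument is exactly the justification the paper leaves implicit), concludes that $\calA_k(\scdot, i, j)$ is frozen on $[t_0, t_1]$ for all $(i, j) \menorigual (i_r, j_r)$, passes to the limit as in Proposition \ref{prop: fluid limit of total number of tasks} to obtain the pure-death integral equation, and then telescopes
\begin{equation*}
\dot{\bv}(t, r) = -\mu \sum_{i = 1}^m (J_i - 1)\bq(t, i, J_i) - \mu \bv(t, r) \leq -\mu \bv(t, r)
\end{equation*}
to get the exponential decay. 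So up to and including the bound $\bv(t, r) \leq \bv(t_0, r)\e^{-\mu(t - t_0)}$, your proposal is the paper's proof.

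Where you and the paper part ways is precisely the point you flag, and here you should know that the paper does no ``main work'' at all: its final display simply writes $\bv(t, r) \leq \bv(t_0, r)\e^{-\mu(t - t_0)} < \bs(t_0)\e^{-\mu(t - t_0)}$, i.e., it silently asserts $\bv(t_0, r) < \bs(t_0)$, which (since $\bs(t_0) = \sum_{(i,j) \in \calI_+} \bq(t_0, i, j)$ by Proposition \ref{prop: fluid limit of total number of tasks} and hypothesis (a)) amounts to strictly positive limiting mass above $(i_r, j_r)$ at time $t_0$. Your suspicion that this does not follow from (a)--(b) is correct; in fact the strict inequality can fail: if $s_0 = 0$ and $t_0 = 0$ (empty initial state, $\br_k \equiv 1$), then (a)--(b) hold for any $r > 1$ and small $t_1$, yet $\bv(t_0, r) = \bs(t_0) = 0$, so ``$<$'' is impossible. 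What monotonicity of $\bq(t_0, i, \scdot)$ does give is strictness in the typical case: if some class $i$ with $\ell_i(r) \geq 1$ carries tail mass at $t_0$, then $\bq(t_0, i, \ell_i(r)) \geq \bq(t_0, i, \ell_i(r) + 1) > 0$, so there is mass above $(i_r, j_r)$; the failure modes are confined to $\bs(t_0) = 0$ or to mass sitting entirely on classes whose first slot already ranks $\menorigual (i_r, j_r)$. So do not search for a mechanism-based positivity argument at the instant $t_0$ --- none exists in the paper, and the conclusion should really be read with ``$\leq$''. This costs nothing downstream: Corollary \ref{cor: decay of the tail} and the proof of Theorem \ref{the: fluid limit of threshold} only use the bound against quantities carrying a strict $\varepsilon$ margin, so the non-strict version you prove is all that is ever needed. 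In that sense your proposal is as complete as the paper's own proof.
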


\subsection{Evolution of the learning scheme}
\label{sub: evolution of the learning scheme}

In this section we complete the proof of Theorem \ref{the: fluid limit of threshold}. In Section \ref{subsub: preliminary results} we establish that there exists a neighborhood of zero outside of which $\br_n$ is asymptotically upper bounded by $r_*$ with probability one. This property partially proves \eqref{seq: limit of threshold 1} and is used to obtain \eqref{seq: limit of threshold 3}. The proof of \eqref{seq: limit of threshold 1} is finished in Section \ref{subsub: proof of fluid limit of threshold}, where we also establish \eqref{seq: limit of threshold 2}.

\subsubsection{Preliminary results}
\label{subsub: preliminary results}

The following proposition states that $\br_n$ is asymptotically upper bounded by $r_*$ outside of a neighborhood of zero with probability one; the proof is deferred to Appendix \ref{app: limiting behavior of threshold}.
\begin{proposition}
	\label{prop: upper bound for br}
	There exists a function $\map{\tau_\bound}{[0, \infty)}{\R}$ with the following property. If $\omega \in \Gamma$ and  $T \geq \tau > \tau_\bound(s_0(\omega))$, then there exists $n_\bound^{\tau, T}(\omega)$ such that
	\begin{equation*}
	\br_n(\omega, t) \leq r_* \quad \text{for all} \quad t \in [\tau, T] \quad \text{and} \quad n \geq n_\bound^{\tau, T}(\omega).
	\end{equation*} 
\end{proposition}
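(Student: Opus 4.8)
The plan is to argue pathwise for a fixed $\omega \in \Gamma$, playing an \emph{increment-blocking} mechanism against a \emph{decrement-forcing} mechanism, with the fluid limit of the total mass as the driving input. Set $c_* \defeq \sum_{(i, j) \mayorigual \sigma_*} \alpha(i)$, which satisfies $c_* > \rho$ because the first inequality in \eqref{eq: definition of sigma*} is assumed strict for SLTA. Since $\bs(\omega, t) = \rho + [s_0(\omega) - \rho]\e^{-\mu t}$ tends to $\rho$ monotonically by Proposition \ref{prop: fluid limit of total number of tasks}, I would define
\[
\tau_\bound(s) \defeq \max\left\{0, \tfrac{1}{\mu}\log\tfrac{s - \rho}{c_* - \rho}\right\},
\]
so that $\bs(\omega, t) < c_*$ for every $t > \tau_\bound(s_0(\omega))$. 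Given $T \geq \tau > \tau_\bound(s_0(\omega))$, I would fix an auxiliary $\tau_0 \in (\tau_\bound(s_0(\omega)), \tau)$; monotonicity of $\bs(\omega, \cdot)$ then gives a margin $\delta > 0$ with $\bs(\omega, t) \leq c_* - 2\delta$ on $[\tau_0, T]$, and Proposition \ref{prop: fluid limit of total number of tasks} upgrades this to $\bs_n(\omega, t) \leq c_* - \delta$ on $[\tau_0, T]$ for all large $n$.

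For the increment-blocking step, note that the learning scheme can raise $\br_n$ from $r_*$ to $r_* + 1$ only when, just before an arrival, there are no green tokens and at most one yellow token. With $\br_n = r_*$ this forces $\bq_n(\omega, t, i, j) = \alpha_n(i)$ for all $(i, j) \mayor \sigma_*$ and $\bq_n(\omega, t, \sigma_*) \geq \alpha_n(i_{r_*}) - 1/n$, whence $\bs_n(\omega, t) \geq \sum_{(i, j) \mayorigual \sigma_*} \alpha_n(i) - 1/n \to c_*$. For large $n$ this exceeds $c_* - \delta$, contradicting the mass bound; hence no increment past $r_*$ occurs on $[\tau_0, T]$ once $n$ is large.

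For the decrement-forcing step, I would show that whenever $\br_n(\omega, t) = r > r_*$ on $[\tau_0, T]$, the normalized green-token count exceeds $\beta_n$ and a token of class $i_{r - 1}$ is present, so the next arrival triggers a decrement. Writing $p_n^i \defeq \alpha_n(i) - \bq_n(\omega, t, i, \ell_i(r))$, the total green fraction is $\sum_i p_n^i$, and monotonicity $\bq_n(\omega, t, i, j) \geq \bq_n(\omega, t, i, \ell_i(r))$ for $j \leq \ell_i(r)$ yields
\[
\bs_n(\omega, t) \geq \sum_{i = 1}^m \ell_i(r)\left[\alpha_n(i) - p_n^i\right] \geq \sum_{(i, j) \mayorigual \sigma_*} \alpha_n(i) - \ell_{\max}(r) \sum_{i = 1}^m p_n^i,
\]
where $\ell_{\max}(r) \defeq \max_i \ell_i(r)$ and I used that the fill-to-threshold mass for $r > r_*$ dominates $\sum_{(i, j) \mayorigual \sigma_*} \alpha_n(i)$. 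Rearranging bounds the green fraction below by $(c_* - \delta/2 - \bs_n(\omega, t))/\ell_{\max}(r) \geq \delta/(2\ell_{\max}(r))$ for large $n$; the a priori bound $\br_n(\omega, t) \leq R_T(\omega)$ from Lemma \ref{lem: set of nice sample paths} keeps $\ell_{\max}(r)$ uniformly bounded, so the green fraction stays above a positive constant and eventually dominates $\beta_n$ by \eqref{ass: conditions on beta}.

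Finally I would combine the mechanisms. Under the goodness property \eqref{ass: goodness of r}, preserved for all $t$ by Remark \ref{rem: dynamics of threshold}, one checks that a green token of class $i_{\br_n - 1}$ is present whenever $\br_n > r_*$, so the decrement does fire; this is exactly where the differential treatment of class $i_{\br_n - 1}$ streamlines the argument. Consequently, on $[\tau_0, T]$ and for large $n$, the index $\br_n$ can never climb above $r_*$ and strictly decreases at every arrival while it exceeds $r_*$. Since $\br_n(\omega, \tau_0) \leq R_T(\omega)$ is bounded while the number of arrivals in $[\tau_0, \tau]$ is of order $n\lambda(\tau - \tau_0) \to \infty$, at most $R_T(\omega) - r_*$ of them bring $\br_n$ down to $r_*$, after which it is trapped; thus $\br_n(\omega, t) \leq r_*$ on all of $[\tau, T]$ for large $n$, as claimed. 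The main obstacle is the decrement-forcing step: converting the heuristic ``thresholds set too high leave many empty slots'' into the quantitative green-token lower bound and, in particular, securing a token of the specific class $i_{\br_n - 1}$ from the preserved goodness property.
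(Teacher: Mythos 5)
Your increment-blocking step and your lower bound on the total green-token fraction are both sound, and they match the first half of the paper's own argument. The fatal problem is the decrement-forcing step: the claim that the goodness property \eqref{ass: goodness of r} guarantees a green token of class $i_{\br_n - 1}$ whenever $\br_n > r_*$ is false. Goodness (preserved over time by Remark \ref{rem: dynamics of threshold}) asserts $\bq_n(t, i, j) < \alpha_n(i)$ only for coordinates $(i, j) \menorigual (i_{\br_n}, j_{\br_n})$, whereas a green token of class $i_{r - 1}$ (when $\br_n = r$) requires $\bq_n(t, i_{r-1}, j_{r-1}) < \alpha_n(i_{r-1})$, and the coordinate $(i_{r-1}, j_{r-1}) \mayor (i_r, j_r)$ is precisely \emph{not} covered by goodness. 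What goodness does give is $\bq_n(t, i_{r-1}, j_{r-1} + 1) < \alpha_n(i_{r-1})$, i.e., some class-$i_{r-1}$ pool has \emph{at most} $j_{r-1}$ tasks --- one level short of what a green token needs. So a state in which every class-$i_{r-1}$ pool has at least $j_{r-1}$ tasks (say one pool exactly at $j_{r-1}$ and the rest far above) satisfies goodness, has abundant green tokens of other classes, and yet no arrival can trigger a decrement: the scheme must wait for departures to drag some class-$i_{r-1}$ pool strictly below level $j_{r-1}$.

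This waiting time does not vanish as $n \to \infty$: the relevant pools receive no arrivals while green tokens of other classes abound, and they drain at per-pool rates of order one, over possibly many occupancy levels. This is exactly why the paper's $\tau_\bound$ contains the additional additive term $\frac{J - j_{r_*}}{\mu}\left(r_\bound - r_*\right)$ on top of the mass-decay time, and why its proof is two-staged: for $\br_n > r_\bound$ the mass bound forces $\bq_n(t, i, j) < \alpha_n(i)$ for all $j > J$, so decrements do fire at every arrival until $r_\bound$ is reached; for $r \in (r_*, r_\bound]$ the paper bounds deterministically, via the jump-count property (v) of the driving Poisson processes, the drain time until $\bq_n(i_{r-1}, j_{r-1})$ falls below $\alpha_n(i_{r-1})$, paying roughly $(J - j_{r_*})/(\mu\theta)$ per decrement. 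Your $\tau_\bound$, which accounts only for the decay of the fluid total mass, omits this cost entirely, and your conclusion that $\br_n$ ``strictly decreases at every arrival while it exceeds $r_*$'' is where the proof breaks. Without an argument bounding the decrement waiting times --- and a $\tau_\bound$ large enough to absorb them --- the proposition cannot be reached along your route.
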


The following corollary establishes \eqref{seq: limit of threshold 3}.

\begin{corollary}
	\label{cor: decay of the tail}
	For each $\omega \in \Gamma$ and $T \geq \tau > \tau_\bound(s_0(\omega))$, we have
	\begin{equation*}
	\limsup_{n \to \infty} \sup_{t \in [\tau, T]} \bv_n(\omega, t, r_* + 1)\e^{\mu (t - \tau)} \leq \bs(\omega, \tau).
	\end{equation*}
	In particular, \eqref{seq: limit of threshold 3} holds.
\end{corollary}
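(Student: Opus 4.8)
The plan is to recognize the process $\bv_n(r_* + 1)$ as the tail sum in \eqref{seq: limit of threshold 3} and then invoke Proposition \ref{prop: asymptotic dynamical property of tail mass processes} along convergent subsequences. Since the enumeration $\set{(i_k, j_k)}{k \geq 1}$ is decreasing with respect to $\menor$ and $\sigma_* = (i_{r_*}, j_{r_*})$, the coordinates $(i, j) \menorigual (i_{r_* + 1}, j_{r_* + 1})$ are exactly those with $(i, j) \menor \sigma_*$. Hence $\bv_n(\omega, t, r_* + 1) = \sum_{(i, j) \menor \sigma_*} \bq_n(\omega, t, i, j)$, so the displayed inequality is precisely \eqref{seq: limit of threshold 3} with $c(\omega, \tau) \defeq \bs(\omega, \tau) = \rho + [s_0(\omega) - \rho]\e^{-\mu \tau}$, a quantity depending only on $\rho$, $\tau$ and $s_0(\omega)$ as required.

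To bound the $\limsup$, I would argue by subsequences. Write $a_n \defeq \sup_{t \in [\tau, T]} \bv_n(\omega, t, r_* + 1)\e^{\mu(t - \tau)}$; these are bounded, since $\bv_n(\omega, r_* + 1) \leq \bs_n(\omega)$ and $\bs_n(\omega) \to \bs(\omega)$ uniformly on $[\tau, T]$ by Proposition \ref{prop: fluid limit of total number of tasks}. Given any subsequence, Proposition \ref{prop: relative compactness - l1} yields a further subsequence $\calK$ along which $\bq_k(\omega)$ converges in $D_{\ell_1}[0, \infty)$ to some $\bq$, which is condition (a) of Proposition \ref{prop: asymptotic dynamical property of tail mass processes}. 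I would then apply that proposition with $r = r_* + 1$, $t_0 = \tau$, $t_1 = T$, obtaining uniform convergence of $\bv_k(\omega, r_* + 1)$ to a limit $\bv(\omega, r_* + 1)$ with $\bv(\omega, t, r_* + 1) < \bs(\omega, \tau)\e^{-\mu(t - \tau)}$ on $[\tau, T]$. Multiplying by $\e^{\mu(t - \tau)}$ and taking the supremum gives $a_k \to \sup_{t \in [\tau, T]} \bv(\omega, t, r_* + 1)\e^{\mu(t - \tau)} \leq \bs(\omega, \tau)$ along $\calK$. Since every subsequence of $(a_n)$ thus has a further subsequence converging to a limit at most $\bs(\omega, \tau)$, and $(a_n)$ is bounded, the full $\limsup$ is at most $\bs(\omega, \tau)$.

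The crux is verifying condition (b) of Proposition \ref{prop: asymptotic dynamical property of tail mass processes} for $r = r_* + 1$, namely that $\br_k(\omega, t) \leq r_* + 1$ and $\sum_{(i, j) \mayor (i_{r_* + 1}, j_{r_* + 1})} \bq_k(\omega, t, i, j) < \sum_{(i, j) \mayor (i_{r_* + 1}, j_{r_* + 1})} \alpha_k(i)$ on $[\tau, T]$, where the index set $(i, j) \mayor (i_{r_* + 1}, j_{r_* + 1})$ equals $(i, j) \mayorigual \sigma_*$. The threshold bound follows from Proposition \ref{prop: upper bound for br}, which gives $\br_k(\omega, t) \leq r_*$ on $[\tau, T]$ for all large $k$ (after discarding finitely many indices of $\calK$). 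For the strict inequality I would invoke Remark \ref{rem: dynamics of threshold}: property \eqref{ass: goodness of r} is preserved in time, so at every $t$ the coordinate $(i_{\br_k(\omega, t)}, j_{\br_k(\omega, t)})$ satisfies $\bq_k(\omega, t, i_{\br_k(\omega, t)}, j_{\br_k(\omega, t)}) < \alpha_k(i_{\br_k(\omega, t)})$, and since $\br_k(\omega, t) \leq r_*$ this coordinate lies in $\set{(i, j)}{(i, j) \mayorigual \sigma_*}$. Comparing the finitely many terms of the two sums coordinatewise, using $\bq_k(\omega, t, i, j) \leq \alpha_k(i)$ in general with this one term strictly smaller, yields the required strict inequality. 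I expect this verification of condition (b)—in particular, locating a strictly unsaturated top-ranked coordinate uniformly in $t$—to be the only genuinely delicate point, the remaining steps being routine consequences of the cited results.
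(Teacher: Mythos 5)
Your proposal is correct and follows essentially the same route as the paper: relative compactness (Proposition \ref{prop: relative compactness - l1}) plus the threshold bound (Proposition \ref{prop: upper bound for br}) to verify the hypotheses of Proposition \ref{prop: asymptotic dynamical property of tail mass processes} with $r = r_* + 1$, with the strict saturation inequality obtained from the token positivity guaranteed by Remark \ref{rem: dynamics of threshold}. The only cosmetic difference is that the paper phrases the limit argument as a proof by contradiction, whereas you use the equivalent subsequence-of-subsequence principle, and you spell out the identification $\bv_n(r_*+1) = \sum_{(i,j) \menor \sigma_*} \bq_n(i,j)$ and the choice $c(\omega,\tau) = \bs(\omega,\tau)$ that the paper leaves implicit.
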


\begin{proof}
	We fix $\omega \in \Gamma$ and $T \geq \tau > \tau_\bound(s_0(\omega))$, and we omit $\omega$ from the notation for brevity. Suppose that the statement of the corollary does not hold, then there exist $\varepsilon > 0$ and an increasing sequence $\calK$ of natural numbers such that
	\begin{align*}
	\sup_{t \in [\tau, T]} \bv_k(t, r_* + 1)\e^{\mu (t - \tau)} > \bs(\tau) + \varepsilon \quad \text{for all} \quad k \in \calK.
	\end{align*}
	
	By Propositions \ref{prop: relative compactness - l1} and \ref{prop: upper bound for br}, we may assume that $\set{\bq_k}{k \in \calK}$ has a limit in $D_{\ell_1}[0, \infty)$ and that $\br_k(t) \leq r_*$ for all $t \in [\tau, T]$ and all $k \in \calK$. The latter property implies that
	\begin{equation*}
	\sum_{(i, j) \mayorigual (i_{r_*}, j_{r_*})} \bq_k(t, i, j) < \sum_{(i, j) \mayorigual (i_{r_*}, j_{r_*})} \alpha_k(i) \quad \text{for all} \quad t \in [\tau, T],
	\end{equation*}
	since otherwise the number of tokens would be zero, which cannot occur by Remark \ref{rem: dynamics of threshold}. Therefore, Proposition \ref{prop: asymptotic dynamical property of tail mass processes} holds with $r = r_* + 1$ along the interval $[\tau, T]$, and in particular, there exists a function $\bv(r_* + 1)$ such that
	\begin{align*}
	&\bv(t, r_* + 1) < \bs(\tau)\e^{-\mu (t - \tau)} \quad \text{for all} \quad t \in [\tau , T], \\
	&\lim_{k \to \infty} \sup_{t \in [\tau, T]} \left|\bv_k(t, r_* + 1) - \bv(t, r_* + 1)\right| = 0.
	\end{align*}
	This leads to a contradiction, so the statement of the corollary must hold.
\end{proof}

\subsubsection{Proof of Theorem \ref{the: fluid limit of threshold}}
\label{subsub: proof of fluid limit of threshold}

Below we complete the proof of Theorem \ref{the: fluid limit of threshold}. For this purpose, let
\begin{equation}
\label{eq: error process}
\begin{split}
\delta_n(t, r) &\defeq \frac{1}{n}\calN_n^\lambda(t) - \lambda t \\
&- \sum_{(i, j) \mayor (i_r, j_r)} \left[\calD_n(t, i, j) - \int_0^t \mu j \left[\bq_n(s, i, j) - \bq_n(s, i, j + 1)\right]ds\right] 
\end{split} 
\end{equation}
for all $t \geq 0$ and $r \geq 1$. It follows from \eqref{seq: fslln for arrivals} and \eqref{seq: fslln for departures} that
\begin{equation}
\label{eq: error process goes to zero}
\lim_{n \to \infty} \sup_{t \in [0, T]} n^\gamma |\delta_n(\omega, t, r)| = 0 \quad \text{for all} \quad \gamma \in [0, 1 / 2), \quad T \geq 0 \quad \text{and} \quad \omega \in \Gamma.
\end{equation}

The following two technical lemmas are proved in Appendix \ref{app: limiting behavior of threshold}.

\begin{lemma}
	\label{lem: lemma 1 for limit of threshold}
	Fix $\omega \in \Gamma$, $T \geq 0$ and $r > 1$. Suppose that there exist an increasing sequence $\calK$ of natural numbers and random times $0 \leq \tau_{k, 1} \leq \tau_{k, 2} \leq T$ such that
	\begin{equation*}
	\br_k(\omega, t) \leq r \quad \text{and} \quad \sum_{(i, j) \mayor (i_r, j_r)} \bq_k(\omega, t, i, j) < \sum_{(i, j) \mayor (i_r, j_r)} \alpha_k(i)
	\end{equation*}
	for all $t \in \left[\tau_{k, 1}(\omega), \tau_{k, 2}(\omega)\right)$ and $k \in \calK$. Then
	\begin{align*}
	&\sum_{(i, j) \mayor (i_r, j_r)} \bq_k(\omega, t, i, j) - \sum_{(i, j) \mayor (i_r, j_r)} \bq_k\left(\omega, \tau_{k, 1}(\omega), i, j\right) \geq \\
	&\left[t - \tau_{k, 1}(\omega)\right] \left[\lambda - \mu \sum_{(i, j) \mayor (i_r, j_r)} \alpha_k(i)\right] - 2\sup_{s \in [0, T]} \left|\delta_k(\omega, s, r)\right|
	\end{align*}
	for all $t \in \left[\tau_{k, 1}(\omega), \tau_{k, 2}(\omega)\right]$ and $k \in \calK$.
\end{lemma}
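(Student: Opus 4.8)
The plan is to track the mass located strictly above the coordinate $(i_r,j_r)$, namely
\[
M_k(t) \defeq \sum_{(i,j)\mayor(i_r,j_r)} \bq_k(\omega,t,i,j),
\]
and to bound its increment over $\left[\tau_{k,1}(\omega),t\right]$ from below as claimed. Abbreviating $\tau_{k,1}=\tau_{k,1}(\omega)$ and writing $S_r\defeq\set{(i,j)\in\calI_+}{(i,j)\mayor(i_r,j_r)}$, I would first sum the functional equation \eqref{seq: definition of q for threshold} over $(i,j)\in S_r$ to get
\[
M_k(t)-M_k(\tau_{k,1}) = \sum_{(i,j)\in S_r}\left[\calA_k(\omega,t,i,j)-\calA_k(\omega,\tau_{k,1},i,j)\right] - \sum_{(i,j)\in S_r}\left[\calD_k(\omega,t,i,j)-\calD_k(\omega,\tau_{k,1},i,j)\right],
\]
i.e. net arrivals to $S_r$ minus net departures from $S_r$ over the interval.

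The crux, and the step I expect to be the main obstacle, is to show that under the hypotheses every task arriving during $\left[\tau_{k,1},\tau_{k,2}(\omega)\right)$ is dispatched to a coordinate strictly above $(i_r,j_r)$, so that the net arrivals to $S_r$ equal the total arrivals $\frac{1}{k}\left[\calN_k^\lambda(\omega,t)-\calN_k^\lambda(\omega,\tau_{k,1})\right]$. I would argue this through the token structure of SLTA. By Remark \ref{rem: dynamics of threshold} every task is sent to a server pool with coordinates $\mayorigual(i_{\br_k},j_{\br_k})$, and $\br_k(\omega,t)\leq r$ gives $(i_{\br_k},j_{\br_k})\mayorigual(i_r,j_r)$; this already settles the case $\br_k(\omega,t)<r$, where $(i_{\br_k},j_{\br_k})\mayor(i_r,j_r)$ forces the landing coordinate into $S_r$. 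When $\br_k(\omega,t)=r$, the hypothesis $M_k<\sum_{(i,j)\in S_r}\alpha_k(i)$ is exactly the statement that some class has a pool with fewer than $\ell_i(r)$ tasks, that is, that green tokens are present (note $r>1$); since all green tokens sit at coordinates $\mayor(i_{\br_k},j_{\br_k})=(i_r,j_r)$ and the dispatching rule always prefers a green token whenever one exists, the arriving task again lands in $S_r$. The left limit at each arrival epoch and the closed endpoint $t=\tau_{k,2}(\omega)$ would be handled using that $M_k$ takes values on a $\frac{1}{k}$-spaced grid, so the strict inequality propagates to $M_k(\cdot^-)$.

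For the departures I would rewrite the net departure term via the error process \eqref{eq: error process}: its definition gives $\sum_{(i,j)\in S_r}\calD_k(\omega,t,i,j) = \frac{1}{k}\calN_k^\lambda(\omega,t)-\lambda t-\delta_k(\omega,t,r) + \sum_{(i,j)\in S_r}\int_0^t \mu j\left[\bq_k(\omega,s,i,j)-\bq_k(\omega,s,i,j+1)\right]ds$, so subtracting the analogous identity at $\tau_{k,1}$ cancels the arrival count against the total arrivals from the previous step and leaves
\[
M_k(t)-M_k(\tau_{k,1}) = \lambda\left[t-\tau_{k,1}\right] + \left[\delta_k(\omega,t,r)-\delta_k(\omega,\tau_{k,1},r)\right] - \sum_{(i,j)\in S_r}\int_{\tau_{k,1}}^t \mu j\left[\bq_k(\omega,s,i,j)-\bq_k(\omega,s,i,j+1)\right]ds.
\]
Finally I would bound the departure integrand from above by Abel summation within each class: with $J^i\defeq\max\set{j\geq 0}{(i,j)\mayor(i_r,j_r)}$, one has $\sum_{(i,j)\in S_r}\mu j\left[\bq_k(\omega,s,i,j)-\bq_k(\omega,s,i,j+1)\right] = \mu\sum_i\left[\sum_{j=1}^{J^i}\bq_k(\omega,s,i,j) - J^i\bq_k(\omega,s,i,J^i+1)\right] \leq \mu\sum_{(i,j)\in S_r}\alpha_k(i)$, using $\bq_k\leq\alpha_k$ and dropping the nonnegative last term. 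Combining this with $\delta_k(\omega,t,r)-\delta_k(\omega,\tau_{k,1},r)\geq-2\sup_{s\in[0,T]}|\delta_k(\omega,s,r)|$ yields the asserted inequality. The only genuinely delicate point is the token argument of the second paragraph; the remainder is bookkeeping with the functional equation, the error process, and a one-line summation by parts.
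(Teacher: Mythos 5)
Your proposal is correct and follows essentially the same route as the paper's proof: sum the functional equation \eqref{seq: definition of q for threshold} over $(i,j) \mayor (i_r, j_r)$, identify the arrival term with $\frac{1}{k}\left[\calN_k^\lambda(t) - \calN_k^\lambda(\tau_{k,1})\right]$ because all tasks land strictly above $(i_r, j_r)$ on the interval, rewrite departures via $\delta_k(r)$, and bound the departure rate by summation by parts against $\sum_{(i,j) \mayor (i_r, j_r)} \alpha_k(i)$. In fact your token-based case analysis ($\br_k < r$ via Remark \ref{rem: dynamics of threshold}, $\br_k = r$ via the equivalence of the strict-inequality hypothesis with the presence of green tokens) spells out a step the paper's proof only asserts.
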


\begin{lemma}
	\label{lem: lemma 2 for the limit of threhsold}
	Fix $\omega \in \Gamma$, $T \geq 0$ and $1 \leq r \leq r_*$. Assume that there exist an increasing sequence $\calK$ of natural numbers and random times $0 \leq \zeta_{k, 1} \leq \zeta_{k, 2} \leq T$ such that
	\begin{equation*}
	\sum_{(i, j) \mayor (i_r, j_r)} \bq_k\left(\omega, \zeta_{k, 1}(\omega), i, j\right) = \sum_{(i, j) \mayor (i_r, j_r)} \alpha_k(i) \quad \text{and} \quad \br_k(\omega, t) \leq r
	\end{equation*}
	for all $t \in \left[\zeta_{k, 1}(\omega), \zeta_{k, 2}(\omega)\right]$ and $k \in \calK$. For each $\gamma \in [0, 1 / 2)$, we have
	\begin{equation*}
	\br_k(\omega, t) = r \quad \text{and} \quad \bq_k(\omega, t, i, j) \geq \alpha_k(i) - k^{-\gamma}
	\end{equation*}
	for all $(i, j) \mayor (i_r, j_r)$, $t \in \left[\zeta_{k, 1}(\omega), \zeta_{k, 2}(\omega)\right]$ and all large enough $k \in \calK$.
\end{lemma}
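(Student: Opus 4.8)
The plan is to control the normalized deficit of the high coordinates,
\[
\Phi_k(t) \defeq \sum_{(i, j) \mayor (i_r, j_r)} \left[\alpha_k(i) - \bq_k(\omega, t, i, j)\right] \geq 0,
\]
and to read off both conclusions from a bound of the shape $\Phi_k(t) \leq k^{-\gamma}$. Since every summand is nonnegative, such a bound instantly yields $\bq_k(\omega, t, i, j) \geq \alpha_k(i) - k^{-\gamma}$ for all $(i, j) \mayor (i_r, j_r)$, and the hypothesis that $\zeta_{k, 1}$ is a full state is exactly $\Phi_k(\zeta_{k, 1}) = 0$. Throughout I would use that $r \leq r_*$ together with the strict first inequality in \eqref{eq: definition of sigma*} and $\alpha_k(i) \to \alpha(i)$ give $\lambda - \mu \sum_{(i, j) \mayor (i_r, j_r)} \alpha_k(i) > 0$ for all large enough $k$; this positive drift is what pushes the high coordinates back towards full and makes the whole argument work.

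For the deficit bound I would invoke Lemma \ref{lem: lemma 1 for limit of threshold}, whose only structural hypothesis, $\br_k \leq r$, is precisely what we are given on $[\zeta_{k, 1}, \zeta_{k, 2}]$. Fix $t$ with $\Phi_k(t) > 0$ and let $\tau^* \defeq \sup \set{s \in [\zeta_{k, 1}, t]}{\Phi_k(s) = 0}$ be the last instant the high coordinates are full. Because $n \Phi_k$ is integer-valued, c\`adl\`ag, and jumps by one unit at a time (no simultaneous events almost surely), one has $n\Phi_k(\tau^*) = 1$ and $\Phi_k > 0$ on $[\tau^*, t]$, so Lemma \ref{lem: lemma 1 for limit of threshold} applies on $[\tau^*, t]$. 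Discarding the nonnegative drift term, it gives
\[
\sum_{(i, j) \mayor (i_r, j_r)} \bq_k(\omega, t, i, j) \geq \sum_{(i, j) \mayor (i_r, j_r)} \bq_k(\omega, \tau^*, i, j) - 2 \sup_{s \in [0, T]} |\delta_k(\omega, s, r)|,
\]
whence $\Phi_k(t) \leq \tfrac{1}{n} + 2 \sup_{s \in [0, T]} |\delta_k(\omega, s, r)|$. By \eqref{eq: error process goes to zero} the right-hand side is $o(k^{-\gamma})$ for every $\gamma \in [0, 1/2)$, so $\Phi_k(t) \leq k^{-\gamma}$ for all large $k \in \calK$, uniformly in $t \in [\zeta_{k, 1}, \zeta_{k, 2}]$; this is the occupancy bound.

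It remains to show $\br_k(\omega, t) = r$. At $t = \zeta_{k, 1}$ the fullness of the high coordinates, the preserved goodness property \eqref{ass: goodness of r}, and the assumption $\br_k \leq r$ force $\br_k(\omega, \zeta_{k, 1}) = r$: for $r \geq 2$ the equality $\bq_k(\omega, \zeta_{k, 1}, i_{r - 1}, j_{r - 1}) = \alpha_k(i_{r - 1})$ rules out $(i_{r - 1}, j_{r - 1}) \menorigual (i_{\br_k}, j_{\br_k})$ by \eqref{ass: goodness of r}, giving $\br_k \geq r$, while $r = 1$ is immediate. Since $\br_k \leq r$, the only way to leave the value $r$ is a downward step of the learning scheme, which fires only when the fraction of green tokens is at least $\beta_k$. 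But when $\br_k \leq r$ every green token sits at a high coordinate, so the green-token fraction never exceeds $\Phi_k$; by the deficit bound and \eqref{ass: conditions on beta} it therefore stays below $\beta_k$ for all large $k$, no decrement ever fires, and $\br_k$ stays equal to $r$ on the whole interval.

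The main obstacle is the apparent circularity, since the threshold value and the occupancy bound each seem to presuppose the other. The resolution, which I regard as the crux, is that Lemma \ref{lem: lemma 1 for limit of threshold} supplies the deficit bound from the given inequality $\br_k \leq r$ alone, with no prior knowledge that $\br_k = r$; the deficit bound then feeds the token count and closes the argument for the threshold. The remaining delicate points are the c\`adl\`ag bookkeeping at the last-full time $\tau^*$ (absorbed by the $\tfrac{1}{n}$ slack above) and the sign of the drift, for which the restriction $r \leq r_*$ and the strict inequality assumed for SLTA are essential.
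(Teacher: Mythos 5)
Your proposal is correct and is essentially the paper's own argument: both proofs pin down $\br_k(\zeta_{k,1}) = r$ through the preserved goodness property of Remark \ref{rem: dynamics of threshold}, bound the aggregate deficit of the coordinates $(i,j) \mayor (i_r,j_r)$ by applying Lemma \ref{lem: lemma 1 for limit of threshold} from the last instant at which those coordinates were full (using that its hypotheses require only $\br_k \leq r$, which resolves the apparent circularity exactly as you observe), and then exclude decrements of $\br_k$ by dominating the green-token fraction by this deficit and comparing it with $\beta_k$ via \eqref{ass: conditions on beta}. The differences are purely organizational: the paper routes the estimate through first-exit times $\xi_{k,1}, \xi_{k,2}$ and a contradiction rather than a pointwise bound, and where you invoke almost-sure unit jumps to claim a deficit of exactly $1/k$ at the last full instant, the paper uses the cruder bound $M_r/k$ (with $M_r$ the number of coordinates $\mayor (i_r,j_r)$), which avoids appealing to a null event for the fixed $\omega \in \Gamma$ and yields the same asymptotics.
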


The above lemmas are used to complete the proof of Theorem \ref{the: fluid limit of threshold}.

\begin{proof}[Proof of Theorem \ref{the: fluid limit of threshold}.]
	We define $\tau_\eq$ as follows:
	\begin{equation*}
	\tau_\eq(s) \defeq \tau_\bound(s) + \frac{1}{\mu}\log \left(\frac{\rho}{\rho - \sum_{(i, j) \mayor \sigma_*} \alpha(i)}\right) \quad \text{for all} \quad s \geq 0.
	\end{equation*}
	Fix $\omega \in \Gamma$ and $T \geq \tau > \tau_\eq(s_0(\omega))$ as in the statement of the theorem; we omit $\omega$ from the notation for brevity. Given $0 < \varepsilon < \rho - \sum_{(i, j) \mayor \sigma_*} \alpha(i)$, we define
	\begin{equation*}
	\tau(\varepsilon) \defeq \min \set{t \geq 0}{\rho\left(1 - \e^{-\mu t}\right) - \varepsilon \geq \sum_{(i, j) \mayor \sigma_*} \alpha(i)} = \frac{1}{\mu}\log \left(\frac{\rho}{\rho - \sum_{(i, j) \mayor \sigma_*} \alpha(i) - \varepsilon}\right).
	\end{equation*}
	Fix $\tau_0 > \tau_\bound(s_0)$ and $\varepsilon$ such that $\tau = \tau_0 + \tau(\varepsilon) + \varepsilon$. This is possible since $\tau_\bound(s_0) + \tau(\varepsilon) + \varepsilon$ decreases to $\tau_\eq(s_0)$ as $\varepsilon \to 0$. In addition, consider the random times
	\begin{equation*}
	\xi_n \defeq \inf \set{t \geq \tau_0}{\sum_{(i, j) \mayor \sigma_*} \bq_n(t, i, j) = \sum_{(i, j) \mayor \sigma_*} \alpha_n(i)}.
	\end{equation*}
	The proofs of \eqref{seq: limit of threshold 1} and \eqref{seq: limit of threshold 2} will be completed if we demonstrate that $\xi_n \leq \tau$ for all large enough $n$. Indeed, if this is established, then \eqref{seq: limit of threshold 1} and \eqref{seq: limit of threshold 2} follow from Lemma \ref{lem: lemma 2 for the limit of threhsold} with $r \defeq r_*$, $\zeta_{n, 1} \defeq \xi_n$ and $\zeta_{n, 2} \defeq T$. The hypotheses of the lemma hold since $\br_n(t) \leq r_*$ for all $t \in [\tau_0, T]$ and all large enough $n$ by the choice of $\tau_0$ and Proposition \ref{prop: upper bound for br}.
	
	In order to prove that $\xi_n \leq \tau$ for all large enough $n$, we show that
	\begin{equation}
	\label{aux: limsup of xi}
	\limsup_{n \to \infty} \xi_n \leq \tau_0 + \tau(\varepsilon) < \tau_0 + \tau(\varepsilon) + \varepsilon = \tau.
	\end{equation}
	If $r_* = 1$, then $\xi_n = \tau_0$ for all $n$ and the above inequality holds, so suppose that $r_* > 1$.
	
	Assume that \eqref{aux: limsup of xi} does not hold. Then there exists an increasing sequence $\calK$ of natural numbers such that $\xi_k > \tau_0 + \tau(\varepsilon)$ for all $k \in \calK$. Moreover, by Propositions \ref{prop: relative compactness - l1} and \ref{prop: upper bound for br}, this sequence may be chosen so that the next two properties hold.
	\begin{enumerate}
		\item[(i)] $\set{\bq_k}{k \in \calK}$ converges in $D_{\ell_1}[0, \infty)$.
		
		\item[(ii)] $\br_k(t) \leq r_*$ for all $t \in [\tau_0, T]$ and $k \in \calK$.
	\end{enumerate}

	The definition of $\xi_k$ implies that
	\begin{equation*}
	\sum_{(i, j) \mayor \sigma_*} \bq_k(t, i, j) < \sum_{(i, j) \mayor \sigma_*} \alpha_k(i) \quad \text{for all} \quad t \in \left[\tau_0, \tau_0 + \tau(\varepsilon)\right] \subset \left[\tau_0, \xi_k\right) \quad \text{and} \quad k \in \calK.
	\end{equation*}
	The hypotheses of Proposition \ref{prop: asymptotic dynamical property of tail mass processes} hold with $r \defeq r_*$, $t_0 \defeq \tau_0$ and $t_1 \defeq \tau_0 + \tau(\varepsilon)$, by the above remark and properties (i) and (ii). Let $\bv(r_*)$ be the function defined in this proposition, as the uniform limit of the tail processes $\bv_k(r_*)$ over $[0, T]$. It follows from Propositions \ref{prop: fluid limit of total number of tasks} and \ref{prop: asymptotic dynamical property of tail mass processes} that
	\begin{equation*}
	\sup_{t \in [0, T]} \left|\bs_k(t) - \bv_k(t, r_*) - \left[\bs(t) - \bv(t, r_*)\right]\right| \leq \frac{\varepsilon}{2}
	\end{equation*}
	for all sufficiently large $k \in \calK$. For each of these $k$, we have
	\begin{align*}
	\sum_{(i, j) \mayor \sigma_*} \bq_k\left(\tau_0 + \tau(\varepsilon), i, j\right) &= \bs_k\left(\tau_0 + \tau(\varepsilon)\right) - \bv_k(\tau_0 + \tau(\varepsilon), r_*) \\
	&\geq \bs\left(\tau_0 + \tau(\varepsilon)\right) - \bv(\tau_0 + \tau(\varepsilon), r_*) - \frac{\varepsilon}{2} \\
	&> \rho + \left[\bs(\tau_0) - \rho\right]\e^{-\mu\tau(\varepsilon)} - \bs(\tau_0)\e^{-\mu\tau(\varepsilon)} - \frac{\varepsilon}{2} \\
	&= \rho \left(1 - \e^{-\mu\tau(\varepsilon)}\right) - \frac{\varepsilon}{2} = \sum_{(i, j) \mayor (i_r, j_r)} \alpha(i) + \frac{\varepsilon}{2}.
	\end{align*}
	The third inequality follows from Proposition \ref{prop: asymptotic dynamical property of tail mass processes}, and the last equality from the definition of $\tau(\varepsilon)$. It follows from \eqref{ass: size of classes and initial condition} that the right-hand side is strictly larger than $\sum_{(i, j) \mayor \sigma_*} \alpha_k(i)$ for all large enough $k \in \calK$, which is a contradiction.
	
	We conclude that \eqref{aux: limsup of xi} holds, which proves \eqref{seq: limit of threshold 1} and \eqref{seq: limit of threshold 2}. We had already proved \eqref{seq: limit of threshold 3} in Corollary \ref{cor: decay of the tail}, thus the proof of the theorem is complete.
\end{proof}

\section{Asymptotic optimality}
\label{sec: asymptotic optimality}

In this section we prove Theorem \ref{the: asymptotic optimality}. Specifically, in Section \ref{sub: drift analysis} we use drift analysis to demonstrate that the continuous-time Markov chains introduced in Section \ref{sub: stochastic models} are irreducible and positive-recurrent, and to derive upper bounds for certain expectations and tail probabilities. In Section \ref{sub: proof of the asymptotic optimality} we use these upper bounds to establish that the stationary distributions of the latter Markov chains are tight, and then we complete the proof of Theorem \ref{the: asymptotic optimality} using the results of Sections \ref{sec: limiting behavior of jlmu} and \ref{sec: limiting behavior of threshold}. Finally, in Section \ref{sub: suboptimality result} we demonstrate that JLMU is not optimal in general, although it is asymptotically optimal.

\subsection{Drift analysis}
\label{sub: drift analysis}

Denote the state space and the generator matrix of the continuous-time Markov chains defined in Section \ref{sub: stochastic models} by $S_n$ and $A_n$, respectively. We use exactly the same notation for JLMU and SLTA, but we always indicate which policy is being considered.  The drift of a function $\map{f}{S_n}{[0, \infty)}$ is the function $A_nf$ defined by
\begin{equation*}
A_nf(x) \defeq \sum_{y \in S_n} A_n(x, y) f(y) = \sum_{y \neq x} A_n(x, y) \left[f(y) - f(x)\right] > -\infty \quad \text{for all} \quad x \in S_n.
\end{equation*}
The proof of the following proposition uses a Foster-Lyapunov argument, which is based on the drift of certain suitably chosen functions.

\begin{proposition}
	\label{prop: irreducibility and positive recurrence}
	For each given $n$, the two continuous-time Markov chains introduced in Section \ref{sub: stochastic models} are irreducible and positive-recurrent. In particular, each of these Markov chains has a unique stationary distribution $\pi_n$.
\end{proposition}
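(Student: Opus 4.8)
The plan is to prove irreducibility by a draining argument and positive recurrence through a Foster--Lyapunov drift condition, handling JLMU as the simpler special case in which the threshold component is absent.

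\textbf{Irreducibility.} Since $S_n$ is by definition the set of states reachable from the empty occupancy state (with $\br_n = 1$ in the SLTA case), it suffices to show that this base state is accessible from every state, for then any two states communicate through it. For JLMU this is immediate: from an arbitrary occupancy $\bq_n$ one lets the tasks depart one at a time, and each such departure is a transition of positive rate, so the empty state is reached along a finite path of positive probability. For SLTA the same departures bring the system to $(0, r)$, with empty occupancy but $\br_n = r$ unchanged, since departures do not trigger the learning scheme. To lower $\br_n$ from $r > 1$ down to $1$, I would use that with empty occupancy every server pool lies strictly below its threshold, so all $n$ pools carry a green token; in particular $\ell_{i_{r - 1}}(r) \geq j_{r - 1} \geq 1$ guarantees a green token of the (non-empty) class $i_{r - 1}$. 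Provided $n\beta_n \leq n$, the decrement condition is then met, so a single arrival followed by the departure of the task it places realises the transition $(0, r) \to (0, r - 1)$; iterating reaches $(0, 1)$.

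\textbf{Positive recurrence.} I would invoke the Foster--Lyapunov criterion for continuous-time Markov chains: for an irreducible, non-explosive chain it suffices to exhibit $V \geq 0$ with $V(x) \to \infty$ and a finite set $B \subset S_n$ such that $A_n V \leq -\varepsilon$ on $S_n \setminus B$. For JLMU take $V(\bq_n) = n\bs_n$, the total number of tasks. As $n\bs_n$ evolves as an autonomous $M/M/\infty$ queue, its drift is exactly $A_n V = n\lambda - \mu\, n\bs_n$, which is at most $-1$ once $n\bs_n > \left[(n\lambda + 1)/\mu\right]$; the complementary set has boundedly many tasks and hence is finite, and non-explosiveness is inherited from the $M/M/\infty$ queue.

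For SLTA the state also carries $\br_n$, so $n\bs_n$ alone lacks finite sublevel sets, and I would use $V(\bq_n, \br_n) = n\bs_n + c\,\br_n$ with $c > 1$. The drift splits as $A_n V = (n\lambda - \mu\, n\bs_n) + c\, n\lambda\,(\indc_{\mathrm{inc}} - \indc_{\mathrm{dec}})$, where the indicators mark the increment and decrement conditions of the learning scheme. The structural input is that $\ell_i(r) \to \infty$ as $r \to \infty$ for each $i$, because within a class $(i, 1) \mayor (i, 2) \mayor \cdots$ and the enumeration of $\calI_+$ eventually lists arbitrarily high coordinates of that class. Fix $N^*$ with $\mu N^* \geq (1 + c)n\lambda + 1$, then $r^*$ with $\min_i \ell_i(r^*) > N^*$, assuming $n\beta_n \leq n$. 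On $\{n\bs_n > N^*\}$ the first bracket is at most $-1$ and dominates the bounded indicator term, so the drift is negative regardless of the indicators. On $\{n\bs_n \leq N^*,\ \br_n > r^*\}$, the absence of green tokens would force some pool to hold at least $\min_i \ell_i(\br_n) > N^* \geq n\bs_n$ tasks, which is impossible; hence green tokens are present and $\indc_{\mathrm{inc}} = 0$, and in fact no pool can reach its (very high) threshold, so all $n$ pools carry green tokens, exceeding $n\beta_n$ and including one of class $i_{\br_n - 1}$, whence $\indc_{\mathrm{dec}} = 1$ and $A_n V \leq (1 - c)n\lambda < 0$. Thus the drift is at most $-\varepsilon$ off the finite set $B = \{n\bs_n \leq N^*,\ \br_n \leq r^*\}$, and the drift bound with $V \to \infty$ also precludes explosion. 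Positive recurrence together with irreducibility yields the unique stationary distribution $\pi_n$.

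\textbf{Main obstacle.} The delicate part is the SLTA drift analysis, where one must track the token counts precisely: that large $\br_n$ with a moderate task count produces enough green tokens (including one of class $i_{\br_n - 1}$) to fire the decrement, while the increment can fire only when $n\bs_n$ is already so large that the task-count term of $V$ overwhelms the threshold term. Extracting these bounds from the definitions of $\ell_i$ and of the green and yellow tokens, and confirming $\ell_i(r) \to \infty$, is where the heterogeneity makes the bookkeeping substantially heavier than in the homogeneous case of \cite{goldsztajn2021learning}.
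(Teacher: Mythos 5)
Your proposal is correct and follows essentially the same route as the paper: irreducibility by draining all tasks and then decrementing $\br_n$ through arrival--departure cycles at the empty state, and positive recurrence via a Foster--Lyapunov argument with a Lyapunov function combining the total number of tasks and a constant multiple of the threshold index, where the key step is showing the decrement fires whenever $\br_n$ is large while the task count is moderate. The only cosmetic differences are that the paper handles JLMU more directly (the empty state is positive-recurrent because $n\bs_n$ is an $M/M/\infty$ queue, so no drift argument is needed there), and in the SLTA drift analysis it obtains the required $n\beta_n$ green tokens from pools of class $i_{\br_n - 1}$ alone rather than from all $n$ pools as you do.
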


\begin{proof}
	Suppose first that the load balancing policy is JLMU. Any occupancy state can reach the empty occupancy state after a finite number of consecutive departures. By the definition of $S_n$ provided in Section \ref{sub: stochastic models}, the latter remark implies that $\bq_n$ is irreducible. Moreover, $\bq_n$ is the empty occupancy state if and only if $\bs_n = 0$, which implies that the empty occupancy state is positive-recurrent, because the $M/M/\infty$ queue $\bs_n$ is irreducible and positive-recurrent. Thus, $\bq_n$ is positive-recurrent.
	
	Suppose now that the load balancing policy is SLTA. Any state $(q, r) \in S_n$ can reach the empty occupancy sate with $\br_n = r$ after a finite number of consecutive departures. Moreover, the latter state can reach the empty occupancy state with $\br_n = 1$ after a finite number of alternate arrivals and departures. We conclude from the definition of $S_n$ provided in Section \ref{sub: stochastic models} that $(\bq_n, \br_n)$ is irreducible.
	
	Next we use a Foster-Lyapunov argument to prove the positive recurrence. Consider the functions $\map{f, g}{S_n}{[0, \infty)}$ defined by
	\begin{equation}
	\label{eq: definition of f and g}
	f(q, r) \defeq \sum_{(i, j) \in \calI_+} q(i, j) \quad \text{and} \quad g(q, r) \defeq \frac{r}{n} \quad \text{for all} \quad (q, r) \in S_n.
	\end{equation}
	All server pools together form an infinite-server system, thus $A_nf(q, r) = \lambda - \mu f(q, r)$ for all $(q, r) \in S_n$. In addition, we have
	\begin{equation*}
	A_ng(q, r) = \lambda \left[\indc_I(q, r) - \indc_D(q, r)\right] \quad \text{for all} \quad (q, r) \in S_n.
	\end{equation*}
	Here $I$ corresponds to those states $(q, r)$ such that $\br_n$ increases if $(\bq_n, \br_n) = (q, r)$ and the next event is an arrival. Specifically,
	\begin{equation*}
	I \defeq \set{(q, r) \in S_n}{q(i, j) = \alpha_n(i)\ \text{for all}\ (i, j) \mayor (i_r, j_r),\ nq(i_r, j_r) = n\alpha_n(i_r) - 1}.
	\end{equation*}
	Also, $D$ corresponds to those states $(q, r)$ such that $\br_n$ decreases if $(\bq_n, \br_n) = (q, r)$ and the next event is an arrival. Specifically,
	\begin{equation*}
	D \defeq \set{(q, r) \in S_n}{r > 1,\ n - \sum_{i = 1}^m nq\left(i, \ell_i(r)\right) \geq n\beta_n,\ q(i_{r - 1}, j_{r - 1}) < \alpha_n(i_{r - 1})}.
	\end{equation*}
	
	Consider the function $h \defeq f + 2 g$ and let $F$ be the set of those $(q, r) \in S_n$ that satisfy the following two conditions.
	\begin{enumerate}
		\item[(i)] $f(q, r) \leq 4\rho$.
		
		\item[(ii)] $r = 1$ or $r > 1$ and $\left[\alpha_n(i_{r - 1}) - \beta_n\right] j_{r - 1} \leq 4\rho$.
	\end{enumerate}
	The first condition holds for finitely many $q \in Q_n$ and the second condition holds for finitely many $r \geq 1$, thus $F$ is finite. Next we establish that $A_nh \leq -\lambda + 4\lambda \indc_F$. Note that $(\bq_n, \br_n)$ is non-explosive since the infinite-server queue $\bs_n$ has this property. Therefore, it follows from \cite[Proposition 2.2.1]{hajek2006notes} that $(\bq_n, \br_n)$ is positive-recurrent.
	
	The latter inequality holds for all $(q, r) \in F$ since $A_n h \leq 3\lambda$. Hence, let us assume that $(q, r) \notin F$. Suppose that $(q, r) \notin F$ violates (i). Then
	\begin{equation*}
	A_nh(q, r) \leq 3\lambda - \mu f(q, r) < 3\lambda - 4\lambda = -\lambda.
	\end{equation*}
	Assume now that $(q, r) \notin F$ satisfies (i). Then $(q, r)$ satisfies (i) and violates~(ii), which implies that $r > 1$ and $\left[\alpha_n(i_{r - 1}) - \beta_n\right]j_{r - 1} > 4\rho \geq f(q, r)$. From this we conclude that $q(i_{r - 1}, j_{r - 1}) < \alpha_n(i_{r - 1}) - \beta_n$, since otherwise $f(q, r) \geq j_{r - 1}q(i_{r - 1}, j_{r - 1}) > 4\rho$. Thus,
	\begin{equation*}
	n - \sum_{i = 1}^m nq\left(i, \ell_i(r)\right) = \sum_{i = 1}^m n\left[\alpha_n(i) - q\left(i, \ell_i(r)\right)\right] \geq n\left[\alpha_n(i_{r - 1}) - q(i_{r - 1}, j_{r - 1})\right] > n\beta_n.
	\end{equation*}
	It follows that $(q, r) \in D$, thus $A_nh(q, r) = \lambda - \mu f(q, r) - 2\lambda \leq -\lambda$.
\end{proof}

Next we provide upper bounds for certain expectations and tail probabilities, which are used in the following section to demonstrate that the sequence of stationary distributions $\set{\pi_n}{n \geq 1}$ is tight, both for JLMU and SLTA. First we state a technical lemma; the proof follows from Fubini's theorem and is provided in Appendix \ref{app: auxiliary results}.

\begin{lemma}
	\label{lem: fubini}
	Let $x_n$ have the stationary distribution $\pi_n$, where $x_n = q_n$ if JLMU is used and $x_n = (q_n, r_n)$ if SLTA is used. If $\map{f}{S_n}{[0, \infty)}$ satisfies
	\begin{equation*}
	\bE\left[\sum_{y \in S_n} \left|A_n(x_n, y)\right|f(y)\right] < \infty, \quad \text{then} \quad \bE\left[A_nf(x_n)\right] = 0.
	\end{equation*}
\end{lemma}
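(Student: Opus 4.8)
The plan is to express $\bE\left[A_n f(x_n)\right]$ as a double sum over the countable state space $S_n$, interchange the two summations via Fubini's theorem, and then exploit the global balance equations satisfied by the stationary distribution $\pi_n$. Writing the expectation against $\pi_n$ explicitly and recalling the definition of the drift gives
\begin{equation*}
\bE\left[A_n f(x_n)\right] = \sum_{x \in S_n} \pi_n(x) \sum_{y \in S_n} A_n(x, y) f(y).
\end{equation*}
The hypothesis of the lemma is precisely the assertion that $\sum_{x \in S_n} \pi_n(x) \sum_{y \in S_n} |A_n(x, y)| f(y) < \infty$, so the doubly indexed family $\{\pi_n(x) A_n(x, y) f(y)\}_{(x, y) \in S_n \times S_n}$ is absolutely summable. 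This is exactly the condition that licenses an application of Fubini's theorem to swap the order of summation, yielding
\begin{equation*}
\bE\left[A_n f(x_n)\right] = \sum_{y \in S_n} f(y) \sum_{x \in S_n} \pi_n(x) A_n(x, y).
\end{equation*}

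It then remains to invoke stationarity. By Proposition~\ref{prop: irreducibility and positive recurrence}, both Markov chains of Section~\ref{sub: stochastic models} are irreducible and positive-recurrent, and they are non-explosive since the total mass process $\bs_n$ has the law of an $M/M/\infty$ queue; consequently their stationary distribution $\pi_n$ satisfies the global balance equations $\sum_{x \in S_n} \pi_n(x) A_n(x, y) = 0$ for every $y \in S_n$, which equate the probability flow into each state with the flow out of it. For indices $y$ with $f(y) > 0$ the inner sum therefore vanishes (and the hypothesis, upon dividing by $f(y)$, ensures the corresponding inner series is absolutely convergent, so the balance equation is meaningful), while for $y$ with $f(y) = 0$ the summand is zero regardless. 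Summing over $y$ gives $\bE\left[A_n f(x_n)\right] = 0$, as claimed.

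The only genuinely delicate step is the interchange of summation, and this is handled entirely by the integrability assumption, which was formulated so as to make Fubini directly applicable; note in particular that $S_n$ is countable, being a subset of $F_n^\calI \cap \ell_1$, so no additional measure-theoretic care beyond absolute summability of a double series is required. I would simply remark that the balance equations are a standard property of the stationary distribution of an irreducible, positive-recurrent, non-explosive continuous-time Markov chain, so no separate verification is needed here.
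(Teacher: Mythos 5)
Your proof is correct and follows essentially the same route as the paper's: both apply Tonelli to identify the hypothesis with absolute summability of the double series $\sum_{x, y} \pi_n(x) A_n(x, y) f(y)$, invoke Fubini to interchange the order of summation, and conclude via the global balance equations $\sum_{x \in S_n} \pi_n(x) A_n(x, y) = 0$ satisfied by the stationary distribution. Your additional care about the indices $y$ with $f(y) = 0$ versus $f(y) > 0$ is a harmless refinement of the same argument.
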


Consider the quantities
\begin{equation}
\label{eq: definition of theta n k}
\theta_n^k \defeq \sum_{(i, j) \mayorigual (i_k, j_k)} \alpha_n(i) \quad \text{for all} \quad k \geq 1.
\end{equation}
The above lemma is used to prove the following two propositions.

\begin{proposition}
	\label{prop: expectation bound jlmu}
	Suppose that the load balancing policy is JLMU, fix $n$ and consider the functions $\map{f_k}{S_n}{[0, \infty)}$ defined by
	\begin{equation*}
	f_k(q) \defeq \sum_{(i, j) \menor (i_k, j_k)} q(i, j) \quad \text{for all} \quad q \in S_n \quad \text{and} \quad k \geq 1.
	\end{equation*}
	If $q_n$ has the stationary distribution $\pi_n$, then
	\begin{equation*}
	\bE\left[f_k(q_n)\right] \leq \rho \e^{-n\left(\theta_n^k - 2\rho\right)} \quad \text{for all} \quad k \geq 1.
	\end{equation*}
\end{proposition}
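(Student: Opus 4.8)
The plan is to combine a drift computation for $f_k$ with the stationary balance $\bE[A_n f_k(q_n)] = 0$ supplied by Lemma \ref{lem: fubini}. The first step is to record the key structural feature of JLMU. Writing $g_k(q) \defeq \sum_{(i, j) \mayorigual (i_k, j_k)} q(i, j)$, we always have $g_k(q) \leq \theta_n^k$ because $q(i, j) \leq \alpha_n(i)$, and equality holds if and only if $q(i, j) = \alpha_n(i)$ for every $(i, j) \mayorigual (i_k, j_k)$. Since $\menor$ orders the coordinates of a fixed class by increasing occupancy (a consequence of the concavity of $u_i$), this full-saturation condition is in turn equivalent to $\sigma(q) \menor (i_k, j_k)$. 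Consequently, an arriving task is routed by JLMU to a coordinate $\menor (i_k, j_k)$, thereby raising $f_k$ by $1/n$, \emph{precisely} on the event $\{g_k(q) = \theta_n^k\}$.

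With this identification I would compute the two contributions to the drift $A_n f_k(q)$. Arrivals occur at rate $n\lambda$ and increase $f_k$ by $1/n$ only on $\{g_k = \theta_n^k\}$, contributing $\lambda \ind{g_k(q) = \theta_n^k}$. Departures from pools of class $i$ with exactly $j$ tasks occur at rate $n\mu j[q(i, j) - q(i, j + 1)]$ and lower $f_k$ by $1/n$ whenever $(i, j) \menor (i_k, j_k)$; a summation by parts within each class shows that this departure contribution equals $-\mu f_k(q)$ minus a nonnegative remainder, whence
\[
A_n f_k(q) \leq \lambda \ind{g_k(q) = \theta_n^k} - \mu f_k(q).
\]
I would then verify the mild integrability hypothesis of Lemma \ref{lem: fubini}: the total transition rate out of $q$ is $n\lambda + n\mu \bs_n$, and since $n\bs_n$ is Poisson with mean $n\rho$ it has all moments finite, so the required expectation is finite. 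The balance $\bE[A_n f_k(q_n)] = 0$ together with the displayed inequality then yields $\mu\bE[f_k(q_n)] \leq \lambda\,\prob(g_k(q_n) = \theta_n^k)$, that is, $\bE[f_k(q_n)] \leq \rho\,\prob(g_k(q_n) = \theta_n^k)$.

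It remains to bound this probability by the Poisson tail. Since $f_k(q) + g_k(q) = \sum_{(i, j) \in \calI_+} q(i, j)$ equals the normalized total number of tasks, the event $\{g_k(q_n) = \theta_n^k\}$ forces $\bs_n \geq \theta_n^k$, so $\{g_k(q_n) = \theta_n^k\} \subseteq \{n\bs_n \geq n\theta_n^k\}$. In stationarity $n\bs_n$ is Poisson with mean $n\rho$, so a Chernoff bound at parameter $1$ gives
\[
\prob(n\bs_n \geq n\theta_n^k) \leq \e^{-n\theta_n^k}\,\bE\!\left[\e^{n\bs_n}\right] = \e^{-n\theta_n^k + n\rho(\e - 1)} \leq \e^{-n(\theta_n^k - 2\rho)},
\]
where the last step uses $\e - 1 < 2$. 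Combining the two displays proves the claim. The \textbf{main obstacle} is the first step: correctly identifying the routing event $\{\sigma(q) \menor (i_k, j_k)\}$ with $\{g_k(q) = \theta_n^k\}$ and carrying out the summation by parts so that the departure drift is seen to dominate $-\mu f_k(q)$; once the pointwise drift bound is in place, the probabilistic estimate via the $M/M/\infty$ occupancy is routine.
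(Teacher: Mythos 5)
Your proposal is correct and follows essentially the same route as the paper's proof: the same pointwise drift bound for $f_k$ (arrival term controlled by the saturation of the coordinates $\mayorigual (i_k, j_k)$, departure term handled by summation by parts within each class to yield $-\mu f_k(q)$ minus a nonnegative remainder), the same appeal to Lemma \ref{lem: fubini} to equate the stationary expected drift to zero, and the same Chernoff bound on the Poisson-distributed total number of tasks. The only cosmetic difference is that you identify the routing event exactly as $\left\{g_k(q) = \theta_n^k\right\}$, whereas the paper simply bounds $\ind{\sigma(q) \menor (i_k, j_k)}$ by $\ind{f(q) \geq \theta_n^k}$; both yield the identical final estimate.
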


\begin{proof}
	Fix some $k \geq 1$ and define $J_i \defeq \min \set{j \geq 1}{(i, j) \menor (i_k, j_k)}$. The drift of $f_k$ with respect to $\bq_n$ satisfies
	\begin{equation}
	\label{aux: drift bound}
	\begin{split}
	A_nf_k(q) &= \lambda \ind{\sigma(q) \menor (i_k, j_k)} - \sum_{(i, j) \menor (i_k, j_k)} \mu j \left[q(i, j) - q(i, j + 1)\right] \\
	&= \lambda \ind{\sigma(q) \menor (i_k, j_k)} - \sum_{i = 1}^m \sum_{j = J_i}^\infty \mu j \left[q(i, j) - q(i, j + 1)\right] \\
	&= \lambda \ind{\sigma(q) \menor (i_k, j_k)} - \sum_{i = 1}^m \left[\mu \left(J_i - 1\right) q\left(i, J_i\right) + \mu \sum_{j = J_i}^\infty q(i, j)\right] \\
	&= \lambda \ind{\sigma(q) \menor (i_k, j_k)} - \mu f_k(q) - \mu \sum_{i = 1}^m \left(J_i - 1\right) q\left(i, J_i\right) \\
	&\leq \lambda \ind{f(q) \geq \theta_n^k} - \mu f_k(q) \quad \text{for all} \quad q \in S_n,
	\end{split}
	\end{equation}
	where $f(q)$ is as in \eqref{eq: definition of f and g}. For the last step, note that $(i, j) \mayor \sigma(q)$ implies that $q(i, j) = \alpha_n(i)$.
	
	Observe that $\left|A_n(x, x)\right| = n\lambda + n \mu f(x)$ because $n f(x)$ is the total number of tasks at the occupancy state $x$ and $n \lambda$ is the arrival rate of tasks. Hence,
	\begin{align*}
	\sum_{y \in S_n} \left|A_n(x, y)\right|f_k(y) &= \left|A_n(x, x)\right|f_k(x) + \sum_{y \neq x} A_n(x, y)f_k(y) \\
	&= 2\left|A_n(x, x)\right|f_k(x) + \sum_{y \in S_n} A_n(x, y) f_k(y) \\
	&= 2\left[n\lambda + n \mu f(x)\right]f_k(x) + A_nf_k(x) \leq 2 n \left[\lambda + \mu f(x)\right] f(x) + \lambda.
	\end{align*}
	The right-hand side has a finite mean with respect to $\pi_n$ since $nf(q_n)$ is the total number of tasks in the system in stationarity, which is Poisson distributed with mean $n\rho$. Thus, we conclude that $\bE\left[A_nf_k(q_n)\right] = 0$ by Lemma \ref{lem: fubini}.
	
	Taking expectations with respect to $\pi_n$ on both sides of \eqref{aux: drift bound}, and recalling that $n f(q_n)$ is Poisson distributed with mean $n\rho$, we obtain
	\begin{equation*}
	\bE\left[f_k\left(q_n\right)\right] \leq \rho \bE\left[\ind{f(q_n) \geq \theta_n^k}\right] = \rho \bP\left(nf(q_n) \geq n\theta_n^k\right) \leq \rho \e^{-n\left(\theta_n^k - 2\rho\right)},
	\end{equation*}
	where the last inequality follows from a Chernoff bound.
\end{proof}

\begin{proposition}
	\label{prop: tail probabilities threshold}
	Suppose that the load balancing policy is SLTA, fix $n$ and consider the functions $\map{f_k}{S_n}{[0, \infty)}$ defined by
	\begin{equation*}
	f_k(q, r) \defeq \sum_{(i, j) \menor (i_k, j_k)} q(i, j) \quad \text{for all} \quad (q, r) \in S_n \quad \text{and} \quad k \geq 1.
	\end{equation*}
	Let $(q_n, r_n)$ have the stationary distribution $\pi_n$. For each $k \geq 1$, we have
	\begin{subequations}
		\begin{align}
		&\bE\left[f_k(q_n, r_n)\right] \leq \rho \bP \left(r_n > k\right), \label{seq: expectation bound threshold} \\
		&\bP \left(r_n > k\right) \leq \e^{-n\left(\theta_n^k - 2\rho\right)} + \e^{-n\left[\theta_n^k - L(k) \beta_n - 2\rho\right]} + \e^{-n\left[j(k)\alpha_n^{\min} - 2\rho\right]}, \label{seq: tail probabilities bound threshold}
		\end{align}
	\end{subequations}
	where $\theta_n^k$ is defined as in \eqref{eq: definition of theta n k}, $\alpha_n^{\min} \defeq \min \set{\alpha_n(i)}{1 \leq i \leq m}$,
	\begin{align*}
	L(k) \defeq \max \set{\ell_i(k + 1)}{1 \leq i \leq m} \quad \text{and} \quad j(k) \defeq \min \set{j_r}{r \geq k}.
	\end{align*}
\end{proposition}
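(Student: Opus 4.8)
The plan for the expectation bound \eqref{seq: expectation bound threshold} is to mimic the drift computation behind Proposition \ref{prop: expectation bound jlmu}, now under the SLTA chain, setting $J_i \defeq \min\set{j \geq 1}{(i,j)\menor(i_k,j_k)}$ so that $f_k(q) = \sum_i \sum_{j \geq J_i} q(i,j)$ by within-class monotonicity of $\menor$. The departure part of $A_n f_k$ is literally the JLMU calculation and telescopes to $-\mu f_k(q) - \mu\sum_i (J_i-1)q(i,J_i) \leq -\mu f_k(q)$. The only genuinely new input is the arrival part: by Remark \ref{rem: dynamics of threshold}, every task is routed to a coordinate $(i,j)\mayorigual(i_{\br_n},j_{\br_n})$, so an arrival can raise $f_k$ (land at a coordinate $\menor(i_k,j_k)$) only when $(i_{\br_n},j_{\br_n})\menor(i_k,j_k)$, i.e. $\br_n>k$. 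Hence the arrival contribution is at most $\lambda\ind{r>k}$, giving $A_nf_k(q,r) \leq \lambda\ind{r>k}-\mu f_k(q,r)$. I would check the integrability hypothesis of Lemma \ref{lem: fubini} exactly as in Proposition \ref{prop: expectation bound jlmu}, using that $nf(q_n)$ is Poisson with mean $n\rho$; then $\bE[A_nf_k(q_n,r_n)]=0$ yields $\mu\bE[f_k(q_n,r_n)] \leq \lambda\bP(r_n>k)$, which is \eqref{seq: expectation bound threshold}.

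For the tail bound \eqref{seq: tail probabilities bound threshold} the plan is to show that $\{r_n>k\}$ forces the Poisson total mass $nf(q_n)$ into one of three atypically large regimes, each controlled by the Chernoff estimate $\bP(nf(q_n)\geq na)\leq\e^{-n(a-2\rho)}$ already used for JLMU. The combinatorial fact driving everything is that the coordinates ranked $\mayor(i_{k+1},j_{k+1})$ coincide with the top $k$ coordinates ranked $\mayorigual(i_k,j_k)$, so that $\sum_i \ell_i(k+1)\alpha_n(i)=\theta_n^k$. Fix a state with $r_n\geq k+1$, whence $\ell_i(\br_n)\geq\ell_i(k+1)$ for all $i$. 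If the number of green tokens $n-\sum_i n\bq_n(i,\ell_i(\br_n))$ is below $n\beta_n$, then $\sum_i[\alpha_n(i)-\bq_n(i,\ell_i(\br_n))]<\beta_n$, and since $\bq_n$ is non-increasing in $j$,
\[
f(q_n) \;\geq\; \sum_i \ell_i(\br_n)\,\bq_n(i,\ell_i(\br_n)) \;\geq\; \sum_i \ell_i(k+1)\alpha_n(i) - L(k)\beta_n \;=\; \theta_n^k - L(k)\beta_n,
\]
so the Chernoff bound on this event produces the middle term $\e^{-n[\theta_n^k-L(k)\beta_n-2\rho]}$. The extremal no-green-token subcase, which is exactly what the increment set $I$ requires, gives full saturation up to the thresholds and hence $nf(q_n)\geq n\theta_n^k$, the first term $\e^{-n(\theta_n^k-2\rho)}$.

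The delicate part, which I expect to be the main obstacle, is the token-rich states at levels $\geq k+1$: there the learning scheme is driving $\br_n$ downward and high occupancy is \emph{not} forced instantaneously, so the stationary mass of these states cannot be read off from a pointwise occupancy inequality. The plan is to control them through the flow identities produced by Lemma \ref{lem: fubini}; in particular the drift of $g=r/n$ from Proposition \ref{prop: irreducibility and positive recurrence} yields the balance $\pi_n(\{r=k\}\cap I)=\pi_n(\{r=k+1\}\cap D)$, tying the mass above level $k$ to the up-crossing flux, which originates only from $I$-states (term one) and whose complementary down-crossing is blocked only when the least-loaded coordinate of class $i_{\br_n-1}$ is saturated, forcing $nf(q_n)\geq n\ell(k)\alpha_n^{\min}$ and hence the third term $\e^{-n[\ell(k)\alpha_n^{\min}-2\rho]}$. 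Assembling the three regimes by a union bound gives \eqref{seq: tail probabilities bound threshold}. The two points I would need to treat with care, and which I expect are deferred to Appendix \ref{app: limiting behavior of threshold} in the spirit of \cite{goldsztajn2021learning}, are the rigorous passage from the stationary event $\{r_n>k\}$ to these instantaneous occupancy events despite the coupling of $\bq_n$ and $\br_n$, and the fact that the excursions creating token-rich states vanish on the scale $\beta_n\to0$.
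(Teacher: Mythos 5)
Your proposal reproduces the paper's proof almost step for step. The drift bound $A_nf_k(q,r) \leq \lambda\ind{r>k} - \mu f_k(q,r)$ (with the arrival part controlled via Remark \ref{rem: dynamics of threshold}) combined with Lemma \ref{lem: fubini} is exactly how \eqref{seq: expectation bound threshold} is obtained; and your three-regime analysis of $\{r_n > k\}$ --- few green tokens forcing $f \geq \theta_n^k - L(k)\beta_n$, saturation of class $i_{r_n-1}$ forcing $f \geq \ell(k)\alpha_n^{\min}$, and the remaining decrement-enabled states tied by a stationary balance to the increment set $I$, on which $f \geq \theta_n^k$ --- is precisely the paper's decomposition $\bP(r_n>k) = \bP\left((q_n,r_n)\in I_k\right) + \bP\left(r_n>k,\,(q_n,r_n)\notin D\right)$ with $I_k \defeq I \cap \{r \geq k\}$ and $D_k \defeq D\cap\{r>k\}$, followed by the same Chernoff bounds on the Poisson variable $nf(q_n,r_n)$. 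Your combinatorial identity $\sum_i \ell_i(k+1)\alpha_n(i) = \theta_n^k$ is also the hinge of the paper's argument.

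One step needs repair as written: the drift of $g(q,r) = r/n$ from Proposition \ref{prop: irreducibility and positive recurrence} only produces the global identity $\bP\left((q_n,r_n)\in I\right) = \bP\left((q_n,r_n)\in D\right)$; it cannot see the level $k$, so it does not yield your balance $\pi_n(\{r=k\}\cap I)=\pi_n(\{r=k+1\}\cap D)$. You need a level-dependent test function, and moreover a bounded (or truncated) one, since applying Lemma \ref{lem: fubini} to $g$ itself would require a priori moment bounds on $r_n$ that are not available. The paper uses the truncated ramp $g_k^l(q,r) = \frac{1}{n}\left[(r-k)^+\ind{r<l}+(l-k)\ind{r\geq l}\right]$, whose drift is $\lambda\left[\indc_{I_k^l} - \indc_{D_k^l}\right]$, verifies the integrability hypothesis thanks to the truncation, and lets $l\to\infty$ to get $\bP(I_k)=\bP(D_k)$; the bounded indicator $\ind{r>k}$ would give your level-by-level version just as well. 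This is a repairable imprecision rather than a conceptual gap, since the identity you want is true and provable with exactly the tool you cite. Finally, the two ``delicate points'' you expect to be deferred to the appendix do not exist: the entire argument is a static computation on the stationary law of $(q_n,r_n)$ --- the three occupancy lower bounds are deterministic implications valid pointwise on the state space $S_n$, no passage between stationary and transient events is required, and the paper's proof is self-contained in Section \ref{sub: drift analysis}.
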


\begin{proof}
	Fix $k \geq 1$. As in the proof of Proposition \ref{prop: expectation bound jlmu}, we see that
	\begin{equation*}
	A_nf_k(q, r) \leq \lambda \ind{r > k} - \mu f_k(q, r) \quad \text{for all} \quad (q, r) \in S_n,
	\end{equation*}
	and that $f_k$ satisfies the hypothesis of Lemma \ref{lem: fubini}. Then we obtain \eqref{seq: expectation bound threshold} by taking the expectation with respect to $\pi_n$ on both sides of the latter inequality.
	
	Consider the sets $I$ and $D$ defined in the proof of Proposition \ref{prop: irreducibility and positive recurrence} and let
	\begin{align*}
	&I_k \defeq I \cap \set{(q, r) \in S_n}{r \geq k} \quad \text{and} \quad D_k \defeq D \cap \set{(q, r) \in S_n}{r > k}.
	\end{align*}
	The first step of the proof of \eqref{seq: tail probabilities bound threshold} is to establish that
	\begin{equation}
	\label{aux: probabilities of i k and d k}
	\bP\left((q_n, r_n) \in I_k\right) = \bP\left((q_n, r_n) \in D_k\right).
	\end{equation}
	
	Fix $l > k$ and consider the function $\map{g_k^l}{S_n}{[0, \infty)}$ defined by
	\begin{equation*}
	g_k^l(q, r) = \frac{1}{n}\left[(r - k)^+ \ind{r < l} + (l - k)\ind{r \geq l}\right] \quad \text{for all} \quad (q, r) \in S_n.
	\end{equation*}
	As in the proof of Proposition \ref{prop: irreducibility and positive recurrence}, we obtain
	\begin{equation}
	\label{aux: drift of g k l}
	A_ng_k^l(q, r) = \lambda \left[\indc_{I_k^l}(q, r) - \indc_{D_k^l}(q, r)\right],
	\end{equation}
	where the sets $I_k^l$ and $D_k^l$ are defined by
	\begin{align*}
	&I_k^l \defeq I \cap \set{(q, r) \in S_n}{k \leq r < l} \quad \text{and} \quad D_k^l \defeq D \cap \set{(q, r) \in S_n}{k < r \leq l}.
	\end{align*}
	
	Define $f(q, r)$ as in \eqref{eq: definition of f and g} and note that $\left|A_n\left((x, r), (x, r)\right)\right| = n\lambda + n\mu f(x, r)$. Thus,
	\begin{align*}
	\sum_{(y, s) \in S_n} \left|A_n\left((x, r), (y, s)\right)\right| g_k^l(y, s) &= 2\left|A_n\left((x, r), (y, s)\right)\right| g_k^l(x, r) + A_ng_k^l(x, r) \\
	&\leq 2n\left[\lambda + \mu f(x, r)\right] (l - k) + \lambda.
	\end{align*}
	The right-hand side has a finite mean with respect to $\pi_n$ since $nf(q_n, r_n)$ is the total number of tasks in stationarity, which is Poisson distributed with mean $n\rho$. Therefore, it follows from Lemma \ref{lem: fubini} and \eqref{aux: drift of g k l} that $\bP\left((q_n, r_n) \in I_k^l\right) = \bP\left((q_n, r_n) \in D_k^l\right)$. The sets $I_k^l$ and $D_k^l$ increase to $I_k$ and $D_k$, respectively, as $l \to \infty$. This implies \eqref{aux: probabilities of i k and d k} since
	\begin{align*}
	\bP\left((q_n, r_n) \in I_k\right) &= \lim_{l \to \infty} \bP\left((q_n, r_n) \in I_k^l\right) = \lim_{l \to \infty} \bP\left((q_n, r_n) \in D_k^l\right) = \bP\left((q_n, r_n) \in D_k\right).
	\end{align*}
	
	Now we may write
	\begin{equation}
	\label{aux: bound for probability of rn > k}
	\begin{split}
	\bP\left(r_n > k\right) &= \bP\left(r_n > k, (q_n, r_n) \in D\right) + \bP\left(r_n > k, (q_n, r_n) \notin D\right) \\
	&= \bP\left((q_n, r_n) \in D_k\right) + \bP\left(r_n > k, (q_n, r_n) \notin D\right) \\
	&= \bP\left((q_n, r_n) \in I_k\right) + \bP\left(r_n > k, (q_n, r_n) \notin D\right).
	\end{split}
	\end{equation}
	Using the definition of $I_k$, we can bound the first term on the last line by
	\begin{equation}
	\label{aux: bound for probability of rn > k part 1}
	\bP\left(f(q_n, r_n) \geq \sum_{(i, j) \mayor (i_{r_n}, j_{r_n})} \alpha_n(i), r_n > k\right) \leq \bP\left(f(q_n, r_n) \geq \theta_n^k\right).
	\end{equation}
	The second term on the last line of \eqref{aux: bound for probability of rn > k} can be bounded by
	\begin{equation}
	\label{aux: bound for probability of rn > k part 2}
	\begin{split}
	&\bP\left(\sum_{i = 1}^m q_n\left(i, \ell_i(r_n)\right) > 1 - \beta_n, r_n > k\right) + \bP\left(q_n(i_{r_n - 1}, j_{r_n - 1}) = \alpha_n(i_{r_n - 1}), r_n > k\right) \leq \\
	&\bP\left(f(q_n, r_n) \geq \theta_n^k - L(k) \beta_n\right) + \bP\left(f(q_n, r_n) \geq j(k) \alpha_n^{\min}\right).
	\end{split}
	\end{equation}
	For the last inequality, observe that the condition inside the first probability sign of the left-hand side of \eqref{aux: bound for probability of rn > k part 2} implies that
	\begin{equation*}
	\sum_{i = 1}^m q_n\left(i, \ell_i(k + 1)\right) \geq \sum_{i = 1}^m q_n\left(i, \ell_i(r_n)\right) > 1 - \beta_n,
	\end{equation*}
	and this in turn implies that
	\begin{equation*}
	f(q_n, r_n) \geq \sum_{(i, j) \mayorigual (i_k, j_k)} \alpha_n(i) - L(k) \beta_n = \theta_n^k - L(k) \beta_n,
	\end{equation*}
	since $q_n(i, j)$ is non-increasing in $j$ for all $i$. In addition, the condition inside the second probability sign of the left-hand side of \eqref{aux: bound for probability of rn > k part 2} implies that
	\begin{equation*}
	f(q_n, r_n) \geq j_{r_n - 1}q_n(i_{r_n - 1}, j_{r_n -1}) = j_{r_n - 1}\alpha_n(i_{r_n - 1}) \geq j(k) \alpha_n^{\min}.
	\end{equation*}
	We obtain \eqref{seq: tail probabilities bound threshold} from \eqref{aux: bound for probability of rn > k part 1} and \eqref{aux: bound for probability of rn > k part 2}, recalling that $nf(q_n, r_n)$ is Poisson distributed with mean $n \rho$ and applying Chernoff bounds.
\end{proof}

\subsection{Proof of the asymptotic optimality}
\label{sub: proof of the asymptotic optimality}

In this section we prove Theorem \ref{the: asymptotic optimality}. As a first step, we establish that the sequence of stationary distributions $\set{\pi_n}{n \geq 1}$ is tight both for JLMU and SLTA.

\begin{proposition}
	\label{prop: tightness of stationary distributions}
	If the load balancing policy is JLMU, then $\set{\pi_n}{n \geq 1}$ is tight in $\ell_1$. If the load balancing policy is SLTA, then $\set{\pi_n}{n \geq 1}$ is tight in $\ell_1 \times \N$.
\end{proposition}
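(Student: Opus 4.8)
The plan is to deduce tightness from the expectation and tail bounds of Propositions \ref{prop: expectation bound jlmu} and \ref{prop: tail probabilities threshold}, combined with the standard characterization of relative compactness in $\ell_1$. The key observation is that, enumerating $\calI_+$ in $\menor$-decreasing order as $\set{(i_k, j_k)}{k \geq 1}$, the functional $f_k(q) = \sum_{(i, j) \menor (i_k, j_k)} q(i, j)$ is precisely the tail $\sum_{l > k} q(i_l, j_l)$ of $q$ in this enumeration. Hence, by the $\ell_1$ compactness criterion, a set of the form
\begin{equation*}
K = \overline{\set{q \in \ell_1}{q \geq 0,\ \norm{q}_1 \leq 1 + M,\ f_{k_l}(q) \leq \delta_l\ \text{for all}\ l \geq 1}}
\end{equation*}
is compact whenever $k_l \uparrow \infty$ and $\delta_l \downarrow 0$: the norm bound gives boundedness, and the tail bounds give uniform summability. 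So it suffices to choose $M$ and the sequences $(k_l)$, $(\delta_l)$ so that $\pi_n(K)$ is close to one uniformly in $n$. Throughout I would set $\alpha^{\min} \defeq \min_i \alpha(i) > 0$.

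For JLMU, the total mass is controlled by noting $\norm{q_n}_1 = 1 + \bs_n$ with $n\bs_n$ Poisson of mean $n\rho$, so Markov's inequality gives $\bP(\bs_n > M) \leq \rho / M$ uniformly in $n$; I pick $M$ with $\rho/M \leq \varepsilon/3$. For the tails I first fix $n_0$ large enough that $\alpha_n^{\min} \geq \alpha^{\min}/2$ for $n \geq n_0$, which is possible by \eqref{ass: size of classes and initial condition}. Since the top $k$ coordinates are exactly $(i_1, j_1), \dots, (i_k, j_k)$, we have $\theta_n^k = \sum_{l = 1}^k \alpha_n(i_l) \geq k \alpha_n^{\min} \geq k \alpha^{\min}/2$ for $n \geq n_0$, which exceeds $2\rho$ and grows linearly once $k$ is large. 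Then Markov's inequality together with Proposition \ref{prop: expectation bound jlmu} yields
\begin{equation*}
\bP\left(f_k(q_n) > \delta\right) \leq \frac{\rho}{\delta}\e^{-n\left(\theta_n^k - 2\rho\right)} \leq \frac{\rho}{\delta}\e^{-\left(k\alpha^{\min}/2 - 2\rho\right)} \quad \text{for all} \quad n \geq n_0,
\end{equation*}
uniformly in $n$. Choosing $\delta_l = 2^{-l}$ and $k_l$ increasing fast enough makes $\sum_l \bP(f_{k_l}(q_n) > \delta_l) \leq \varepsilon/3$ for all $n \geq n_0$. The finitely many distributions $\pi_n$ with $n < n_0$ are individually tight on the Polish space $\ell_1$, hence jointly tight, so enlarging $K$ by a compact set capturing them yields $\inf_n \pi_n(K) \geq 1 - \varepsilon$, proving tightness in $\ell_1$.

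For SLTA the occupancy component is handled identically, except that its tail is controlled through the threshold: Proposition \ref{prop: tail probabilities threshold} gives $\bE[f_k(q_n, r_n)] \leq \rho\, \bP(r_n > k)$, so via Markov it suffices to make $\sup_{n \geq n_0} \bP(r_n > k)$ summably small along $(k_l)$. I would bound the three terms of \eqref{seq: tail probabilities bound threshold} uniformly, after also choosing $n_0$ so that $\beta_n \leq \alpha^{\min}/4$ for $n \geq n_0$, which is possible by \eqref{ass: conditions on beta}. The first term is exactly as above; for the second, $L(k) \leq k$ gives $\theta_n^k - L(k)\beta_n \geq k(\alpha^{\min}/2 - \alpha^{\min}/4) = k\alpha^{\min}/4$; and for the third, since the enumeration exhausts $\calI_+$ each class contributes infinitely many coordinates, so $\ell(k) = \min_i \ell_i(k+1) \to \infty$ and $\ell(k)\alpha_n^{\min} \geq \ell(k)\alpha^{\min}/2 \to \infty$. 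Each term therefore decays uniformly over $n \geq n_0$ and can be summed along a suitable $(k_l)$. This simultaneously shows $\set{r_n}{n \geq 1}$ is tight in $\N$ and, through the expectation bound, that the occupancy tails are uniformly small; since $K \times \{1, \dots, R\}$ is compact in $\ell_1 \times \N$, combining both (and again absorbing the finitely many small-$n$ measures) gives tightness in $\ell_1 \times \N$.

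The main obstacle I anticipate is making the estimates uniform in $n$: the quantities $\theta_n^k$, $L(k)\beta_n$ and $\ell(k)\alpha_n^{\min}$ all carry $n$-dependence through $\alpha_n$ and $\beta_n$, and one must combine the convergence $\alpha_n \to \alpha$ and $\beta_n \to 0$ with the growth $\theta_n^k \gtrsim k\alpha^{\min}$ and $\ell(k) \to \infty$ to obtain bounds that are both uniform over $n \geq n_0$ and summable over the chosen thresholds $k_l$. The finitely many remaining systems $n < n_0$ are then dispatched by the individual (Ulam) tightness of each $\pi_n$ on the relevant Polish space.
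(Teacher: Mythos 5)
Your proposal is correct, and it rests on exactly the same probabilistic estimates as the paper's proof: the drift-derived bounds of Propositions \ref{prop: expectation bound jlmu} and \ref{prop: tail probabilities threshold} combined with Markov's inequality, together with the observations $\theta_n^k \geq k\alpha_n^{\min}$, $L(k) \leq k$ and $\ell(k) \to \infty$. Where you diverge is in how these estimates are converted into tightness. The paper first notes that $\set{q_n}{n \geq 1}$ is automatically tight in the product topology (since $[0,1]^\calI$ is compact) and then invokes a criterion from \cite{mukherjee2018universality} by which $\ell_1$-tightness follows from the double-limit condition $\lim_{k\to\infty}\limsup_{n\to\infty} \bP\left(f_k(q_n) > \varepsilon\right) = 0$; the $\limsup$ in $n$ makes uniformity over $n$, and any separate treatment of small $n$, unnecessary — for each fixed large $k$ one only needs the bound to vanish as $n \to \infty$. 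You instead build the compact sets by hand from the standard $\ell_1$ compactness criterion (boundedness plus uniformly summable tails), which forces three extra pieces of work the paper avoids: an explicit norm bound via the Poisson law of the stationary total mass, genuinely uniform-in-$n$ tail estimates (obtained by noting that the exponent is positive for large $k$, so $\e^{-n(\cdot)} \leq \e^{-(\cdot)}$ once $n \geq n_0$), and Ulam's theorem to absorb the finitely many systems with $n < n_0$. Your route is self-contained and more elementary, at the cost of this bookkeeping; the paper's route is shorter but delegates the functional-analytic step to a cited lemma. Both are valid: your identification of $f_k(q)$ with the tail of $q$ in the $\menor$-decreasing enumeration, the monotonicity of $f_k$ in $k$ (so that control along a subsequence $k_l$ suffices for compactness), and your uniform handling of the three exponential terms for SLTA all check out.
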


\begin{proof}
	Suppose first that the load balancing policy is JLMU and let $q_n$ have the stationary distribution $\pi_n$ for each $n$. The sequence $\set{q_n}{n \geq 1}$ is tight with respect to the product topology since the random variables $q_n$ take values in $[0, 1]^\calI$, which is compact with respect to the product topology. Therefore, as in \cite[Lemma 2]{mukherjee2018universality}, the tightness in $\ell_1$ of $\set{q_n}{n \geq 1}$ will follow if we establish that
	\begin{equation}
	\label{aux: tightness criterion}
	\lim_{k \to \infty} \limsup_{n \to \infty} \bP\left(\sum_{(i, j) \menor (i_k, j_k)} q_n(i, j) > \varepsilon\right) = 0 \quad \text{for all} \quad \varepsilon > 0.
	\end{equation}
	
	By Proposition \ref{prop: expectation bound jlmu} and Markov's inequality, we have
	\begin{equation*}
	\bP\left(\sum_{(i, j) \menor (i_k, j_k)} q_n(i, j) > \varepsilon\right) \leq \frac{\rho}{\varepsilon}\e^{-n\left(\theta_n^k - 2\rho\right)} \quad \text{for all} \quad k, n \geq 1 \quad \text{and} \quad \varepsilon > 0.
	\end{equation*}
	For all sufficiently large $k$, the exponent on the right-hand side converges to minus infinity as $n$ grows large and $k$ is held fixed. Thus, $\set{q_n}{n \geq 1}$ is tight in $\ell_1$.
	
	Suppose now that the load balancing policy is SLTA and let $(q_n, r_n)$ have the stationary distribution $\pi_n$ for each $n$. In order to prove that $\set{(q_n, r_n)}{n \geq 1}$ is tight in $\ell_1 \times \N$, it suffices to show that $\set{q_n}{n \geq 1}$ and $\set{r_n}{n \geq 1}$ are tight in $\ell_1$ and $\N$, respectively. Indeed, if the latter properties hold, then for each $\varepsilon > 0$ there exist compact sets $K_q \subset \ell_1$ and $K_r \subset \N$ such that
	\begin{equation*}
	\max \left\{\bP\left(q_n \notin K_q\right), \bP\left(r_n \notin K_r\right)\right\} \leq \frac{\varepsilon}{2} \quad \text{for all} \quad n.
	\end{equation*}
	Therefore, the compact set $K_q \times K_r \subset \ell_1 \times \N$ satisfies
	\begin{equation*}
	\bP\left((q_n, r_n) \notin K_q \times K_r\right) \leq \bP\left(q_n \notin K_q\right) + \bP\left(r_n \notin K_r\right) \leq \varepsilon \quad \text{for all} \quad n.
	\end{equation*} 
	
	By Proposition \ref{prop: tail probabilities threshold} and Markov's inequality, we have
	\begin{align*}
	&\bP\left(\sum_{(i, j) \menor (i_k, j_k)} q_n(i, j) > \varepsilon\right) \leq \frac{\rho}{\varepsilon} \bP\left(r_n > k\right) \quad \text{for all} \quad k, n \geq 1 \quad \text{and} \quad \varepsilon > 0, \\
	&\bP\left(r_n > k\right) \leq \e^{-n\left(\theta_n^k - 2\rho\right)} + \e^{-n\left[\theta_n^k - L(k)\beta_n - 2\rho\right]} + \e^{-n\left[j(k)\alpha_n^{\min} - 2\rho\right]} \quad \text{for all} \quad k, n \geq 1.
	\end{align*}
	For all large enough $k$, the right-hand side of the second inequality is summable over $n$ and in particular vanishes with $n$. This implies that the sequences $\set{q_n}{n \geq 1}$ and $\set{r_n}{n \geq 1}$ are tight in $\ell_1$ and $\N$, respectively.
\end{proof}

We also need the following technical lemma.

\begin{lemma}
	\label{lem: overall utility}
	Fix $q \in \ell_1$ and suppose that the marginal utilities are bounded. Then
	\begin{equation*}
	u(q) = \sum_{i = 1}^m u_i(0)q(i, 0) + \sum_{(i, j) \in \calI_+} \Delta(i, j - 1)q(i, j).
	\end{equation*}
\end{lemma}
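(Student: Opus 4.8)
The plan is to prove the identity one class at a time and then sum over the finitely many classes $i \in \{1, \dots, m\}$. First I would fix $i$ and apply a summation by parts (Abel transformation) to the partial sums of the inner series defining $u(q)$. Using $\Delta(i, j - 1) = u_i(j) - u_i(j - 1)$, this yields
\begin{equation*}
\sum_{j = 0}^{N} u_i(j)\left[q(i, j) - q(i, j + 1)\right] = u_i(0)q(i, 0) + \sum_{j = 1}^{N} \Delta(i, j - 1)q(i, j) - u_i(N)q(i, N + 1)
\end{equation*}
for every $N \geq 0$ (a one-line check at $N = 0, 1$ confirms the bookkeeping). The whole argument then reduces to letting $N \to \infty$: I must show that the middle sum converges and that the boundary term $u_i(N)q(i, N + 1)$ vanishes.

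The middle sum is the easy part. The hypothesis that the marginal utilities are bounded, say $|\Delta(i, j)| \leq C$ for all $(i, j)$, gives $\sum_{(i, j) \in \calI_+} |\Delta(i, j - 1)q(i, j)| \leq C\norm{q}_1 < \infty$, so the series $\sum_{j \geq 1}\Delta(i, j - 1)q(i, j)$ converges absolutely; this absolute convergence is also exactly what makes the right-hand side of the lemma a well-defined bounded linear functional of $q \in \ell_1$. The same bound on the marginal utilities yields the linear growth estimate $|u_i(N)| \leq |u_i(0)| + CN$, obtained by writing $u_i(N) = u_i(0) + \sum_{l = 0}^{N - 1}\Delta(i, l)$.

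The crux, and the step I expect to be the main obstacle, is showing the boundary term tends to zero. Here I would use that the relevant $q$ is an occupancy state, so $q(i, \scdot)$ is non-negative and non-increasing; combined with $q \in \ell_1$, the standard fact that a non-increasing, non-negative, summable sequence $a_j$ satisfies $j a_j \to 0$ gives $N q(i, N + 1) \to 0$. Together with the estimate $|u_i(N)| \leq |u_i(0)| + CN$, this forces $u_i(N)q(i, N + 1) \to 0$. (Monotonicity can in fact be circumvented: concavity makes $\Delta(i, \scdot)$ non-increasing, hence $u_i$ eventually monotone, and if the boundary term had a non-zero limit then the at-most-linear growth of $u_i$ would contradict $q \in \ell_1$; but invoking the occupancy-state structure is cleaner.) Passing to the limit in the summation-by-parts identity then gives
\begin{equation*}
\sum_{j = 0}^{\infty} u_i(j)\left[q(i, j) - q(i, j + 1)\right] = u_i(0)q(i, 0) + \sum_{j \geq 1}\Delta(i, j - 1)q(i, j).
\end{equation*}
Finally I would sum this identity over the finitely many classes $i = 1, \dots, m$ and recall the definition of $u(q)$, which completes the proof.
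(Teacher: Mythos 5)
Your proof is correct and takes essentially the same route as the paper: the same summation-by-parts identity, absolute convergence of the $\Delta$-weighted series from bounded marginal utilities and $q \in \ell_1$, the linear growth bound $|u_i(N)| \leq |u_i(0)| + CN$, and vanishing of the boundary term $u_i(N)q(i, N+1)$. If anything, you handle the one delicate step more carefully than the paper does: the paper dismisses the boundary term with ``the latter limit is zero since $q \in \ell_1$,'' which is false for an arbitrary $\ell_1$ sequence (e.g.\ $q(i, k+1) = 1/k$ for $k$ a perfect square and $0$ otherwise), whereas you correctly note that one needs the occupancy-state monotonicity (giving $N q(i, N+1) \to 0$), or alternatively the existence of the limit of the boundary term together with the linear growth bound, to close the argument.
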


\begin{proof}
	Note that
	\begin{align*}
	u(q) &= \sum_{i = 1}^m \sum_{j = 0}^\infty u_i(j)\left[q(i, j) - q(i, j + 1)\right] \\
	&= \lim_{k \to \infty} \sum_{i = 1}^m \sum_{j = 0}^k u_i(j)\left[q(i, j) - q(i, j + 1)\right] \\
	&= \lim_{k \to \infty} \sum_{i = 1}^m \left[u_i(0)q(i, 0) + \sum_{j = 1}^k \Delta(i, j - 1)q(i, j) - u_i(k)q(i, k + 1)\right], \\
	&= \sum_{i = 1}^m u_i(0)q(i, 0) + \sum_{(i, j) \in \calI_+} \Delta(i, j - 1)q(i, j) - \lim_{k \to \infty} \sum_{i = 1}^m u_i(k)q(i, k + 1).
	\end{align*}
	The second term in the last line is absolutely convergent since the marginal utilities are bounded and $q \in \ell_1$. Moreover, there exists $M \geq 0$ such that
	\begin{align*}
	\lim_{k \to \infty} \left|u_i(k)\right| q(i, k + 1) &\leq \lim_{k \to \infty} \left|u_i(0) + \sum_{j = 0}^{k - 1} \Delta(i, j)\right| q(i, k + 1) \\
	&\leq \lim_{k \to \infty} \left(\left|u_i(0)\right| + Mk\right) q(i, k + 1) \quad \text{for all} \quad i,
	\end{align*}
	and the latter limit is zero for all $i$ since $q \in \ell_1$.
\end{proof}

Now we are ready to prove Theorem \ref{the: asymptotic optimality}.

\begin{proof}[Proof of Theorem \ref{the: asymptotic optimality}.]
	Suppose first that the load balancing policy is JLMU. It follows from Prokhorov's theorem and Proposition \ref{prop: tightness of stationary distributions} that the stationary distributions $\set{\pi_n}{n \geq 1}$ are relatively compact in $\ell_1$, thus every subsequence has a further subsequence which converges in distribution. To establish (a), it suffices to prove the following statement: if $\calK$ is an increasing sequence of natural numbers such that $\set{\pi_k}{k \in \calK}$ converges weakly to $\pi$, then $\pi$ is the Dirac measure concentrated at $q_*$. Similarly, the sequence $\set{\pi_n}{n \geq 1}$ is relatively compact in $\ell_1 \times \N$ if the load balancing policy is SLTA, and to prove (b) it suffices to establish the following statement: if $\calK$ is an increasing sequence of natural number such that $\set{\pi_k}{k \in \calK}$ converges weakly to $\pi$, then $\pi$ is the Dirac measure at $(q_*, r_*)$. Below we prove (a) and (b) in parallel, proceeding as indicated above.
	
	Fix an arbitrary increasing sequence of natural numbers $\calK$ such that $\set{\pi_k}{k \in \calK}$ converges weakly to a certain probability measure $\pi$. The following constructions use Skorokhod's representation theorem. If the load balancing policy is JLMU, then there exist random variables $q_k$ and $q$, distributed as $\pi_k$ and $\pi$, respectively, that are defined on a common probability space $(\Omega_I, \calF_I, \prob_I)$ and satisfy:
	\begin{equation*}
		\lim_{k \to \infty} \norm{q_k(\omega) - q(\omega)}_1 = 0 \quad \text{for all} \quad \omega \in \Omega_I.
	\end{equation*}
	If the load balancing policy is SLTA, then there exist random variables $(q_k, r_k)$ and $(q, r)$, with distributions given by the probability measures $\pi_k$ and $\pi$, respectively, that are defined on some common probability space $(\Omega_I, \calF_I, \prob_I)$ and satisfy:
	\begin{equation*}
		\lim_{k \to \infty} \norm{q_k(\omega) - q(\omega)}_1 = 0\ \text{and}\ \lim_{k \to \infty} r_k(\omega) = r(\omega) \quad \text{for all} \quad \omega \in \Omega_I.
	\end{equation*}
	The second limit implies that there exists a random variable $R$ such that $r_k(\omega) \leq R(\omega)$ for all $k \in \calK$ and $\omega \in \Omega_I$. Moreover, $q_k(i, j) < \alpha_k(i)$ for all $(i, j) \menorigual (i_{r_k}, j_{r_k})$ on $\Omega_I$ by the definition of the state space $S_k$ for SLTA. Indeed, recall from Remark \ref{rem: dynamics of threshold} that the latter property is preserved by arrivals and departures and observe that it holds for the empty occupancy state with $\br_k = 1$.
	
	If the load balancing policy is JLMU, then we may construct occupancy proceses $\bq_k$ on a common probability space $(\Omega, \calF, \prob)$ as in Section \ref{subsub: construction for jlmu}, such that $\bq_k(0) = q_k$ and \eqref{ass: size of classes and initial condition} holds with $q_0 = q$. If the load balancing policy is SLTA, then we may construct processes $(\bq_k, \br_k)$ on a common probability space $(\Omega, \calF, \prob)$ as in Section \ref{subsub: construction for threshold}, such that $\left(\bq_k(0), \br_k(0)\right) = (q_k, r_k)$ and the assumptions of Section \ref{subsub: technical assumptions} hold with $q_0 = q$. For both load balancing policies, we may assume by Proposition \ref{prop: relative compactness - l1} that $\bq_k$ converges in $D_{\ell_1}[0, \infty)$ to a process $\bq$ with probability one, this may require to replace $\calK$ by a subsequence.
	
	If the load balancing policy is JLMU, then Theorem \ref{the: fluid limit of jlmu} implies that $\bq(\omega)$ is the unique fluid trajectory such that $\bq(\omega, 0) = q(\omega)$ for each $\omega \in \Omega$. Moreover, by Theorem \ref{the: global asymptotic stability},
	\begin{equation}
		\label{aux: limit over time jlmu}
		\lim_{t \to \infty} \norm{\bq(\omega, t) - q_*}_1 = 0 \quad \text{for all} \quad \omega \in \Omega.
	\end{equation}
	
	If the load balancing policy is SLTA, then \eqref{seq: limit of threshold 2} and \eqref{seq: limit of threshold 3} hold on a set of probability one by Theorem \ref{the: fluid limit of threshold}. Fix any $\omega \in \Omega$ such that $\bq_k(\omega)$ converges to $\bq(\omega)$ in $D_{\ell_1}[0, \infty)$ and  such that \eqref{seq: limit of threshold 2} and \eqref{seq: limit of threshold 3} hold. Also, choose $\tau > \tau_{\eq}(s_0(\omega))$. It follows from \eqref{seq: limit of threshold 2} that
		\begin{align*}
			|\alpha(i) - \bq(\omega, T, i, j)| &\leq \sup_{t \in [\tau, T]} |\alpha(i) - \bq(\omega, t, i, j)| \\
			&\leq \lim_{k \to \infty} \sup_{t \in [\tau, T]} |\alpha(i) - \bq_k(\omega, t, i, j)| + \lim_{k \to \infty} \sup_{t \in [\tau, T]} \norm{\bq_k(\omega, t) - \bq(\omega, t)}_1 = 0.
		\end{align*}
		for all $(i, j) \mayor \sigma_*$ and $T \geq \tau$. Thus, $\bq(\omega, t, i, j) = q_*(i, j)$ for all $(i, j) \mayor \sigma_*$ and $t \geq \tau$. Similarly, we conclude from \eqref{seq: limit of threshold 3} that
		\begin{align*}
			\sum_{(i, j) \menor \sigma_*} \bq(\omega, T, i, j)\e^{\mu(T - \tau)} &\leq \sup_{t \in [\tau, T]} \sum_{(i, j) \menor \sigma_*} \bq(\omega, t, i, j)\e^{\mu(t - \tau)} \\
			&\leq \limsup_{k \to \infty} \sup_{t \in [\tau, T]} \sum_{(i, j) \menor \sigma_*} \bq_k(\omega, t, i, j)\e^{\mu(t - \tau)} \\
			&+ \lim_{k \to \infty} \sup_{t \in [\tau, T]} \norm{\bq_k(\omega, t) - \bq(\omega, t)}_1\e^{\mu(t - \tau)} \leq c(\omega, \tau)
		\end{align*}
		for all $T \geq \tau$; for the last inequality, note that in the last line $\e^{\mu(t - \tau)} \leq \e^{\mu(T - \tau)}$ for all $t \in [\tau, T]$, so the last limit equals zero. Since $T$ can be arbitrarily large, we have
		\begin{equation*}
			\lim_{t \to \infty} \sum_{(i, j) \menor \sigma_*} \bq(\omega, t, i, j) \leq \lim_{t \to \infty} c(\omega, \tau)\e^{-\mu(t - \tau)} = 0,
		\end{equation*}
		and in particular $\bq(\omega, t, i, j) \to q_*(i, j)$ as $t \to \infty$ for all $(i, j) \menor \sigma_*$. This also holds for $(i, j) = \sigma_*$ by Proposition \ref{prop: fluid limit of total number of tasks}, so we conclude that
	\begin{equation}
		\label{aux: limit over time threshold}
		\prob\left(\lim_{t \to \infty} \norm{\bq(t) - q_*}_1 = 0\right) = 1.
	\end{equation}
	
	It follows from the stationarity of $\pi_k$ that $\bq_k(t)$ is distributed as $q_k$ for all $t \geq 0$ in the case of JLMU, and that $\left(\bq_k(t), \br_k(t)\right)$ has the same distribution as $(q_k, r_k)$ for all $t \geq 0$ if the load balancing policy is SLTA. Furthermore, recall that in either case we have
	\begin{equation*}
		\prob\left(\lim_{k \to \infty} \norm{\bq_k(t) - \bq(t)}_1 = 0\right) = 1 \quad \text{for all} \quad t \geq 0 \quad \text{and} \quad q_k \Rightarrow q \quad \text{as} \quad k \to \infty,
	\end{equation*}
	which implies that $\bq(t)$ is distributed as $q$ for all $t \geq 0$. Moreover, $\bq(t)$ converges weakly to $q_*$ as $t \to \infty$ by \eqref{aux: limit over time jlmu} and \eqref{aux: limit over time threshold}. Therefore, $q$ corresponds to the Dirac probability measure concentrated at $q_*$ both for JLMU and SLTA.
	
	This completes the proof of (a). To finish the proof of (b), observe that $s_0 = \rho$ with probability one since $q_0 = q$ is equal to $q_*$ with probability one. It follows from \eqref{seq: limit of threshold 1} that
	\begin{equation*}
		\prob\left(\lim_{k \to \infty} \br_k(t) = r_*\right) = 1 \quad \text{for all} \quad t > \tau_\eq(\rho).
	\end{equation*}
	Hence, $\br_k(t)$ converges weakly to $r_*$ for all $t > \tau_\eq(\rho)$. Since $\br_k(t)$ has the same distribution as $r_k$ for all $t \geq 0$, we conclude that $r_k$ converges weakly to $r_*$. Note that $q_*$ and $r_*$ are deterministic, thus $q_k$ and $r_k$ converge to $q_*$ and $r_*$, respectively, in probability, which implies that $(q_k, r_k)$ converges to $(q_*, r_*)$ in probability. This completes the proof of (b).
	
	Next we prove the statements about the stationary overall utilities, and here exactly the same arguments apply both for JLMU and SLTA. Suppose that $q_n$ has the stationary distribution. By (a) and (b), $q_n$ converges in probability to $q_*$ in $\ell_1$, and by Lemma \ref{lem: overall utility},
	\begin{equation*}
		u(x) = \sum_{i = 1}^m u_i(0)x(i, 0) + \sum_{(i, j) \in \calI_+} \Delta(i, j - 1)x(i, j) \quad \text{for all} \quad x \in \ell_1.
	\end{equation*}
	Since the marginal utilities are bounded, there exists a constant $a$ such that
	\begin{equation*}
		|u(x) - u(y)| \leq a \norm{x - y}_1 \quad \text{for all} \quad x, y \in \ell_1.
	\end{equation*}
	This implies that $u(q_n)$ converges in probability to $u(q_*)$.
	
	Finally, observe that $|u(q_n)| \leq b + c s_n$ with
	\begin{equation*}
		b \defeq \sum_{i = 1}^m |u_i(0)|, \quad c \defeq \max_{(i, j) \in \calI} |\Delta(i, j)| \quad \text{and} \quad s_n \defeq \sum_{(i, j) \in \calI_+} q_n(i, j).
	\end{equation*}
	Since $n s_n$ is Poisson distributed with mean $n\rho$, we have
	\begin{align*}
		\bE\left[u(q_n)^2\right] &\leq b^2 + 2bc\bE\left[s_n\right] + c^2\bE\left[s_n^2\right] \\
		&\leq b^2 + 2bc\rho + c^2\rho^2 + \frac{c^2 \rho}{n} \leq b^2 + 2bc\rho + c^2\rho^2 + c^2 \rho. 
	\end{align*}
	The last expression does not depend on $n$, therefore we conclude that $\set{u(q_n)}{n \geq 1}$ is uniformly integrable. As a result, we have
	\begin{equation*}
		\lim_{n \to \infty} \bE\left[u(q_n)\right] = u(q_*).
	\end{equation*}
	This completes the proof.
\end{proof}

\subsection{Suboptimality result}
\label{sub: suboptimality result}

In this section we prove that JLMU is generally not optimal in the pre-limit, although it becomes optimal as the number of server pools grows large. For this purpose, we construct an example in which JLMU is strictly outperformed by another policy.

Specifically, consider a system with two heterogeneous server pools and assume that the associated utility functions are of the following form:
\begin{equation*}
u_1(x) \defeq a \varepsilon x \quad \text{and} \quad u_2(y) \defeq a y \ind{y < 1} + a \ind{y \geq 1} \quad \text{for all} \quad x, y \geq 0,
\end{equation*}
where $a > 0$ and $\varepsilon \in (0, 1)$ are constants. Note that
\begin{equation*}
\Delta(1, 0) < \Delta(2, 0) \quad \text{but} \quad \Delta(1, j) > \Delta(2, j) \quad \text{for all} \quad j \geq 1.
\end{equation*}
As a result, JLMU sends tasks to server pool two if and only if this server pool is empty, thus the number of tasks in server pool two is zero or one in stationarity. This assignment rule guarantees the largest increase in the aggregate utility of the system at each arrival epoch. However, this increase can be very small when tasks are sent to server pool one, whereas the decrease in the aggregate utility can be comparatively large when a departure leaves server pool two with zero tasks. Particularly, this is the case when $\varepsilon$ is small. 

Suppose that tasks arrive as a Poisson process of intensity $\lambda$ with exponential service times of mean $1 / \mu$. In addition, let $\bX$ and $\bY$ denote the number of tasks in server pools one and two, respectively. Next we provide an upper bound for the mean stationary overall utility of JLMU when the utility functions are as defined above.

As already noted, a new task is sent to server pool two if and only if $\bY = 0$. Therefore, the following statements hold.
\begin{itemize}
	\item $\bY(t) \in \{0, 1\}$ for all sufficiently large $t$.
	
	\item Conditional on $\bY(0) \in \{0, 1\}$, the process $\bY$ alone is a birth-death process with state space $\{0, 1\}$, birth rate $\lambda$ and death rate $\mu$. 
\end{itemize}
By Proposition \ref{prop: irreducibility and positive recurrence}, the process $(\bX, \bY)$ has a unique stationary distribution $(X, Y)$, and by the above observations, we have
\begin{equation*}
\bP(Y = 0) = \frac{\mu}{\lambda + \mu} \quad \text{and} \quad \bP(Y = 1) = \frac{\lambda}{\lambda + \mu}.
\end{equation*}
If we let $U \defeq u_1(X) + u_2(Y)$ denote the aggregate utility in stationarity, then 
\begin{equation*}
\bE\left[U\right] = a \varepsilon \bE\left[X\right] + a \bP(Y = 1) \leq a \varepsilon \bE\left[X + Y\right] + a \bP(Y = 1) = a \left[\varepsilon \rho + \frac{\rho}{\rho + 1}\right].
\end{equation*}

Consider now the policy that sends all tasks to server pool two. While JLMU dispatches the new tasks in a greedy fashion, this other policy is conservative, since it tries to avoid drops of $u_2(\bY)$ from $a$ to zero, by keeping a positive number of tasks in server pool two.

The mean stationary aggregate utility can be computed explicitly for the policy that we described above since $\bY$ is now an $M/M/\infty$ queue and $\bX(t) = 0$ for all sufficiently large $t$. Let $(X, Y)$ be the stationary distribution of $(\bX, \bY)$ and let $V \defeq u_1(X) + u_2(Y)$ denote the aggregate utility of the system in stationarity, we have
\begin{equation*}
\bE\left[V\right] = a \bP(Y > 0) = a\left(1 - \e^{-\rho}\right).
\end{equation*}

It is not difficult to verify that $\varepsilon$ can be chosen so that
\begin{equation*}
\varepsilon \rho + \frac{\rho}{\rho + 1} < 1 - \e^{-\rho}
\end{equation*}
for all $\rho$ in some open interval contained in $(0, \infty)$. If $\varepsilon$ and $\rho$ are chosen so that the latter inequality holds, then $\bE\left[U\right] < \bE\left[V\right]$ and thus JLMU is strictly suboptimal.

In general, the right balance between greedy and conservative actions is difficult to determine and depends intricately on the set of utility functions. However, the benefits of conservative actions, that prevent the number of tasks in server pools of specific classes from dropping below certain occupancy levels, diminish as the scale of the system grows. Indeed, the average fraction of these server pools that have less tasks than the optimal quantity decreases as the number of server pools grows, even if the assignment policy is purely greedy as JLMU; essentially, this is a consequence of the increase in the number of server pools per class and the decrease in the coefficient of variation of the total number of tasks in the system. Therefore, the associated loss in mean stationary overall utility asymptotically vanishes as the scale of the system grows.

\begin{appendices}
	
	\section{Auxiliary results}
	\label{app: auxiliary results}
	
	\begin{proof}[Construction of Section \ref{subsub: construction for threshold}.]
		First we define the functions $\eta_k$. For this purpose, suppose that $q \in Q_n$ and $r \geq 1$ are the current values of $\bq_n$ and $\br_n$, respectively. Then
		\begin{equation*}
		\calG(q, r) \defeq \begin{cases}
		\set{(i, j) \in \calI_+}{(i, j) \mayor (i_r, j_r)} & \text{if} \quad r > 1, \\
		\emptyset & \text{if} \quad r = 1,
		\end{cases} 
		\end{equation*}
		is the set of the coordinates that could belong to a server pool with a green token; i.e., a server pool of class $i$ with exactly $j - 1$ tasks has a green token if and only if $(i, j) \in \calG(q, r)$. We partition the latter set of coordinates as follows:
		\begin{align*}
		&\calG_1(q, r) \defeq \calG(q, r) \cap \set{(i, j) \in \calI_+}{i \neq i_{r - 1}}, \\
		&\calG_2(q, r) \defeq \calG(q, r) \cap \set{(i, j) \in \calI_+}{i = i_{r - 1}}.
		\end{align*}
		
		Let $\delta_q(i, j) \defeq q(i, j - 1) - q(i, j)$ denote the fraction of server pools which are of class $i$ and have exactly $j - 1$ tasks when the occupancy state is $q$, and define
		\begin{equation*}
		I(q, i, j) \defeq \left[1 - \sum_{(k, l) \menorigual (i, j)} \delta_q(k, l), 1 - \sum_{(k, l) \menor (i, j)} \delta_q(k, l)\right) \quad \text{for all} \quad (i, j) \in \calI_+.
		\end{equation*}
		These intervals form a partition of $[0, 1)$ such that the length of $I(q, i, j)$ is the fraction of server pools which are of class $i$ and have exactly $j - 1$ tasks. The length of $I(q, i, j)$ is equal to the probability of picking a server pool with coordinates $(i, j)$ uniformly at random.
		
		Note that $G_1(q, r) \defeq \sum_{(k, l) \in \calG_1(q, r)} \delta_q(k, l)$ is the fraction of server pools of class $i \neq i_{r - 1}$ with a green token. If the latter quantity is positive, then we let
		\begin{equation*}
		G_1(q, r, i, j) \defeq \left[\frac{1 - \sum_{(k, l) \menorigual (i, j), (k, l) \in \calG_1(q, r)} \delta_q\left(k, l\right)}{G_1(q, r)}, \frac{1 - \sum_{(k, l) \menor (i, j), (k, l) \in \calG_1(q, r)} \delta_q\left(k, l\right)}{G_1(q, r)}\right).
		\end{equation*}
		These intervals yield another partition of $[0, 1)$, where the length of $G_1(q, r, i, j)$ is the fraction of server pools which are of class $i$ and have exactly $j - 1$ tasks, but only among those server pools which are of class $i \neq i_{r - 1}$ and have a green token. If $G_1(q, r) = 0$, then we define $G_1(q, r, i, j) \defeq \emptyset$ for all $(i, j) \in \calI_+$. Sets $G_2(q, r, i, j)$ are defined similarly.
		
		The functions $\eta_k$ are defined as follows:
		\begin{equation*}
		\eta_k(q, r, i, j) \defeq \begin{cases}
		\ind{U_k \in G_1(q, r, i, j)} & \text{if} \quad (i, j) \in \calG_1(q, r), \\
		\ind{G_1(q, r) = 0, U_k \in G_2(q, r, i, j)} & \text{if} \quad (i, j) \in \calG_2(q, r), \\
		\ind{G_1(q, r) = G_2(q, r) = 0, I(q, i_r, j_r) \neq \emptyset} & \text{if} \quad (i, j) = (i_r, j_r), \\
		\ind{G_1(q, r) = G_2(q, r) = 0, I(q, i_r, j_r) = \emptyset, U_k \in I(q, i, j)} & \text{otherwise}.
		\end{cases}
		\end{equation*}
		Observe that $\eta_k(q, r, i, j) \in \{0, 1\}$ for all $k$ and $(i, j) \in \calI_+$, and that for a fixed $k$ there exists a unique $(i, j) \in \calI_+$ such that $\eta_k(q, r, i, j) = 1$. Also, if $(q, r)$ is the value of $(\bq_n, \br_n)$ when the $k^{\text{th}}$ task arrives to the system, then $\eta_k(q, r, i, j) = 1$ if and only if this task has to be sent to a server pool of class $i$ with exactly $j - 1$ tasks.
		
		It only remains to define the sets $I_{n, k}$ and $D_{n, k}$ that appear in \eqref{seq: functional equation for r for threshold}. The former is the set of those $\omega \in \Omega$ that satisfy:
		\begin{align*}
		&\bq\left(\tau_{n, k}^-(\omega), i, j\right) = \alpha_n(i) \quad \text{for all} \quad (i, j) \mayor \left(i_{\br\left(\tau_{n, k}^-(\omega)\right)}, j_{\br\left(\tau_{n, k}^-(\omega)\right)}\right), \\
		&n\bq\left(\tau_{n, k}^-(\omega), i_{\br\left(\tau_{n, k}^-(\omega)\right)}, j_{\br\left(\tau_{n, k}^-(\omega)\right)}\right) \geq n\alpha_n\left(i_{\br\left(\tau_{n, k}^-(\omega)\right)}\right) - 1.
		\end{align*}
		If the state of the system is $(\bq, \br)$, then the first condition means that there are no green tokens right before the $k^{\text{th}}$ task arrives, and the second condition means that the number of yellow tokens is at most one. Finally, $D_{n, k}$ is the set of those $\omega \in \Omega$ such that
		\begin{align*}
		&\br\left(\tau_{n, k}^-(\omega)\right) > 1, \\
		&n - \sum_{i = 1}^m n\bq\left(\tau_{n, k}^-(\omega), i, \ell_i\left(\br\left(\tau_{n, k}^-(\omega)\right)\right)\right) \geq n\beta_n, \\
		&\bq\left(\tau_{n, k}^-(\omega), i_{\br\left(\tau_{n, k}^-(\omega)\right) - 1}, j_{\br\left(\tau_{n, k}^-(\omega)\right) - 1}\right) < \alpha_n \left(i_{\br\left(\tau_{n, k}^-(\omega)\right) - 1}\right).
		\end{align*}
		The second condition means that the number of green tokens is larger than or equal to $n \beta_n$ right before the $k^{\text{th}}$ task arrives to the system, and the last condition means that at least one of these tokens is of class $i_{\br - 1}$.
	\end{proof}
	
	\begin{proof}[Proof of Lemma \ref{lem: set of nice sample paths}.]
		Fix $T \geq 0$ and let $M_T \defeq \lambda T(\e - 1) + 1$. The set
		\begin{equation*}
		E_T \defeq \set{\omega \in \Omega}{\calN_n^\lambda(\omega, T) \leq n M_T\ \text{for all large enough}\ n}.
		\end{equation*}
		has probability one as a consequence of the Borel-Cantelli lemma and the Chernoff bound
		\begin{equation*}
		\prob\left(\calN_n^\lambda(T) > nM_T\right) \leq \frac{\e^{\lambda T(\e - 1)n}}{\e^{M_Tn}} = \e^{-n}.
		\end{equation*}
		As a result, the following set also has probability one:
		\begin{equation*}
		\Gamma \defeq \bigcap_{T \in \N} \left(E_T \cap \Gamma_\infty\right).
		\end{equation*}
		
		Note that for each $\omega \in \Gamma$ and $T \geq 0$ there exists $n_T^1(\omega)$ such that $\calN_n^\lambda(\omega, T) \leq n M_T$ for all $n \geq n_T^1(\omega)$. Fix $T \geq 0$, $\omega \in \Gamma$ and $0 < \varepsilon < \min \set{\alpha(i)}{1 \leq i \leq m}$. Next we establish that there exist $j_T(\omega)$ and $n_T(\omega)$ such that
		\begin{equation}
		\label{aux: technical lemma for relative compactness in l1}
		\bq_n\left(\omega, t, i, j_T(\omega)\right) \leq \varepsilon < \alpha_n(i) \quad \text{for all} \quad t \in [0, T], \quad 1 \leq i \leq m \quad \text{and} \quad n \geq n_T(\omega).
		\end{equation}
		If the load balancing policy is JLMU, then \eqref{eq: definition of sigma} and the above statement imply that $\sigma_j\left(\bq_n(\omega, t)\right) \leq j_T(\omega)$ for all $t \in [0, T]$ and $n \geq n_T(\omega)$, which in turn implies that the claim of the lemma holds. Suppose instead that SLTA is used and let
		\begin{equation*}
		r_T(\omega) \defeq \min \set{k \geq 1}{(i_k, j_k) \menorigual \left(i, j_T(\omega)\right)\ \text{for all}\ i}.
		\end{equation*}
		It follows from \eqref{eq: additional property of threshold within gamma infty} that $\br_n(\omega, t) \leq \max\{r_T(\omega), R(\omega)\} \eqdef R_T(\omega)$ for all $t \in [0, T]$ and $n \geq n_T(\omega)$. Furthermore, the latter property and Remark \ref{rem: dynamics of threshold} imply that $\calA_n(\omega, t, i, j) = 0$  for all $t \in [0, T]$, $n \geq n_T(\omega)$ and $(i, j) \menor (i_{R_T(\omega) + 1}, j_{R_T(\omega) + 1})$. Therefore, the claim of the lemma also holds for SLTA.
		
		The following arguments apply both for JLMU and SLTA. Below we omit $\omega$ from the notation for brevity. Since $q_0 \in \ell_1$, there exists $j_0$ such that $q_0(i, j_0) \leq \varepsilon / 4$ for all $i$, and by \eqref{ass: size of classes and initial condition} and \eqref{seq: convergence of initial conditions}, there exists $n_T \geq n_T^1$ such that
		\begin{equation*}
		\bq_n(0, i, j_0) \leq \frac{\varepsilon}{2} < \varepsilon < \alpha_n(i) \quad \text{for all} \quad 1 \leq i \leq m \quad \text{and} \quad n \geq n_T.
		\end{equation*}
		Define $j_T \defeq j_0 + \ceil{2 M_T / \varepsilon} - 1$. Also, fix $t \in [0, T]$, $1 \leq i \leq m$ and $n \geq n_T$. We have
		\begin{align*}
		\sum_{j = j_0}^{j_T} n\left[\bq_n(t, i, j) - \frac{\varepsilon}{2}\right] \leq \sum_{j = j_0}^{j_T} n\left[\bq_n(t, i, j) - \bq_n(0, i, j)\right] \leq \sum_{j = j_0}^{j_T} n\calA_n(t, i, j) \leq nM_T.
		\end{align*}
		For the first inequality, observe that $\bq_n(0, i, j) \leq \bq_n(0, i, j_0) \leq \varepsilon / 2$ for all $j \geq j_0$, and for the last inequality, note that $\calN_n^\lambda(t) \leq \calN_n^\lambda(T) \leq nM_T$ because $n \geq n_T^1$. Since $\bq_n(t, i, j)$ is non-increasing in $j$, we conclude that
		\begin{equation*}
		\bq_n(t, i, j_T) \leq \frac{1}{j_T - j_0 + 1}\sum_{j = j_0}^{j_T} \bq_n(t, i, j) \leq \frac{M_T + \frac{\varepsilon}{2}\left(j_T - j_0 + 1\right)}{j_T - j_0 + 1} < \varepsilon < \alpha_n(i).
		\end{equation*}
		This completes the proof.
	\end{proof}
	
	\begin{proof}[Proof of Lemma \ref{lem: skorokhod mappings with upper reflecting barrier}]
		Consider a function $\bx \in D[0, \infty)$ such that $\bx(0) \geq 0$.  Then there exist unique $\by, \bz \in D[0, \infty)$ such that the following statements hold.
		\begin{enumerate}
			\item[(a')] $\bz(t) = \bx(t) + \by(t) \geq 0$ for all $t \geq 0$.
			
			\item[(b')] $\by$ is non-decreasing and $\by(0) = 0$.
			
			\item[(c')] $\by$ is flat off $\set{t \geq 0}{\bz(t) = 0}$, i.e., $\dot{\by}(t)\ind{\bz(t) > 0} = 0$ almost everywhere.
		\end{enumerate}
		The map $(\Psi, \Phi)$ such that $\Psi(\bx) = \by$ and $\Phi(\bx) = \bz$ is called the one-dimensional Skorokhod mapping with lower reflecting barrier at zero and satisfies
		\begin{equation*}
		\Psi(\bx)(t) = \sup_{s \in [0, t]} \left[-\bx(s)\right]^+ \quad \text{and} \quad \Phi(\bx)(t) = \bx(t) + \Psi(\bx)(t).
		\end{equation*}
		Also, if $\bx, \by \in D[0, \infty)$ satisfy $\bx(0), \by(0) \geq 0$, then for each $T \geq 0$, we have
		\begin{equation*}
		\norm{\Psi(\bx) - \Psi(\by)}_T \leq \norm{\bx - \by}_T \quad \text{and} \quad \norm{\Phi(\bx) - \Phi(\by)}_T \leq 2 \norm{\bx - \by}_T.
		\end{equation*}
		The latter definition and properties of the Skorokhod mapping with lower reflecting barrier at zero can be found in \cite[Theorem 6.1]{chen2013fundamentals} and \cite[Lemma 6.14]{karatzas2014brownian}.
		
		Suppose $\bx$ is as in the statement of the lemma. Define $\by$ and $\bz$ as in \eqref{eq: explicit expressions for reflection mappings}; i.e.,
		\begin{equation*}
		\by(t) \defeq \sup_{s \in [0, T]} \left[\bx(s) - \alpha\right]^+ = \Psi(\alpha - \bx)(t) \quad \text{and} \quad \bz(t) \defeq \bx(t) - \by(t) = \alpha - \Phi(\alpha - \bx)(t).
		\end{equation*}
		Properties (b') and (c') imply (b) and (c). Also, (a) holds since $\bz = \alpha - \Phi(\alpha - \bx) \leq \alpha$ by (a') and
		$\bz = \bx - \by$ by definition.
		
		Conversely, suppose that $\by$ and $\bz$ satisfy (a)-(c). Then $\alpha - \bz = \alpha - \bx + \by \geq 0$ and $\by$ satisfies (b') and (c') with $\bz$ replaced by $\alpha - \bz$, hence we conclude that $\by = \Psi(\alpha - \bx)$ and $\alpha - \bz = \Phi(\alpha - \bx)$. Therefore, $\by$ and $\bz$ are as in \eqref{eq: explicit expressions for reflection mappings}.
		
		The Lipschitz properties of $\Psi_\alpha$ and $\Phi_\alpha$ follow from the Lipschitz properties of $\Psi$ and $\Phi$ and the identities $\Psi_\alpha(\bx) = \Psi(\alpha - \bx)$ and $\Phi_\alpha(\bx) = \alpha - \Phi(\alpha - \bx)$.
	\end{proof}
	
	\begin{proof}[Proof of Lemma \ref{lem: fubini}]
		By assumption and Tonelli's theorem,
		\begin{align*}
		\sum_{x, y \in S_n} \pi_n(x) \left|A_n(x, y)\right| f(y) &= \sum_{x \in S_n} \pi_n(x) \sum_{y \in S_n} \left|A_n(x, y)\right| f(y) \\
		&= \bE\left[\sum_{y \in S_n} \left|A_n(x_n, y)\right|f(y)\right] < \infty. 
		\end{align*}
		Therefore, we may use Fubini's theorem, which yields
		\begin{align*}
		0 = \sum_{y \in S_n} f(y) \sum_{x \in S_n} \pi_n(x) A_n(x, y) = \sum_{x \in S_n} \pi_n(x) \sum_{y \in S_n} A_n(x, y) f(y) = \bE\left[A_nf(x_n)\right].
		\end{align*}
		The first equality follows from the stationarity of $\pi_n$.
	\end{proof}
	
	\section{Relative compactness}
	\label{app: relative compactness}
	
	Consider the metric
	\begin{equation*}
	d(x, y) \defeq \sum_{(i, j) \in \calI} \frac{\min \{|x(i, j) - y(i, j)|, 1\}}{2^j} \quad \text{for all} \quad x, y \in \R^\calI,
	\end{equation*}
	which induces the product topology on $\R^\calI$. For each $T \geq 0$, let $D_{\R^\calI}[0, T]$ denote the space of all c\`adl\`ag functions on $[0, T]$ with values in $\R^\calI$ and equip this space with the uniform metric, defined by
	\begin{equation*}
	\varrho_T(\bx, \by) \defeq \sup_{t \in [0, T]} d\left(\bx(t), \by(t)\right) \quad \text{for all} \quad \bx, \by \in D_{\R^\calI}[0, T].
	\end{equation*}
	The topology of uniform convergence over compact sets on $D_{\R^\calI}[0, \infty)$ is compatible with the metric defined by
	\begin{equation*}
	\varrho_\infty(\bx, \by) \defeq \sum_{k = 0}^\infty \frac{\min\{\varrho_k(\bx, \by), 1\}}{2^k} \quad \text{for all} \quad \bx, \by \in D_{\R^\calI}[0, \infty).
	\end{equation*}
	Since $D_{\R^\calI}[0, \infty)$ is metrizable, a subset of $D_{\R^\calI}[0, \infty)$ is relatively compact if and only if every sequence within the subset has a convergent subsequence. The same holds for the metric spaces $D_{\R^\calI}[0, T]$ defined above.
	
	The following arguments apply both for JLMU and SLTA. Fix some arbitrary time horizon $T \geq 0$. In order to prove Proposition \ref{prop: relative compactness - product topology}, we first demonstrate that the sequences $\set{\calA_n}{n \geq 1}$, $\set{\calD_n}{n \geq 1}$ and $\set{\bq_n}{n \geq 1}$ are relatively compact in the space $D_{\R^\calI}[0, T]$ with probability one, and that every convergent subsequence has a componentwise Lipschitz limit. Here we are actually referring to the restrictions to the interval $[0, T]$ of the processes $\calA_n$, $\calD_n$ and $\bq_n$, but the symbols $|_{[0, T]}$ have been omitted in order to maintain a lighter notation; this is also done in the sequel when there is no risk of confusion. The latter properties are established using the methodological framework \cite{bramson1998state}. The following result defines a set of probability one where the relative compactness holds.
	
	\begin{lemma}
		\label{lem: definition of Gamma_T}
		There exists a set of probability one $\Gamma_T \subset \Gamma_0$ where:
		\begin{subequations}
			\begin{align}
			&\lim_{n \to \infty} \norm{\bq_n(0) - q_0}_1 = 0, \label{seq: convergence of initial conditions - [0, T]} \\
			&\lim_{n \to \infty} \sup_{t \in [0, T]} n^\gamma\left|\frac{1}{n} \calN_n^\lambda(t) - \lambda t\right| = 0, \label{seq: fslln for arrivals - [0, T]} \\
			&\lim_{n \to \infty} \sup_{t \in [0, \mu j T]} n^\gamma\left|\frac{1}{n} \calN_{(i, j)}(nt) - t\right| = 0 \quad \text{for all} \quad (i, j) \in \calI_+, \label{seq: fslln for departures - [0, T]}
			\end{align}
		\end{subequations}
		for all $\gamma \in [0, 1 / 2)$. Also, if the load balancing policy is SLTA, then apart from the latter properties, there exists a random variable $R \geq 1$ such that
		\begin{equation*}
		\br_n(0) \leq R \quad \text{and} \quad \bq_n(0, i, j) < \alpha_n(i) \quad \text{for all} \quad (i, j) \menorigual \left(i_{\br_n(0)}, j_{\br_n(0)}\right) \quad \text{and} \quad n.
		\end{equation*}
	\end{lemma}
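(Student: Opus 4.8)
The plan is to build $\Gamma_T$ as a countable intersection of probability-one events, one per required property, and then intersect with $\Gamma_0$; since a countable intersection of probability-one sets again has probability one, and since $\Gamma_T \subset \Gamma_0$ by construction, this will establish the lemma. Two of the ingredients require no real work. Property \eqref{seq: convergence of initial conditions - [0, T]} holds almost surely directly from the hypotheses imposed on the initial conditions in Section \ref{subsub: technical assumptions}, namely \eqref{ass: size of classes and initial condition}, so I merely record the probability-one set on which it holds. Likewise, when the policy is SLTA, the existence of a random variable $R \geq 1$ with $\br_n(0) \leq R$ and $\bq_n(0, i, j) < \alpha_n(i)$ for all $(i, j) \menorigual (i_{\br_n(0)}, j_{\br_n(0)})$ is precisely assumption \eqref{ass: boundedness and goodness of r}, which again holds on a probability-one set.

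The substantive content is the rate-refined functional strong law of large numbers, \eqref{seq: fslln for arrivals - [0, T]} and \eqref{seq: fslln for departures - [0, T]}. Both reduce to a single statement about a unit-rate Poisson process $\calN$: for every $S \geq 0$ and every $\gamma \in [0, 1/2)$,
\[
\lim_{n \to \infty} \sup_{s \in [0, S]} n^{\gamma - 1}\left|\calN(ns) - ns\right| = 0 \quad \text{almost surely}.
\]
Indeed, writing $\calN_n^\lambda(t) = \calN(n\lambda t)$ gives $n^\gamma\left|\tfrac1n\calN_n^\lambda(t) - \lambda t\right| = n^{\gamma - 1}\left|\calN(n\lambda t) - n\lambda t\right|$, and the substitution $s = \lambda t$ turns \eqref{seq: fslln for arrivals - [0, T]} into the display with $S = \lambda T$; the departure statement is identical with $\calN$ replaced by $\calN_{(i, j)}$ and $S = \mu j T$. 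To prove the display I would fix $\varepsilon > 0$ and control the supremum of the centered process through a Bernstein/Chernoff estimate, applying Doob's maximal inequality to the exponential martingale $\exp\!\left(\theta\calN(r) - r(\e^\theta - 1)\right)$ (for both signs of $\theta$, to handle upper and lower deviations). This should yield
\[
\prob\left(\sup_{r \in [0, nS]}\left|\calN(r) - r\right| > \varepsilon n^{1 - \gamma}\right) \leq 2\exp\left(-c\, n^{1 - 2\gamma}\right)
\]
for a constant $c = c(\varepsilon, S) > 0$ and all large $n$. Because $\gamma < 1/2$ forces $1 - 2\gamma > 0$, the right-hand side is summable in $n$, so the Borel–Cantelli lemma gives that $\sup_{s \leq S} n^{\gamma - 1}\left|\calN(ns) - ns\right| \leq \varepsilon$ eventually, almost surely; letting $\varepsilon$ run through a sequence decreasing to zero yields the limit.

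To obtain the statement for \emph{all} $\gamma \in [0, 1/2)$ on a single set rather than a fixed $\gamma$, I would exploit monotonicity: if the limit holds for some $\gamma_0$, it holds for every $\gamma \leq \gamma_0$ since $n^\gamma \leq n^{\gamma_0}$ for $n \geq 1$. Hence it suffices to take a sequence $\gamma_k \uparrow 1/2$, intersect the corresponding almost-sure sets, and intersect further over the countable index set $\calI_+$ for the departure processes. The resulting probability-one set carries \eqref{seq: fslln for arrivals - [0, T]} and \eqref{seq: fslln for departures - [0, T]} for all admissible $\gamma$; intersecting it with the initial-condition set, the SLTA set (when applicable), and $\Gamma_0$ produces $\Gamma_T$. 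The main obstacle is the uniform-in-time concentration bound with the sharp exponent $n^{1 - 2\gamma}$: one must bound the supremum of the centered Poisson process over $[0, nS]$, not merely its value at one time, and check that the deviation scale $\varepsilon n^{1 - \gamma}$ — which exceeds the natural $\sqrt{n}$ fluctuation precisely because $\gamma < 1/2$ — renders the tail summable. The remaining steps are routine bookkeeping with countable intersections.
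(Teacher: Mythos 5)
Your proposal is correct. Structurally it is the same argument as the paper's: property \eqref{seq: convergence of initial conditions - [0, T]} and the two SLTA conditions are read off directly from assumptions \eqref{ass: size of classes and initial condition} and \eqref{ass: boundedness and goodness of r}, and everything is assembled by countable intersection with $\Gamma_0$. The difference is in the remaining ingredient: the paper disposes of \eqref{seq: fslln for arrivals - [0, T]} and \eqref{seq: fslln for departures - [0, T]} in one line by citing the refined strong law of large numbers for the Poisson process proved in \cite[Lemma 4]{goldsztajn2021learning}, whereas you prove that law from scratch. Your proof of it is sound: after reducing to a unit-rate process $\calN$, the event $\sup_{r \in [0, nS]}\left(\calN(r) - r\right) > \varepsilon n^{1 - \gamma}$ forces the exponential martingale $\exp\left(\theta \calN(r) - r\left(\e^{\theta} - 1\right)\right)$ above $\exp\left(\theta \varepsilon n^{1 - \gamma} - nS\left(\e^{\theta} - 1 - \theta\right)\right)$ at some $r \leq nS$, so Doob's maximal inequality for this nonnegative martingale gives the tail bound $\exp\left(-\theta\varepsilon n^{1-\gamma} + nS\left(\e^{\theta} - 1 - \theta\right)\right)$; choosing $\theta = \varepsilon n^{-\gamma}/(2S)$ and using $\e^{\theta} - 1 - \theta \leq \theta^2$ for small $\theta$ yields $\exp\left(-\varepsilon^2 n^{1 - 2\gamma}/(4S)\right)$, and the lower tail is symmetric with the opposite sign of $\theta$. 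Since $1 - 2\gamma > 0$ these bounds are summable, Borel--Cantelli applies, and your two bookkeeping reductions — monotonicity in $\gamma$ to pass from a sequence $\gamma_k \uparrow 1/2$ to all $\gamma \in [0, 1/2)$, plus countable intersection over $\varepsilon \downarrow 0$ and over $(i, j) \in \calI_+$ — are exactly what is needed to land on a single probability-one set contained in $\Gamma_0$. As for what each route buys: the paper's citation keeps the appendix minimal but makes the lemma rest on a companion paper; your argument is self-contained at the cost of half a page, and it exposes why $\gamma < 1/2$ is the precise threshold, namely that the deviation scale $\varepsilon n^{1 - \gamma}$ must dominate the $\sqrt{n}$ fluctuations of the centered Poisson process for the Chernoff--Doob bound to remain summable.
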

	
	\begin{proof}
		This result is a straightforward consequence of \eqref{ass: size of classes and initial condition}, \eqref{ass: boundedness and goodness of r} and the refined strong law of large numbers for the Poisson process proven in \cite[Lemma 4]{goldsztajn2021learning}.
	\end{proof}
	
	We fix some $\omega \in \Gamma_T$, which we omit from the notation for brevity. Next we prove that the sequences $\set{\calA_n}{n \geq 1}$ and $\set{\calD_n}{n \geq 1}$ are relatively compact subsets of $D_{\R^\calI}[0, T]$ and that the limit of every convergent subsequence is a function with Lipschitz coordinates. If JLMU is used, it then follows from \eqref{eq: definition of q for jlmu} and \eqref{seq: convergence of initial conditions - [0, T]} that the sequence $\set{\bq_n}{n \geq 1}$ has the same properties. If SLTA is used, then we need to invoke \eqref{seq: definition of q for threshold} instead of \eqref{eq: definition of q for jlmu}. 
	
	The following characterization of relative compactness with respect to $\varrho_T$ will be useful. Let $D[0, T]$ denote the space of real c\`adl\`ag functions on $[0, T]$, with the uniform norm:
	\begin{equation*}
	\norm{\bx}_T = \sup_{t \in [0, T]} |\bx(t)| \quad \text{for all} \quad \bx \in D[0, T].
	\end{equation*}
	Observe that a sequence of functions $\bx_n \in D_{\R^\calI}[0, T]$ converges to $\bx \in D_{\R^\calI}[0, T]$ with respect to $\varrho_T$ if and only if $\bx_n(i, j)$ converges to $\bx(i, j)$ with respect to the uniform norm for all $(i, j) \in \calI$. Moreover, we have the following proposition.
	
	\begin{proposition}
		\label{prop: relative compactness from coordinates}
		The sequence $\set{\bx_n}{n \geq 1}$ is relatively compact in $D_{\R^\calI}[0, T]$ if and only if $\set{\bx_n(i, j)}{n \geq 1}$ is a relatively compact subset of $D[0, T]$ for all $(i, j) \in \calI$.
	\end{proposition}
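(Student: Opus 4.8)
The plan is to exploit two facts recalled just above the statement: that $D_{\R^\calI}[0, T]$ is metrizable, so relative compactness coincides with sequential relative compactness, and that $\bx_n \to \bx$ in $\varrho_T$ if and only if $\bx_n(i, j) \to \bx(i, j)$ in $\norm{\scdot}_T$ for every $(i, j) \in \calI$. The forward implication is then immediate: the second fact says that each coordinate projection $\bx \mapsto \bx(i, j)$ from $(D_{\R^\calI}[0, T], \varrho_T)$ to $(D[0, T], \norm{\scdot}_T)$ is continuous, and a continuous image of a sequentially relatively compact set is sequentially relatively compact. Concretely, given any subsequence of $\set{\bx_n(i, j)}{n \geq 1}$, I would pass to the corresponding subsequence of $\set{\bx_n}{n \geq 1}$, extract a $\varrho_T$-convergent further subsequence, and invoke the coordinatewise characterization to obtain $\norm{\scdot}_T$-convergence of the $(i, j)$ coordinate.

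For the reverse implication, which is the substantive direction, I would argue by sequential compactness. Starting from an arbitrary subsequence of $\set{\bx_n}{n \geq 1}$ and using that $\calI$ is countable, a standard diagonal extraction yields a further subsequence $\set{\bx_{n_k}}{k \geq 1}$ along which $\bx_{n_k}(i, j)$ converges in $\norm{\scdot}_T$ to some limit $\by(i, j) \in D[0, T]$, simultaneously for all $(i, j) \in \calI$; this uses precisely the assumed relative compactness of each $\set{\bx_n(i, j)}{n \geq 1}$ in $D[0, T]$. The candidate limit $\by \defeq (\by(i, j))_{(i, j) \in \calI}$ lies in $D_{\R^\calI}[0, T]$, since a map into the product space $\R^\calI$ is c\`adl\`ag exactly when all of its coordinates are, and each $\by(i, j)$ is a uniform limit of c\`adl\`ag functions.

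The remaining and crucial step is to upgrade this coordinatewise convergence to convergence in $\varrho_T$, and this is where the summability of the weights $2^{-j}$ defining $d$ is essential. Given $\varepsilon > 0$, I would first pick $J$ with $\sum_{i = 1}^m \sum_{j > J} 2^{-j} < \varepsilon / 2$, which is possible because $\sum_{i = 1}^m \sum_{j \geq 0} 2^{-j} = 2m < \infty$. Bounding each summand with $j > J$ by its weight $2^{-j}$ and using $\min\{a, 1\} \leq a$ on the remaining finitely many coordinates gives
\[
\varrho_T(\bx_{n_k}, \by) \leq \sum_{i = 1}^m \sum_{j = 0}^J \frac{\norm{\bx_{n_k}(i, j) - \by(i, j)}_T}{2^j} + \frac{\varepsilon}{2},
\]
whose first term is a finite sum of quantities each vanishing as $k \to \infty$. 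Hence $\varrho_T(\bx_{n_k}, \by) < \varepsilon$ for all large $k$, so every subsequence of $\set{\bx_n}{n \geq 1}$ has a $\varrho_T$-convergent further subsequence and the sequence is relatively compact. I expect the main obstacle to be organizing the diagonal extraction cleanly and controlling the tail of the series uniformly in $t \in [0, T]$; the forward implication and the verification that $\by$ is c\`adl\`ag are routine.
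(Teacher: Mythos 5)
Your proof is correct and follows essentially the same route as the paper's: a diagonal extraction over the countable index set $\calI$ to produce a subsequence along which every coordinate converges uniformly, combined with the coordinatewise characterization of $\varrho_T$-convergence. The only difference is that you spell out explicitly the tail estimate (via the summable weights $2^{-j}$) and the forward implication, both of which the paper treats as immediate consequences of the observation stated just before the proposition.
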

	
	\begin{proof}
		We only need to prove the converse, so assume that $\set{\bx_n(i, j)}{n \geq 1}$ is relatively compact for all $(i, j) \in \calI$. Given an increasing sequence $\calK \subset \N$, we need to establish that there exists a subsequence of $\set{\bx_k}{k \in \calK}$ that converges with respect to $\varrho_T$. For this purpose, we may construct a family of sequences $\set{\calK_j}{j \geq 0}$ with the following properties.
		\begin{enumerate}
			\item[(a)] $\calK_{j + 1} \subset \calK_j \subset \calK$ for all $j \geq 0$.
			
			\item[(b)] $\set{\bx_k(i, j)}{k \in \calK_j}$ has a limit $\bx(i, j) \in D[0, T]$ for each $(i, j) \in \calI$.
		\end{enumerate}
		
		Define $\set{k_l}{l \geq 1} \subset \calK$ such that $k_l$ is the $l^{\text{th}}$ element of $\calK_l$. Then
		\begin{equation*}
		\lim_{l \to \infty} \norm{\bx_{k_l}(i, j) - \bx(i, j)}_T = 0 \quad \text{for all} \quad (i, j) \in \calI.
		\end{equation*} 
		Let $\bx \in D_{\R^\calI}[0, T]$ be the function with coordinates the limiting functions $\bx(i, j)$ introduced in (b). Then $\bx_{k_l}$ converges to $\bx$ with respect to $\varrho_T$.
	\end{proof}
	
	As a result, it suffices to establish that $\set{\calA_n(i, j)}{n \geq 1}$ and $\set{\calD_n(i, j)}{n \geq 1}$ are relatively compact in $D[0, T]$ for all $(i, j) \in \calI$. Consider the sets
	\begin{equation*}
	L_M \defeq \set{\bx \in D[0, T]}{\bx(0) = 0\ \text{and}\ |\bx(t) - \bx(s)| \leq M|t - s|\ \text{for all}\ s, t \in [0, T]}
	\end{equation*}
	which are compact for each $M > 0$ by the Arzel\'a-Ascoli theorem. For each $(i, j) \in \calI$, we prove that there exists $M_j$ such that $\calA_n(i, j)$ and $\calD_n(i, j)$ approach $L_{M_j}$ as $n$ grows to large. Then we use the compactness of $L_{M_j}$ to show that $\set{\calA_n(i, j)}{n \geq 1}$ and $\set{\calD_n(i, j)}{n \geq 1}$ are relatively compact subsets of $D[0, T]$. To this end, we introduce the spaces
	\begin{equation*}
	L_M^\varepsilon \defeq \set{\bx \in D[0, T]}{\bx(0) = 0\ \text{and}\ |\bx(t) - \bx(s)| \leq M|t - s| + \varepsilon\ \text{for all}\ s, t \in [0, T]}.
	\end{equation*}
	
	\begin{lemma}
		\label{lem: lemma from bramson}
		If $\bx \in L_M^\varepsilon$, then there exists $\by \in L_M$ such that $\norm{\bx - \by}_T \leq 4\varepsilon$.
	\end{lemma}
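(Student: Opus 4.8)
The plan is to produce $\by$ by smoothing $\bx$ through an infimal convolution with the cone $M|\scdot|$, which manufactures the exact Lipschitz constant $M$ for free, and then to translate the result vertically so that it vanishes at the origin. Concretely, I would set
\[
\by(t) \defeq \inf_{s \in [0, T]} \left[\bx(s) + M|t - s|\right] \quad \text{for all} \quad t \in [0, T].
\]
Since $\bx$ is c\`adl\`ag on the compact interval $[0, T]$ it is bounded, so the infimum is finite; and as an infimum of the family of $M$-Lipschitz functions $t \mapsto \bx(s) + M|t - s|$, the function $\by$ is itself $M$-Lipschitz, hence continuous and in particular c\`adl\`ag. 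This already places $\by$ one vertical translation away from $L_M$.

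The next step is to bound $\bx - \by$. Taking $s = t$ in the infimum gives $\by(t) \leq \bx(t)$. For the reverse bound, the defining inequality of $L_M^\varepsilon$ applied to the pair $(t, s)$ yields $\bx(s) \geq \bx(t) - M|t - s| - \varepsilon$, whence $\bx(s) + M|t - s| \geq \bx(t) - \varepsilon$ for every $s$, and therefore $\by(t) \geq \bx(t) - \varepsilon$. Combining the two estimates, $0 \leq \bx(t) - \by(t) \leq \varepsilon$ on all of $[0, T]$; in particular, evaluating at $t = 0$ and using $\bx(0) = 0$, we obtain $-\varepsilon \leq \by(0) \leq 0$. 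I would then define $\widetilde{\by} \defeq \by - \by(0)$, which lies in $L_M$ since it is $M$-Lipschitz and vanishes at $0$, and which satisfies $\norm{\bx - \widetilde{\by}}_T \leq \norm{\bx - \by}_T + |\by(0)| \leq 2\varepsilon$. Since $2\varepsilon \leq 4\varepsilon$, this closes the argument (and in fact delivers a stronger bound).

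There is no serious obstacle here: the only points requiring care are the routine verifications that the infimal convolution is finite and $M$-Lipschitz, which rely solely on boundedness of $\bx$ and the triangle inequality for $|t - s|$, together with the correction of the initial value, which costs at most an extra $\varepsilon$. The slack factor of $4$ in the statement is generous and leaves ample room; the essential content is simply that the $\varepsilon$-relaxed Lipschitz cone defining $L_M^\varepsilon$ can be absorbed by replacing $\bx$ with its infimal convolution against $M|\scdot|$ and then recentering at the origin.
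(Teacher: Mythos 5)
Your proof is correct, and in fact it proves more than was asked: the final bound is $2\varepsilon$ rather than $4\varepsilon$. It is worth pointing out that the paper does not prove this lemma at all — it is explicitly labelled a restatement of \cite[Lemma 4.2]{bramson1998state}, and the constant $4$ is simply inherited from that reference — so your argument differs from the paper's treatment by being self-contained. Your route is the Pasch--Hausdorff (infimal-convolution) envelope $\by(t) = \inf_{s \in [0,T]}\left[\bx(s) + M|t - s|\right]$, and each step checks out: the infimum is finite (this also follows immediately from the $L_M^\varepsilon$ condition with $s = 0$, which gives $|\bx(s)| \leq MT + \varepsilon$, so boundedness of c\`adl\`ag functions is not even needed); an infimum of a family of $M$-Lipschitz functions that is finite at one point is $M$-Lipschitz, hence continuous and c\`adl\`ag; the choice $s = t$ gives $\by \leq \bx$, while the relaxed Lipschitz inequality $\bx(t) - \bx(s) \leq M|t-s| + \varepsilon$ gives $\by \geq \bx - \varepsilon$; and the recentering $\widetilde{\by} \defeq \by - \by(0)$ lands in $L_M$ at a cost of $|\by(0)| = |\by(0) - \bx(0)| \leq \varepsilon$, yielding $\norm{\bx - \widetilde{\by}}_T \leq 2\varepsilon$. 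Compared with the grid-interpolation style of argument behind the cited lemma, your construction is shorter, requires no discretization or case analysis, and gives a sharper constant; what the citation buys the authors is merely brevity. The one thing to keep in mind is that your $\by$ is continuous by construction, so this method cannot produce approximants with jumps — irrelevant here, since elements of $L_M$ are Lipschitz anyway.
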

	
	The above lemma is a restatement of \cite[Lemma 4.2]{bramson1998state}, and together with the next lemma, it implies that for each $(i, j) \in \calI$ there exists a constant $M_j$ such that $\calA_n(i, j)$ and $\calD_n(i, j)$ approach $L_{M_j}$ as $n$ grows large.
	
	\begin{lemma}
		\label{lem: approximate lipschitz property}
		For each $(i, j) \in \calI$ there exist $M_j > 0$ and $\set{\varepsilon_n(i, j) > 0}{n \geq 1}$ such that $\calA_n(i, j)$ and $\calD_n(i, j)$ lie in $L_{M_j}^{\varepsilon_n(i, j)}$ for all $n$ and $\varepsilon_n(i, j) \to 0$ as $n$ grows large.
	\end{lemma}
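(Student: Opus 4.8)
The plan is to treat $\calA_n(i,j)$ and $\calD_n(i,j)$ separately, using the refined strong laws of large numbers \eqref{seq: fslln for arrivals - [0, T]} and \eqref{seq: fslln for departures - [0, T]} of Lemma \ref{lem: definition of Gamma_T} to control their increments over $[0,T]$. Both processes vanish at $t=0$ by construction, and the coordinates with $j=0$ are identically zero, so those cases are trivial; I would fix $(i,j) \in \calI_+$ and $0 \leq s \leq t \leq T$ throughout.

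For the arrival process, I would first observe that every arrival increments exactly one coordinate of $\calA_n$ by $1/n$: for JLMU this is immediate from \eqref{seq: arrival process for jlmu}, and for SLTA it follows from the property of the functions $\eta_k$ established in the construction of Appendix \ref{app: auxiliary results}. Consequently $\sum_{(i,j) \in \calI_+} \calA_n(t,i,j) = \frac{1}{n}\calN_n^\lambda(t)$, so the increment of a single coordinate is dominated by the increment of this total, giving $|\calA_n(t,i,j) - \calA_n(s,i,j)| \leq \frac{1}{n}[\calN_n^\lambda(t) - \calN_n^\lambda(s)]$. Writing $\frac{1}{n}\calN_n^\lambda(u) = \lambda u + e_n(u)$ and invoking \eqref{seq: fslln for arrivals - [0, T]} to bound $\sup_{u \in [0,T]}|e_n(u)|$, the right-hand side is at most $\lambda|t-s| + 2\sup_{u \in [0,T]}|e_n(u)|$. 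This places $\calA_n(i,j)$ in $L_\lambda^{\varepsilon_n^A}$ with $\varepsilon_n^A \defeq 2\sup_{u \in [0,T]}|e_n(u)| \to 0$.

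For the departure process the key point is its time-changed structure. I would set $\phi_n(u) \defeq \int_0^u \mu j\left[\bq_n(w,i,j) - \bq_n(w,i,j+1)\right]dw$; since $0 \leq \bq_n(w,i,j) - \bq_n(w,i,j+1) \leq \alpha_n(i) \leq 1$, the clock $\phi_n$ is non-decreasing, Lipschitz with constant $\mu j$, and satisfies $\phi_n(u) \leq \mu j\, u \leq \mu j\, T$ for $u \in [0,T]$. Thus both $\phi_n(s)$ and $\phi_n(t)$ lie in the fixed interval $[0,\mu j T]$ on which \eqref{seq: fslln for departures - [0, T]} gives uniform control. Writing $\frac{1}{n}\calN_{(i,j)}(nv) = v + g_n(v)$ and recalling $\calD_n(u,i,j) = \frac{1}{n}\calN_{(i,j)}(n\phi_n(u))$, I would bound
\begin{equation*}
|\calD_n(t,i,j) - \calD_n(s,i,j)| \leq \left[\phi_n(t) - \phi_n(s)\right] + 2\sup_{v \in [0,\mu j T]}|g_n(v)| \leq \mu j|t-s| + \varepsilon_n^D,
\end{equation*}
where $\varepsilon_n^D \defeq 2\sup_{v \in [0,\mu j T]}|g_n(v)| \to 0$ by \eqref{seq: fslln for departures - [0, T]}. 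Hence $\calD_n(i,j) \in L_{\mu j}^{\varepsilon_n^D}$.

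Taking $M_j \defeq \max\{\lambda, \mu j\}$ and $\varepsilon_n(i,j) \defeq \max\{\varepsilon_n^A, \varepsilon_n^D\}$ then yields $\calA_n(i,j), \calD_n(i,j) \in L_{M_j}^{\varepsilon_n(i,j)}$ with $\varepsilon_n(i,j) \to 0$, as required. The main obstacle is the random, $n$-dependent time change $\phi_n$ in the departure process: one must ensure that the fluctuation term $g_n$ is controlled uniformly over the entire range of $\phi_n$, and this is precisely why \eqref{seq: fslln for departures - [0, T]} is stated as uniform convergence over the fixed interval $[0,\mu j T]$ rather than pointwise. Everything else is elementary bookkeeping with telescoping increments.
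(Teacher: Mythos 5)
Your proposal is correct and follows essentially the same route as the paper: bound the arrival increments by the increments of $\tfrac{1}{n}\calN_n^\lambda$ and apply \eqref{seq: fslln for arrivals - [0, T]}, handle the departures through the Lipschitz time change $\phi_n$ (the paper's $\boldf_n$) whose range lies in $[0,\mu j T]$ so that \eqref{seq: fslln for departures - [0, T]} applies uniformly, and take $M_j = \max\{\lambda, \mu j\}$ with $\varepsilon_n(i,j)$ the maximum of the two error terms. The only cosmetic difference is that you spell out why each arrival increments exactly one coordinate (via the $\eta_k$ property for SLTA), which the paper leaves implicit.
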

	
	\begin{proof}
		If $j = 0$, then $\calA_n(i, j)$ and $\calD_n(i, j)$ are identically zero for all $i$ and $n$, so the claim holds trivially. Therefore, let us fix $(i, j) \in \calI_+$, in this case we have
		\begin{equation*}
		\left|\calA_n(t, i, j) - \calA_n(s, i, j)\right| \leq \frac{1}{n} \left|\calN_n^\lambda(t) - \calN_n^\lambda(s)\right| \leq \lambda\left|t - s\right| + 2\sup_{r \in [0, T]} \left|\frac{1}{n}\calN_n^\lambda(r) - \lambda r\right|
		\end{equation*}
		for all $s, t \in [0, T]$. It follows from Lemma \ref{lem: definition of Gamma_T} that there exists a vanishing sequence of positive real numbers $\set{\delta_n^1 > 0}{n \geq 1}$ such that
		\begin{align*}
		\left|\calA_n(t, i, j) - \calA_n(s, i, j)\right| \leq \lambda |t - s| + \delta_n^1 \quad \text{for all} \quad s, t \in [0, T].
		\end{align*}
		
		Consider now the non-decreasing function defined as
		\begin{align*}
		\boldf_n(t) \defeq \int_0^t \mu j\left[\bq_n(s, i, j) - \bq_n(s, i, j + 1)\right]ds \quad \text{for all} \quad t \in [0, T].
		\end{align*}
		Note that $\boldf_n(0) = 0$ and $|\boldf_n(t) - \boldf_n(s)| \leq \mu j|t - s|$ for all $s, t \in [0, T]$, so in particular, $\boldf_n(T) \leq \mu j T$. For all $s, t \in [0, T]$, we have
		\begin{align*}
		\left|\calD_n(t, i, j) - \calD_n(s, i, j)\right| &= \frac{1}{n} \left|\calN_{(i, j)}\left(n \boldf_n(t)\right) - \calN_{(i, j)}\left(n \boldf_n(s)\right)\right| \\
		&\leq |\boldf_n(t) - \boldf_n(s)| + 2\sup_{r \in [0, T]} \left|\frac{1}{n}\calN_{(i, j)}\left(n \boldf_n(r)\right) - \boldf_n(r)\right| \\
		&\leq \mu j|t - s| + 2\sup_{r \in [0, \mu j T]} \left|\frac{1}{n}\calN_{(i, j)}(n r) - r\right|.
		\end{align*}
		By \eqref{seq: fslln for departures - [0, T]}, there exists a vanishing sequence $\set{\delta_{n}^2(i, j) > 0}{n \geq 1}$ such that
		\begin{align*}
		\left|\calD_n(t, i, j) - \calD_n(s, i, j)\right| \leq \mu j|t - s| + \delta_{n}^2(i, j) \quad \text{for all} \quad s, t \in [0, T].
		\end{align*}
		The result follows setting $M_j \defeq \max\{\lambda, \mu j\}$ and $\varepsilon_n(i, j) \defeq \max\{\delta_n^1, \delta_{n}^2(i, j)\}$.
	\end{proof}
	
	We now demonstrate that $\set{\calA_n}{n \geq1}$, $\set{\calD_n}{n \geq 1}$ and $\set{\bq_n}{n \geq 1}$ are relatively compact subsets of $D_{\R^\calI}[0, T]$ with probability one, and that the limit of every convergent subsequence is componentwise Lipschitz.
	
	\begin{lemma}
		\label{lem: relative compactness of restricted sample paths}
		The sequences $\set{\calA_n(\omega)}{n \geq 1}$, $\set{\calD_n(\omega)}{n \geq 1}$ and $\set{\bq_n(\omega)}{n \geq 1}$ are all relatively compact in $D_{\R^\calI}[0, T]$ for all $\omega \in \Gamma_T$. Also, they have the property that the limit of every convergent subsequence is a function with Lipschitz coordinates. 
	\end{lemma}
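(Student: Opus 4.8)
The plan is to reduce relative compactness in $D_{\R^\calI}[0, T]$ to relative compactness of each coordinate sequence in $D[0, T]$ via Proposition \ref{prop: relative compactness from coordinates}, and then to obtain the latter from the approximate Lipschitz property recorded in Lemma \ref{lem: approximate lipschitz property}, combined with the approximation Lemma \ref{lem: lemma from bramson} and the compactness of the uniformly Lipschitz sets $L_{M_j}$. Throughout I fix $\omega \in \Gamma_T$ and omit it from the notation.

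First I would treat the arrival and departure processes coordinatewise. Fix $(i, j) \in \calI$ and an arbitrary increasing sequence of natural numbers $\calK$. By Lemma \ref{lem: approximate lipschitz property} we have $\calA_n(i, j) \in L_{M_j}^{\varepsilon_n(i, j)}$, so Lemma \ref{lem: lemma from bramson} provides $\by_n \in L_{M_j}$ with $\norm{\calA_n(i, j) - \by_n}_T \leq 4\varepsilon_n(i, j)$. Since $L_{M_j}$ is compact, some subsequence $\set{\by_k}{k \in \calK'}$ with $\calK' \subset \calK$ converges in $D[0, T]$ to a limit $\by \in L_{M_j}$; because $\varepsilon_k(i, j) \to 0$, the triangle inequality then yields $\norm{\calA_k(i, j) - \by}_T \to 0$ along $\calK'$. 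Hence $\set{\calA_n(i, j)}{n \geq 1}$ is relatively compact in $D[0, T]$, and the limit of every convergent subsequence lies in the closed set $L_{M_j}$ and is therefore $M_j$-Lipschitz. The identical argument applies to $\set{\calD_n(i, j)}{n \geq 1}$. Invoking Proposition \ref{prop: relative compactness from coordinates}, I would conclude that $\set{\calA_n}{n \geq 1}$ and $\set{\calD_n}{n \geq 1}$ are relatively compact in $D_{\R^\calI}[0, T]$ and that every convergent subsequence has Lipschitz coordinates.

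Next I would transfer these properties to $\set{\bq_n}{n \geq 1}$ through the functional equation. Given any increasing sequence of naturals, I extract a further subsequence $\calK$ along which $\calA_n$ converges to some $\ba$ and then, refining once more, along which $\calD_n$ also converges to some $\bd$, each limit having Lipschitz coordinates. By \eqref{seq: convergence of initial conditions - [0, T]} the constant-in-time initial conditions $\bq_n(0)$ converge coordinatewise to $q_0$, hence as constant functions in $D_{\R^\calI}[0, T]$. Using $\bq_n = \bq_n(0) + \calA_n - \calD_n$ from \eqref{eq: definition of q for jlmu} for JLMU, or from \eqref{seq: definition of q for threshold} for SLTA, it follows that $\bq_k \to q_0 + \ba - \bd$ in $D_{\R^\calI}[0, T]$ along $\calK$, and this limit has Lipschitz coordinates as a constant plus a difference of functions with Lipschitz coordinates. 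This gives the relative compactness of $\set{\bq_n}{n \geq 1}$ together with the stated Lipschitz property.

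The one genuinely load-bearing step is the passage from the approximate membership $\calA_n(i, j) \in L_{M_j}^{\varepsilon_n(i, j)}$ to actual relative compactness; this is precisely where Lemma \ref{lem: lemma from bramson} and the compactness of $L_{M_j}$ combine, and where the vanishing of $\varepsilon_n(i, j)$ is essential. Everything else is bookkeeping, so I do not expect any real obstacle beyond keeping the successive subsequence extractions organized.
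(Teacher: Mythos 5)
Your proof is correct and follows essentially the same route as the paper: coordinatewise reduction via Proposition \ref{prop: relative compactness from coordinates}, the approximate Lipschitz bounds of Lemma \ref{lem: approximate lipschitz property} combined with Lemma \ref{lem: lemma from bramson} and the compactness of $L_{M_j}$ for $\calA_n$ and $\calD_n$, and then the functional equation together with \eqref{seq: convergence of initial conditions - [0, T]} to transfer the conclusion to $\bq_n$. The only cosmetic difference is that the paper carries out the transfer to $\bq_n$ in the text preceding the lemma and keeps the proof itself purely coordinatewise, whereas you fold that step into the proof.
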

	
	\begin{proof}
		We fix an arbitrary $\omega \in \Gamma_T$ and we omit it from the notation for brevity. As noted above, it suffices to prove that $\set{\calA_n(i, j)}{n \geq 1}$ and $\set{\calD_n(i, j)}{n \geq 1}$ are relatively compact subsets of $D[0, T]$ for all $(i, j) \in \calI$. We fix some $(i, j) \in \calI$ and demonstrate that $\set{\calA_n(i, j)}{n \geq 1}$ is relatively compact in $D[0, T]$, and that the limit of every convergent subsequence is Lipschitz. The same arguments can be used if $\calA_n$ is replaced by $\calD_n$.
		
		Let $M_j$ and $\set{\varepsilon_n(i, j)}{n \geq 1}$ be as in the statement of Lemma \ref{lem: approximate lipschitz property}. It follows from Lemma \ref{lem: lemma from bramson} that for each $n$ there exists $\bx_n(i, j) \in L_{M_j}$ such that
		\begin{equation*}
		\norm{\calA_n(i, j) - \bx_n(i, j)}_T \leq 4\varepsilon_n(i, j).
		\end{equation*}
		Recall that $L_{M_j}$ is compact, thus every increasing sequence of natural numbers has a subsequence $\calK$ such that $\set{\bx_k(i, j)}{k \in \calK}$ converges to a function $\bx \in L_{M_j}$. Furthermore,
		\begin{equation*}
		\limsup_{k \to \infty} \norm{\calA_k(i, j) - \bx(i, j)}_T \leq \lim_{k \to \infty} 4\varepsilon_k(i, j) + \lim_{k \to \infty} \norm{\bx_k(i, j) - \bx(i, j)}_T = 0,
		\end{equation*}
		where the limits are taken along $\calK$. This shows that every subsequence of $\set{\calA_n(i, j)}{n \geq 1}$ has a further subsequence which converges to a Lipschitz function.
	\end{proof}
	
	We are now ready to prove Proposition \ref{prop: relative compactness - product topology}
	
	\begin{proof}[Proof of Proposition \ref{prop: relative compactness - product topology}]
		Recall that $\Gamma_T$ has probability one for all $T \geq 0$. Hence,
		\begin{equation*}
		\Gamma_\infty \defeq \bigcap_{T \in \N} \Gamma_T
		\end{equation*}
		has probability one as well. Also, \eqref{eq: properties within gamma infty} and \eqref{eq: additional property of threshold within gamma infty} hold within $\Gamma_\infty$ by Lemma \ref{lem: definition of Gamma_T}.
		
		Next we fix an arbitrary $\omega \in \Gamma_\infty$, which we omit from the notation for brevity, and we prove that $\set{\bq_n}{n \geq 1}$ is a relatively compact subset of $D_{\R^\calI}[0, \infty)$ such that the limit of every convergent subsequence has locally Lipschitz coordinate functions. Exactly the same arguments can be used if $\bq_n$ is replaced by $\calA_n$ or $\calD_n$.
		
		Fix a sequence $\calK \subset \N$. We must show that $\set{\bq_k}{k \in \calK}$ has a subsequence which converges uniformly over compact sets to a function with locally Lipschitz coordinates. To this end, we may construct a family of sequences $\set{\calK_T}{T \in \N}$ such that:
		\begin{enumerate}
			\item[(a)] $\calK_{T + 1} \subset \calK_T \subset \calK$ for all $T \in \N$;
			
			\item [(b)] for each $T \in \N$, there exists $\bq_T \in D_{\R^\calI}[0, T]$ such that the coordinates of $\bq_T$ are Lipschitz and $\varrho_T\left(\bq_k|_{[0, T]}, \bq_T\right) \to 0$ as $k \to \infty$ with $k \in \calK_T$.
		\end{enumerate}
		
		Let $k_l$ denote the $l^{\text{th}}$ element of $\calK_l$. It follows from (a) and (b) that
		\begin{equation}
		\label{eq: convergence over compact sets}
		\lim_{l \to \infty} \varrho_T\left(\bq_{k_l}|_{[0, T]}, \bq_T\right) = 0 \quad \text{for all} \quad T \in \N.
		\end{equation}
		Note that $\bq_{k_l}(t) \to \bq_T(t)$ with $l$ for all $t \leq T$, so $\bq_S(t) = \bq_T(t)$ for all $t \leq S, T$. Thus, we may define a function $\bq \in D_{\R^\calI}[0, \infty)$ such that $\bq(t) = \bq_T(t)$ for all $t \leq T$. Moreover, (b) implies that $\bq$ has locally Lipschitz coordinates and \eqref{eq: convergence over compact sets} means that $\{\bq_{k_l} : l \geq 1\}$ converges uniformly over compact sets to $\bq$.
	\end{proof}
	
	\section{Limiting behavior of SLTA}
	\label{app: limiting behavior of threshold}
	
	\begin{proof}[Proof of Proposition \ref{prop: fluid limit of total number of tasks}.]
		We fix an arbitrary $\omega \in \Gamma$, which is omitted from the notation for brevity. It follows from \eqref{seq: definition of q for threshold} that
		\begin{align*}
		\bs_n &= \bs_n(0) + \sum_{(i, j) \in \calI_+} \left[\calA_n(i, j) - \calD_n(i, j)\right] = \bs_n(0) + \frac{1}{n}\calN_n^\lambda(t) - \sum_{(i, j) \in \calI_+} \calD_n(i, j) \quad \text{for all} \quad n.
		\end{align*}
		
		By Proposition \ref{prop: relative compactness - l1}, every increasing sequence of natural numbers has a subsequence $\calK$ such that the sequences $\set{\calD_k}{k \in \calK}$ and $\set{\bq_k}{k \in \calK}$ converge in $D_{\ell_1}[0, \infty)$ to certain functions $\bd$ and $\bq$, respectively. It follows from \eqref{seq: fslln for departures} that
		\begin{equation*}
		\bd(t, i, j) = \int_0^t \mu j \left[\bq(s, i, j) - \bq(s, i, j + 1)\right]ds \quad \text{for all} \quad t \geq 0 \quad \text{and} \quad (i, j) \in \calI_+.
		\end{equation*}
		Moreover, $\set{\bs_k}{k \in \calK}$ converges uniformly over compact sets to
		\begin{equation*}
		\tilde{\bs} \defeq \sum_{(i, j) \in \calI_+} \bq(i, j).
		\end{equation*}
		
		It follows from \eqref{seq: convergence of initial conditions} and \eqref{seq: fslln for arrivals} that
		\begin{equation*}
		\lim_{k \to \infty} \bs_k(0) = s_0 \quad \text{and} \quad \lim_{k \to \infty} \sup_{t \in [0, T]} \left|\frac{1}{k}\calN_k^\lambda(t) - \lambda t\right| = 0 \quad \text{for all} \quad T \geq 0.
		\end{equation*}
		Furthermore, the fact that $\set{\calD_k}{k \in \calK}$ converges to $\bd$ in $D_{\ell_1}[0, \infty)$ implies that
		\begin{align*}
		&\lim_{k \to \infty} \sup_{t \in [0, T]} \left|\sum_{(i, j) \in \calI_+} \calD_k(t, i, j) - \int_0^t \mu \tilde{\bs}(s)ds\right| = \\
		&\lim_{k \to \infty} \sup_{t \in [0, T]} \left|\sum_{i = 1}^m \sum_{j = 1}^\infty \left[\calD_k(t, i, j) - \int_0^t \mu j\left[\bq(s, i, j) - \bq(s, i, j + 1)\right]ds\right]\right| = \\
		&\lim_{k \to \infty} \sup_{t \in [0, T]} \left|\sum_{(i, j) \in \calI_+} \left[\calD_k(t, i, j) - \bd(t, i, j)\right]\right| = 0 \quad \text{for all} \quad T \geq 0.
		\end{align*}
		
		We conclude that $\tilde{\bs}$ satisfies
		\begin{equation*}
		\tilde{\bs}(t) = s_0 + \lambda t - \int_0^t \mu \tilde{\bs}(s)ds \quad \text{for all} \quad t \geq 0.
		\end{equation*}
		Equivalently, $\tilde{\bs}$ solves \eqref{eq: fluid limit of total number of tasks}, and therefore $\tilde{\bs} = \bs$. This demonstrates that every subsequence of $\set{\bs_n}{n \geq 1}$ has a further subsequence that converges to $\bs$ uniformly over compact sets. The claim of the proposition follows from this fact.
	\end{proof}
	
	\begin{proof}[Proof of Proposition \ref{prop: asymptotic dynamical property of tail mass processes}.]
		Let us omit $\omega$ from the notation for brevity. Recall that tasks are always assigned to server pools with coordinates $(i, j) \mayorigual (i_{\br_n}, j_{\br_n})$, as explained in Remark~\ref{rem: dynamics of threshold}. By (b), $(i_{\br_k}, j_{\br_k}) \mayorigual (i_r, j_r)$ along the interval $[t_0, t_1]$ for each $k \in \calK$. Moreover, (b) implies that the system has green tokens when the equality holds. Therefore, tasks are only assigned to server pools with coordinates $(i, j) \mayor (i_r, j_r)$ along the interval $[t_0, t_1]$ in all the systems comprised in $\calK$. Thus, $\calA_k(t, i, j) = \calA_k(t_0, i, j)$ for all $t \in [t_0, t_1]$ and $(i, j) \menorigual (i_r, j_r)$. We conclude from \eqref{seq: definition of q for threshold} that
		\begin{equation*}
		\bq_k(t, i, j) = \bq_k(t_0, i, j) - \left[\calD_k(t, i , j) - \calD_k(t_0, i, j)\right]
		\end{equation*}
		for all $t \in [t_0, t_1]$, $(i, j) \menorigual (i_r, j_r)$ and $k \in \calK$. As in the proof of Proposition \ref{prop: fluid limit of total number of tasks}, we obtain
		\begin{equation*}
		\bq(t, i, j) = \bq(t_0, i, j) - \int_{t_0}^t \mu j \left[\bq(s, i, j) - \bq(s, i, j + 1)\right]ds
		\end{equation*}
		for all $t \in [t_0, t_1]$, $(i, j) \menorigual (i_r, j_r)$ and $k \in \calK$. It follows that $\bq(i, j)$ is differentiable for all $(i, j) \menorigual (i_r, j_r)$ because $\bq$ has continuous coordinates by Proposition \ref{prop: relative compactness - product topology}. Furthermore, the system of differential equations in the statement of the proposition holds.
		
		Note that $\set{\bv_k(r)}{k \geq 1}$ converges uniformly over compact sets to
		\begin{equation*}
		\bv(r) \defeq \sum_{(i, j) \menorigual (i_r, j_r)} \bq(i, j)
		\end{equation*}
		by (a) and the definition of $\bv_k$. Let $J_i \defeq \min \set{j \geq 1}{(i, j) \menorigual (i_r, j_r)}$ and note that
		\begin{align*}
		\dot{\bv}(t, r) &= \sum_{i = 1}^m \sum_{j = J_i}^\infty -\mu j \left[\bq(t, i, j) - \bq(t, i, j + 1)\right] \\
		&= \sum_{i = 1}^m \left[-\mu (J_i - 1) \bq(t, i, J_i) - \mu \sum_{j = J_i}^\infty \bq(t, i, j)\right] = -\mu \sum_{i = 1}^m (J_i - 1)\bq(t, i, J_i) - \mu \bv(t, r). 
		\end{align*}
		Since the first term on the right-hand side is non-positive, we have
		\begin{equation*}
		\bv(t, r) \leq \bv(t_0, r) \e^{-\mu(t - t_0)} < \bs(t_0)\e^{-\mu(t - t_0)} \quad \text{for all} \quad t \in [t_0, t_1].
		\end{equation*}
		This completes the proof.
	\end{proof}

	\begin{proof}[Proof of Proposition \ref{prop: upper bound for br}.]
		Define
		\begin{equation*}
		\alpha_{\min} \defeq \min_{1 \leq i \leq m} \alpha(i), \quad J \defeq \floor{\frac{\rho + 1}{\alpha_{\min}}} \quad \text{and} \quad r_\bound \defeq \max \set{r \geq 2}{j_{r - 1} \leq J}. 
		\end{equation*}
		The function $\tau_\bound$ mentioned in the statement of the proposition is defined by
		\begin{equation*}
		\tau_\bound(s) = \begin{cases}
		\frac{1}{\mu}\left[\log\left(\frac{s - \rho}{\sum_{(i, j) \mayorigual \sigma_*} - \rho}\right)\right]^+ + \frac{J - j_{r_*}}{\mu}\left(r_\bound - r_*\right)& \text{if} \quad s > \rho, \\
		\frac{J - j_{r_*}}{\mu}\left(r_\bound - r_*\right) & \text{if} \quad s \leq \rho.
		\end{cases}
		\end{equation*}
		
		For brevity, we omit $\omega$ from the notation, and we let
		\begin{equation*}
		\Sigma(r) \defeq \sum_{(i, j) \mayor (i_r, j_r)} \alpha(i) \quad \text{and} \quad \Sigma_n(r) \defeq \sum_{(i, j) \mayor (i_r, j_r)} \alpha_n(i) \quad \text{for all} \quad n, r \geq 1.
		\end{equation*}
		It follows from \eqref{eq: definition of sigma*} that we can write $\Sigma\left(r_* + 1\right) = \rho + \delta + \varepsilon$ with $\delta, \varepsilon > 0$. We regard $\varepsilon$ as a function of $\delta$, determined by the latter identity, and for each
		\begin{equation*}
		0 < \delta < \min \left\{\Sigma(r_* + 1) - \rho, \alpha_{\min}\right\} \quad \text{and} \quad \theta \in (0, 1)
		\end{equation*}
		we define $\tau(\delta, \theta) \defeq \min \set{t \geq 0}{\left(s_0 - \rho\right)\e^{-\mu t} \leq \theta \varepsilon}$. Explicitly,
		\begin{equation*}
		\tau(\delta, \theta) = \begin{cases}
		\frac{1}{\mu}\left[\log\left(\frac{s_0 - \rho}{\theta \left[\Sigma\left(r_* + 1\right) - \rho - \delta\right]}\right)\right]^+ & \text{if} \quad s_0 > \rho, \\
		0 & \text{if} \quad s_0 \leq \rho.
		\end{cases}
		\end{equation*}
		Thus, the function $\bs$ defined by \eqref{eq: fluid limit of total number of tasks} satisfies
		\begin{equation}
		\label{aux: bound for bs}
		\bs(t) \leq \rho + \theta \varepsilon \quad \text{for all} \quad t \geq \tau(\delta, \theta).
		\end{equation}
		
		Note that $\tau_\bound(s_0)$ is the infimum over $\delta$ and $\theta$ of
		\begin{equation*}
		\tau(\delta, \theta) + \delta + \frac{J - j_{r_*}}{\mu \theta} \left(r_\bound - r_*\right).
		\end{equation*}
		We fix $\delta$ and $\theta$ such that the above expression is smaller than $\tau$ and we choose $n_0$ such that the following properties hold for all $n \geq n_0$.
		\begin{enumerate}	
			\item[(i)] $\br_n(t) \leq R_T$ for all $t \in [0, T]$.
			
			\item[(ii)] $\left|\bs_n(t) - \bs(t)\right| \leq (1 - \theta) \varepsilon$ for all $t \in [0, T]$.
			
			\item[(iii)] $1 / n \leq \beta_n \leq \min \set{\alpha_n(i)}{1 \leq i \leq m}$ and $\Sigma_n(r_* + 1) - L(r_* + 1) \beta_n \geq \rho + \varepsilon$, where we used the notation $L(r) \defeq \max \set{\ell_i(r)}{1 \leq i \leq m}$.
		\end{enumerate}
		Properties (i) and (ii) are possible by Lemma \ref{lem: set of nice sample paths} and Proposition \ref{prop: fluid limit of total number of tasks}, respectively, and (iii) can be enforced by assumptions \eqref{ass: size of classes and initial condition} and \eqref{ass: conditions on beta}.
		
		Fix $n \geq n_0$ and note that for each $r > r_*$ and $t \in [\tau(\delta, \theta), T]$ we have 
		\begin{equation*}
		\Sigma_n(r) - L(r)\beta_n \geq \Sigma_n(r_* + 1) - L(r_* + 1)\beta_n \geq \rho + \varepsilon \geq \bs_n(t).
		\end{equation*}
		The first inequality follows from the fact that $\Sigma_n(\scdot) - L(\scdot)\beta_n$ is non-decreasing provided that $\beta_n \leq \min \set{\alpha_n(i)}{1 \leq i \leq m}$, and the last inequality follows from (ii) and \eqref{aux: bound for bs}. We conclude from the above inequalities that
		\begin{equation}
		\label{aux: proof of bound for br}
		\sum_{i = 1}^m \bq_n\left(t, i, \ell_i(r)\right) \leq 1 - \beta_n \leq 1 - \frac{1}{n}\quad \text{for all} \quad t \in \left[\tau(\delta, \theta), T\right],
		\end{equation}
		since otherwise we would have $\bs_n(t) > \Sigma_n(r) - L(r)\beta_n$. It follows from \eqref{aux: proof of bound for br} with $r = r_* + 1$ that the total number of tokens is at least one if $t \in [\tau(\delta, \theta), T]$ and $\br_n(t) \geq r_*$, so we conclude that $\br_n$ does not increase beyond $r_*$ along the interval $\left[\tau(\delta, \theta), T\right]$. Hence, it only remains to prove that the amount of time after $\tau(\delta, \theta)$ until $\br_n \leq r_*$ is upper bounded by
		\begin{equation*}
		\delta + \frac{J - j_{r_*}}{\mu \theta} \left(r_\bound - r_*\right).
		\end{equation*}
		
		It follows from \eqref{aux: proof of bound for br} that the number of green tokens is
		\begin{equation*}
		n - \sum_{i = 1}^m n\bq_n(t, i, \ell_i\left(\br_n(t)\right)) \geq n\beta_n \quad \text{if} \quad \br_n(t) > r_*, \quad t \in \left[\tau(\delta, \theta), T\right] \quad \text{and} \quad n \geq n_0.
		\end{equation*}
		Therefore, $\br_n$ decreases with the next arrival if the latter conditions hold and
		\begin{equation*}
		\bq_n(t, i_{\br_n(t) - 1}, j_{\br_n(t) - 1}) < \alpha_n(i_{\br_n(t) - 1}).
		\end{equation*}
		
		Choose $n_\bound^{\tau, T} \geq n_0$ such that the following conditions hold.
		\begin{enumerate}
			\item[(iv)] $\min \set{\alpha_n(i)}{1 \leq i \leq m} > \alpha(\delta) \defeq \alpha_{\min} - \delta$.
			
			\item[(v)] Let $d \defeq \delta \left[2\left(r_\bound - r_*\right) + 1\right]^{-1}$. The following two properties hold on every subinterval of $[0, T]$ of length at least $d$. First, $\calN_n^\lambda(\scdot)$ has at least $R_T - r_\bound$ jumps. Second, for all $1 \leq i \leq m$ and $1 \leq j \leq J$, the process $\calN_{(i, j)}(n \scdot)$ has at least $n \theta d$ jumps followed by at least one jump of $\calN_n^\lambda(\scdot)$.
			
			\item[(vi)] $\mu n \left[\alpha_n(i) - \alpha(\delta)\right] \theta  d \geq 1$ for all $1 \leq i \leq m$.
		\end{enumerate}
		Properties (iv) and (v) can be enforced by \eqref{ass: size of classes and initial condition} and Proposition \ref{prop: relative compactness - product topology}, respectively.
		
		We may assume without loss of generality that
		\begin{equation*}
		J \geq \floor{\frac{\rho + \varepsilon}{\alpha(\delta)}},
		\end{equation*}
		choosing a smaller $\delta$ at the start of the proof if needed. By the above inequality,
		\begin{equation*}
		\bq_n(t, i, j) \leq \frac{\bs_n(t)}{j} \leq \frac{\rho + \varepsilon}{J + 1} < \alpha(\delta) \leq \alpha_n(i) \quad \text{if} \quad j > J, \quad t \in [\tau(\delta, \theta), T] \quad \text{and} \quad n \geq n_\bound^{\tau, T}.
		\end{equation*}
		Therefore, if $n \geq n_\bound^{\tau, T}$, $t \in [\tau(\delta, \theta), T]$ and $\br_n(t) > r_\bound$, then $\br_n$ decreases with each arrival until it reaches $r_\bound$. We conclude from (i) and (v) that
		\begin{equation*}
		\br_n(t) \leq r_\bound \quad \text{for all} \quad t \in \left[\tau(\delta, \theta) + d, T\right] \quad \text{and} \quad n \geq n_\bound^{\tau, T}.
		\end{equation*}
		
		Suppose now that $\br_n(t) = r \in (r_*, r_\bound]$ for some $t \in [\tau(\delta, \theta) + d, T]$ and $n \geq n_\bound^{\tau, T}$. Then $\br_n$ decreases as soon as a server pool of class $i_{r - 1}$ has a green token and a task arrives. Moreover, since the number of green tokens remains positive, no tasks are sent to server pools of class $i_{r - 1}$ before $\br_n$ decreases. At least $n\left[\alpha_n(i) - \alpha(\delta)\right]$ server pools of class $i_{r - 1}$ have strictly less than $J + 1$ tasks, thus it follows from (v) that the time until $\bq_n(i_{r - 1}, j_{r - 1})$ drops below $\alpha_n(i)$ is at most
		\begin{equation*}
		\ceil{\frac{n(J - j_{r - 1})\left[\alpha_n(i) - \alpha(\delta)\right] + 1}{\mu n \left[\alpha_n(i) - \alpha(\delta)\right]j_{r - 1}\theta  d}}d \leq \frac{J - j_{r - 1}}{\mu \theta} + \frac{1}{\mu n \left[\alpha_n(i) - \alpha(\delta)\right]\theta} + d \leq \frac{J - j_{r_*}}{\mu \theta} + 2d.
		\end{equation*}
		The numerator inside the ceiling function is the maximum number of tasks that need to leave the $n \left[\alpha_n(i) - \alpha(\delta)\right]$ server pools of class $i_{r - 1}$ with strictly fewer than $J + 1$ tasks to ensure that one of these server pools has strictly less than $j_{r - 1}$ tasks, and the quantity $n \left[\alpha_n(i) - \alpha(\delta)\right]j_{r - 1}$ in the denominator is a lower bound for the cumulative number of tasks in these server pools while all the server pools have at least $j_{r - 1}$ tasks. Also, note that the first inequality uses $j_{r - 1} \geq 1$ and the last inequality uses (vi). Since $\br_n$ decreases at most $r_\bound - r_*$ times before it reaches $r_*$, we have
		\begin{equation*}
		\br_n(t) \leq r_* \quad \text{for all} \quad t \in \left[\tau(\delta, \theta) + \delta + \frac{J - j_{r_*}}{\mu \theta}\left(r_\bound - r_*\right), T\right] \quad \text{and} \quad n \geq n_\bound^{\tau, T}.
		\end{equation*}
		This completes the proof.
	\end{proof}
	
	\begin{proof}[Proof of Lemma \ref{lem: lemma 1 for limit of threshold}.]
		We omit $\omega$ from the notation. It follows from \eqref{seq: definition of q for threshold} that
		\begin{align*}
		\sum_{(i, j) \mayor (i_r, j_r)} \bq_k(t, i, j) - \sum_{(i, j) \mayor (i_r, j_r)} \bq_k\left(\tau_{k, 1}, i, j\right) &= \sum_{(i, j) \mayor (i_r, j_r)} \left[\calA_k(t, i, j) - \calA_k\left(\tau_{k, 1}, i, j\right)\right] \\
		&- \sum_{(i, j) \mayor (i_r, j_r)} \left[\calD_k(t, i, j) - \calD_k\left(\tau_{k, 1}, i, j\right)\right].
		\end{align*}
		Note that all tasks are assigned to server pools with coordinates $(i, j) \mayor (i_r, j_r)$ during the interval $(\tau_{k, 1}, t]$, so the first term on the right-hand side can be expressed as follows:
		\begin{equation*}
		\sum_{(i, j) \mayor (i_r, j_r)} \left[\calA_k(t, i, j) - \calA_k\left(\tau_{k, 1}, i, j\right)\right] = \frac{1}{k}\left[\calN_k^\lambda(t) - \calN_k^\lambda\left(\tau_{k, 1}\right)\right].
		\end{equation*}
		
		Using the process $\delta_k(r)$ defined in \eqref{eq: error process}, we may write
		\begin{align*}
		\sum_{(i, j) \mayor (i_r, j_r)} \bq_k(t, i, j) &- \sum_{(i, j) \mayor (i_r, j_r)} \bq_k\left(\tau_{k, 1}, i, j\right) \\
		&= \left(t - \tau_{k, 1}\right) \lambda - \sum_{(i, j) \mayor (i_r, j_r)} \int_{\tau_{k, 1}}^t \mu j \left[\bq_k(s, i, j) - \bq_k(s, i, j + 1)\right]ds \\
		&+ \delta_k(t, r) - \delta_k(\tau_{k, 1}, r) \\
		&\geq \left(t - \tau_{k, 1}\right) \left[\lambda - \mu \sum_{(i, j) \mayor (i_r, j_r)} \alpha_k(i)\right] - 2\sup_{s \in [0, T]} \left|\delta_k(s, r)\right|.
		\end{align*}
		For the last inequality, observe that
		\begin{align*}
		\sum_{(i, j) \mayor (i_r, j_r)} j\left[\bq_k(i, j) - \bq_k(i, j + 1)\right] &= \sum_{i = 1}^m \sum_{j = 1}^{\ell_i(r)} j\left[\bq_k(i, j) - \bq_k(i, j + 1)\right] \\
		&= \sum_{i = 1}^m \left[\sum_{j = 1}^{\ell_i(r)} \bq_k(i, j) - \ell_i(r)\bq_k(i, \ell_i(r) + 1)\right] \\
		&\leq \sum_{(i, j) \mayor (i_r, j_r)} \alpha_k(i) \quad \text{for all} \quad k \in \calK.
		\end{align*}
		This completes the proof.
	\end{proof}
	
	\begin{proof}[Proof of Lemma \ref{lem: lemma 2 for the limit of threhsold}.]
		We omit $\omega$ from the notation and we assume that $r > 1$, otherwise the statement holds trivially. Fix an arbitrary $\gamma \in [0, 1/2)$ and let
		\begin{align*}
		&\xi_{k, 2} \defeq \inf \set{t \geq \zeta_{k, 1}}{\sum_{(i, j) \mayor (i_r, j_r)} \bq_k(t, i, j) \leq \sum_{(i, j) \mayor (i_r, j_r)} \alpha_k(i) - \min\left\{\beta_k, k^{-\gamma}\right\}}, \\
		&\xi_{k, 1} \defeq \sup \set{t \leq \xi_{k, 2}}{\sum_{(i, j) \mayor (i_r, j_r)} \bq_k(t, i, j) = \sum_{(i, j) \mayor (i_r, j_r)} \alpha_k(i)}.
		\end{align*}
		By assumption, $\br_k \leq r$ along the interval $\left[\zeta_{k, 1}, \zeta_{k, 2}\right]$ and $\br_k(\zeta_{k, 1}) = r$, since otherwise the number of tokens at time $\zeta_{k, 1}$ would be zero, and this cannot happen by Remark \ref{rem: dynamics of threshold}. Moreover, the number of green tokens is upper bounded by
		\begin{equation*}
		\sum_{(i, j) \mayor (i_r, j_r)} k\alpha_k(i) - \sum_{(i, j) \mayor (i_r, j_r)} k\bq_k(i, j) < k\beta_k
		\end{equation*}
		along the interval $\left[\zeta_{k, 1}, \xi_{k, 2}\right)$, which implies that $\br_k$ cannot decrease along $\left[\zeta_{k, 1}, \xi_{k, 2}\right]$. We conclude that $\br_k(t) = r$ for all $t \in \left[\zeta_{k, 1}, \zeta_{k, 2} \wedge \xi_{k, 2}\right]$ and $k \in \calK$. Moreover,
		\begin{equation*}
		\max \set{\alpha_k(i) - \bq_k(i, j)}{(i, j) \mayor (i_r, j_r)} \leq \sum_{(i, j) \mayor (i_r, j_r)} \alpha_k(i) - \sum_{(i, j) \mayor (i_r, j_r)} \bq_k(i, j) \leq k^{-\gamma} 
		\end{equation*}
		along $\left[\zeta_{k, 1}, \xi_{k, 2}\right]$. Thus, it suffices to prove that $\xi_{k, 2} \geq \zeta_{k, 2}$ for all large enough $k \in \calK$.
		
		In order to prove that $\xi_{k, 2} \geq \zeta_{k, 2}$, we show that
		\begin{equation}
		\label{aux: statement to prove lemma}
		\begin{split}
		&\sum_{(i, j) \mayor (i_r, j_r)} \alpha_k(i) - \sum_{(i, j) \mayor (i_r, j_r)} \bq_k(t, i, j) =\\
		&\sum_{(i, j) \mayor (i_r, j_r)} \bq_k\left(\xi_{k, 1}^-, i, j\right) - \sum_{(i, j) \mayor (i_r, j_r)} \bq_k(t, i, j) < \min\left\{\beta_k, k^{-\gamma}\right\}
		\end{split}
		\end{equation}
		for all $t \in \left[\xi_{k, 1}, \zeta_{k, 2} \wedge \xi_{k, 2}\right]$ and all large enough $k \in \calK$. Note that, by definition of $\xi_{k, 2}$, the latter statement implies that $\xi_{k, 2} > \zeta_{k, 2}$.
		
		Let $\tau_{k, 1} \defeq \zeta_{k, 2} \wedge \xi_{k, 1}$ and $\tau_{k, 2} \defeq \zeta_{k, 2} \wedge \xi_{k, 2}$. In addition, let $M_r$ be the cardinality of the finite set $\set{(i, j) \in \calI_+}{(i, j) \mayor (i_r, j_r)}$. For all $t \in \left[\tau_{k, 1}, \tau_{k, 2}\right]$, we have
		\begin{align*}
		\sum_{(i, j) \mayor (i_r, j_r)} \bq_k(t, i, j) &- \sum_{(i, j) \mayor (i_r, j_r)} \bq_k\left(\tau_{k, 1}^-, i, j\right) \\
		&\geq \sum_{(i, j) \mayor (i_r, j_r)} \bq_k(t, i, j) - \sum_{(i, j) \mayor (i_r, j_r)} \bq_k\left(\tau_{k, 1}, i, j\right) - \frac{M_r}{k} \\
		&\geq \left(t - \tau_{k, 1}\right) \left[\lambda - \mu \sum_{(i, j) \mayor (i_r, j_r)} \alpha_k(i)\right] - 2\sup_{s \in [0, T]} \left|\delta_k(s, r)\right| - \frac{M_r}{k} \\
		&\geq - 2\sup_{s \in [0, T]} \left|\delta_k(s, r)\right| - \frac{M_r}{k}.
		\end{align*}
		For the first inequality, note that the processes $\bq_k(i, j)$ have jumps of size $1 / k$. The second inequality follows from Lemma \ref{lem: lemma 1 for limit of threshold}, since
		\begin{equation*}
		\br_k(t) = r \quad \text{and} \quad \sum_{(i, j) \mayor (i_r, j_r)} \bq_k(t, i, j) < \sum_{(i, j) \mayor (i_r, j_r)} \alpha_k(i)
		\end{equation*}
		for all $t \in \left[\tau_{k, 1}, \tau_{k, 2}\right)$ and $k \in \calK$. The third inequality follows from \eqref{eq: definition of sigma*} and $r \leq r_*$.
		
		We conclude from \eqref{ass: conditions on beta} and \eqref{eq: error process goes to zero} that
		\begin{equation*}
		\lim_{k \to \infty} \frac{1}{\beta_k}\left[2\sup_{s \in [0, T]} |\delta_k(s, r)| + \frac{M_r}{k}\right] = \lim_{k \to \infty} \frac{1}{k^{\gamma_0}\beta_k}\left[2\sup_{s \in [0, T]} k^{\gamma_0}|\delta_k(s, r)| + \frac{M_r}{k^{1 - {\gamma_0}}}\right] = 0.
		\end{equation*}
		If $\beta_k$ is replaced by $k^{-\gamma}$ on the left-hand side, then the limit is also equal to zero by~\eqref{eq: error process goes to zero}. Consequently, for all sufficiently large $k \in \calK$, we have
		\begin{align*}
		\sum_{(i, j) \mayor (i_r, j_r)} \bq_k(t, i, j) - \sum_{(i, j) \mayor (i_r, j_r)} \bq_k\left(\tau_{k, 1}^-, i, j\right) &\geq - 2\sup_{s \in [0, T]} \left|\delta_k(s, r)\right| - \frac{M_r}{k} \\
		&> - \min\left\{\beta_k, k^{-\gamma}\right\}
		\end{align*}
		for all $t \in \left[\tau_{k, 1}, \tau_{k, 2}\right]$. This implies that \eqref{aux: statement to prove lemma} holds for all sufficiently large $k \in \calK$; observe that \eqref{aux: statement to prove lemma} holds trivially when $\xi_{k, 1} > \zeta_{k, 2}$.
	\end{proof}
	
\end{appendices}
	

\newcommand{\noop}[1]{}

\bibliographystyle{IEEEtranS}
\bibliography{bibliography}
	
\end{document}